\newcommand{\Z}{\mathbb{Z}_3}
\newcommand{\PP}{\mathcal{P}}
\newcommand{\GG}{\mathcal{G}}
\newcommand{\CC}{\mathcal{C}}
\newcommand\g{\mathfrak{G}}
\newcommand\comp{\mathfrak{C}}
\newtheorem{theorem}{Theorem}[section]
\newtheorem{observation}[theorem]{Observation}
\newtheorem{corollary}[theorem]{Corollary}
\newtheorem{lemma}[theorem]{Lemma}
\newtheorem{conjecture}[theorem]{Conjecture}
\newtheorem{definition}[theorem]{Definition}
\newtheorem*{claim*}{Claim}
\newtheorem*{subclaim*}{Subclaim}
\newtheorem*{ack}{Acknowledgments}
\tikzset{myptr/.style={decoration={markings,mark=at position 1 with %
    {\arrow[scale=3,>=stealth]{>}}},postaction={decorate}}}
\newcommand{\equal}{=}
\tikzset{
blackvertexv2/.style={circle, draw=black!100,fill=black!100,thick, inner sep=0pt, minimum size= 2mm},
dummywhite/.style={circle, draw=white!100,fill=white!100,thick, inner sep=0pt, minimum size= 0.5mm},
ellipsenodev1/.style={ellipse, draw=black!100,fill=none,thick, inner sep=0pt, minimum width= 1.5cm, minimum height = 2.75cm},
ellipsenodev2/.style={ellipse, draw=black!100,fill=none,dashed, inner sep=0pt, minimum width= .75cm, minimum height = 2.25cm},
ellipsenodev3/.style={ellipse, draw=black!100,fill=none,thick, inner sep=0pt, minimum width= 3cm, minimum height = 1cm},
}
\definecolor{hanpurple}{rgb}{0.32, 0.09, 0.98}
\definecolor{SoffiaRed}{RGB}{160,20,20}
\newenvironment{subproof}{%
  \begin{proof}[Proof of claim]%
}{%
  \end{proof}%
}
\newenvironment{subsubproof}{%
  \begin{proof}[Proof of subclaim]%
}{%
  \end{proof}%
}
\title{Flow-critical graphs\thanks{Supported by project 22-17398S (Flows and cycles in graphs on surfaces) of Czech Science Foundation.}}
\author{Arnbjörg Soffía Árnadóttir\thanks{University of Iceland, Reykjavík, Iceland, email: \texttt{asoffia@hi.is}. Supported by the Independent Research Fund Denmark, 8021-00249B AlgoGraph and by CNPq (Brazil).}\and Zdeněk Dvořák \thanks{Charles University, Prague, Czechia, email: \texttt{rakdver@iuuk.mff.cuni.cz}.} \and Bernard Lidický \thanks{Iowa State University, Ames, Iowa, United States, email: \texttt{lidicky@iastate.edu}. Research supported by NSF DSM-2152490 and Scott Hanna Professorship.} \and Benjamin Moore \thanks{University of Manitoba, Winnipeg, Canada, email: \texttt{Ben.Moore@umanitoba.ca}. Research partially supported by ERC Starting Grant ``RANDSTRUCT'' No.\ 101076777, and  acknowledges the support of the Natural Sciences and Engineering Research Council of Canada
(NSERC) [RGPIN-2025-07125],  Cette recherche a été financé par le Conseil de recherches en sciences naturelles et en génie du Canada (CRSNG) [RGPIN-2025-07125]} \and Evelyne Smith-Roberge \thanks{Illinois State University, Illinois, United States, email: \texttt{esmithr@ilstu.edu},} \and Robert Šámal \thanks{Charles University, Prague, Czechia, email: \texttt{samal@iuuk.mff.cuni.cz}.}}
\date{\today}
\begin{document}

\maketitle

\textbf{Keywords:} \emph{flow-critical}, \emph{graph}, \emph{$\mathbb{Z}_3$-flow}, \emph{nowhere-zero flow}

\begin{abstract}
 Lov\'{a}sz et al. proved that every $6$-edge-connected graph has a nowhere-zero $3$-flow. In fact, they proved a more technical statement which says that there exists a nowhere-zero $3$-flow that extends the flow prescribed on the incident edges of a single vertex $z$ with bounded degree. We extend this theorem of Lov\'{a}sz et al. to  allow $z$ to have arbitrary degree, but with the additional assumption that there is another vertex $x$ with large degree and no small cut separating  $x$ and $z$. Using this theorem, we prove two results regarding the generation of minimal graphs with the property that prescribing the edges incident to a vertex with specific flow does not extend to a nowhere-zero $3$-flow. We use this to further strengthen the theorem of Lov\'{a}sz et al., as well as give a density bound on flow-critical graphs with at most one vertex of degree at least $7$.
\end{abstract}

\tableofcontents

\section{Introduction}
Subsection \ref{subsec:background} sets the stage for our results, an overview of which is given in Subsection \ref{subsec:results}. Subsection \ref{subsec:paperoutline} contains an outline of the rest of the paper.

\subsection{Background and context}\label{subsec:background}
Throughout the paper, graphs are allowed to have multiple edges, but are
loopless.  A \emph{$k$-colouring} of a graph $G$ is a function $f:V(G)
\rightarrow \{0,\ldots,k-1\}$ such that for every edge $e =xy \in E(G)$, we
have $f(x) \neq f(y)$.

We start our story by recalling the famous theorem of
Grötzsch: Every triangle-free planar graph is $3$-colourable
\cite{grotzsch1959}. There are many possible avenues for generalizations of
Grötzsch's Theorem. For example, one could try to add some number of triangles.
Along this direction, we see that one can allow arbitrarily many triangles if
they are far apart \cite{havelpaper}, or up to four triangles assuming certain
structures are avoided \cite{fourtriangles}.

Another possibility would be to
generalize past planarity. Here, when attempting to increase the genus of the
surface, one immediately runs into problems: Even on the projective plane there
are triangle-free graphs which are not $3$-colourable (although, they can be
completely characterized; see \cite{gimbelthomassentheorem}, and a similar
although more complicated situation occurs on the torus
\cite{torodialcharacterization}). For more general surfaces, some structure is
known but there is not a complete characterization, see \cite{DVORAK2024517,
trfree2,trfree3,trfree4,trfree7}.

As generalizing to larger genus surfaces
quickly leads to a messy situation, one might ask if the complicated condition
of planarity can be replaced with a simpler condition. The most natural such
condition would be an edge-density condition -- as in, show all vertex and
edge-minimal graphs with no $4$-colouring have many edges and then deduce
Gr\"{o}tzsch's Theorem from the density condition and Euler's formula. This
almost works, as shown in \cite{Shortproof}; however one cannot quite deduce
Grötzsch's theorem from the density condition given in \cite{Shortproof}
without appealing to planarity, one first needs to preform a routine reduction
of $4$-faces. As there exists infinitely many vertex and edge-minimal
triangle-free graphs with no $3$-colouring with average degree roughly
$\frac{10}{3}$ (see \cite{densityboundtrianglefree}), and triangle-free planar
graphs have average degree arbitrarily close to $4$, this approach is not
likely to completely succeed, so another idea is needed.

A different approach
to removing the planarity condition is to move from the colouring framework to
that of nowhere-zero flows. Indeed, this is the focus of this paper. With
$\deg^+(v)$ and $\deg^-(v)$ denoting the in- and outdegree of a vertex $v$,
respectively, a \emph{nowhere-zero $3$-flow}  is an orientation of $G$ such
that $\deg^+(v) -\deg^-(v) \equiv 0 \pmod 3$ for every $v \in V(G)$. Normally,
this would be called a modulo $3$ orientation of $G$, and one would prove that
this is equivalent to a nowhere-zero $3$-flow, but we simply take this as a
definition (see \cite{circularflows}).  Tutte proved that a planar graph admits a $3$-colouring if and
only if the dual graph admits a nowhere-zero $3$-flow
\cite{tutte1954contribution}, and thus Grötzsch's theorem implies that
$4$-edge-connected planar graphs admit nowhere-zero $3$-flows. Famously, Tutte
conjectured that planarity is not required in the previous statement:

\begin{conjecture}[$3$-flow-conjecture, \cite{tutte1954contribution}]
Every $4$-edge-connected graph admits a nowhere-zero $3$-flow.
\end{conjecture}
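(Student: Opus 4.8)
The plan is to attack the conjecture through a minimal counterexample combined with the local flow-extension machinery developed in this paper. Let $G$ be a $4$-edge-connected graph with no nowhere-zero $3$-flow, chosen to minimize $|V(G)|+|E(G)|$. First I would invoke Kochol's reduction, which shows that the $3$-flow conjecture is equivalent to its restriction to $5$-edge-connected graphs (by splitting off pairs of edges at degree-$4$ vertices while preserving $4$-edge-connectivity); so I may assume $G$ is $5$-edge-connected, and in particular $G$ has minimum degree at least $5$. Routine reductions then let me assume $G$ is simple and has no small edge-cut whose two sides could be contracted to produce a smaller counterexample.

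The heart of the argument would be to show that every $5$-edge-connected graph on at least two vertices contains a vertex $z$ such that, after identifying a bounded number of vertices near $z$ (or contracting a small connected subgraph disjoint from $z$), the resulting graph $G'$ remains sufficiently edge-connected and contains a vertex $x$ of large degree with no small edge-cut separating $x$ from $z$. The strengthened Lov\'{a}sz et al.\ theorem of this paper then applies to $G'$ with the specified vertex $z$: it yields a nowhere-zero $3$-flow of $G'$ extending any admissible prescription on the edges incident to $z$. Choosing that prescription to be compatible with the way $z$ sits inside $G$, I would lift the flow back to a nowhere-zero $3$-flow of $G$, contradicting minimality. The crucial feature is that $z$ is allowed to have unbounded degree here, which is exactly why this route is conceivable at all: the original Lov\'{a}sz et al.\ statement cannot digest a high-degree vertex of a $5$-edge-connected graph, whereas the new theorem can, provided the auxiliary high-degree vertex $x$ and the absence of a small $x$--$z$ edge-cut can be arranged.

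That last proviso is the main obstacle, and it is essentially the reason the conjecture is still open. The extension theorem of this paper, like its predecessor, rests on contraction and splitting-off arguments and an inductive flow-extension scheme whose counting step appears to genuinely consume slack equivalent to edge-connectivity $6$; manufacturing a large-degree vertex $x$ with no small $x$--$z$ edge-cut inside a merely $5$-edge-connected (let alone $4$-edge-connected) graph is precisely where the argument breaks down. I expect that closing this gap requires either a new reducible configuration together with a discharging argument over the minimal counterexample $G$, certifying that $G$ cannot avoid all such configurations, or a more global algebraic identity that replaces the vertex-by-vertex extension altogether. Absent such an idea, the plan above only reduces Tutte's conjecture to a clean but still-unproven connectivity-reduction statement, and making that statement true is the real content of the problem.
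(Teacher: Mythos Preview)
The statement you were asked to prove is Tutte's $3$-flow conjecture, and the paper does not prove it: it is stated there as an open conjecture, and the paper's results (Theorem~\ref{6edgeconnected} of Lov\'asz et al., Theorem~\ref{cor-tall}, the density bounds) are only partial progress towards it. So there is no ``paper's own proof'' to compare against.

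Your proposal is, to your credit, self-aware about this: you explicitly say that the crucial step---producing in an arbitrary $5$-edge-connected graph a vertex $z$ together with a high-degree vertex $x$ and no small $x$--$z$ cut so that Theorem~\ref{cor-tall} applies---``is precisely where the argument breaks down,'' and that the plan ``only reduces Tutte's conjecture to a clean but still-unproven connectivity-reduction statement.'' That is an accurate diagnosis, not a proof. Theorem~\ref{cor-tall} requires, for the chosen $x$, that \emph{every} set $X\ni x$ with $X\subseteq V(\g)\setminus\{z\}$ satisfy $\deg(X)>\deg(z)-2$; in a $5$-edge-connected graph with $\deg(z)$ large this is a very strong hypothesis, and nothing in the paper shows how to arrange it. Your invocation of Kochol's reduction and the general shape of the minimal-counterexample argument are reasonable, but the core step is simply missing, as you yourself note. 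This is not a gap in execution; it is the actual open content of the conjecture.
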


There has been much work towards this conjecture; see for example \cite{KOCHOL,torodial3flows,ltwz,weak3flow} for a non-exhaustive list. We highlight the most important progress for this paper below.

\begin{theorem}[Kochol~\cite{KOCHOL}]
\label{Kocholtheorem}
If every $5$-edge-connected graph admits a nowhere-zero $3$-flow, then the $3$-flow conjecture is true.
\end{theorem}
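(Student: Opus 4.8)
The plan is to argue by contradiction through a minimal counterexample and show it would have to be $5$-edge-connected. So assume every $5$-edge-connected graph has a nowhere-zero $3$-flow, suppose for contradiction that some $4$-edge-connected graph has none, and let $G$ be such a graph minimizing $|V(G)|$ and then $|E(G)|$. Being $4$-edge-connected, $G$ has minimum degree at least $4$; and by the assumption $G$ is not $5$-edge-connected, so $G$ has an edge cut of size exactly $4$. It suffices to establish two claims: \textbf{(i)} $G$ has no vertex of degree $4$; and \textbf{(ii)} $G$ has no $4$-edge-cut $(A,B)$ with $|A|,|B|\ge 2$. Indeed, once both hold, every $4$-edge-cut of $G$ would be \emph{trivial}, i.e.\ would isolate a single vertex of degree $4$, which is excluded by (i); so $G$ would have no $4$-edge-cut at all, i.e.\ $G$ would be $5$-edge-connected, contradicting the previous sentence. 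Recall that here a nowhere-zero $3$-flow is an orientation with $\deg^+(v)-\deg^-(v)\equiv 0\pmod 3$ for every $v$, and that reversing every arc of such an orientation yields another one.

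For (i), let $v$ have degree $4$ with incident edges $vu_1,vu_2,vu_3,vu_4$. By a standard edge-splitting-off argument (Mader's theorem, applicable since $G$ has no bridge), there is a pairing, say $\{u_1u_2,u_3u_4\}$, such that $G':=(G-v)+u_1u_2+u_3u_4$ is $4$-edge-connected; if every admissible pairing creates a loop (which happens only when $v$ has a multiple edge to a single neighbour), one instead contracts $v$ into a neighbour, which also preserves $4$-edge-connectivity. In either case we obtain a $4$-edge-connected graph on fewer than $|V(G)|$ vertices, which by minimality has a nowhere-zero $3$-flow $D'$. One now extends $D'$ to $G$: in the splitting case, if $D'$ orients $u_1u_2$ as $u_1\to u_2$, orient in $G$ the path $u_1\to v\to u_2$ instead (similarly for $u_3u_4$); then every $u_i$ keeps its in- and out-degree and $v$ has $\deg^+(v)=\deg^-(v)=2$, so the result is a nowhere-zero $3$-flow of $G$ (the contraction case is handled by an analogous but slightly longer rerouting through $v$). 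This contradicts the choice of $G$, so $G$ has minimum degree at least $5$.

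For (ii), suppose $(A,B)$ is such a cut, with cut edges $f_1,f_2,f_3,f_4$ and $f_i=a_ib_i$ with $a_i\in A$, chosen among all such cuts with $\min(|A|,|B|)$ minimum. Contracting $B$ to a single vertex $\beta$ yields $G_A$, which is $4$-edge-connected (contraction does not decrease edge-connectivity) with $|A|+1<|V(G)|$ vertices; define $G_B$ symmetrically. By minimality each of $G_A,G_B$ has a nowhere-zero $3$-flow. The condition at $\beta$ forces exactly two of $f_1,\dots,f_4$ to point away from $\beta$ (towards $A$); call this $2$-set the \emph{type} of the flow, and note that arc-reversal replaces the type by its complement, so the three complementary pairs of types correspond exactly to the three partitions of $\{f_1,\dots,f_4\}$ into two pairs. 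If some nowhere-zero $3$-flow of $G_A$ and some nowhere-zero $3$-flow of $G_B$ determine the same such partition, then after possibly reversing one of them globally they induce the same orientation on every $f_i$, and hence combine into a nowhere-zero $3$-flow of $G$ --- a contradiction. So it remains to show that $G_A$ and $G_B$ realize a common partition, for which it is enough that each realizes at least two of the three partitions (two $2$-element subsets of a $3$-element set intersect). To see this for $G_A$: if $G_A$ realized only the partition $\{f_1f_2,f_3f_4\}$, then the graph $G_A^{\ast}:=G[A]+a_1a_2+a_3a_4$ --- obtained by splitting $\beta$ into two degree-$2$ vertices according to that partition and suppressing them --- would have \emph{no} nowhere-zero $3$-flow, since a nowhere-zero $3$-flow of $G_A^{\ast}$ corresponds precisely to one of $G_A$ whose type is ``crossing''. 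But $|V(G_A^{\ast})|=|A|<|V(G)|$, so minimality forces $G_A^{\ast}$ not to be $4$-edge-connected; one then checks, using that $G$ is $4$-edge-connected with minimum degree $\ge 5$ and that $(A,B)$ was chosen with $\min(|A|,|B|)$ minimum, that this cannot happen --- or, if it genuinely does, that the offending small cut of $G_A^{\ast}$ yields a smaller nontrivial $4$-edge-cut of $G$, contradicting that choice. This last connectivity bookkeeping --- guaranteeing that for at least one of the three partitions the corresponding split graph on each side is $4$-edge-connected, hence by minimality has a nowhere-zero $3$-flow of the right type --- is the main obstacle; everything else above is routine.
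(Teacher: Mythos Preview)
The paper does not prove Theorem~\ref{Kocholtheorem}; it is quoted from~\cite{KOCHOL} as background, so there is no in-paper argument to compare against. Your overall strategy---take a minimal $4$-edge-connected counterexample, eliminate degree-$4$ vertices by splitting off, then eliminate nontrivial $4$-edge-cuts---is the right shape, and part~(i) is essentially correct modulo routine care with parallel edges.

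The genuine gap, which you flag yourself, is in part~(ii). Your plan is to show that each of $G_A,G_B$ realizes at least two of the three cut-partitions, by arguing that if $G_A$ realized only $P_1=\{\{f_1,f_2\},\{f_3,f_4\}\}$ then $G_A^{*}=G[A]+a_1a_2+a_3a_4$ would have no nowhere-zero $3$-flow, hence (by minimality) would fail to be $4$-edge-connected, and that this failure reflects back as a smaller nontrivial $4$-cut in $G$. But it need not: if a $3$-cut $(A_1,A_2)$ of $G_A^{*}$ has $a_1,a_2\in A_1$, $a_3,a_4\in A_2$ and exactly three $G[A]$-edges across it, then the corresponding cuts $(A_1,A_2\cup B)$ and $(A_2,A_1\cup B)$ in $G$ both have size $5$, so your minimality hypothesis on $\min(|A|,|B|)$ yields nothing, and if $|A_1|=1$ that vertex has degree $5$, so~(i) yields nothing either. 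The argument \emph{can} be completed, but this is the heart of Kochol's proof, not bookkeeping: taking $|A|$ minimal, one combines submodularity of the cut function applied to the witness $3$-cuts for two different failing partitions, together with the degree bound from~(i), to force either a $\le 3$-cut or a strictly smaller nontrivial $4$-cut of $G$; alternatively one shows directly that the side with $|A|$ minimal realizes \emph{all three} partitions, so that any single realized partition on the $B$-side suffices. As written, your proof stops precisely at the nontrivial step.
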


\begin{theorem}[L. M. Lovász et al.~\cite{ltwz}]
\label{6edgeconnected}
Every $6$-edge-connected graph admits a nowhere-zero $3$-flow.
\end{theorem}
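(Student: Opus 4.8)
The plan is to prove a more flexible statement than the bare assertion, since the latter does not survive the natural inductive reductions: contracting a subgraph or splitting off a pair of edges can destroy $6$-edge-connectivity. Following Lovász et al., the statement I would aim for keeps a distinguished vertex $z$ of bounded degree whose incident edges carry a prescribed $\Z$-flow, and asks to extend it. Concretely, given a graph $G$, a vertex $z$, and a ``partial flow'' on the edges at $z$ whose values sum to $0$ in $\Z$, call an orientation of $G$ with $\deg^+(v)-\deg^-(v)\equiv 0 \pmod 3$ for all $v$ (equivalently a nowhere-zero $3$-flow) \emph{compatible} if it restricts to the prescribed orientation on the edges at $z$. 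The target lemma: \emph{if $\deg(z)$ is at most a small absolute constant and $G$ has no edge cut of size at most $5$ with a vertex of $V(G)\setminus\{z\}$ on each side, then every admissible prescription at $z$ extends to a compatible nowhere-zero $3$-flow.} The theorem follows: given a $6$-edge-connected $G$, pick a vertex $v$; if $\deg(v)$ is already small, take $z=v$ and the empty prescription; otherwise repeatedly split off pairs of edges at $v$ --- by a Mader/Lovász-type splitting-off theorem this can be done while keeping the ``no small cut with two vertices on each side'' property --- until the degree is small, apply the lemma, and un-split.

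For the induction (on, say, $|V(G)|$, with ties broken to keep $\deg(z)$ small) I would first dispose of the easy structure in a minimal counterexample. If an edge cut of size at most $3$ has a vertex of $V(G)\setminus\{z\}$ on each side, recurse on both sides, absorb the cut edges into the boundary data, and glue --- equivalently a sufficiently small side is $\Z$-connected and may be contracted. Three parallel edges between one pair of vertices span a $\Z$-connected subgraph and may likewise be contracted. After these reductions every vertex other than $z$ has degree at least $6$, there is no small separation avoiding $z$, and only the genuine core remains.

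The crux is to produce a reducible configuration and carry out the reduction while \emph{keeping $\deg(z)$ under control}. The two basic moves are (i) splitting off a carefully chosen pair of edges at some $v\neq z$, Mader/Lovász-style, retaining enough edge-connectivity and recursing; and (ii) contracting a connected $\Z$-connected subgraph $H$ --- built from triangles, theta-subgraphs, or small highly edge-connected pieces --- recursing on $G/H$ with the boundary data corrected by the chosen orientation of the boundary edges of $H$, then extending back into $H$. The main obstacle, as I see it, is that each such move can increase $\deg(z)$, so the induction does not close unless one either locates the configuration far from $z$, or lets the bound on $\deg(z)$ float while tracking exactly how it changes (via a lexicographic or weighted potential that penalizes $\deg(z)$), or injects extra structural slack to absorb $z$. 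Reconciling ``some reducible configuration always exists'' with ``it must not inflate $z$'' is what forces the delicate case analysis of the local structure at low-degree vertices; it is also precisely where the present paper departs from Lovász et al., trading the bound on $\deg(z)$ for the assumption of a second high-degree vertex $x$ with no small $(x,z)$-cut, which furnishes the slack the induction requires.
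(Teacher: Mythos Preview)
The paper does not prove Theorem~\ref{6edgeconnected}; it is quoted from~\cite{ltwz}. What the paper does contain is an outline of the Lov\'asz et al.\ proof of the stronger Theorem~\ref{thm:lovaszrealtheorem} (from which Theorem~\ref{6edgeconnected} follows immediately), so that is the relevant comparison.

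Your overall architecture is right: strengthen to a statement with a distinguished vertex $z$ carrying a prescribed preflow, then induct. But several specifics are off. First, the derivation of the theorem from the lemma does not need Mader splitting at all: simply adjoin an isolated vertex $z$ to $G$ (so $\deg(z)=0\le 4+|\tau(z)|$), observe that $6$-edge-connectivity gives $\deg(A)\ge 6\ge 4+|\tau(A)|$ for every nonempty $A\subsetneq V(G)$, and apply the lemma to the empty tip preflow. Your detour through a separate splitting-off theorem is unnecessary and introduces complications you then worry about.

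Second, and more substantively, your sketch of the inductive engine is too vague to close. The actual argument (as outlined in the paper's introduction and carried out in~\cite{ltwz}) is not ``find a $\Z$-connected piece and contract it''; it hinges on the function $\tau$ and proceeds by: (a) ruling out nontrivial tight cuts $A$ with $\deg(A)=4+|\tau(A)|$; (b) using the resulting slack to split off pairs at any vertex of even degree and boundary zero, eliminating such vertices; (c) eliminating \emph{mixed} edges $uv$ (one endpoint in-friendly, the other out-friendly) by deleting $uv$ and shifting boundary by $\pm 1$; (d) concluding that $\psi$ orients all edges at $z$ the same way and $\deg(z)$ is small; and finally (e) replacing one arc at $z$ by two parallel arcs in the opposite direction, which is strictly smaller in a carefully chosen partial order (not merely $|V(G)|$ with $\deg(z)$ as tiebreaker) yet yields a flow that transfers back. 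Steps (c)--(e) are the heart of the matter and do not appear in your proposal; your ``basic moves'' of splitting off and contracting $\Z$-connected subgraphs, together with a hoped-for potential function, do not by themselves force the structure needed to finish. You correctly diagnose that controlling $\deg(z)$ is the crux, but the resolution is the mixed-edge/arc-reversal machinery above, not a Mader-style splitting theorem or a search for triangles and thetas.
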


Thus in a sense we are very close to proving the $3$-flow conjecture: We know
the conjecture holds for $6$-edge-connected graphs, and further know that in
order to establish the full conjecture, it suffices to prove it for
$5$-edge-connected graphs.

As a possible approach to the $3$-flow conjecture,
one could draw inspiration from proofs of Grötzsch's Theorem. For example, one
can try to mimic the potential method proof of Grötzsch's Theorem given by
Kostochka and Yancey \cite{Shortproof}. To understand their approach, we need a
definition: We say a graph is \textit{$4$-critical} if it is not
$3$-colourable, but all proper subgraphs are. Kostochka and Yancey showed that
$4$-critical graphs have a lot of edges\textemdash more precisely, that if a
graph is $4$-critical, then $|E(G)| \geq \frac{5|V(G)|-2}{3}$. From this, one
can deduce that triangle-free graphs are $3$-colourable by first observing that
a minimal counterexample to Grötzsch's Theorem is $4$-critical and has no faces
of length $4$, and then observing that by Euler's formula, such graphs do not
have enough edges to be $4$-critical.

With this and flow-colouring duality in
mind, Theorem \ref{Kocholtheorem} is the analogue for removing $4$-faces from a
minimal counterexample to the $3$-flow conjecture. Following the general
approach of Kostochka and Yancey, the next step would be to prove that graphs
that are in some way critical for not having a nowhere-zero $3$-flow have few
edges, and deduce the $3$-flow conjecture from this. This strategy has been
proposed before, but we need some definitions to  explain precisely. 

For a
partition $\PP$ of the vertex set of a graph $G$, we let $G /
\PP$ be the graph obtained by identifying the vertices in each part of
$\PP$ to a single vertex and then removing all of the loops. If
$\PP$ contains a part with at least two vertices, then we say
$\PP$ is \textit{non-trivial}, and that $G / \PP$ is a
\textit{contraction} of $G$. By appealing to flow-colouring duality, it is natural
to define flow-criticality in the following way: We say a graph $G$ is
\textit{connected-flow-critical} if $G$ does not admit a nowhere-zero $3$-flow
and for every non-trivial partition $\PP$ of $V(G)$ where each part
induces a connected graph, $G / \PP$ admits a nowhere-zero $3$-flow. We
note that while the above definition is natural, there is really no reason to
enforce the connectivity condition in the parts. If one modifies the definition
above to say $G$ does not admit a nowhere-zero $3$-flow, but $G / \PP$
does for all non-trivial partitions $\PP$, then we obtain
\textit{flow-critical} graphs. This notion will be used throughout the paper,
and is more flexible than the connected version. Nevertheless, in the
literature, more work has been done on connected-flow-critical graphs, and the
following density bound conjectures are stated for connected-flow-critical
graphs.

\begin{restatable}[Li et al.~\cite{li20223}]{conjecture}{conjdensitycrit}
\label{conj:densitycrit1}
For any connected-flow-critical graph $G$ on at least seven vertices, $|E(G)| \leq 3|V(G)|-8$.
\end{restatable}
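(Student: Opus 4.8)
The plan is to argue by induction on $|V(G)|$ and by contradiction, using the strengthened version of the theorem of Lov\'{a}sz et al. as the main reducibility tool. Suppose $G$ is a connected-flow-critical graph on $n \ge 7$ vertices with $|E(G)| \ge 3n-7$, chosen with $n$ as small as possible. I would first record the standard structural consequences of flow-criticality: $G$ is $3$-edge-connected with minimum degree at least $3$, and for every connected induced subgraph $H \subsetneq G$ with $|V(H)| \ge 2$, the contraction $G/H$ admits a nowhere-zero $3$-flow, which restricts to a prescribed $\Z$-flow on the edges of $G$ leaving $V(H)$ that does \emph{not} extend to a nowhere-zero $3$-flow of $G$. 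Consequently $G$ cannot contain any connected subgraph $H$ with the property that \emph{every} prescribed $\Z$-valuation on its boundary edges extends to a nowhere-zero $3$-flow of $G$; this is exactly what makes small separators and locally dense pieces reducible.

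The engine of the argument is the following. Let $Z = \{v : \deg_G(v) \le 5\}$ and $X = V(G)\setminus Z$; the assumed edge count forces $X$ to be large relative to $Z$. The key step is to show that if some $z \in Z$ and some $x \in X$ of sufficiently large degree fail to be separated by a cut smaller than the threshold appearing in our extension theorem, then, after contracting the complement of a suitable region in a way compatible with flow-criticality, one may apply that theorem (with $z$ playing the role of the prescribed vertex and $x$ the high-degree vertex) to extend the boundary flow coming from a contraction, contradicting flow-criticality. Here the paper's generation results for minimal non-extendable configurations do the heavy lifting: they pin down which local pictures around a vertex can fail to extend, and each such picture must then embed around every vertex of $Z$, constraining how the low-degree vertices cluster and how many edges they can carry.

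For the count itself I would use discharging. Set $\mathrm{ch}(v) = \deg_G(v) - 6$, so that $\sum_{v} \mathrm{ch}(v) = 2|E(G)| - 6n \ge -14$. One designs rules sending charge from high-degree vertices toward nearby members of $Z$; the configuration restrictions from the previous paragraph bound the demand at each vertex of $Z$ and the number of such vertices near a given high-degree vertex, so the surplus $\deg_G(v)-6$ on the $X$-side cannot be fully absorbed unless in fact $|E(G)| < 3n-7$, the desired contradiction. A finite case analysis on small graphs accounts for the threshold $n \ge 7$ and the slack term in $3n-8$.

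The main obstacle I anticipate is the clash between the connectivity hypothesis of the extension theorem and the fact that flow-critical graphs may be riddled with small edge-cuts: when every high-degree vertex is separated from every low-degree vertex by a small cut, the engine above stalls, and one must instead split $G$ along such a cut and run the induction on the pieces, carrying along the prescribed boundary-flow constraints. Arranging the induction hypothesis to survive these decompositions---in particular, controlling the interaction of several small cuts and the small rooted pieces they cut off---is the delicate part, and I expect it is where a complete argument currently breaks down; realistically this strategy establishes the conjecture only under an additional connectivity assumption or with a weaker multiplicative constant.
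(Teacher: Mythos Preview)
The statement you are attempting to prove is Conjecture~\ref{conj:densitycrit1}, which the paper explicitly records as \emph{open}: the paper does not prove it, and indeed remarks that ``Conjecture~\ref{conj:densitycrit1} is still wide open.'' So there is no ``paper's own proof'' to compare against. What the paper does prove is the weaker Theorem~\ref{thm:introweakerversion}: if a flow-critical graph $G$ has at most one vertex of degree at least $7$, then $|E(G)|\le 3|V(G)|-5$. That argument does not use discharging at all; it converts $G$ into a tame flow-critical canvas $((G',\beta'),z)$ via Observation~\ref{obs-to-tame}, and then reads off the edge bound from the census restrictions of Theorems~\ref{thm-deg} and~\ref{thm-degbetter}.

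Your plan, as you yourself concede in the final paragraph, is not a proof but a programme, and the obstacle you identify is exactly the one that keeps the conjecture open. The discharging step is not the issue; the issue is that the ``engine'' you describe---applying the extension theorem with a low-degree $z$ and a high-degree $x$ not separated by a small cut---has no teeth when \emph{every} high-degree vertex is separated from \emph{every} low-degree vertex by a small cut, and splitting along such cuts does not preserve the hypotheses well enough to close an induction. The paper's generation theorems (Theorems~\ref{thm-gen} and~\ref{thm-genladeg}) are precisely an attempt to formalize what such inductive decompositions look like, and even with them the authors can only control the census when the canvas contains a vertex of degree $\deg(z)-2$; the (EXPB) operation, which deletes an edge and may drop the maximum non-tip degree, is exactly where control is lost. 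Your sketch does not propose any mechanism to get past this, so as written it cannot succeed at the full conjecture.
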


This unfortunately is not strong enough to deduce the $3$-flow conjecture. However, if true it implies Theorem \ref{6edgeconnected} and there are infinitely many graphs which attain the bound, as shown by the examples in \cite{li20223}. Nevertheless, these examples have many vertices of degree 3, and hence the authors of \cite{li20223} suggest the following, which does imply the $3$-flow conjecture.

\begin{conjecture}[Li et al.~\cite{li20223}]\label{conj:li}
For any connected-flow-critical graph $G$ on at least seven vertices with $n_{3}$ vertices of degree $3$, we have
$$|E(G)| < \frac{5|V(G)|}{2} +n_{3}.$$
\end{conjecture}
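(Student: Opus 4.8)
The plan is to argue by minimal counterexample together with a discharging argument, in the spirit of Kostochka and Yancey's density bound for $4$-critical graphs, the reducible configurations being supplied by the extension of Theorem~\ref{6edgeconnected} that is the main technical result of this paper and by the accompanying classification of minimal non-extendable configurations. Suppose the conjecture fails, and let $G$ be a connected-flow-critical graph on at least seven vertices, with $n_3$ vertices of degree $3$ and $|E(G)| \ge \tfrac{5|V(G)|}{2} + n_3$, chosen to minimize $|V(G)|$ and, subject to that, $|E(G)|$; equivalently $\sum_{v\in V(G)}(\deg(v)-5) \ge 2n_3$. A vertex of degree at most $2$ may be deleted or suppressed without creating a nowhere-zero $3$-flow, so $G$ has minimum degree at least $3$, and by the usual treatment of small edge cuts we may assume $G$ is $3$-edge-connected, so that contractions of connected subgraphs behave well.

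First I would give each vertex $v$ the charge $\deg(v)-5$, so that the total charge equals $2|E(G)|-5|V(G)| \ge 2n_3$. The degree-$3$ vertices are the sinks (charge $-2$), degree-$4$ vertices carry charge $-1$, degree-$5$ vertices are neutral, and the vertices of degree at least $6$ hold all of the surplus. The goal of the redistribution is a final configuration in which every vertex of degree at least $4$ has nonpositive charge while every vertex of degree $3$ has charge strictly less than $2$; summing, this would push the total charge below $2n_3$, contradicting the choice of $G$. So the crux is a local structural claim of roughly the form: \emph{a vertex $x$ of degree $d\ge 6$ has at most $\tfrac{d-5}{c}+O(1)$ vertices of small degree in a bounded-radius ball around it}, for a suitable absolute constant $c$.

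This is exactly where reducibility enters. If some large-degree vertex $x$ violated that bound, we would pick a connected subgraph $X$ containing $x$ and the offending low-degree vertices and form $G/\PP$ by contracting $X$ to a single vertex $z$. By flow-criticality, $G/\PP$ has a nowhere-zero $3$-flow, which prescribes flow values on the edges leaving $X$; we then want to extend these values to a nowhere-zero $3$-flow of $X$, producing a nowhere-zero $3$-flow of $G$ and contradicting that $G$ has none. The extension of Theorem~\ref{6edgeconnected} is designed precisely for this: $X$ contains the large-degree vertex $x$, so provided $X$ has no small internal cut separating $x$ from $z$, the prescribed boundary flow does extend. The classification of minimal non-extendable configurations then disposes of the leftover cases --- where the natural $X$ has a small bad cut, or where $z$ ends up with too small an effective degree --- either by rerouting the contraction or by exhibiting a smaller connected-flow-critical graph that contradicts the minimality of $G$.

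The main obstacle, I expect, is this interface between the flow inherited from $G/\PP$ and the hypotheses of the extension theorem: one must arrange simultaneously that the boundary values coming from $G/\PP$ are nowhere zero, that $X$ genuinely contains a large-degree vertex, and that no small cut inside $X$ separates that vertex from $z$, all while keeping $X$ small enough for the discharging accounting to close. When the low-degree vertices of $G$ hide behind small edge cuts, or when $G$ has too few vertices of degree at least $6$ to seed the discharging, this cannot be achieved; one is then forced to strengthen the connectivity hypotheses or to accept a bound weaker than $\tfrac{5|V(G)|}{2}+n_3$, which is presumably why the paper obtains progress on, rather than a full proof of, Conjecture~\ref{conj:li}.
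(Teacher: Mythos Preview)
The statement you are addressing is a \emph{conjecture}, not a theorem: the paper does not prove Conjecture~\ref{conj:li}, and in fact explicitly says only that it makes progress towards it (via Theorem~\ref{thm:introweakerversion} and the structural results on tame flow-critical canvases). There is therefore no proof in the paper to compare your proposal against.

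Your proposal is not a proof either, and you already recognize this in your final paragraph. The discharging scheme you sketch never gets to the point of specifying any actual rules or reducible configurations; the ``local structural claim'' that a high-degree vertex sees only a bounded number of low-degree vertices in a bounded-radius ball is asserted without any argument, and indeed nothing in the paper's toolkit yields such a statement. The extension theorem (Theorem~\ref{cor-tall}) requires the absence of small cuts separating $x$ from $z$, and you correctly identify this as the obstruction, but you offer no mechanism for arranging it; the paper's own generation theorems (Theorems~\ref{thm-gen} and~\ref{thm-genladeg}) were built precisely to get around this difficulty in the much weaker setting of Theorem~\ref{thm:introweakerversion}, and even there the authors note that the (EXPB) case prevents them from pushing further. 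In short, your outline names the right ingredients but does not assemble them into an argument, and the gap you point to at the end is exactly the reason the conjecture remains open.
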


Aside from the bounds given in \cite{li20223}, recent progress on lower bounds in \cite{dvořák2024sparsity} and some progress when the genus of the graph is bounded in \cite{bojanzdenek}, Conjecture \ref{conj:densitycrit1} is still wide open. 

\subsection{Results}\label{subsec:results}
We now pivot to an overview of the results in this paper. Along the lines of Conjecture \ref{conj:densitycrit1}, we prove the following.
\begin{restatable}{theorem}{thmintroweakerversion}\label{thm:introweakerversion}
Let $G$ be a flow-critical graph.  If $G$ has at most one vertex of degree at least $7$,
then $|E(G)|\le 3|V(G)|-5$.
\end{restatable}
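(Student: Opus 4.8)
The plan is to recast the claimed bound as the charge inequality
$\sum_{v\in V(G)}\bigl(6-\deg_G(v)\bigr)\ge 10$ and prove it by a discharging/counting argument, using the degree hypothesis to control the unique vertex that can carry negative charge. First I would record the structural properties of a flow‑critical graph $G$ that follow from the definition by elementary contraction arguments: $G$ is connected and $2$‑connected, $\delta(G)\ge 3$, and $G$ is $3$‑edge‑connected with the property that every $3$‑edge‑cut isolates a single vertex of degree $3$. The last fact is the ``contract‑and‑glue'' trick: if $(A,B)$ is a $k$‑edge‑cut with $|A|,|B|\ge 2$ and $k\le 3$, then $G/A$ and $G/B$ are proper contractions of $G$ and hence carry nowhere‑zero $3$‑flows; since the only admissible flow patterns across a cut of size at most $3$ are the all‑$1$ and all‑$2$ patterns up to reorientation, after possibly negating one flow we glue them into a nowhere‑zero $3$‑flow of $G$, a contradiction. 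I would also dispose of small $|V(G)|$ directly, noting that the single edge already shows the bound is tight.

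Assign to each vertex the charge $\mu(v)=6-\deg_G(v)$, so the goal becomes $\sum_v\mu(v)\ge 10$, and by hypothesis at most one vertex, say $x$, has $\mu(x)<0$. If no such $x$ exists, then $\Delta(G)\le 6$; since $G$ has no nowhere‑zero $3$‑flow, Theorem~\ref{6edgeconnected} forces $G$ to have an edge‑cut of size $4$ or $5$. A more careful version of the contract‑and‑glue argument handles this: the admissible cut patterns are now more varied, so one first applies the paper's main extension theorem on the larger side of the cut to prescribe a pattern compatible with a flow on the other side, then glues; the outcome is either a nowhere‑zero $3$‑flow of $G$ or a strictly smaller flow‑critical graph, contradicting the choice of $G$ unless $G$ already contains enough vertices of degree at most $5$ to make $\sum_v\mu(v)\ge 10$.

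The substantial case is when $x$ with $d:=\deg_G(x)\ge 7$ exists, so $\mu(x)=6-d$ and I must produce vertices of degree at most $5$ whose charges total at least $d+4$ — morally, one degree‑$3$ vertex (or a small cluster of slightly larger but still low‑degree vertices) charged against each edge at $x$, plus a constant surplus. Here I would invoke the paper's main technical theorem (the extension of Lov\'asz et al.\ allowing $z$ of arbitrary degree provided a high‑degree vertex $x$ with no small $x$--$z$ cut is present) together with the generation theorems for minimal non‑extendable pairs $(H,z)$: since $G$ has no nowhere‑zero $3$‑flow, contracting $G$ away from $x$ exhibits a non‑extendable configuration at $x$, and the generation results pin down the local structure around $x$. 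The point is that if too large a fraction of the edges at $x$ reached only degree‑$6$ vertices with no low‑degree vertex nearby, then the connectivity hypothesis of the main theorem would be satisfied and it would deliver a nowhere‑zero $3$‑flow of $G$; converting this dichotomy into a charge count yields $\sum_{v\ne x}\mu(v)\ge d+10$, hence $\sum_v\mu(v)\ge 10$.

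The hard part will be exactly this last quantitative step. One must show that the connectivity hypothesis needed to apply the main extension theorem is genuinely forced by flow‑criticality, which requires understanding $4$‑ and $5$‑edge‑cuts separating $x$ from the rest of $G$ — precisely the regime where the clean gluing argument breaks down — and then one must extract from the generation theorems a charge lower bound near $x$ that is uniform over every possible local picture and sharp enough to reach the threshold $d+10$. Organizing the many local configurations around $x$ into a single clean count, rather than an unwieldy case analysis, is where I expect the bulk of the effort to go.
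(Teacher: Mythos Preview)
Your charge reformulation $\sum_v (6-\deg_G(v))\ge 10$ is correct, but the plan for discharging has a real gap: you never set up the object to which the paper's canvas theorems actually apply. Those theorems (Theorem~\ref{thm-deg} and Theorem~\ref{thm-degbetter}) speak about a tame canvas $(\g,z)$ and constrain the degrees of vertices \emph{other than the tip} $z$. A flow-critical graph $G$ with zero boundary has no such tip, and taking $x$ itself as the tip --- which is what ``contracting $G$ away from $x$ exhibits a non-extendable configuration at $x$'' amounts to --- points the theorems in the wrong direction: they would then bound the degrees of the \emph{other} vertices by $\deg(x)-2$, which is vacuous since those degrees are already at most $6$. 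The statement you need goes the other way: many low-degree vertices are forced \emph{because} $x$ has large degree.

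The missing idea is Observation~\ref{obs-to-tame}: adjoin a \emph{new} vertex $z$ to $G$, attaching it to each vertex of degree $2,3,4,5$ with enough edges (and adjusting boundaries) to make the resulting canvas $((G',\beta'),z)$ tame. Then $\deg(z)=2n_2+2n_3+n_4+n_5$ is exactly a weighted count of low-degree vertices --- your charge supply --- while the unique high-degree vertex $v_0$ keeps its degree in $G'$. Now Theorem~\ref{thm-deg} gives $\deg(z)\ge\deg(v_0)+2$, and Theorem~\ref{thm-degbetter} sharpens this in the boundary case $\deg(z)=\deg(v_0)+2$; plugging into the identity $|E(G)|=3|V(G)|-\tfrac12\bigl(\deg(z)-\sum_v(\deg_{G'}(v)-6)\bigr)$ gives the bound directly. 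Your ``contract-and-glue'' treatment of the $\Delta(G)\le 6$ case is also not a proof as written --- the admissible patterns across a $4$- or $5$-cut are too varied for the gluing to go through, and ``unless $G$ already contains enough vertices of degree at most~$5$'' is precisely the conclusion you are trying to establish; the paper handles this case by the same canvas construction, splitting on the value of $\deg(z)$.
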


 One might anticipate a proof of Theorem \ref{thm:introweakerversion} following the proof strategy given in \cite{Shortproof}; in fact it does not use the potential method (the technique used in \cite{Shortproof}) at all. Rather, most of the work  comes from a more technical theorem which generalizes Theorem \ref{6edgeconnected} and relies on a clever inductive argument. This is not particularly new: There are many induction-based proofs of Grötzsch's Theorem (for example \cite{THOMASSEN2003189}), and this idea leads to a proof of Theorem \ref{6edgeconnected}. As our theorem is quite technical even to state, let us motivate some of the following required definitions by analogy to their counterparts for colouring. 
 
 First, in Thomassen's \cite{THOMASSEN2003189} proof of Grötzsch's Theorem he proves a theorem on list-colouring: A generalization of ordinary colouring wherein we prescribe what colour choices are available to each vertex. Similarly, we will want to work with a ``list-colouring" version of nowhere-zero flows. 

\begin{definition}
Let $G$ be a graph and $\beta:V(G) \rightarrow \Z$ a function.  For $A\subseteq V(G)$, we define
$\beta(A):=\sum_{v\in A} \beta(v)$. We say that $\beta$ is a \emph{$\Z$-boundary} for $G$ if for every component $C$ of $G$, we have that $\beta(V(C)) \equiv 0 \pmod 3$. If $\beta$ is a $\Z$-boundary for $G$, we say the pair $\g = (G,\beta)$ is a \emph{$\Z$-bordered graph}.
\end{definition}

With that, we can define a notion that is analogous to list-colouring for flows.

\begin{definition}
Let $\g = (G,\beta)$ be a $\Z$-bordered graph. A \emph{nowhere-zero flow} in $\g$ is an orientation such that for all $v \in V(G)$, we have $\deg^{+}(v) -\deg^{-}(v) \equiv \beta(v) \pmod 3$.
We say that a graph $G$ is $\Z$-connected if $(G,\beta)$ admits a nowhere-zero flow for every $\Z$-boundary $\beta$.
\end{definition}

Observe that if $\beta$ is a $\Z$-boundary of $G$ such that $\beta(v) = 0$ for all $v \in V(G)$, a nowhere-zero flow in $\g$ is a nowhere-zero flow of $G$.

The next useful step in Thomassen's proof of Grötzsch's Theorem is to restrict the lists of some of the vertices, in a way that facilitates the induction. In particular, it is often useful to precolour some vertices\textemdash that is, prescribe their colour before attempting to colour the rest of the graph. Analogously, we introduce \emph{preflows}.  For an edge $uv$, we use $(u,v)$ to denote an arc oriented from $u$ to $v$.

\begin{definition}
A \emph{preflow} $\psi$ in $\g =(G,\beta)$ is a partial orientation of $G$, i.e., a directed graph with the same vertex set as $G$, and such that $uv\in E(G)$ for every $(u,v)\in E(\psi)$.  For a vertex $z$ of $G$, we say that $\psi$ is a \emph{preflow around $z$} if all arcs in $\psi$ are incident to $z$, and further $\deg^{+}(z) - \deg^{-}(z) \equiv \beta(z) \pmod 3$. We say that a preflow around $z$ \emph{extends to a nowhere-zero flow} of $\g$ if there exists a nowhere-zero flow of $\g$ for which the orientation of edges incident to $z$ agrees with the preflow around $z$.
\end{definition}

As we will mostly be interested in preflows around a specific vertex, we make the following definition.

\begin{definition}
Given a $\Z$-bordered graph $\g$, and a vertex $z \in V(G)$, a \emph{canvas} is
a pair $(\g,z)$ and we will refer to $z$ as the \emph{tip} of the canvas. A
\emph{tip preflow} of $(\g,z)$ is a preflow around~$z$.
\end{definition}

The final technical definition (first stated in \cite{weak3flow}) is motivated
by the main tool used for induction. For colouring, a typical reduction
involves identifying two non-adjacent vertices and then applying induction, and
obtaining from the resulting colouring a colouring of the original graph. For
flows, the corresponding tool would be to \textit{split off} an edge: That is,
given vertices $u,v,w$ such that $uv$ and $vw$ are edges, to delete $uv$ and
$vw$, and replace them with an edge $uw$ (possibly creating a multiple edge in
the process, but never a loop: If $u=w$ then we simply delete the pair of
parallel edges). When we split off edges, we may reduce the edge-connectivity
of our graph\textemdash which is not ideal if we are aiming to prove the
$3$-flow conjecture.

Thus it would be nice to replace the edge-connectivity
condition with a simpler condition that more easily facilitates the splitting
off operation. The idea is as follows. We have a $\Z$-bordered graph $\g$, and
we would like to split off the edges incident to a vertex $v$, creating a new graph $G'$, and we want to create a new
$\Z$-boundary $\beta'$ of $G'-v$  such that if we find a nowhere-zero flow of
the $\Z$-bordered graph $(G'-v,\beta')$, we can easily translate this into a
nowhere-zero flow of $\g$. For example, if $\beta(v) = 0$ and further $v$ has
even degree, the approach would be to pair up the neighbours of $v$ and then
split off all of the edges incident to $v$ in accordance to their pairs, and
let $\beta' = \beta|_{G'-v}$. It is easy to see that if $(G'-v,\beta')$ has a
nowhere-zero flow, we can lift it to a nowhere-zero flow of $G$ simply by
taking one of the split off edges, which say has been oriented from $u$ to $w$,
and then orienting the edges $uv,vw$ from $u$ to $v$, and from $v$ to $w$. Of
course, this only works in  the specific case described. If the degree of $v$
was odd and $\beta(v) =0$, then we cannot simply pair up the neighbours of $v$
and split off, as we will always have one edge left over. Thus to ensure that
the process works, instead of splitting off all edges incident to $v$ one
should leave behind three edges, say incident to $u,w,q$, and to compensate for
this modify the boundary of all of $u,w,q$ by $\pm 1$. To extend the flow to
$G$, orient the edges $uv,vw,vq$ either all towards $v$ or all away from $v$
depending on how the boundaries were modified. The following definition
encapsulates the minimum number of edges that would need to be left over in the
above procedure for the extension to work, extended from single vertices to
arbitrary sets of vertices. 

\begin{definition}
Let $\g$ be a $\Z$-bordered graph. For a set $A \subseteq V(G)$, let the degree of $A$, denoted $\deg(A)$, be the number of edges with exactly one endpoint in $A$, and let $\beta(A) = \sum_{v \in A} \beta(v) \pmod 3$. Let $\tau(A)$ be defined by the following chart: 
\begin{center}
\begin{tabular}{c|ccc}
\backslashbox{degree}{$\beta$} & $0$ & $1$ & $-1$\\
\hline
even& $\{0\}$ & $\{-2\}$ & $\{2\}$\\
odd& $\{-3,3\}$ & $\{1\}$ & $\{-1\}$
\end{tabular} \\
\end{center}
We write  $\tau(A)>0$ or $\tau(A)<0$ to mean that $\tau(A)$ contains only a single positive or negative element. Slightly abusing notation, we let $|\tau(A)|$ be defined by the following chart:
\begin{center}
\begin{tabular}{c|cc}
\backslashbox{degree}{$\beta$} & $0$ & $\pm 1$\\
\hline
even& $0$ & $2$\\
odd& $3$ & $1$
\end{tabular}
\end{center}
\end{definition}
To avoid cumbersome notation, if $A = \{v\}$, then we write $\tau(\{v\})$ as $\tau(v)$.
With this, we can state a strengthening of Theorem \ref{6edgeconnected} proven in \cite{ltwz}.

\begin{theorem}[L. M. Lovász et al.~\cite{ltwz}]
\label{thm:lovaszrealtheorem}
Let $(\g,z)$ be a canvas such that $\deg(z) \leq 4 + |\tau(z)|$. If for every non-empty $A \subsetneq V(G) \setminus \{z\}$ we have $\deg(A) \geq 4 + |\tau(A)|$, then every tip preflow extends to a nowhere-zero flow in $\g$. 
\end{theorem}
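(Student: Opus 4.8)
The plan is to prove the theorem by induction on $|V(G)|+|E(G)|$, via a minimal counterexample: a canvas $(\g,z)$ satisfying the hypotheses together with a tip preflow $\psi$ that does not extend to a nowhere-zero flow of $\g$, chosen with $|V(G)|+|E(G)|$ minimum. Several facts are immediate. The cut condition forces $G$ to be connected (a component avoiding $z$ would be a set $A$ with $\deg(A)=0$), so I would dispatch the small cases $|V(G)|\le 2$ — where there is no admissible choice of $A$ and the tip preflow extends by direct inspection — and assume $|V(G)|\ge 3$. Every $v\ne z$ has $\deg(v)\ge 4+|\tau(v)|\ge 4$. Crucially, the parity of $\deg(v)$ is determined by $|\tau(v)|$, and $4+|\tau(v)|$ and $5+|\tau(v)|$ have opposite parities; hence for each $v\ne z$ either $\deg(v)=4+|\tau(v)|$, in which case call $v$ \emph{tight}, or $\deg(v)\ge 6+|\tau(v)|$. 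The same observation shows that any set $A$ with $z\notin A$, $A\ne V(G)\setminus\{z\}$ and $\deg(A)\le 5+|\tau(A)|$ is in fact \emph{tight}, that is, $\deg(A)=4+|\tau(A)|$.

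The engine is edge splitting. Suppose some $v\ne z$ has $\deg(v)\ge 6+|\tau(v)|$. I claim there is a pair of edges $vu_1,vu_2$ with $u_1,u_2\ne z$ whose splitting-off — replacing $vu_1,vu_2$ by a new edge $u_1u_2$, and deleting the resulting loop if $u_1=u_2$ — yields a canvas $(\g',z)$ still satisfying the cut condition. Granting this, $\psi$ is unchanged and remains a tip preflow of $\g'$; any nowhere-zero flow of $\g'$ extending $\psi$ lifts to one of $\g$ extending $\psi$ by re-expanding $u_1u_2$ along a path through $v$; and $|E(\g')|<|E(\g)|$ contradicts minimality. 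To prove the claim, observe that the only obstruction to a particular pair being usable is a tight set $D$ with $v\in D$, $z\notin D$, $D\ne V(G)\setminus\{z\}$ and $\{u_1,u_2\}\not\subseteq D$; since $\{v\}$ itself is not tight — this is exactly the hypothesis $\deg(v)\ge 6+|\tau(v)|$ — a Lov\'{a}sz--Mader style uncrossing argument over the tight sets through $v$ produces a usable pair, and along the way one checks that $v$ has at least two edges to $V(G)\setminus\{z\}$ (else $V(G)\setminus\{v,z\}$ would violate the cut condition), so the pair can be chosen to avoid $z$. Consequently, in the minimal counterexample \emph{every} $v\ne z$ is tight.

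Now fix any $v\ne z$, so $\deg(v)=4+|\tau(v)|$. A second splitting lemma provides a dichotomy: either (a) one can split off two disjoint pairs of edges at $v$, delete $v$, and shift the boundary values of the $|\tau(v)|$ remaining neighbours of $v$ by the amounts dictated by $\tau(v)$, obtaining a canvas $(\g',z)$ with $|V(\g')|=|V(\g)|-1$ that still satisfies the cut condition; or (b) there is a tight set $A$ with $v\in A$, $z\notin A$ and $2\le|A|\le|V(G)|-2$. The dichotomy is proved by the same uncrossing technique: if every complete splitting of $v$ is blocked, the blocking tight sets uncross into a single tight set containing $v$ and another vertex. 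In case (a), any nowhere-zero flow of $\g'$ that extends the image of $\psi$ lifts back to a nowhere-zero flow of $\g$ extending $\psi$ — this is precisely the bookkeeping described just before the theorem statement, with $\tau(v)$ recording the change of boundary forced by the $|\tau(v)|$ edges left at $v$ — contradicting minimality. In case (b) I would perform a \emph{cut reduction} along $A$. Let $\g_1$ be obtained from $\g$ by contracting $V(G)\setminus A$ to a single vertex $a$ with $\beta(a):=-\beta(A)$; since $\deg(a)=\deg_G(A)$ and $|\tau(\cdot)|$ depends only on the parity of the degree and on whether the boundary value is $0$ or $\pm1$ (which negation preserves), we get $|\tau(a)|=|\tau(A)|$ and hence $\deg(a)=4+|\tau(A)|=4+|\tau(a)|$; moreover $\deg_{\g_1}(B)=\deg_G(B)$ with $\tau$ unchanged for every $B\subsetneq A$, so $(\g_1,a)$ is a canvas obeying the cut condition. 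Let $\g_2$ be obtained by contracting $A$ to a single vertex and keeping $z$ as the tip; $(\g_2,z)$ is smaller, so $\psi$ extends to a nowhere-zero flow of $\g_2$, which orients the $\deg(A)$ edges leaving $A$. Because the net flow at the contracted $A$-vertex of $\g_2$ is $\equiv\beta(A)$, those same orientations form a valid tip preflow of the (also smaller) canvas $(\g_1,a)$, hence extend to a nowhere-zero flow of $\g_1$; gluing the flows of $\g_1$ and $\g_2$ along the edges leaving $A$, where by construction they agree, yields a nowhere-zero flow of $\g$ extending $\psi$ — the final contradiction.

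The main obstacle is the two splitting lemmas, and within them the uncrossing. The quantity $|\tau(\cdot)|$ is a step function of the parity of $\deg(\cdot)$ and of $\beta(\cdot)\bmod 3$, so when one invokes submodularity in the form $\deg(D_1)+\deg(D_2)\ge\deg(D_1\cap D_2)+\deg(D_1\cup D_2)$ one must simultaneously track how $\beta$ (which is additive) and the parity of the degree (which is additive modulo $2$) distribute over $D_1\cap D_2$ and $D_1\cup D_2$, and argue that the slack in ``$\deg(D)\ge 4+|\tau(D)|$'' is never destroyed. The entire purpose of the definition of $\tau$ is to make this argument go through cleanly, and verifying that it does is where essentially all of the real work resides.
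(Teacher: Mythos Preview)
This theorem is not proved in the paper; it is quoted from~\cite{ltwz}, though the introduction sketches that proof. The sketch differs markedly from your outline: in~\cite{ltwz} one first fixes a special partial order on counterexamples, then eliminates all non-trivial tight cuts by the contraction argument (your case~(b)), and only \emph{with that slack already secured} performs limited splitting (just at even-degree vertices with zero boundary). One then shows there are no mixed edges, concludes that $\psi$ orients all edges at $z$ the same way, and finishes by replacing one arc at $z$ by two parallel opposite arcs and invoking minimality in the partial order. No general splitting lemma at a vertex with $\deg(v)\ge 6+|\tau(v)|$ is used; the order ``cut reduction first, then splitting'' is precisely what makes the splitting tractable.

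The gap in your outline is the uncrossing that underlies both of your splitting lemmas. Your description of the obstruction is already incomplete (a tight set $D$ with $v\notin D$ and $u_1,u_2\in D$ also loses two boundary edges), and the assertion that ``the definition of $\tau$ makes this argument go through cleanly'' is not correct: the inequality $|\tau(D_1)|+|\tau(D_2)|\ge|\tau(D_1\cap D_2)|+|\tau(D_1\cup D_2)|$ that a clean uncrossing would need can fail, e.g.\ take $\beta(D_1)=1$, $\beta(D_2)=-1$ of even degree against $\beta(D_1\cap D_2)=\beta(D_1\cup D_2)=0$ of odd degree, giving $2+2$ versus $3+3$. Such failures can sometimes be turned into contradictions rather than into laminarity of tight sets, but organizing this into proofs of your two splitting lemmas is exactly the hard work you concede in your last paragraph and do not carry out. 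The argument in~\cite{ltwz} sidesteps the issue entirely: the cut reduction removes non-trivial tight sets before any splitting is attempted, and the endgame replaces a general splitting/uncrossing with the structural ``mixed edge'' and ``arc-doubling'' steps, which is why the paper adopts the same architecture for its own extensions.
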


One of our main theorems is an extension of this theorem where we allow the degree of $z$ to be arbitrarily large. In general this may result in graphs for which a tip preflow will not extend; we will show that if there is another vertex with large degree relative to the degree of $z$, then every tip preflow extends. To make the statement less cumbersome, we introduce a notion of flow-criticality for canvases in the natural fashion.

\begin{definition}
Let $\g=(G,\beta)$ be a $\Z$-bordered graph. For a partition $\PP$ of $V(G)$, we
define the $\Z$-boundary $\beta / \PP:V(G/\PP)\to\Z$ for the graph $G/\PP$ by
letting 
$$(\beta / \PP)(p) = \sum_{ v \in P} \beta(v) \pmod 3$$
for every part $P\in\PP$ and the corresponding vertex $p$ obtained by contracting $\PP$.
We define $\g / \PP$ to be the $\Z$-bordered graph $(G/ \PP, \beta / \PP)$.
As before, we call $\g / \PP$ a \emph{contraction} of $\g$.

Suppose now that $(\g,z)$ is a canvas.
We say that $\PP$ is \emph{tip-respecting} if $\{z\}$ is a part of $\PP$.
In this case, we abuse notation and use $z$ to refer to the vertex of $\g/\PP$ obtained by
contracting the part $\{z\}$ of $\PP$; thus, $(\g/\PP, z)$ is a canvas.
Moreover, we can view any tip preflow in $(\g,z)$ as a tip preflow in $(\g / \PP,z)$.

For a tip preflow $\psi$, we say that the canvas $(\g,z)$ is \emph{$\psi$-critical} if $\psi$ does
not extend to a nowhere-zero flow in $(\g,z)$, but for every non-trivial
tip-respecting partition $\PP$, $\psi$ extends to a nowhere-zero flow
in $(\g / \PP,z)$. More generally, we say a canvas $(\g,z)$ is
\emph{flow-critical} if for every non-trivial tip-respecting partition
$\PP$, there exists a tip preflow that extends to a nowhere-zero flow
in $(\g / \PP,z)$ but does not extend in $(\g,z)$.
\end{definition}

We pause to make an important remark on our definition of flow-criticality.
First observe that unlike in connected-flow-criticality, here we make no
assumption that the parts induce a connected graph. While this connectivity
assumption is natural from the perspective of flow-colouring duality, there
seems to be no real reason to include it in the definition, and thus we do not.
Further, trivially a canvas with at most two vertices is flow-critical, as
there are no non-trivial tip-respecting contractions. We call a canvas with two
vertices \textit{trivial}.

As it will come up frequently, we also define the following important notion.

\begin{definition}
 Let $(\g,z)$ be a canvas. We say $(\g,z)$  is \emph{tame} if for all vertices $v \in V(G) \setminus \{z\}$, we have $\deg(v) \geq 4 + |\tau(v)|$. 
\end{definition}

Now we can finally state one of our main theorems, which allows us to extend Theorem \ref{thm:lovaszrealtheorem} to the situation where we have a vertex of large degree with a preflow.

\begin{theorem}
\label{thm-deg}
If $(\g,z)$ is a non-trivial flow-critical tame canvas, then every vertex other than $z$ has degree at most $\deg(z) -2$.  
\end{theorem}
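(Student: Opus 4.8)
The plan is to argue by contradiction: assume $(\g,z)$ is a non-trivial flow-critical tame canvas and that some vertex $x\neq z$ has $\deg(x)\ge\deg(z)-1$, and then produce a non-trivial tip-respecting partition $\PP$ that contradicts flow-criticality. The mechanism is a \emph{lifting lemma}: if $P\subseteq V(G)\setminus\{z\}$ with $|P|\ge 2$ is such that $G[P]$ is connected and $\Z$-connected, then the partition $\PP$ whose only non-trivial part is $P$ contradicts flow-criticality. Indeed $\PP$ is non-trivial and tip-respecting, and I claim that every tip preflow extending in $\g/\PP$ also extends in $\g$; this says precisely that there is no tip preflow extending in $\g/\PP$ but not in $\g$, contradicting the definition. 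For the claim, take a nowhere-zero flow of $\g/\PP$ extending a tip preflow $\psi$, keep the orientation of all edges with at most one endpoint in $P$, and let $b_v$ be the resulting partial boundary at $v\in P$. Since $\sum_{v\in P}(\beta(v)-b_v)\equiv\beta(P)-(\beta/\PP)(p)\equiv 0\pmod 3$, the map $v\mapsto\beta(v)-b_v$ is a $\Z$-boundary for the connected graph $G[P]$; as $G[P]$ is $\Z$-connected we can orient $E(G[P])$ to realize it, and the union is a nowhere-zero flow of $\g$ extending $\psi$. (The reverse direction of the same computation shows contraction never makes a preflow non-extendable, so the two canvases have the same extendable tip preflows.)

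It therefore suffices to show that $G-z$ contains a connected $\Z$-connected subgraph on at least two vertices whenever some $x\neq z$ has $\deg(x)\ge\deg(z)-1$ (it is enough to find such a subgraph, not an induced one, since its vertex set $P$ then has $G[P]$ connected with a spanning $\Z$-connected subgraph, hence $G[P]$ is $\Z$-connected). Two parallel edges already form a $\Z$-connected graph, so if any two vertices of $V(G)\setminus\{z\}$ are joined by at least two edges we are done; hence we may assume $G-z$ is simple. The bulk of the argument—and the step I expect to be the principal obstacle—is the structural analysis in this simple case. Tameness forces every neighbour $v$ of $x$ other than $z$ to have $\deg_G(v)\ge 4$, and the part of $\deg_G(v)$ not supplied inside $G-z$ must be supplied by edges to $z$. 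Summing this over $N(x)\setminus\{z\}$ and using $\deg(z)\le\deg(x)+1$ shows that the neighbourhood of $x$ in $G-z$ is forced to be dense; iterating (a neighbour of small $(G-z)$-degree forces many edges to $z$, which forces $\deg(z)$, hence $\deg(x)$, to be even larger—a divergence) confines the situation to a region of $G-z$ of large minimum degree and edge-connectivity around $x$.

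The remaining task is to extract from such a region a connected $\Z$-connected subgraph, for instance by locating a $K_4$-subgraph, or more generally by building one up from small $\Z$-connected pieces using closure operations that preserve $\Z$-connectivity (adding edges, gluing along an edge, splitting at a vertex of degree at least $4$). Assembling this structural argument, together with careful bookkeeping of the $|\tau(\cdot)|$ contributions in the tameness hypothesis and a direct check of the cases $|V(G)|\le 3$ (where the only non-trivial tip-respecting partition contracts $V(G)\setminus\{z\}$ and the degree inequality follows from tameness by a short computation), completes the proof. I expect the delicate point to be making the divergence argument and the $\Z$-connected-subgraph extraction quantitatively tight enough to cover exactly the boundary case $\deg(x)=\deg(z)-1$.
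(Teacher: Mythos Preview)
Your lifting lemma is correct and is essentially the same device behind Lemma~\ref{lemma-simple} (a pair of parallel edges is $\Z$-connected). The case $|V(G)|=3$ also goes through as you indicate. But the remainder of the proposal is not a proof: the entire content of the theorem has been deferred to the unproved assertion that, under the degree hypothesis, the simple graph $G-z$ must contain a $\Z$-connected subgraph on at least two vertices. You describe this as a ``divergence argument'' followed by ``$\Z$-connected-subgraph extraction'', but neither step is actually carried out, and your concrete suggestion --- locating a $K_4$ --- is wrong, since $K_4$ is \emph{not} $\Z$-connected (indeed $K_4$ with zero boundary has no nowhere-zero $\Z$-flow; see Lemma~\ref{lemma-small}). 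Simple $\Z$-connected graphs are in fact quite dense, and there is no evident reason why the hypothesis $\deg(x)\ge\deg(z)-1$, together with tameness, should force one to appear in $G-z$; tameness only gives $\deg_G(v)\ge 4$ for $v\neq z$, and after removing the edges to $z$ the degrees in $G-z$ can be small.

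The paper's proof proceeds along entirely different lines and does not attempt to locate $\Z$-connected subgraphs. It deduces Theorem~\ref{thm-deg} from the stronger Theorem~\ref{thm:tall} (tall tame easels are not critical), which is proved by induction with respect to a carefully designed partial order $\prec$ on triples $(\g,z,x)$. The key technical step (Lemma~\ref{lemma-sepx}) shows that in a minimal counterexample there is no set $A\ni x$ with $2\le|A|<|V(\g)\setminus\{z\}|$ and $\deg(A)\le\deg(x)+2$; this is obtained by a delicate splitting-off argument with a very specific choice of the edges to be split. With this slack in hand one eliminates degree-four vertices and mixed edges, forces the tip preflow to be monotone at $z$, and finishes by replacing an edge $zv$ with two oppositely-oriented parallel edges, producing a strictly $\prec$-smaller counterexample. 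None of this structure is visible from your outline, and I do not see how to bridge the gap you have left.
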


Of course, one can rephrase this to make the connection to Theorem \ref{thm:lovaszrealtheorem} more obvious.

\begin{theorem}\label{cor-tall}
Let $(\g,z)$ be a canvas such that $\deg(A)\ge 4+|\tau(A)|$ for every non-empty $A\subsetneq V(\g)\setminus\{z\}$. If there exists a vertex $x\in V(\g)\setminus \{z\}$ such that for every $X\subseteq V(\g)\setminus\{z\}$ containing $x$, we have $\deg(X)>\deg(z)-2$,  then every tip preflow extends to a nowhere-zero flow in $\g$.
\end{theorem}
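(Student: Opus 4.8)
The plan is to derive Theorem~\ref{cor-tall} from Theorem~\ref{thm-deg} by a ``maximal contraction'' argument, exploiting that the hypothesis ``$\deg(A)\ge 4+|\tau(A)|$ for every non-empty $A\subsetneq V(\g)\setminus\{z\}$'' is exactly the statement that every non-tip part of any tip-respecting partition is heavy, provided it is a proper subset. Suppose for contradiction that some tip preflow $\psi$ of $(\g,z)$ does not extend to a nowhere-zero flow in $\g$. Among all tip-respecting partitions $\PP$ of $V(\g)$ for which $\psi$, regarded as a tip preflow of $(\g/\PP,z)$, still fails to extend, I would pick one, say $\PP^\star$, minimizing $|V(\g/\PP^\star)|$; this is well defined since the all-singletons partition qualifies. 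Writing $\g^\star=\g/\PP^\star$, minimality immediately gives that $(\g^\star,z)$ is $\psi$-critical: $\psi$ does not extend in $\g^\star$, while any non-trivial tip-respecting contraction of $\g^\star$ is a contraction of $\g$ with strictly fewer vertices, hence admits an extension of $\psi$. In particular $(\g^\star,z)$ is flow-critical.

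The next step is to verify the remaining hypotheses of Theorem~\ref{thm-deg} for $(\g^\star,z)$, namely non-triviality and tameness. For non-triviality: if $\g^\star$ had only two vertices then $\PP^\star=\{\{z\},V(\g)\setminus\{z\}\}$, but on a trivial canvas a tip preflow already orients every edge, and the $\Z$-boundary condition $\beta(z)+\beta(v)\equiv 0\pmod 3$ forces the flow constraint at the unique non-tip vertex $v$ (or $\beta(v)\equiv 0$ when there are no edges), so $\psi$ would extend — a contradiction. Hence $|V(\g^\star)|\ge 3$, and consequently every part of $\PP^\star$ other than $\{z\}$ is a non-empty \emph{proper} subset $P$ of $V(\g)\setminus\{z\}$. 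Tameness then follows because the hypothesis of Theorem~\ref{cor-tall} gives $\deg_\g(P)\ge 4+|\tau_\g(P)|$, and contracting $\PP^\star$ changes neither the number of edges with exactly one endpoint in $P$ nor $\beta(P)\bmod 3$; so the vertex of $\g^\star$ corresponding to $P$ has degree at least $4+|\tau(\cdot)|$. The same bookkeeping shows $\deg_{\g^\star}(z)=\deg_\g(z)$, since contracting parts disjoint from $z$ does not affect its degree.

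Now Theorem~\ref{thm-deg} applies to the non-trivial flow-critical tame canvas $(\g^\star,z)$ and tells us that every vertex of $\g^\star$ other than $z$ has degree at most $\deg_{\g^\star}(z)-2=\deg_\g(z)-2$. But the vertex of $\g^\star$ corresponding to the part $P_x\in\PP^\star$ containing $x$ has degree exactly $\deg_\g(P_x)$, and since $x\in P_x\subseteq V(\g)\setminus\{z\}$, the degree hypothesis on $x$ yields $\deg_\g(P_x)>\deg_\g(z)-2$. This is the desired contradiction, completing the proof.

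I do not expect a genuine obstacle: the argument is essentially a translation between the ``flow-critical canvas'' formulation of Theorem~\ref{thm-deg} and the ``every proper subset is heavy, plus one tall vertex'' formulation of Theorem~\ref{cor-tall}. The only place demanding care — and the single point where a slip could occur — is confirming that passing to the minimal contraction preserves tameness; this rests on the observation that once $\g^\star$ has at least three vertices, each non-tip part of $\PP^\star$ is a \emph{strict} subset of $V(\g)\setminus\{z\}$, so the hypothesis of Theorem~\ref{cor-tall} (stated only for proper subsets) can legitimately be invoked. Everything else is routine checking of how $\deg(\cdot)$, $\beta(\cdot)$, and therefore $\tau(\cdot)$ behave under contraction.
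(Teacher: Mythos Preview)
Your proof is correct and takes essentially the same approach as the paper: pass to a $\psi$-critical tip-respecting contraction, verify it is tame and non-trivial, and apply Theorem~\ref{thm-deg} to the vertex arising from the part containing $x$. The paper simply cites the existence of a $\psi$-critical contraction and Observation~\ref{obs-ge3} for non-triviality where you spell these out directly, but the argument is otherwise identical.
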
 

We show that Theorem \ref{cor-tall} is indeed a rephrasing of Theorem \ref{thm-deg} at the end of the preliminary section. For the purposes of induction, we will actually prove a more complicated statement which requires a definition.

\begin{definition}
 An \emph{easel} is a tuple $(\g,z,x,\psi)$, where
$(\g,z)$ is a canvas, $x$ is a vertex of $\g$ distinct from $z$, and $\psi$ is a tip preflow.
The easel is \emph{tame} if $\deg(v)\ge 4+|\tau(v)|$ for every $v\in V(\g)\setminus\{x,z\}$.
The easel is \emph{tall} if $\deg(z)\le \deg(x)+2$,
and if $\deg(z)=\deg(x)+2$, then there additionally exists an edge in $\psi$ not incident with $x$ directed towards
$z$ and another such edge directed away from $z$.
It is \emph{critical} if the canvas $(\g,z)$ is $\psi$-critical.
\end{definition}

The more complicated theorem can now be stated in the following manner.

\begin{theorem}
\label{thm:tall}
    Tall tame easels are not critical.  
\end{theorem}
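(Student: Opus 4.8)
\medskip

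The plan is to argue by induction on $|V(\g)| + |E(\g)|$ (or some similar monotone quantity, perhaps weighting edges suitably so that splitting off decreases it). Suppose for contradiction that $(\g,z,x,\psi)$ is a tall tame easel that is critical and minimal in this order. The first block of work is structural: I would establish that in a minimal critical tall tame easel, the vertex $z$ is quite special. Concretely, since $(\g,z)$ is $\psi$-critical, every non-trivial tip-respecting contraction $(\g/\PP, z)$ admits a nowhere-zero flow extending $\psi$; I want to leverage this against the failure of extension in $\g$ itself. A natural first reduction is to show $G - z$ is connected: if $G-z$ had components $C_1, \dots, C_k$ with $k \ge 2$, then (after bookkeeping with the $\Z$-boundary of the arcs of $\psi$ landing in each $C_i$) one of the pieces already violates extendability, contradicting minimality — here the tameness hypothesis and the ``tall'' condition involving the edges of $\psi$ directed toward and away from $z$ are exactly what let the boundary parities line up. Similarly I would want to rule out small edge cuts in $G-z$ not separating $x$ from $z$: if $\deg(A) $ is small for some $A \subseteq V(\g)\setminus\{z\}$ with $x \notin A$, one contracts $V(\g)\setminus A$ (a tip-respecting contraction, since $z \notin A$) to get a smaller canvas on which $\psi$ extends, and then the piece $\g[A]$ together with the contracted blob should itself form a smaller tall tame easel — the delicate point is that $x$ must lie on the correct side so the ``tall'' degree inequality $\deg(z) \le \deg(x)+2$ is preserved, which is why we only bound cuts avoiding $x$.

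\medskip

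The heart of the argument is the splitting-off step at $z$. Because $\deg(z)$ may be large, I cannot invoke Theorem \ref{thm:lovaszrealtheorem} directly; instead I want to split off a pair of edges at $z$, say $uz$ and $zw$ with $u \ne w$, chosen consistently with $\psi$ (i.e.\ if $\psi$ orients $uz$ and $zw$, they should be oriented ``through'' $z$ so the split-off edge $uw$ inherits a consistent orientation; if they are unoriented the split-off edge is unoriented). Replacing $uz, zw$ by $uw$ gives a $\Z$-bordered graph $\g'$ with the same boundary, $\deg_{\g'}(z) = \deg_\g(z) - 2$, and a tip preflow $\psi'$ obtained from $\psi$; any nowhere-zero flow of $(\g', \beta)$ extending $\psi'$ lifts to one of $\g$ extending $\psi$, so $(\g', z)$ must still fail to extend $\psi'$ — but it need not be critical. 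So I would pass to a $\psi'$-critical ``core'' canvas obtained from $\g'$ by a maximal tip-respecting contraction that still fails to extend; crucially, contractions only decrease degrees of sets, so tameness of all vertices except possibly $x$ is preserved, and $x$ (or its image) still has large degree since no set containing $x$ was contracted — thus we land at a smaller tall tame critical easel, unless the split-off created a problem. The two things that can go wrong are: (a) the only available split produces a multi-edge reduction that kills $x$'s degree advantage, i.e.\ $\deg(z) = \deg(x) + 2$ after one split and the $\psi$-edge conditions fail; and (b) $z$ has too few distinct neighbours to split off at all (e.g.\ $z$ has degree $3$ with three parallel edges, or all edges of $\psi$ at $z$ point the same way). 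Case (b) is where one reverts to the Lovász et al.\ machinery — if $\deg(z) \le 4 + |\tau(z)|$ we are essentially in the hypothesis of Theorem \ref{thm:lovaszrealtheorem} and tameness of $(\g,z)$ (which holds since $x$ is then also tame, using $\deg(x) \ge \deg(z) - 2$ and the $\tau$ bookkeeping) gives the extension directly, a contradiction.

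\medskip

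To make the induction close, I expect to need a careful choice of which pair to split off at $z$, governed by the structure around $x$. The right heuristic: split off a pair $uz, zw$ with $u, w \ne x$ whenever possible, so that $\deg(x)$ is untouched while $\deg(z)$ drops by $2$, improving (or preserving) the tall inequality; only when $z$'s neighbourhood forces $x$ to be involved in every pair do we need the finer analysis, and there the hypothesis that when $\deg(z) = \deg(x)+2$ there are $\psi$-edges at $z$ in both directions not incident to $x$ is precisely the invariant that survives. A subtle accounting issue is the interplay between $\tau(A)$ and the parity changes induced by splitting (an edge split at $z$ can change the parity of $\deg(A)$ for sets $A$ with exactly one of $u,w$ inside), so I would track the quantity $\deg(A) - |\tau(A)|$ and verify it never drops below the threshold $4$ for sets avoiding $x$ — standard but tedious casework on the four combinations of $(\deg(A) \bmod 2, \beta(A))$. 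The main obstacle, I anticipate, is not any single one of these reductions but orchestrating them so that \emph{some} valid move always exists: ruling out the configuration where $G-z$ is $2$-edge-connected away from $x$, $z$ has no two neighbours both different from $x$, and $\deg(z)$ is still too large for Theorem \ref{thm:lovaszrealtheorem} — handling this residual case may require either a direct argument exploiting criticality of $(\g,z)$ (contracting a pair of $z$'s neighbours and pulling back the resulting flow) or a more clever split that temporarily passes through $x$ and then corrects the boundary.
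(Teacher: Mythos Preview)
Your central mechanism --- splitting off pairs of edges at $z$ --- does not close. As you note, a split-off pair $uz,zw$ lifts back only when $\psi$ orients one toward and one away from $z$; if both point the same way, no orientation of the new edge $uw$ recovers the original flow constraints at $u$ and $w$ (nor can you fix this by adjusting the boundary without losing the lifting). You propose falling back on Theorem~\ref{thm:lovaszrealtheorem} when no good pair exists, but that theorem needs $\deg(z)\le 4+|\tau(z)|$, which is far from guaranteed: nothing in your argument prevents $\deg(z)=\deg(x)+1$ with $\deg(x)$ arbitrarily large and all edges at $z$ oriented the same way. In fact this ``residual case'' is not a corner case at all --- it is exactly where the paper's minimal counterexample lives (see Lemma~\ref{lemma-sameor}). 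You also have the cut analysis inverted: the delicate cuts are those \emph{containing} $x$ (one must show $\deg(A)>\deg(x)+2$ for $|A|\ge 2$ with $x\in A$, so that contracting around $x$ preserves tallness), whereas small cuts avoiding $x$ are already handled by Lemma~\ref{lemma-cut}.

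The paper's route is quite different and never splits at $z$. It uses a three-tier ordering $\prec$ based on $o(\g,z)=|V(\g)|+|E(\g-z)|$, then $x$-homogeneity, then $\deg(z)$; the first tier deliberately ignores edges at $z$ so that the final reduction can \emph{add} an edge there. The reductions are: split off at degree-$4$ vertices (not at $z$), delete mixed edges in $\g-\{x,z\}$ with boundary adjustment, delete any edge at $z$ oriented against the prevailing direction, and --- once all edges at $z$ point the same way and $\deg(z)\le\deg(x)+1$ --- take an arc $vz$, reverse it, and add a parallel copy. This last move keeps $o(\g,z)$ fixed but destroys $x$-homogeneity (since $\tau_{\g'}(v)$ flips sign while its neighbour's does not), so it is strictly $\prec$-smaller. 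Your ordering $|V|+|E|$ would increase under this step, and your toolkit has no substitute for it.
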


To prove Theorem \ref{thm:introweakerversion}, we will actually need more
control over flow-critical graphs than what is given in Theorem \ref{thm-deg}.
In particular, we will want to understand the possible degree sequences of flow-critical tame canvases. As one can generate
flow-critical tame canvases with arbitrarily many vertices of degree four,  we
define the following.

\begin{definition}
Let $(\g,z)$ be a canvas. The \emph{census} $C(\g,z)$ of $(\g,z)$ is the multiset $\{\deg(v) : v \in V(G) \setminus \{z\}, \deg(v) \neq 4\}$. 
\end{definition}

This definition might look strange, however as with most of the work in this paper, the definition can be motivated from results on colouring graphs on surfaces. When $3$-colouring graphs on surfaces, it is known that in a $4$-critical triangle-free graph embedded in a surface with no non-contractible $4$-cycles, there are only a bounded number of faces of length bigger than four \cite{trfree2,trfree3}; however, there can be arbitrarily many $4$-faces. 

Rephrased in terms of censuses, Theorem \ref{thm-deg} says the following.

\begin{corollary}\label{cor-censmax}
If $(\g,z)$ is a non-trivial flow-critical tame canvas, then $$\max_{d \in C(\g,z)} d \le \deg(z)-2.$$  In particular, if $\deg(z)=6$, then $C(\g,z)=\emptyset$.
\end{corollary}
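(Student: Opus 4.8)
The plan is to read this statement directly off Theorem~\ref{thm-deg}; no genuinely new argument is needed. Recall that $C(\g,z)$ is, by definition, the multiset $\{\deg(v) : v\in V(G)\setminus\{z\},\ \deg(v)\neq 4\}$. Since $(\g,z)$ is a non-trivial flow-critical tame canvas, Theorem~\ref{thm-deg} applies and gives $\deg(v)\le\deg(z)-2$ for every $v\in V(G)\setminus\{z\}$. In particular, every value $d$ occurring in $C(\g,z)$ satisfies $d\le\deg(z)-2$, which is exactly the claimed inequality $\max_{d\in C(\g,z)}d\le\deg(z)-2$ (and if $C(\g,z)=\emptyset$ the inequality holds vacuously).

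For the second assertion I would assume $\deg(z)=6$. The inequality just obtained becomes $\deg(v)\le 4$ for every $v\in V(G)\setminus\{z\}$. On the other hand, tameness says $\deg(v)\ge 4+|\tau(v)|$ for every such $v$, and since $|\tau(v)|\ge 0$ (immediate from the chart defining $|\tau(\cdot)|$) this forces $\deg(v)\ge 4$. Hence $\deg(v)=4$ for every $v\in V(G)\setminus\{z\}$, so no vertex is recorded in the census and $C(\g,z)=\emptyset$.

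The only points requiring care are bookkeeping: one must invoke the definition of the census (which discards the degree-$4$ vertices) together with the non-negativity of $|\tau(\cdot)|$. I do not anticipate any obstacle here, since all of the substance is already contained in Theorem~\ref{thm-deg}; this corollary is essentially just a translation of that theorem into the language of censuses.
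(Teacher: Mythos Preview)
Your proof is correct and follows exactly the paper's intended approach: the paper presents this corollary explicitly as a rephrasing of Theorem~\ref{thm-deg} in the language of censuses, and your argument does precisely that, with the tameness lower bound filling in the $\deg(z)=6$ case.
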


We prove the following strengthening of Theorem \ref{thm-deg} and Corollary \ref{cor-censmax}.

\begin{restatable}{theorem}{thmdegbetter}\label{thm-degbetter}
Let $(\g,z)$ be a non-trivial flow-critical tame canvas.
If $\deg(z)\ge 7$ and $\g$ contains a vertex of degree $\deg(z)-2$, then $C(\g,z)=\{\deg(z)-2\}$.
\end{restatable}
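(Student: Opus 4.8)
The plan is to argue by contradiction. Assume $(\g,z)$ is a non-trivial flow-critical tame canvas with $d:=\deg(z)\ge 7$, that $\g$ contains a vertex $x$ with $\deg(x)=d-2$, but $C(\g,z)\ne\{d-2\}$. By Corollary~\ref{cor-censmax} every vertex of $\g$ other than $z$ has degree at most $d-2$, so $d-2$ is the largest non-tip degree; since $d-2\ge 5\ne 4$ we have $d-2\in C(\g,z)$, and hence $C(\g,z)\ne\{d-2\}$ supplies a second vertex $y\in V(\g)\setminus\{x,z\}$ with $5\le\deg(y)\le d-2$. From here I would contract $x$ with $y$ and then realise $\g$ itself as a tall tame easel in order to invoke Theorem~\ref{thm:tall}.

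First the contraction. Let $\PP$ be the tip-respecting partition whose only non-singleton part is $\{x,y\}$, put $\g':=\g/\PP$, and let $p$ be the vertex of $\g'$ arising from $\{x,y\}$; then $\deg_{\g'}(p)=\deg_\g(\{x,y\})=\deg(x)+\deg(y)-2m$, where $m$ is the $x$–$y$ edge multiplicity, while every non-tip vertex other than $p$ keeps its degree and $\beta$-value, so $\g'$ is tame off $\{z,p\}$. Since $\PP$ is non-trivial and tip-respecting, flow-criticality of $(\g,z)$ yields a tip preflow $\psi$ that extends to a nowhere-zero flow of $\g'$ but not of $\g$. I will also use the elementary monotonicity that extension of a fixed tip preflow survives further tip-respecting contraction: a nowhere-zero flow of $\g/\mathcal{Q}$ agreeing with $\psi$ at $z$ pushes down to a nowhere-zero flow of $\g/\mathcal{Q}'$ agreeing with it at $z$ whenever $\mathcal{Q}'$ is a coarsening of $\mathcal{Q}$. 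Consequently the minimal non-trivial contractions $\g/\PP_{ab}$ (merging a single pair of non-tip vertices) are the only ones that matter when testing whether $(\g,z)$ is $\psi$-critical.

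Now the main work. Consider the easel $(\g,z,x,\psi)$: it is tame because $(\g,z)$ is a tame canvas, and $\deg(z)=d=\deg(x)+2$, so it is tall once $\psi$ directs some $x$-avoiding edge into $z$ and some $x$-avoiding edge away from $z$ — which I would arrange when choosing the witness $\psi$, or else show that a $\psi$ violating this is so constrained that it already extends in $\g$, contrary to its choice. Granting tallness, Theorem~\ref{thm:tall} says $(\g,z)$ is \emph{not} $\psi$-critical; since $\psi$ does not extend in $\g$, some $\g/\PP_{ab}$ must fail to admit an extension of $\psi$. To obtain a contradiction I would show that, for every pair $\{a,b\}$ of non-tip vertices, $\psi$ does extend in $\g/\PP_{ab}$. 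The contractions that merge $x$ or $y$ (in particular $\g/\PP_{xy}$, where $\psi$ extends by construction) create a vertex of degree close to $\deg(z)$, and for those I would control the multiplicity $m$ via the cut-inequalities a flow-critical tame canvas obeys ($\deg(A)\ge 4+|\tau(A)|$ for non-empty $A\subsetneq V(\g)\setminus\{z\}$, as in Theorems~\ref{thm:lovaszrealtheorem} and~\ref{cor-tall}, established — or small cuts handled — as a preliminary) and then apply Theorem~\ref{cor-tall}, or Theorem~\ref{thm:tall} to the resulting easel, to conclude that \emph{every} tip preflow extends there. The contractions merging two small vertices keep both $x$ and $y$ around; here I would again appeal to Theorem~\ref{thm:tall} for the easel with auxiliary vertex $x$ or $y$, peeling off one identification at a time down to the two-vertex quotient, where a full tip preflow trivially extends.

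The step I expect to be the real obstacle is exactly this last one — reconciling the per-partition nature of flow-criticality (which only promises a possibly-different witnessing preflow for each partition) with the input Theorem~\ref{thm:tall} wants, namely a \emph{single} preflow that fails to extend in $\g$ yet extends in every non-trivial contraction. Making this go through hinges on showing that having two vertices of degree near $\deg(z)$ forces one witness to serve simultaneously for all pairs, and it is precisely here that the hypotheses $\deg(z)\ge 7$ and $\deg(x)=\deg(z)-2$, the presence of the second large vertex $y$, the parallel-edge bookkeeping, and the tie-break clause in the definition of a tall easel are all used.
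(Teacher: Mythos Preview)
Your approach has a genuine gap, and you have correctly identified it yourself in the final paragraph: flow-criticality only supplies, for each non-trivial tip-respecting partition $\PP$, \emph{some} tip preflow $\psi_\PP$ that extends in $\g/\PP$ but not in $\g$; there is no mechanism to manufacture a single $\psi$ that works for all partitions simultaneously. Your plan hinges on producing such a universal $\psi$ (so that $(\g,z)$ would be $\psi$-critical and then Theorem~\ref{thm:tall} yields a contradiction), but the argument you sketch for doing so is circular. When you say ``apply Theorem~\ref{thm:tall} to the easel $(\g/\PP_{ab},z,x,\psi)$'' to conclude $\psi$ extends there, the theorem only tells you the easel is not critical, i.e.\ $\psi$ fails to extend in some \emph{further} contraction of $\g/\PP_{ab}$; it does not say $\psi$ extends in $\g/\PP_{ab}$ itself. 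You are pushed one level down with no termination.

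Even the subcases you call easier are not: applying Theorem~\ref{cor-tall} to $\g/\PP_{xa}$ requires $\deg(X)>\deg(z)-2$ for \emph{every} $X\subseteq V(\g)\setminus\{z\}$ containing $\{x,a\}$, not just for $\{x,a\}$ itself. The tameness bound $\deg(A)\ge 4+|\tau(A)|\le 7$ is far too weak once $\deg(z)$ is large, and the stronger cut bounds (like Lemma~\ref{lemma-sepx}) are proved in the paper only under an inductive minimality hypothesis you do not have.

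The paper takes an entirely different route. It first develops a generation theorem for easels in the class $\GG_{k,r}$ (Theorem~\ref{thm-genladeg}), describing six operations (SMALL, EXP, EXPA, EXPX, EXPB, ADD, REM) by which every such easel arises from a smaller one. The proof of Theorem~\ref{thm-degbetter} (via Lemma~\ref{lemma-degbetter}) then takes a $\prec$-minimal counterexample in $\GG_{k,0}$ and rules out each operation by tracking how the census $C(\g,z)$ can change. The hardest case, (EXPB), leaves a short finite list of candidate censuses, which are eliminated by a density count (Observation~\ref{obs-sumdeg}) together with an explicit classification of small simple $\Z$-bordered graphs without nowhere-zero flows (Lemma~\ref{lemma-small}). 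The generation machinery is precisely what replaces the missing ``single universal $\psi$'' that your approach would need.
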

Remember, $C(\g,z)$ is a multiset, so the above theorem says that in the case that $\g$ contains a vertex of degree $\deg(z) -2$, only that vertex and $z$ have degree bigger than $4$. 

Together with Corollary~\ref{cor-censmax}, this has the following consequence.
\begin{corollary}\label{cor-degbetter}
Let $(\g,z)$ be a non-trivial flow-critical tame canvas.
If $\deg(z)=7$, then $C(\g,z)=\{5\}$.
\end{corollary}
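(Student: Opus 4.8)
The plan is to derive Corollary~\ref{cor-degbetter} directly from Theorem~\ref{thm-degbetter} and Corollary~\ref{cor-censmax}, together with a one-line parity count. Since $\deg(z)=7\ge 7$, Theorem~\ref{thm-degbetter} applies to $(\g,z)$ the moment we can exhibit a vertex of degree $\deg(z)-2=5$ in $\g$, and then it outputs exactly the desired conclusion $C(\g,z)=\{\deg(z)-2\}=\{5\}$. So the entire task reduces to showing that such a vertex exists, i.e., that the census $C(\g,z)$ is non-empty (and, as we will see, automatically consists of fives).

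First I would record that every element of $C(\g,z)$ equals $5$. Indeed, by Corollary~\ref{cor-censmax} every $d\in C(\g,z)$ satisfies $d\le\deg(z)-2=5$; on the other hand, since the canvas is tame we have $\deg(v)\ge 4+|\tau(v)|\ge 4$ for every $v\in V(G)\setminus\{z\}$, and the census by definition omits the value $4$. Hence each element of $C(\g,z)$ lies in the range $[5,5]$, i.e., equals $5$. In particular $C(\g,z)\subseteq\{5,5,5,\dots\}$, so it only remains to see that it is non-empty.

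Next I would rule out $C(\g,z)=\emptyset$ using the handshake lemma. If the census were empty, then every vertex of $G$ other than $z$ would have degree exactly $4$, and therefore
$$2|E(G)|=\deg(z)+\sum_{v\in V(G)\setminus\{z\}}\deg(v)=7+4\bigl(|V(G)|-1\bigr),$$
which is odd — a contradiction. Thus $C(\g,z)\neq\emptyset$, so there is a vertex $x\in V(G)\setminus\{z\}$ with $\deg(x)=5=\deg(z)-2$. Since $(\g,z)$ is a non-trivial flow-critical tame canvas with $\deg(z)=7\ge 7$ containing a vertex of degree $\deg(z)-2$, Theorem~\ref{thm-degbetter} gives $C(\g,z)=\{\deg(z)-2\}=\{5\}$, as claimed.

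I do not expect any genuine obstacle here: all of the substance is packaged inside Theorem~\ref{thm-degbetter} (and, via it, Theorem~\ref{thm-deg}). The only point needing a moment of care is passing from ``$C(\g,z)\subseteq\{5\}$'' to ``$C(\g,z)$ contains a vertex of degree $5$'', which is precisely what the parity count supplies; after that one simply checks that the hypotheses of Theorem~\ref{thm-degbetter} hold verbatim and invokes it.
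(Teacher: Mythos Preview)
Your proof is correct and matches the paper's approach: the paper presents Corollary~\ref{cor-degbetter} as an immediate consequence of Theorem~\ref{thm-degbetter} together with Corollary~\ref{cor-censmax}, and your argument spells out exactly the parity step needed to guarantee a vertex of degree $5$ so that Theorem~\ref{thm-degbetter} applies.
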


The censuses from Corollaries~\ref{cor-censmax} and \ref{cor-degbetter} might seem somewhat familiar:
They resemble the multisets of lengths of internal faces (ignoring faces of length four)
of triangle-free plane graphs with outer face of length $6$ or $7$ and minimal subject to the
property that some precoloring of their outer face does not extend to a 3-coloring of the whole
graph; see~\cite{col8cyc}.  It might seem this is because of the flow-coloring duality; however,
upon closer inspection, it is probably somewhat of a coincidence:  The condition of tameness
implies that the boundary of vertices of degree five is non-zero, and thus the dual to Corollary~\ref{cor-degbetter}
does not say anything about proper coloring of plane graphs with 5-faces.  Moreover, the
two concepts diverge at length $8$, where in the $3$-coloring case, there are only two 5-faces possible,
while in the flow case, there can be four vertices of degree five (for the canvas consisting of $K_4$
and the tip joined to its vertices by double edges; see Figure~\ref{fig:tightnessexample1}).

The flow-critical tame canvases with the censuses described in Theorem~\ref{thm-degbetter} actually exist:
Suppose $(\g,z)$ is any tame canvas containing a vertex $x$ of degree $\deg(z)-2$ not adjacent to $z$ with all other vertices having degree four
(and boundary $0$, by tameness), and suppose $\psi$ is the tip preflow orienting all edges away from $z$.
Then $\psi$ does not extend to a nowhere-zero flow.  Indeed, if it extended to  a nowhere-zero flow,
then for any vertex $v$ of degree four, we would have $\deg^+(v)=\deg^-(v)=2$, and thus 
\begin{align*}
    0 & =\sum_{u\in V(\g)} \deg^+ (u)-\deg^-(u)\\ 
    & = \deg^+ (z)-\deg^- (z)+\deg^+ (x)-\deg^-(x)
\end{align*}

However, the choice of $\psi$ implies $\deg^+(z)-\deg^-(z)=\deg(z) >\deg(x) \ge |\deg^+(x)-\deg^-(x)|$,
which is a contradiction.  If for each vertex $v\in V(\g)\setminus\{z\}$, the only edge cut of size at
most $\deg(v)$ separating $v$ from $z$ consists of the edges incident with $v$, it is easy to conclude
using Theorem~\ref{thm-degbetter} that $(\g,z)$ is actually $\psi$-critical.

\begin{figure}
\begin{center}
\begin{tikzpicture} 
    \node[blackvertexv2] at (0,0) (v1) {};
    \node[blackvertexv2] at (1.5,0) (v2) {};
    \node[blackvertexv2] at (1.5,1.5) (v3) {};
    \node[blackvertexv2] at (0,1.5) (v4) {};
    \node[blackvertexv2] at (-2.5,.75) (z) [label= left:$z$] {};

    \draw[thick,black] (v1)--(v2)--(v3)--(v4)--(v1);
    \draw[thick,black] (v1)--(v3);
    \draw[thick,black] (v2)--(v4);
    \draw[black, bend left = 15, thick,postaction={decoration={markings,mark=at position 0.6 with {\arrow{>}}},decorate}] (z) to (v1);
    
    \draw[thick,black, bend right = 15, postaction={decoration={markings,mark=at position 0.6 with {\arrow{>}}},decorate}] (z) to (v1);
    
    \draw[thick,black, bend left = 15, postaction={decoration={markings,mark=at position 0.6 with {\arrow{<}}},decorate}] (z) to (v4);
    
    \draw[thick,black, bend right =15, postaction={decoration={markings,mark=at position 0.6 with {\arrow{<}}},decorate}] (z) to (v4);
    
    \draw[thick,black, bend left = 35,postaction={decoration={markings,mark=at position 0.6 with {\arrow{<}}},decorate}] (z) to (v3);
    
    \draw[thick,black, bend left = 50, postaction={decoration={markings,mark=at position 0.6 with {\arrow{<}}},decorate}] (z) to (v3);
    
    \draw[thick,black, bend right = 35,postaction={decoration={markings,mark=at position 0.6 with {\arrow{>}}},decorate}] (z) to (v2);
    
    \draw[thick,black, bend right = 50,postaction={decoration={markings,mark=at position 0.6 with {\arrow{>}}},decorate}] (z) to (v2);
   %middle figure shifted
    \begin{scope}[xshift = 5cm]
     \node[blackvertexv2] at (0,.75) (v1) {};
    \node[blackvertexv2] at (1.5,0) (v2) {};
    \node[blackvertexv2] at (1.5,1.5) (v3) {};
    \node[blackvertexv2] at (-2.5,.75) (z) [label= left:$z$] {};
    %arcs to v3
    \draw[thick,black, bend left = 35,postaction={decoration={markings,mark=at position 0.6 with {\arrow{<}}},decorate}] (z) to (v3);
    
    \draw[thick,black, bend left = 50, postaction={decoration={markings,mark=at position 0.6 with {\arrow{<}}},decorate}] (z) to (v3);
    %arcs to v2
    \draw[thick,black, bend right = 35,postaction={decoration={markings,mark=at position 0.6 with {\arrow{>}}},decorate}] (z) to (v2);
    
    \draw[thick,black, bend right = 50,postaction={decoration={markings,mark=at position 0.6 with {\arrow{>}}},decorate}] (z) to (v2);
     %arcs to v1
    \draw[thick,black, bend right = 15,postaction={decoration={markings,mark=at position 0.6 with {\arrow{<}}},decorate}] (z) to (v1);
    
    \draw[thick,black, bend right = 30,postaction={decoration={markings,mark=at position 0.6 with {\arrow{<}}},decorate}] (z) to (v1);
    
    \draw[thick,black, bend left = 15,postaction={decoration={markings,mark=at position 0.6 with {\arrow{>}}},decorate}] (z) to (v1);

     \draw[thick,black, bend left = 30,postaction={decoration={markings,mark=at position 0.6 with {\arrow{>}}},decorate}] (z) to (v1);
    %triangle edges
     \draw[thick,black, postaction={decoration={markings,mark=at position 0.6 with {\arrow{<}}},decorate} ] (v1) to (v2);

     \draw[thick,black, postaction={decoration={markings,mark=at position 0.6 with {\arrow{>}}},decorate} ] (v2) to (v3);

      \draw[thick,black, bend right = 15, postaction={decoration={markings,mark=at position 0.5 with {\arrow{<}}},decorate} ] (v1) to (v3);

       \draw[thick,black, bend right = 15, postaction={decoration={markings,mark=at position 0.6 with {\arrow{>}}},decorate} ] (v1) to (v2);

     \draw[thick,black, postaction={decoration={markings,mark=at position 0.6 with {\arrow{<}}},decorate} ] (v3) to (v1);

    \end{scope}
  %two vertex partition
    \begin{scope}[xshift = 5cm, yshift = 2.75cm]
   \node[blackvertexv2] at (0,1) (v1) {};
    \node[blackvertexv2] at (2,1) (v3) {};
    \node[blackvertexv2] at (-2.5,1) (z) [label= left:$z$] {};
    %arcs to v3
    \draw[thick,black, bend left = 35,postaction={decoration={markings,mark=at position 0.6 with {\arrow{>}}},decorate}] (z) to (v3);
    
    \draw[thick,black, bend left = 50, postaction={decoration={markings,mark=at position 0.6 with {\arrow{>}}},decorate}] (z) to (v3);
    %arcs to v2
    \draw[thick,black, bend right = 35,postaction={decoration={markings,mark=at position 0.6 with {\arrow{<}}},decorate}] (z) to (v3);
    
    \draw[thick,black, bend right = 50,postaction={decoration={markings,mark=at position 0.6 with {\arrow{<}}},decorate}] (z) to (v3);
     %arcs to v1
    \draw[thick,black, bend right = 15,postaction={decoration={markings,mark=at position 0.6 with {\arrow{<}}},decorate}] (z) to (v1);
    
    \draw[thick,black, bend right = 30,postaction={decoration={markings,mark=at position 0.6 with {\arrow{<}}},decorate}] (z) to (v1);
    
    \draw[thick,black, bend left = 15,postaction={decoration={markings,mark=at position 0.6 with {\arrow{>}}},decorate}] (z) to (v1);

     \draw[thick,black, bend left = 30,postaction={decoration={markings,mark=at position 0.6 with {\arrow{>}}},decorate}] (z) to (v1);
    %quadruple edge
     \draw[thick,black, bend left = 10, postaction={decoration={markings,mark=at position 0.6 with {\arrow{>}}},decorate} ] (v3) to (v1);

     \draw[thick,black, bend right = 10, postaction={decoration={markings,mark=at position 0.6 with {\arrow{<}}},decorate} ] (v3) to (v1);

     \draw[thick,black, bend left = 30, postaction={decoration={markings,mark=at position 0.6 with {\arrow{<}}},decorate} ] (v3) to (v1);

     \draw[thick,black, bend right = 30, postaction={decoration={markings,mark=at position 0.6 with {\arrow{>}}},decorate} ] (v3) to (v1);

    \end{scope}
    %one vertex partition
     \begin{scope}[xshift = 6cm, yshift = -2.75cm]
   \node[blackvertexv2] at (0,1) (v1) {};
    \node[blackvertexv2] at (-2.5,1) (z) [label= left:$z$] {};
    %arcs to v3
    \draw[thick,black, bend left = 75,postaction={decoration={markings,mark=at position 0.5 with {\arrow{>}}},decorate}] (z) to (v1);
    
    \draw[thick,black, bend left = 50, postaction={decoration={markings,mark=at position 0.6 with {\arrow{>}}},decorate}] (z) to (v1);

      \draw[thick,black, bend left = 15,postaction={decoration={markings,mark=at position 0.6 with {\arrow{<}}},decorate}] (z) to (v1);

     \draw[thick,black, bend left = 30,postaction={decoration={markings,mark=at position 0.5 with {\arrow{<}}},decorate}] (z) to (v1);

    \draw[thick,black, bend right = 75,postaction={decoration={markings,mark=at position 0.6 with {\arrow{>}}},decorate}] (z) to (v1);
    
    \draw[thick,black, bend right = 50,postaction={decoration={markings,mark=at position 0.5 with {\arrow{<}}},decorate}] (z) to (v1);
   
    \draw[thick,black, bend right = 15,postaction={decoration={markings,mark=at position 0.5 with {\arrow{<}}},decorate}] (z) to (v1);
    
    \draw[thick,black, bend right = 30,postaction={decoration={markings,mark=at position 0.6 with {\arrow{>}}},decorate}] (z) to (v1);

    \end{scope}
\end{tikzpicture}
\caption{On the left, we have a flow-critical graph $G$ where $z$ has boundary zero and the vertices where $z$ has in arcs to have boundary $2$, and the remaining have boundary $1$. On the right, we have the three tip-respecting contractions and a nowhere-zero $3$-flow that does not extend to $G$.}\label{fig:tightnessexample1}
\end{center}
\end{figure}
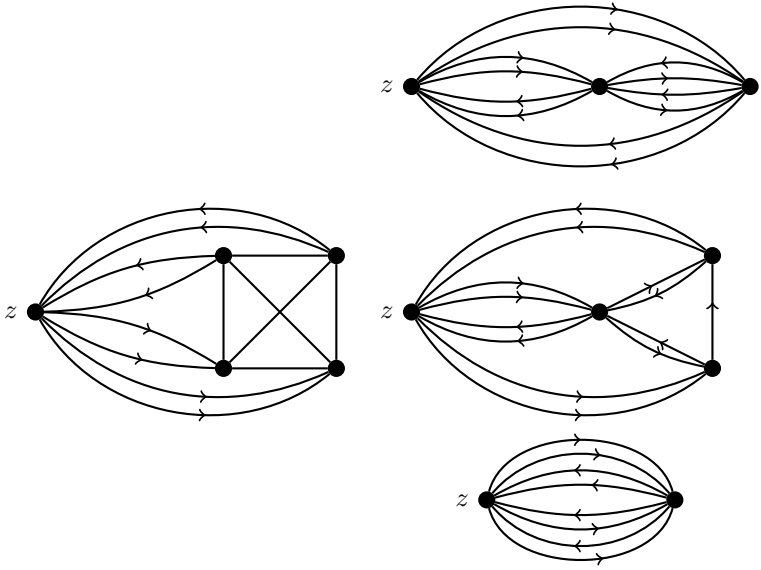
In light of Theorem~\ref{thm-degbetter}, we make the following two conjectures.

\begin{restatable}{conjecture}{conjfewlarge}\label{conj:fewlarge}
If $(\g,z)$ is a tame flow-critical canvas, then
$$\sum_{n \in C(\g,z)} (n-6) \leq \deg(z) -8.$$
\end{restatable}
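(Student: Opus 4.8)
The plan is to recast the conjectured inequality as a sparsity bound and then prove it by induction on $|V(\g)|$ using tip-respecting contractions. Since tameness forces $\deg(v)\ge 4$ for every $v\ne z$, writing $n_4$ for the number of non-tip vertices of degree exactly $4$ we have
\[
\sum_{n\in C(\g,z)}(n-6)=\sum_{v\ne z}(\deg(v)-6)+2n_4=2|E(\g)|-6|V(\g)|-\deg(z)+6+2n_4,
\]
so Conjecture~\ref{conj:fewlarge} is equivalent to $|E(\g)|\le 3|V(\g)|+\deg(z)-n_4-7$, that is, to $|E(G-z)|\le 3|V(G-z)|-n_4-4$ for the graph obtained from $\g$ by deleting the tip. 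This is a Kostochka--Yancey-type bound, tight exactly at the canvases described after Theorem~\ref{thm-degbetter}; the natural approach is to take a flow-critical tame counterexample $(\g,z)$ minimizing first $|V(\g)|$ and then $|E(\g)|$, and to derive a contradiction.

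The first step is to use the structure already in hand. By Corollary~\ref{cor-censmax} every non-tip vertex has degree at most $\deg(z)-2$, and by Theorem~\ref{thm-degbetter}, if some non-tip vertex attains degree $\deg(z)-2$ then $C(\g,z)=\{\deg(z)-2\}$ and the bound holds with equality; so in a minimal counterexample every non-tip vertex has degree in $\{4,5,\dots,\deg(z)-3\}$, and in particular $\deg(z)$ is large (the cases $\deg(z)\le 7$ following from Corollaries~\ref{cor-censmax} and~\ref{cor-degbetter}). Next, using flow-criticality, one fixes a non-trivial tip-respecting partition together with a tip preflow $\psi$ witnessing flow-criticality for it; a routine cleanup --- splitting off at $z$ and paying for it through the $\tau$-bookkeeping set up before Theorem~\ref{thm:lovaszrealtheorem} --- should reduce to the case where $\psi$ orients all edges at $z$, so that $G-z$, equipped with the boundary induced by $\psi$, admits no nowhere-zero flow while all of its proper contractions inherited from $\g$ do. This reduced bordered graph is the object on which the induction runs.

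Now discharge on $\g$ with initial charge $\mathrm{ch}(v)=\deg(v)-6$ on each non-tip vertex; the total is $2|E(\g)|-6|V(\g)|-\deg(z)+6$, and the goal is to push it down to at most $\deg(z)-8-2n_4$. Morally: the deficit $-2$ of each degree-$4$ vertex should be absorbed, the surplus $-1$ of each degree-$5$ vertex should be handed to the heavy vertices, and $z$ --- the only true sink, with capacity $\deg(z)-8$ --- soaks up the remainder. The rules should move charge along edges, a vertex of degree $d\ge 7$ collecting $+1$ across each edge to a degree-$4$ or degree-$5$ neighbour and $z$ paying out along its incident edges. Flow-criticality is what forces the rules to balance: a heavy vertex $v$ cannot have too many neighbours of degree $\ge 6$, for otherwise the set $A$ made of $v$ together with the heavy vertices reachable from it through heavy vertices would have $\deg(A)$ too small relative to $\deg(z)$, and contracting $A$ (a non-trivial tip-respecting partition) and applying Theorem~\ref{thm-deg} or Theorem~\ref{thm:tall} to the resulting canvas would contradict either minimality or flow-criticality.

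The main obstacle is precisely this quantitative statement that a heavy vertex must be surrounded by light vertices or lie close to $z$, in a form sharp enough that the accounting is tight exactly at the extremal canvas of Theorem~\ref{thm-degbetter}. For a single heavy vertex this is essentially the content of Theorem~\ref{thm-degbetter}; making it work for several heavy vertices at once --- their excess degrees all competing for the single budget $\deg(z)-2$ --- appears to require either a new splitting-off lemma valid inside an arbitrary flow-critical tame canvas (to delete low-degree structure while keeping the canvas critical) or a weighted strengthening of Theorem~\ref{thm:tall} in which a \emph{set} of vertices is allowed to be ``tall'', their degrees jointly discounted against $\deg(z)$, generalizing the role of $x$ in the definition of an easel. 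Proving one of these should be where the genuine effort lies; once the local lemma is available, the discharging itself should be routine.
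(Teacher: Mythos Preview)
This statement is Conjecture~\ref{conj:fewlarge}, which the paper explicitly leaves \emph{open}; there is no proof to compare against. The authors restate it in the conclusion among the ``conjectures \ldots\ we have made and left unanswered,'' and explain that ``the lack of control over the increase of the degrees in (EXPB) prevents us from proving these conjectures in general.''

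Your proposal is accordingly a research plan rather than a proof, and you are candid about this: the ``main obstacle'' you isolate --- a lemma asserting that several heavy vertices cannot coexist unless their joint excess over~$6$ is absorbed by $\deg(z)$ --- is exactly the missing ingredient, and you do not supply it. Your reformulation of the inequality as $|E(\g-z)|\le 3|V(\g-z)|-n_4-4$ is correct, and the handling of the base cases via Corollaries~\ref{cor-censmax}, \ref{cor-degbetter} and Theorem~\ref{thm-degbetter} is fine.

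The discharging sketch, however, has a gap even at the heuristic level. You write that if a heavy vertex had too many heavy neighbours then the set $A$ of heavy vertices reachable from it would have $\deg(A)$ ``too small relative to $\deg(z)$,'' and that contracting $A$ and invoking Theorem~\ref{thm-deg} would yield a contradiction. But the available tools point the other way: Lemma~\ref{lemma-cut} gives a \emph{lower} bound on $\deg(A)$, and Observation~\ref{obs-subcrit} plus Theorem~\ref{thm-deg} (applied to $(\g,z)\restriction A$) again give $\deg(A)\ge \max_{v\in A}\deg(v)+2$, another lower bound. To get an upper bound on $\deg(A)$ from Theorem~\ref{thm-deg} you would need the canvas $(\g/A,z)$ to be flow-critical, which it need not be; flow-criticality of $(\g,z)$ only guarantees that \emph{some} tip preflow extends in $\g/A$ but not in $\g$, not that $(\g/A,z)$ is itself critical. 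This is precisely the leak the paper's generation machinery (Theorems~\ref{thm-gen} and~\ref{thm-genladeg}) is built to plug, and precisely where the (EXPB) operation defeats it.

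Your suggested remedy --- a multi-vertex strengthening of Theorem~\ref{thm:tall} with a \emph{set} playing the role of $x$ --- is natural, and is morally what the easel framework is reaching for. But even the single-vertex version required all of Sections~\ref{sec:thmtall-proof}--\ref{sec:geneasels}, and the authors could not push the method to the next case $r=1$ (Conjecture~\ref{conj-db3}). So the plan is reasonable as a direction, but the gap you identify is genuine and, on the evidence of the paper, substantial.
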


\begin{restatable}{conjecture}{conjcensusplus}
\label{conj-censusplus}
There exists a function $f: \mathbb{N} \rightarrow \mathbb{N}$ such that if $(\g,z)$ is a tame flow-critical canvas, then
$$\sum_{n \in C(\g,z)} n \leq f(\deg(z)).$$
\end{restatable}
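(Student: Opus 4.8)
Write $k=\deg(z)$. By Theorem~\ref{thm-deg} every vertex $v\ne z$ has $\deg(v)\le k-2$, while tameness forces $\deg(v)\ge 4$, so $C(\g,z)$ is a multiset of integers in $\{5,\dots,k-2\}$; it therefore suffices to bound the \emph{number} $N$ of vertices of degree at least $5$ by some $g(k)$, since then $\sum_{n\in C(\g,z)}n\le(k-2)N\le(k-2)g(k)=:f(k)$. Theorem~\ref{thm-degbetter} already disposes of the case in which some vertex has degree exactly $k-2$ (there $N\le 1$), so we may assume every vertex has degree at most $k-3$; the task becomes to show that a flow-critical tame canvas with an arbitrarily large ``core'' of vertices of degree between $5$ and $k-3$ cannot exist. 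A preliminary step is to replace flow-criticality, which is an existential statement over tip preflows, by $\psi$-criticality for a fixed $\psi$: since $(\g,z)$ is nontrivial and flow-critical, some tip preflow $\psi$ fails to extend, and taking a coarsest tip-respecting partition $\PP$ for which $\psi$ still fails to extend in $\g/\PP$ yields a $\psi$-critical canvas $(\g/\PP,z)$. One checks that this contraction loses only a bounded amount of the core, because a contracted part becomes a single vertex whose degree is still at most $k-2$, so it is enough to bound $N$ for $\psi$-critical tame canvases (equivalently, for critical tame easels, via the point of view of Theorem~\ref{thm:tall}).

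For a $\psi$-critical tame canvas, the basic cut-lifting argument gives the ``all sets'' strengthening of tameness appearing in Theorem~\ref{cor-tall}: there is no $A\subsetneq V(\g)\setminus\{z\}$ with $|A|\ge 2$ and $\deg(A)<4+|\tau(A)|$, for otherwise contracting $A$ to a single vertex produces a canvas in which $\psi$ lifts, contradicting $\psi$-criticality. Building on this, and on the mechanism behind Theorem~\ref{thm-degbetter}, I would assemble a short list of reducible configurations for $\psi$-critical tame canvases: no vertex of degree at least $5$ has its neighbourhood ``enclosed'' by degree-$4$ vertices forming a set of small boundary, and more generally two vertices of degree at least $5$ cannot be separated by a \emph{transparent} region of degree-$4$ vertices, that is, a region whose influence on extending $\psi$ is that of a single edge or a bounded gadget which could be contracted away without creating a small cut. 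Each such statement is an instance of ``exhibit a nontrivial tip-respecting contraction through the region and lift $\psi$ back.'' With these in hand, $N$ would be controlled by a weighting argument: give $z$ weight $\Theta(k)$, each vertex of degree at least $5$ a fixed positive weight, and each degree-$4$ vertex weight $0$, and use the reducible configurations to route weight from the core towards $z$ so that the total weight entering the core is $O(k)$. This parallels the colouring situation exactly: the multisets of lengths of $5^+$-faces of minimal triangle-free plane graphs with a non-extendable precoloured outer face are bounded in terms of the outer face length, and the proof there rests on bounding the amount of $4$-face structure separating two $5^+$-faces (cf.~\cite{col8cyc}).

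The main obstacle is precisely this last point in the flow setting: controlling transparent regions of degree-$4$ vertices. Such regions can be genuinely long -- a long induced path of degree-$4$, boundary-$0$ vertices satisfies every local constraint and the ``all sets'' inequality above -- so ruling out \emph{many} of them, or extracting a usable amount of weight from each, is the technical heart of the argument and the flow analogue of the hardest step of the corresponding colouring theorems, with the $\tau$-bookkeeping of boundaries playing the role of face-length bookkeeping. An alternative, possibly cleaner, route would be to prove a flow analogue of Thomassen's ``exponentially many extensions'' phenomenon: show that every nontrivial tip-respecting contraction of $(\g,z)$ admits at least $c(k)^{N}$ extending tip preflows for some $c(k)>1$, while $(\g,z)$ admits a non-extending one, and then bound $N$ by averaging over contractions; making such an exponential lower bound survive the contraction operation is the crux of that approach. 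A partial result, proving the bound for $\deg(z)\le 8$ (perhaps with computer assistance, as in~\cite{col8cyc}) and for the special preflows used in the tightness discussion after Theorem~\ref{thm-degbetter}, looks within reach even without resolving the general obstacle.
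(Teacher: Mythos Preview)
This statement is Conjecture~\ref{conj-censusplus} in the paper and is \emph{left open}; there is no proof in the paper to compare against. In the Conclusion the authors remark that their generation machinery (Theorems~\ref{thm-gen} and~\ref{thm-genladeg}) could in principle be pushed to successive values of $\deg(z)-\deg(x)$, but that step (EXPB) --- a $1$-alteration, i.e.\ adding a single edge and shifting the boundary --- destroys control over degrees and blocks a general argument.

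Your proposal is, fittingly, a strategy rather than a proof, and you correctly isolate the same obstruction in different language: unbounded ``transparent'' regions of degree-$4$, boundary-$0$ vertices are precisely what (EXPB) can manufacture. Two technical points are worth flagging. First, your reduction to the $\psi$-critical case is not free: passing to a $\psi$-critical contraction $(\g/\PP,z)$ can collapse arbitrarily many degree-$\ge 5$ vertices into a single part, so a bound on $N$ for the contraction does not directly bound $N$ for $(\g,z)$; one would need to recurse into the parts via Observation~\ref{obs-subcrit} and set up the induction with care. Second, your justification of the ``all sets'' strengthening is backwards --- $\psi$-criticality says $\psi$ \emph{does} extend after any nontrivial contraction, so contracting a set $A$ with a small cut yields no contradiction; the paper's actual argument (Lemma~\ref{lemma-cut}) restricts to $A$ and applies Corollary~\ref{cor-mainlov}. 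None of this invalidates the overall programme you sketch, but it remains a programme for an open problem, not a proof.
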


Let us remark that Conjecture~\ref{conj:fewlarge} implies a weaker version of Conjecture~\ref{conj:densitycrit1}, namely
that every flow-critical graph $G$ satisfies $|E(G)| \leq 3|V(G)|-4$; see Observation~\ref{obs-to-tame} for details.

To prove Theorem \ref{thm-degbetter}, we will need a theorem which allows us to
generate flow-critical tame canvases $(\g,z)$ such that $\deg(z) =k$. The
statement of this theorem is quite complicated, and thus we will defer it to
later sections. However, it has the following algorithmic consequence.

\begin{theorem}
\label{informalgenerationtheorem1}
For any positive integer $k$, there exists an algorithm that, given a positive integer $n$, 
generates all non-trivial flow-critical tame canvases $(\g,z)$ with $\deg(z) \leq k$ and $|V(\g)|\le n$,
using only a polynomial number (in $n$ and the number of such canvases) nowhere-zero flow existence tests
and with polynomial time complexity if the time complexity of these tests is excluded.
\end{theorem}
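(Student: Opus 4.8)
The plan is to derive Theorem~\ref{informalgenerationtheorem1} from a structural \emph{generation theorem}, whose (complicated) precise statement and proof we defer to later sections, but which for our purposes has the following shape. There is a finite set $\mathcal{R}_k$ of \emph{reduction rules}, each of which, given a non-trivial flow-critical tame canvas $(\g,z)$ with $\deg(z)\le k$ and at least three vertices, selects a bounded number (bounded in terms of $k$) of vertices and edges of $\g$, modifies $\g$ only in that region, and outputs a flow-critical tame canvas $(\g',z)$ with $\deg(z)\le k$ and $|V(\g')|<|V(\g)|$; and at least one rule of $\mathcal{R}_k$ applies to every such $(\g,z)$. Running the rules backwards yields a finite set $\mathcal{E}_k$ of \emph{expansion rules}; applying a rule of $\mathcal{E}_k$ to a canvas on $m$ vertices requires choosing only $O(1)$ vertices and edges (the constant depending on $k$), so it produces at most $O(m^{c(k)})$ candidates for a constant $c(k)$. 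Since each reduction strictly decreases the number of vertices, every non-trivial flow-critical tame canvas $(\g,z)$ with $\deg(z)\le k$ is obtained from a trivial tame canvas with $\deg(z)\le k$ by a sequence of expansions along which, as vertex counts increase monotonically, every intermediate canvas has at most $|V(\g)|$ vertices; in particular, restricting to canvases with at most $n$ vertices loses no targets with at most $n$ vertices.

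The algorithm is then a breadth-first search over flow-critical tame canvases. It maintains the collection $Q$ of canvases discovered so far; duplicates are detected in polynomial time, since by Theorem~\ref{thm-deg} all these canvases have maximum degree at most $k$ (the tip has degree at most $k$, every other vertex degree at most $\deg(z)-2$), and isomorphism of bounded-degree multigraphs with a distinguished vertex and vertex labels (here the $\Z$-boundary values) is decidable in polynomial time. It initializes $Q$ with the finitely many trivial tame canvases with $\deg(z)\le k$, which are flow-critical by definition. While some $(\g',z)\in Q$ with $|V(\g')|<n$ remains unprocessed, the algorithm applies every rule of $\mathcal{E}_k$ to $(\g',z)$ in all possible ways; for each resulting candidate $(\g,z)$ with $|V(\g)|\le n$ it tests whether $(\g,z)$ is a non-trivial flow-critical tame canvas with $\deg(z)\le k$, and if so and if $(\g,z)\notin Q$ it adds $(\g,z)$ to $Q$ and outputs it. By the generation theorem the output is exactly the desired family.

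It remains to verify that deciding whether a candidate $(\g,z)$ is a non-trivial flow-critical tame canvas with $\deg(z)\le k$ uses only polynomially many nowhere-zero flow existence tests (and is otherwise polynomial time); tameness, non-triviality and $\deg(z)\le k$ are read off directly, so the content is flow-criticality. Two reductions suffice. First, whether a tip preflow $\psi$ extends to a nowhere-zero flow of a $\Z$-bordered graph $(H,\beta)$ with tip $z$ is equivalent (after a trivial check that the resulting function is a $\Z$-boundary) to the existence of a nowhere-zero flow of $(H-z,\beta')$, where $\beta'(v)=\beta(v)+\#\{(z,v)\in\psi\}-\#\{(v,z)\in\psi\}$: thus it is a single flow existence test, and since $\deg(z)\le k$ there are at most $2^{k}$ tip preflows. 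Second, it is enough to examine tip-respecting partitions $\PP'$ merging a single pair of non-tip vertices: contracting any set of vertices preserves the nowhere-zero property and, modulo $3$, the signed degree of the merged vertex (the deleted internal edges contribute $0$ in total), so a nowhere-zero flow of $\g/\PP'$ agreeing with $\psi$ at $z$ pushes forward to one of $\g/\PP$ for every tip-respecting coarsening $\PP$ of $\PP'$; hence a tip preflow witnessing criticality for a single-pair partition $\PP'$ refining $\PP$ also witnesses it for $\PP$. Therefore $(\g,z)$ is flow-critical if and only if, for each of the $O(|V(\g)|^2)$ single-pair tip-respecting partitions $\PP'$, some tip preflow extends in $\g/\PP'$ but not in $\g$ --- altogether $O(|V(\g)|^2\,2^{k})$ flow existence tests, which is $\mathrm{poly}(n)$ when $|V(\g)|\le n$.

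Putting this together, $Q$ ever receives only the $O(1)$ trivial tame canvases with $\deg(z)\le k$ and non-trivial flow-critical tame canvases with $\deg(z)\le k$ and at most $n$ vertices; from each the algorithm generates $O(n^{c(k)})$ candidates, each verified with $\mathrm{poly}(n)$ flow existence tests and $\mathrm{poly}(n)$ further work, so the total number of flow existence tests is polynomial in $n$ and in the number of such canvases, and the remaining running time is polynomial in $n$. The main obstacle is the generation theorem itself: isolating a finite, complete set of bounded-size local reductions that preserve flow-criticality and tameness and keep $\deg(z)\le k$. Everything downstream --- the contraction-monotonicity lemma, recasting preflow extension as a flow existence test, and the search-and-deduplication accounting --- is routine.
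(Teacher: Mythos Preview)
Your plan has a genuine gap in the shape you assume for the generation theorem. You posit a finite set of \emph{local} reduction rules, each touching only $O_k(1)$ vertices and edges and strictly decreasing $|V(\g)|$, so that inverting them produces at most $O(m^{c(k)})$ candidates from an $m$-vertex canvas. The paper's actual generation theorem (Theorem~\ref{thm-gen}) is not of this form: the key operation is $\GG_k$-\emph{expansion}, which replaces vertices of a smaller canvas by entire canvases from $\GG_k$ of arbitrary size. Inverting it is non-local, and running it forward can produce superpolynomially many (in $n$) candidates from a single canvas. Moreover, two of the operations, (ADD) and (REM), do not decrease $|V(\g)|$ at all --- progress is measured by the partial order $\prec$ involving $|V(\g)|+|E(\g-z)|$, $x$-homogeneity, and $\deg(z)$ --- and (REM) can even push $\deg(z)$ up to $k+1$, so the intermediate canvases need not satisfy $\deg(z)\le k$. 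Your BFS accounting, which charges $O(n^{c(k)})$ candidates to each discovered canvas and assumes vertex count increases monotonically along generation chains, therefore does not apply to the theorem the paper actually proves.

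The paper rescues the polynomial bound by a different argument: yes, the number of $\GG_k$-expansions to test is superpolynomial in $n$, but for $k\ge 6$ the number of target canvases is itself at least exponential in $n$, so the number of tests is polynomial in the \emph{output size}. This is exactly the kind of ``polynomial in $n$ and the number of such canvases'' that the theorem statement allows. Your downstream observations --- that tip-preflow extension is a single flow test after absorbing the preflow into the boundary, that checking single-pair contractions suffices for flow-criticality, and that isomorphism is polynomial for bounded-degree labelled multigraphs --- are all fine; but you should either weaken the assumed generation theorem to what is actually provable and redo the accounting accordingly, or explain why a purely local, vertex-decreasing generation scheme exists, which the paper does not claim.
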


The more detailed version of the statement is Theorem \ref{thm-gen}. In our
proof of this theorem, we will be able to describe a set of four operations
which allow us to generate all possible flow-critical tame canvases.
Unfortunately, even though this looks like the type of theorem that would allow
us to prove Theorem \ref{thm-degbetter}, we actually will need another
generation theorem on easels. This theorem is even more technical than the
canvas generation theorem, and so again we will defer the statement until
later.  Here, we state just its natural algorithmic consequence.

\begin{theorem}
\label{informalgenerationtheorem2}
For any positive integer $k$ and non-negative integer $r$, there exists an algorithm that,
given a positive integer $n$, generates all critical tame easels $(\g,z,x,\psi)$ such that $\deg(z) \leq k$, $x \neq z$
is a vertex of degree at least $k -2 -r$, and $|V(\g)|\le n$,
using only a polynomial number (in $n$ and the number of such easels) nowhere-zero flow existence tests
and with polynomial time complexity if the time complexity of these tests is excluded.
\end{theorem}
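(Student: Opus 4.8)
The plan is to derive Theorem~\ref{informalgenerationtheorem2} from a detailed structural theorem for critical tame easels, exactly the way the promised Theorem~\ref{thm-gen} will yield Theorem~\ref{informalgenerationtheorem1}. I would first isolate a finite list of \emph{expansion operations} $O_1,\dots,O_m$ on easels --- each the inverse of a ``reduction'' such as un-splitting a vertex, uncontracting across a small separation, or reinserting a degree-four vertex --- where each operation is specified by a bounded amount of \emph{local} data (a constant number of vertices and edges of the input together with a bounded choice of how $\psi$, $x$, and $\beta$ are updated) and increases $|V(\g)|$ by at most a constant. The structural theorem to be proved then reads: there is a constant $c=c(k,r)$ such that every critical tame easel $(\g,z,x,\psi)$ with $\deg(z)\le k$, $\deg(x)\ge k-2-r$, and $|V(\g)|\le n$ is obtained from some easel on at most $c$ vertices by a sequence of these operations in which every intermediate easel is again a critical tame easel with $\deg(z)\le k$ and $|V(\g)|\le n$, and such that the class of easels occurring as intermediates has only polynomially many (in $n$) members per easel of the target class. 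Proving this is where essentially all of the work lies; I would establish it by the standard reductive scheme --- show that any critical tame easel with more than $c$ vertices contains a bounded reducible configuration, apply the corresponding reduction, and use Theorem~\ref{thm:tall} together with the criticality hypothesis to check that the smaller easel still lies in the class --- and induct on $|V(\g)|$.

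Granting the structural theorem, the algorithm is a breadth-first search on the digraph whose arcs are the expansion operations. It maintains the set $S$ of critical tame easels found so far that lie in the relevant intermediate class, initialised to the (finitely many, directly enumerable) seed easels on at most $c$ vertices. While some $E\in S$ is unprocessed, it forms, for each operation $O_i$ and each of the $\mathrm{poly}(|V(E)|)=\mathrm{poly}(n)$ ways of applying it to $E$, the resulting easel $E'$; it discards $E'$ unless $|V(E')|\le n$ and $\deg_{E'}(z)\le k$, and otherwise tests whether $E'$ is a critical tame easel and, if so, adds it to $S$. At the end it outputs those members of $S$ that additionally satisfy $\deg(x)\ge k-2-r$. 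Soundness is immediate, since every output easel is explicitly checked against all defining conditions; completeness is exactly the structural theorem, whose construction sequences the search retraces.

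The one point requiring care in the efficiency analysis is the test of whether a candidate $E'=(\g,z,x,\psi)$ is a \emph{critical} tame easel: tameness and the degree conditions are pure degree checks needing no flow test, but criticality quantifies over all non-trivial tip-respecting partitions, of which there are exponentially many. Two observations reduce this to $O(n^2)$ flow-existence tests. First, ``$\psi$ extends to a nowhere-zero flow in $(\g,z)$'' is equivalent to the existence of a nowhere-zero flow in the $\Z$-bordered graph obtained from $\g$ by deleting the arcs of $\psi$ and adjusting $\beta$ suitably at the ends of each deleted arc (which keeps $\beta$ a $\Z$-boundary), hence is a single flow-existence test. Second, since a nowhere-zero flow survives the contraction of any further set of non-tip vertices, $\psi$ extends in $\g/\PP$ for \emph{every} non-trivial tip-respecting $\PP$ if and only if it extends in $\g/\PP$ for every $\PP$ whose unique non-trivial part is a pair contained in $V(\g)\setminus\{z\}$. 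Thus $E'$ is critical if and only if $\psi$ fails to extend in $(\g,z)$ but extends across each of the $O(|V(\g)|^2)$ pair-contractions --- that is, $O(n^2)$ flow-existence tests per candidate. Multiplying by the $\mathrm{poly}(n)$ candidates produced from each processed easel and by the number of easels processed gives the claimed bound, and all remaining bookkeeping is polynomial time. (The same argument, with the fixed preflow $\psi$ replaced by a loop over the at most $3^k$ tip preflows, yields Theorem~\ref{informalgenerationtheorem1} from Theorem~\ref{thm-gen}.)

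The main obstacle is the structural theorem itself: one must find expansion operations that are simultaneously local enough for the $\mathrm{poly}(n)$-per-easel analysis and complete enough that every critical tame easel decomposes through them while every intermediate stays inside a suitably small class. Ensuring that the reductions never increase $\deg(z)$ and keep $\deg(x)$ within the target window is the delicate part --- note that Theorem~\ref{thm:tall} already forces $\deg(x)\le\deg(z)-2\le k-2$ in any critical tame easel, so only the lower bound on $\deg(x)$ is in question --- and this is likely to dictate precisely which reductions are allowed near $x$ as opposed to near the remaining vertices.
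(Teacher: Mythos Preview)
Your high-level architecture (prove a structural generation theorem, then run a breadth-first search over its operations) matches the paper, and your observation that $\psi$-criticality can be tested with $O(n^2)$ flow-existence calls via pair-contractions is correct and useful. However, the specific shape you posit for the structural theorem diverges from what the paper actually proves, in two ways that matter.

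First, you require each expansion operation to be \emph{local}: bounded data, increasing $|V(\g)|$ by at most a constant. The paper's Theorem~\ref{thm-genladeg} does not have this form. Its central operation is $(\GG_k,x\to x',\GG_{k,r},Y)$-expansion, which replaces vertices of the smaller canvas by entire canvases from $\GG_k$ of arbitrary size. Consequently the number of ways to apply this operation to a fixed easel is \emph{not} $\mathrm{poly}(n)$; it is superpolynomial. The paper handles the complexity bound differently: it argues that the number of target easels is itself exponential in $n$, so the superpolynomial number of candidate expansions is still polynomial \emph{in the output size}. Your per-easel $\mathrm{poly}(n)$ bound would be stronger, but would need a genuinely different (and unproven) structural theorem; there is no evidence that local operations suffice, and the natural reductions (contracting a part, then expanding it back) do not decompose into constant-size steps that stay inside the class.

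Second, you aim to keep $\deg(z)\le k$ throughout the generation. The paper does not: the class $\GG_{k,r}$ already allows $\deg(z)=k+1$ (with the $k$-tallness-witnessing preflow condition), and the (REM) case of Theorem~\ref{thm-genladeg} even requires the parent easel to lie in $\GG_{k+1,r}$, so $\deg(z)$ can reach $k+2$ among the intermediates. Well-foundedness is guaranteed not by a bound on $\deg(z)$ but by the partial order $\prec$, which mixes $o(\g,z)=|V(\g)|+|E(\g-z)|$, $x$-homogeneity, and $\deg(z)$. So your stated goal of ``ensuring that the reductions never increase $\deg(z)$'' is not what is needed, and insisting on it would block exactly the tip-reduction step that the paper uses at the end of its argument.

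In short: the algorithmic shell and the criticality test are fine, but the structural theorem you are banking on---local operations, invariant $\deg(z)\le k$---is not the one the paper proves, and there is a real gap in assuming it exists. The paper's actual theorem uses non-local expansions and an enlarged intermediate class, and compensates in the complexity analysis by measuring against the (exponential) output size rather than against $n$ alone.
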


The more detailed version of the statement is Theorem \ref{thm-genladeg}. In
our proof of this theorem, we have to include an additional operation over the
canvas generation theorem, leading to six operations, but we gain control over
the structure of the easel when certain operations occur. This theorem will be
strong enough to prove Theorem \ref{thm-degbetter}.

Now we sketch the proofs of our theorems. The proofs of Theorems \ref{thm-deg},
\ref{thm:tall}, \ref{informalgenerationtheorem1},
\ref{informalgenerationtheorem2} all follow the same general approach, which is
to modify the proof of Theorem \ref{thm:lovaszrealtheorem}. We briefly outline
the proof of Theorem \ref{thm:lovaszrealtheorem}. They first set up a very
particular partial order on the set of hypothetical counterexamples, and choose
a minimal counterexample with respect to this partial order. With this, they
first argue via a standard induction argument that there are no non-trivial tight
edge cuts, i.e, sets $A \subsetneq V(G) \setminus \{z\}$ with $\deg(A)=4 + |\tau(A)|$
of size at least two.  With the slack obtained in
the previous step, they now can argue, by splitting off edges and applying
induction, that there are no vertices of even degree with boundary zero. Similarly,
they can deduce that there are no ``mixed'' edges not incident to $z$\textemdash
that is, there are no edges $uv$ such that $\tau(u)$ contains a non-positive
value and $\tau(v)$ contains a non-negative value. With that in hand, they can
argue that a minimal counterexample must orient all edges away from $z$ or
towards $z$, and further that the degree of $z$ is not too large. To finish, they
take an arc incident to $z$, and replace it with two parallel arcs in the
opposite direction. They argue that the resulting graph and preflow is not a
counterexample by minimality, and use this to deduce the theorem.

To avoid repetition, let us start by sketching the ideas of Theorem
\ref{thm:tall}. We aim to follow the exact same outline as above, but now we
run into problems when we try to split off edges. Possibly we can no longer
apply induction because the resulting easel would not be tall. To remedy this,
we show that in a minimal counterexample $(\g,z,x,\psi)$, there are no small
cuts which separate $x$ from $z$. This is similar to the step in Theorem
\ref{thm:lovaszrealtheorem} where they argue that there is no non-trivial tight edge cut.
With this in hand, we now can
proceed as in Theorem \ref{thm:lovaszrealtheorem}, albeit with more technical
complications. We argue that there are no ``mixed'' edges, except possibly incident with
$z$ or $x$. Then we argue that $z$ has small degree, and all edges incident to
$z$ are oriented either away from or towards $z$, and $z$ is not adjacent to
$x$. The end is the same as in the proof of Theorem
\ref{thm:lovaszrealtheorem}: We take an arc incident to $z$, reverse it and add
two parallel arcs. Finally, we argue that the resulting easel is not a
counterexample by minimality, and deduce the theorem from this.

Theorems \ref{informalgenerationtheorem1} and \ref{informalgenerationtheorem2} follow a similar outline, except that at certain points, small technical differences arise causing additional complications. 

For the proof of Theorem \ref{thm-degbetter}, we consider a minimal
counterexample, and then we apply the easel generation theorem. We then need to
analyze the possible outcomes and show that for all possible operations, the
operations would not create a counterexample.

Finally to prove Theorem \ref{thm:introweakerversion} we first give an
elementary reduction showing that we can take any flow-critical graph on at
least three vertices and construct a tame flow-critical canvas. With this, one
simply needs to apply Theorem \ref{thm-degbetter} to the reduction to prove
Theorem \ref{thm:introweakerversion}. 

\subsection{Outline of paper}\label{subsec:paperoutline}
The paper is structured as follows: Section \ref{sec:preliminaries} contains
basic properties and definitions which will be needed throughout the rest of
the paper. In particular, Subsection \ref{subsec:flow-crit}  establishes basic
behaviour of flow-critical graphs and canvases, and further motivates our
precise definition of flow-criticality. Subsection \ref{subsec:tamecanvases}
covers tame critical canvases: Again, we establish basic properties, and in
particular prove several connectivity lemmas including a key lemma (Lemma
\ref{lemma-cut}) which we will use repeatedly in the proof of Theorem
\ref{thm:tall}.

Section \ref{sec:thmtall-proof} contains the proof of Theorem
\ref{thm:tall}, which immediately implies Theorem \ref{thm-deg}. The section
begins with a proof outline. Section \ref{sec:canvasgeneration} concerns
flow-critical canvas generation. Subsection \ref{subsec:canvasops} covers the
four operations for generating canvases.  Subsections \ref{subsec:canv-gen}
through \ref{subsec:z-homog} contain necessary lemmas for our canvas generation
theorem (Theorem \ref{thm-gen}), which is proved in Subsection
\ref{subsec:finish}: In particular, Subsection \ref{subsec:canv-gen} contains
results concerning cuts in critical canvases. In Subsection
\ref{subsec:z-homog}, we show that minimal counterexamples to Theorem
\ref{thm-gen} are \emph{$x$-homogeneous}, a notion which will be defined in
Section \ref{sec:preliminaries}; and finally, in Subsection \ref{subsec:finish}
we deduce that no minimum counterexamples exist. The structure of the proof is
overall very similar to that of the proof of Theorem \ref{thm:tall}. 

Finally,
Section \ref{sec:geneasels} contains results on easel generation, and the proof
of our easel generation theorem (Theorem \ref{thm-genladeg}). The general
structure is similar to that of Section \ref{sec:canvasgeneration} (and
therefore in turn very similar to the proof of Theorem \ref{thm:tall}). In
particular, in Subsection \ref{subsec:easelcuts} we characterize small cuts
around $x$ in a minimum counterexample $(\g,z,x,\psi)$. Subsection
\ref{subsec:xhomogeasel} shows that minimum counterexamples are
$x$-homogeneous; and, using the results established in the foregoing two
subsections, Subsection \ref{subsec:easelfinisher} contains the proof of
Theorem \ref{thm-genladeg}.

\section{Preliminaries}
\label{sec:preliminaries}

In this section, we introduce basic properties of flows and canvases that we use throughout the paper.

\subsection{Preliminaries on flow-criticality}\label{subsec:flow-crit}
This subsection begins with a discussion on the differences between and merits
of connected-flow-criticality and flow-criticality. In particular, we make two
easy but crucial observations: Contractions of flow-critical graphs are again
flow-critical (Observation \ref{obs-subcrit}), and in a canvas $(\g,z)$, all
multiple edges are incident to $z$ (Lemma \ref{lemma-simple}).  We first
address the difference between connected-flow-criticality and flow-criticality.
The following simple observation allows us to partially translate results
between the two settings.

\begin{observation}
\label{obs:connectedtoindependent}
    Every flow-critical $\Z$-bordered graph is connected-flow-critical. Furthermore, for every connected-flow-critical $\Z$-bordered graph $\g$, there exists a partition $\PP$ such that each part induces an edgeless subgraph and $\g / \PP$ is flow-critical.
\end{observation}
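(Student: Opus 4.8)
The first assertion is immediate from the definitions: a flow-critical $\Z$-bordered graph $\g$ admits no nowhere-zero flow, and $\g/\PP$ admits one for \emph{every} non-trivial partition $\PP$ of $V(\g)$, in particular for every non-trivial partition whose parts induce connected subgraphs; hence $\g$ is connected-flow-critical.

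For the second assertion, the plan is to contract a partition into independent sets that is ``as coarse as possible'' while still destroying all nowhere-zero flows. Call a partition $\PP$ of $V(\g)$ \emph{admissible} if each part induces an edgeless subgraph and $\g/\PP$ admits no nowhere-zero flow. The partition into singletons is admissible, since then $\g/\PP=\g$ and $\g$, being connected-flow-critical, has no nowhere-zero flow. So I would choose an admissible partition $\PP^*$ with the fewest parts and put $\g^*=\g/\PP^*$; then every part of $\PP^*$ is edgeless and $\g^*$ has no nowhere-zero flow, and it remains to show $\g^*$ is flow-critical, i.e., that $\g^*/\mathcal{Q}$ admits a nowhere-zero flow for every non-trivial partition $\mathcal{Q}$ of $V(\g^*)$.

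The one structural fact I would use is that contracting a partition all of whose parts are edgeless deletes no edges, hence transports orientations verbatim; summing the congruences $\deg^+(v)-\deg^-(v)\equiv\beta(v)$ over the vertices of each part then shows that if a $\Z$-bordered graph admits a nowhere-zero flow, so does each of its contractions by a partition into independent sets. Now suppose toward a contradiction that some non-trivial $\mathcal{Q}$ has $\g^*/\mathcal{Q}$ with no nowhere-zero flow. Lifting $\mathcal{Q}$ through $\PP^*$ gives a proper coarsening $\PP'$ of $\PP^*$ with $\g/\PP'=\g^*/\mathcal{Q}$, so $\g/\PP'$ has no nowhere-zero flow. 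Refine $\PP'$ to $\PP''$ by replacing each part with the vertex sets of the connected components of the subgraph it induces; then the parts of $\PP''$ are connected, and since $\PP'$ contains no edge between distinct components of one of its parts, $\g/\PP'$ is a contraction of $\g/\PP''$ by a partition into independent sets. By the structural fact, $\g/\PP''$ then also has no nowhere-zero flow. If $\PP''$ were non-trivial this would contradict connected-flow-criticality of $\g$; hence $\PP''$ is the partition into singletons, which forces every part of $\PP'$ to be edgeless. Thus $\PP'$ is admissible with strictly fewer parts than $\PP^*$, contradicting the choice of $\PP^*$. Therefore $\g^*=\g/\PP^*$ is flow-critical, and $\PP^*$ is the desired partition.

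I expect the only delicate point to be the bookkeeping around contractions: verifying that a contraction by a partition into independent sets is literally the identity on edges and on orientations (so that the induced boundary is additive over parts), that contraction is associative enough that lifting $\mathcal{Q}$ really yields $\PP'$ with $\g/\PP'=\g^*/\mathcal{Q}$, and that the component-refinement $\PP''$ exhibits $\g/\PP'$ as such a contraction of $\g/\PP''$. None of these steps is difficult, but together they are precisely the mechanism converting connected-flow-criticality into flow-criticality after one independent-set contraction.
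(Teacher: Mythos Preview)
Your proof is correct. The paper states this observation without proof, so there is no approach to compare against; your argument via a minimum-part admissible partition, together with the component-refinement trick to leverage connected-flow-criticality, cleanly supplies the omitted details.
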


In particular, observe that any upper bound on the density of flow-critical
graphs applies to connected-flow-critical graphs. Next, we motivate our
slightly complicated definition of flow-criticality. Of course, the definition
is analogous to the dual definitions for colouring, and so this is motivation in and of
itself; but more importantly, the following two observations hold true with our
definition.

\begin{itemize}
\item If a canvas $(\g,z)$ is $\psi$-critical for a tip preflow $\psi$, then $(\g,z)$ is flow-critical.
\item Every canvas $(\g,z)$ has a tip-respecting flow-critical contraction $(\g/\PP,z)$ such that
every tip preflow extends to a nowhere-zero flow in $(\g,z)$ if and only if it does in $(\g/\PP,z)$.
\end{itemize}

Let us emphasize that a canvas $(\g,z)$ with at most two vertices is flow-critical simply because there are no non-trivial tip-preserving contractions. This is why we call these flow-critical canvases trivial. Thus to see point two, observe if the canvas $(\g,z)$ has a flow for every tip preflow $\psi$, then letting $\PP$ be the partition where all vertices are in the same part except $z$ gives the result. Otherwise, if $(\g,z)$ is not flow-critical itself, we find two vertices which are not $z$ whose identification to $(\g',z)$ does not have a tip preflow which extends in $(\g',z)$ but not $(\g,z)$. Repeating this procedure we obtain a partition $\PP$ such that $(\g / \PP)$ is flow-critical, and every tip preflow extends to a nowhere-zero flow in $(\g,z)$ if and only if it does in $(\g/\PP,z)$.

Let us make another important observation. Note that any tip preflow $\psi$
forms a nowhere-zero flow on such a trivial canvas by itself, and thus this canvas is not $\psi$-critical. Let us record this observation for later use. 
\begin{observation}\label{obs-ge3}
If a canvas $(\g,z)$ is $\psi$-critical for a tip preflow $\psi$, then $|V(\g)|\ge 3$.
\end{observation}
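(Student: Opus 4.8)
The plan is to prove the contrapositive: if $|V(\g)|\le 2$, then $(\g,z)$ is not $\psi$-critical for any tip preflow $\psi$. In fact I would establish the stronger fact flagged in the surrounding discussion, namely that on such a (trivial) canvas every tip preflow $\psi$ is already a nowhere-zero flow of $\g$, and hence extends to one (itself).

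Since a canvas has a tip, $|V(\g)|\ge 1$, so assume $V(G)=\{z\}$ or $V(G)=\{z,v\}$. As $G$ is loopless, every edge of $G$ joins $z$ and $v$ and is therefore incident with $z$; recalling that a tip preflow around $z$ fixes the orientation of every edge incident with $z$, we conclude that $\psi$ is a full orientation of $G$. It remains to check the flow condition $\deg^+_\psi(u)-\deg^-_\psi(u)\equiv\beta(u)\pmod 3$ at every $u\in V(G)$. At $u=z$ this is part of the definition of a preflow around $z$, so if $|V(G)|=1$ we are done. If $|V(G)|=2$, consider $u=v$. When $z$ and $v$ lie in the same component of $G$, every edge runs between them, so $\deg^+_\psi(v)=\deg^-_\psi(z)$ and $\deg^-_\psi(v)=\deg^+_\psi(z)$; thus $\deg^+_\psi(v)-\deg^-_\psi(v)\equiv -\bigl(\deg^+_\psi(z)-\deg^-_\psi(z)\bigr)\equiv-\beta(z)\pmod 3$, and since $\beta$ is a $\Z$-boundary we have $\beta(z)+\beta(v)\equiv 0\pmod 3$, so this quantity equals $\beta(v)$. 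When $z$ and $v$ lie in different components, $G$ has no edges, $\psi$ is empty, and $\beta(z)\equiv\beta(v)\equiv 0\pmod 3$, so the condition at $v$ holds trivially.

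Hence $\psi$ is a nowhere-zero flow of $\g$, so it extends to one, and therefore $(\g,z)$ is not $\psi$-critical. Taking the contrapositive gives the statement. I do not expect a genuine obstacle here: the only care needed is in separating out the two degenerate sub-cases (one vertex, or two vertices with no edge) and in using that a tip preflow pins down the orientation of all edges at $z$ — precisely the feature that forces the entire orientation once $|V(G)|\le 2$.
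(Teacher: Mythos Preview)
Your proof is correct and follows exactly the approach the paper indicates in the sentence preceding the observation: any tip preflow on a trivial canvas is already a nowhere-zero flow, so the canvas cannot be $\psi$-critical. The paper treats this as immediate and does not spell out the case analysis; your write-up simply makes the verification explicit.
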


  One might consider excluding trivial canvases from the definition of flow-criticality; for example, this would simplify the following statement.
\begin{observation}\label{obs-critopsi}
If $(\g,z)$ is a non-trivial flow-critical canvas, then for some tip preflow $\psi$ there exists a $\psi$-critical tip-respecting contraction of $(\g,z)$.
\end{observation}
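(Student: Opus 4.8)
The plan is a minimal-object argument over the poset of tip-respecting contractions of $(\g,z)$, ordered by number of vertices.

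First I would produce a tip preflow $\psi$ that fails to extend already on $(\g,z)$ itself. Since $(\g,z)$ is non-trivial we have $|V(\g)|\ge 3$, so merging any two vertices of $V(\g)\setminus\{z\}$ gives a non-trivial tip-respecting partition $\PP_0$; applying the definition of flow-criticality to $\PP_0$ yields a tip preflow $\psi$ that extends to a nowhere-zero flow in $(\g/\PP_0,z)$ but not in $(\g,z)$. Fix this $\psi$. Before going further, I would record the routine fact that $\psi$, viewed as a partial orientation, is a tip preflow of $(\g/\PP,z)$ for every tip-respecting partition $\PP$: contracting parts disjoint from $\{z\}$ leaves the edges incident with $z$ (in natural bijection) and the value $\beta(z)$ unchanged, so the definition of a tip preflow transfers verbatim. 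Hence the phrase ``$\psi$ does not extend in $(\g/\PP,z)$'' is meaningful for every such $\PP$.

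Next I would consider the collection of all tip-respecting partitions $\PP$ of $V(\g)$ (including the trivial one) for which $\psi$ does not extend to a nowhere-zero flow in $(\g/\PP,z)$. This collection is non-empty, as the trivial partition lies in it by the previous step, so I may pick $\PP$ in it with $|V(\g/\PP)|$ minimum, and claim $(\g/\PP,z)$ is $\psi$-critical. Non-extension in $(\g/\PP,z)$ holds by the choice of $\PP$. For the other half, let $\mathcal{Q}$ be an arbitrary non-trivial tip-respecting partition of $V(\g/\PP)$, and let $\PP'$ be the tip-respecting partition of $V(\g)$ obtained by amalgamating the parts of $\PP$ according to $\mathcal{Q}$; then $(\g/\PP)/\mathcal{Q}=\g/\PP'$ and, since $\mathcal{Q}$ is non-trivial, $|V(\g/\PP')|<|V(\g/\PP)|$. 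By minimality of $\PP$, the preflow $\psi$ must extend to a nowhere-zero flow in $(\g/\PP',z)=((\g/\PP)/\mathcal{Q},z)$; as $\mathcal{Q}$ was arbitrary, $(\g/\PP,z)$ is $\psi$-critical. If $\PP$ is non-trivial this is the desired $\psi$-critical tip-respecting contraction; if $\PP$ is trivial, then $(\g,z)$ itself is $\psi$-critical, which we read as the (trivial) contraction of $(\g,z)$.

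I do not expect a genuine obstacle here — the argument is just a well-ordering on $|V(\g/\PP)|$. The only points demanding care are bookkeeping: that tip preflows are preserved under tip-respecting contraction (so ``$\psi$ does not extend'' makes sense along the whole descending chain), and that a composition of tip-respecting partitions is again tip-respecting and agrees with iterated contraction; both follow immediately from the definitions. Equivalently, the same proof can be phrased as an induction on $|V(\g)|$: if $(\g,z)$ is not already $\psi$-critical, the failure of $\psi$-criticality exhibits a non-trivial tip-respecting $\PP$ with $\psi$ not extending in $(\g/\PP,z)$, and one recurses on $(\g/\PP,z)$, which has strictly fewer vertices.
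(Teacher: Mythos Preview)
Your proposal is correct and follows essentially the same approach as the paper: obtain a tip preflow $\psi$ that does not extend in $(\g,z)$ via flow-criticality applied to some non-trivial tip-respecting contraction, then pass to a minimal tip-respecting contraction in which $\psi$ still fails to extend. The paper's proof is just a terser version of yours, omitting the bookkeeping you spell out about transferring $\psi$ and composing partitions.
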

\begin{proof}
Since the canvas $(\g,z)$ has at least three vertices, it has a proper tip-respecting contraction, and thus
the definition of flow-criticality implies that there exists a tip preflow $\psi$
that does not extend to a nowhere-zero flow in $\g$.  Let $(\g',z)$ be a minimal tip-respecting contraction
of $(\g,z)$ such that $\psi$ does not extend to a nowhere-zero flow in $\g'$,
and observe that $(\g',z)$ is $\psi$-critical.
\end{proof}
On the other hand, declaring trivial canvases to be non-flow-critical would complicate the statement of the following important observation on restrictions of canvases.

\begin{definition}
Given a canvas $(\g,z)$ and a set $A\subseteq V(\g)\setminus \{z\}$, the \emph{restriction} $(\g,z)\restriction A$ of
$(\g,z)$ to $A$ is the canvas $(\g/B,b)$, where $B=V(\g)\setminus A$ and $b$ is the vertex resulting from the contraction of $B$.
\end{definition}
An intuitive way of thinking about the restriction is that $(\g,z)\restriction A$ is obtained from $(\g,z)$ by contracting everything except for $A$ to the tip vertex. 
\begin{observation}\label{obs-subcrit}
If a canvas $(\g,z)$ is flow-critical and $A$ is a subset of $V(\g)\setminus\{z\}$, then the canvas $(\g,z)\restriction A$ is flow-critical.
\end{observation}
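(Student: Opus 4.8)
The plan is to verify the definition of flow-criticality directly for the canvas $(\g,z)\restriction A=(\g/B,b)$, where $B:=V(\g)\setminus A$ and $b$ is the vertex obtained by contracting $B$ (note $z\in B$). If $|A|\le 1$ then $\g/B$ has at most two vertices, so $(\g/B,b)$ is flow-critical vacuously; hence I would assume $|A|\ge 2$. Fix an arbitrary non-trivial tip-respecting partition $\mathcal{Q}$ of $\g/B$; since $\{b\}$ is one of its parts, write $\mathcal{Q}=\mathcal{Q}'\cup\{\{b\}\}$, where $\mathcal{Q}'$ is a necessarily non-trivial partition of $A$. I must produce a tip preflow around $b$ that extends to a nowhere-zero flow in $(\g/B)/\mathcal{Q}$ but not in $\g/B$.

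The idea is to pull such a preflow down from $\g$. Set $\PP:=\mathcal{Q}'\cup\{\{v\}:v\in B\}$, a non-trivial tip-respecting partition of $\g$. By flow-criticality of $(\g,z)$ applied to $\PP$, there is a tip preflow $\chi$ around $z$ and a nowhere-zero flow $\omega$ of $\g/\PP$ extending $\chi$, while $\chi$ does \emph{not} extend to a nowhere-zero flow of $\g$. Now observe that $(\g/B)/\mathcal{Q}=\g/\PP_1$ with $\PP_1:=\mathcal{Q}'\cup\{B\}$, a coarsening of $\PP$ (obtained by merging the $B$-singletons). Since contractions preserve nowhere-zero flows modulo $3$ (internal edges cancel and boundaries add), $\omega$ descends to a nowhere-zero flow $\omega_1$ of $(\g/B)/\mathcal{Q}$ that agrees with $\omega$ on all edges of $\g$ within $A$ and between $A$ and $B$. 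Let $\psi$ be the restriction of $\omega_1$ to the edges incident with $b$, i.e.\ to the edges between $A$ and $B$. Then $\psi$ is a tip preflow around $b$ that extends to the nowhere-zero flow $\omega_1$ of $(\g/B)/\mathcal{Q}$, and it agrees with $\omega$, hence with $\chi$, on every edge joining $z$ to $A$.

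It then remains to show $\psi$ does not extend to a nowhere-zero flow of $\g/B$. Suppose for contradiction that a nowhere-zero flow $\phi$ of $\g/B$ extends $\psi$. I would glue $\phi$ and $\omega$ into an orientation $\Omega$ of $\g$: orient every edge within $A$ or between $A$ and $B$ as $\phi$ does, and every edge within $B$ as $\omega$ does. This is well-defined because $\g/\PP$ contains all edges within $B$ as genuine edges, and on the overlap — the edges between $A$ and $B$ — both $\phi$ (which extends $\psi$) and $\omega$ agree with $\psi$. Because $\omega$ extends $\chi$, the orientation $\Omega$ agrees with $\chi$ on every edge incident with $z$. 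Moreover $\Omega$ is a nowhere-zero flow of $\g$: each vertex of $A$ is incident in $\g$ with exactly the same edges, oriented the same way, as in $\g/B$, so its boundary is met by $\phi$; each vertex of $B\setminus\{z\}$ is incident in $\g$ with exactly the same edges, oriented the same way, as in $\g/\PP$, so its boundary is met by $\omega$; and $z$ meets its boundary since $\chi$ is a preflow around $z$. Thus $\chi$ extends to a nowhere-zero flow of $\g$, a contradiction. Hence $\psi$ is the desired witness, and as $\mathcal{Q}$ was arbitrary, $(\g,z)\restriction A$ is flow-critical.

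The main obstacle is the gluing step: one must arrange that the flow $\phi$ on the ``inner'' quotient $\g/B$ and the flow $\omega$ on the ``outer'' quotient $\g/\PP$ are compatible on their common edges (those between $A$ and $B$) — which is exactly why $\psi$ has to be \emph{extracted} from $\omega$ rather than chosen freely — and then verify that under $\Omega$ each vertex of $\g$ sees precisely the incidences and orientations it had in whichever quotient certified its boundary. The remaining points (that $\PP$ and $\mathcal{Q}'$ are non-trivial, that $(\g/B)/\mathcal{Q}=\g/\PP_1$ as $\Z$-bordered graphs, and that contractions transport nowhere-zero flows) are routine unwinding of the definitions.
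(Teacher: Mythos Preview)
Your proof is correct and follows essentially the same approach as the paper's: both lift the given partition $\mathcal{Q}$ of $\g/B$ to the partition $\PP=\mathcal{Q}'\cup\{\{v\}:v\in B\}$ of $\g$, invoke flow-criticality of $(\g,z)$ to obtain a tip preflow and a nowhere-zero flow $\omega$ on $\g/\PP$, extract from $\omega$ the tip preflow $\psi$ around $b$, and then argue that any extension of $\psi$ to $\g/B$ would glue with $\omega$ to contradict non-extendability in $\g$. Your write-up is somewhat more explicit about the gluing step and the identification of edges across the various quotients, but the argument is the same.
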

\begin{proof}
Let $B=V(\g)\setminus A$ and let $(\g',b)=(\g,z)\restriction A$. Consider any non-trivial tip-respecting partition $\PP'$ of $V(\g')$.
To show that $(\g',b)$ is flow-critical, we need to find a tip preflow $\psi'$ extending to a nowhere-zero flow in $(\g'/\PP',b)$, but
not in $(\g,b)$.

Let $\PP$ be the partition of $V(\g)$ obtained from $\g$ by replacing $b$ by singleton parts consisting of the vertices of $B$;
since $\PP'$ is non-trivial, so is $\PP$.  Since $z\in B$, the partition $\PP$ is tip-respecting in $(\g,z)$.  
Since $(\g,z)$ is flow-critical, there exists a tip preflow $\psi$ such that $\psi$ extends to a nowhere-zero flow $\varphi$ in $\g/\PP$ but not in $\g$.
Let $\psi'$ be the restriction of $\varphi$ to the edges with exactly one end in $B$, which can be viewed as tip preflow in $(\g',b)$.
The restriction of $\varphi$ to the edges incident with at least one edge of $B$ shows that $\psi'$ extends to a nowhere-zero flow in $\g'/\PP'$.
On the other hand, $\psi'$ does not extend to a nowhere-zero flow $\varphi'$ in $\g'$, as otherwise we could combine $\varphi'$ with the
restriction of $\varphi$ to the edges not incident with $B$ to obtain a nowhere-zero flow in $\g$ extending $\psi$ and get a contradiction.
\end{proof}
Let us remark that another motivation for the technical definition of flow-criticality of canvases is so that Observation~\ref{obs-subcrit}
holds (it is not necessarily true that $(\g,z)\restriction A$ is $\psi'$-critical for any fixed tip preflow $\psi'$).

Now, let us make the following standard observation, which tells us that we do not need to check the flow conservation constraint at one of the vertices.

\begin{lemma}
    \label{obs-allbutone}
Let $\g=(G,\beta)$ be a $\Z$-bordered graph, let $y$ be a vertex of $\g$, and let $\vec{G}$ be an orientation of $G$ such that $\deg^+(v) -\deg^-(v) \equiv \beta(v)\pmod 3$ for all
$v\in V(\g)\setminus\{y\}$.  Then $\vec{G}$ is a nowhere-zero flow in $\g$.
\end{lemma}
\begin{proof}
Note that $\sum_{v\in V(\g)} (\deg^+(v) -\deg^-(v))=0$, since each edge is counted positively at its head and negatively at its tail in this sum.
Moreover, since $\beta$ is a $\Z$-boundary, $\sum_{v\in V(\g)} \beta(v)\equiv 0\pmod 3$.
Hence, we have

\begin{align*}
    \deg^+(y)-\deg^-(y)&=-\sum_{v\in V(\g)\setminus\{y\}} (\deg^+(v) -\deg^-(v)) \\&\equiv -\sum_{v\in V(\g)\setminus\{y\}}\beta(v)\equiv \beta(y)\pmod 3,
\end{align*}

and thus the flow conservation condition $\deg^+(v) -\deg^-(v) \equiv \beta(v)\pmod 3$ holds for $v=y$ as well.
\end{proof}

We end this subsection with the following simple observation.

\begin{lemma}\label{lemma-simple}
If $(\g,z)$ is a flow-critical canvas, then every edge of multiplicity more than one is incident with $z$.
\end{lemma}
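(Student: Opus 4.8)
The plan is to prove the contrapositive: if $uv$ is an edge of multiplicity at least two with neither endpoint equal to $z$, then $(\g,z)$ is not flow-critical. Since $(\g,z)$ is a flow-critical canvas, by definition for every non-trivial tip-respecting partition $\PP$ there is a tip preflow extending in $\g/\PP$ but not in $\g$; to contradict this I will exhibit one particular non-trivial tip-respecting partition $\PP$ for which \emph{every} tip preflow that extends in $\g/\PP$ already extends in $\g$. The natural choice is the partition whose only non-singleton part is $\{u,v\}$; note this is tip-respecting since $z\notin\{u,v\}$, and non-trivial.

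The key step is then to show: for this $\PP$, a tip preflow $\psi$ extends to a nowhere-zero flow in $\g$ if and only if it extends to one in $\g/\PP$. The ``only if'' direction is immediate (any nowhere-zero flow in $\g$ pushes forward to one in $\g/\PP$, since contraction just identifies $u$ and $v$ and the boundary was defined to add up). For the ``if'' direction, suppose $\varphi$ is a nowhere-zero flow in $\g/\PP=(G/\PP,\beta/\PP)$ extending $\psi$; let $w$ be the vertex obtained by identifying $u$ and $v$. I will lift $\varphi$ to an orientation of $G$: every edge of $G$ other than the parallel edges between $u$ and $v$ corresponds to a unique edge of $G/\PP$, so orient it as $\varphi$ does; it remains to orient the (at least two) parallel $uv$-edges. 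Since there are at least two such edges, I can orient them so that the net flow they contribute at $u$ (hence the negative of that at $v$) is any residue I like modulo $3$ — with two edges I get net contributions $-2,0,2\equiv 1,0,-2\pmod 3$, i.e.\ all of $\Z$ — and if there are more parallel edges I simply orient the extras in cancelling pairs. Choose the orientation of the parallel edges so that the flow-conservation constraint $\deg^+(u)-\deg^-(u)\equiv\beta(u)\pmod 3$ holds at $u$. Then at $v$, because $\beta/\PP(w)=\beta(u)+\beta(v)$ and the contributions of all non-parallel edges at $w$ split as the contributions at $u$ and at $v$, the constraint at $v$ holds automatically; and the constraints at all other vertices are inherited from $\varphi$. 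Since $\psi$ is unchanged (none of its arcs is incident with $u$ or $v$ unless it is incident with $z$, and in any case $\psi$ lives on edges incident with $z$, which are not among the parallel $uv$-edges), this lifted orientation is a nowhere-zero flow in $\g$ extending $\psi$, as desired.

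Having established the if-and-only-if, I conclude: by flow-criticality applied to $\PP$ there is a tip preflow $\psi$ extending in $\g/\PP$ but not in $\g$ — directly contradicting the equivalence just proved. Hence no such edge $uv$ exists, and every edge of multiplicity more than one is incident with $z$.

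The only genuinely delicate point is the bookkeeping in the lifting argument: one must check carefully that reassigning the parallel $uv$-edges restores conservation at $u$ using the freedom coming from having $\ge 2$ parallel edges (hence full control modulo $3$), and that conservation at $v$ then comes for free from the additivity of $\beta$ under contraction together with Observation~\ref{obs-allbutone} (applied with $v$ in the role of the single unchecked vertex, if one prefers to avoid the explicit computation at $v$). Everything else is routine.
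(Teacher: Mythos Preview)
Your proof is correct and follows essentially the same approach as the paper: contract $\{u,v\}$, use flow-criticality to obtain a tip preflow extending in the contraction but not in $\g$, then lift the resulting flow by orienting the parallel $uv$-edges to fix conservation at $u$ and invoke Observation~\ref{obs-allbutone} for $v$. The only (harmless) slip is the typo ``$-2,0,2\equiv 1,0,-2\pmod 3$''; the correct residues are $1,0,2$, which indeed cover all of $\Z$.
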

\begin{proof}
Let $\g=(G,\beta)$.
Suppose for a contradiction that distinct vertices $u,v\in V(\g)\setminus\{z\}$ are joined by an edge of multiplicity greater than one.
Since the canvas $(\g,z)$ is flow-critical, there exists a tip preflow $\psi$ that extends to a nowhere-zero flow in $\g/\{u,v\}$ but not in $\g$.
The nowhere-zero flow in $\g/\{u,v\}$ can be viewed as a partial orientation of $\g$ extending $\psi$ and such that only the edges
between $u$ and $v$ are not oriented and $\deg^+(y)-\deg^-(y)\equiv \beta(y)\pmod 3$ holds for all $y\in V(\g)\setminus\{u,v\}$.
However, since there are at least two edges between $u$ and $v$, it is possible to orient them so that this condition
holds also for $y=u$, which by Lemma~\ref{obs-allbutone} shows that the resulting orientation is a nowhere-zero flow in $\g$ extending $\psi$. This is a contradiction.
\end{proof}

\begin{comment}
For a vertex $v$ of a $\Z$-bordered graph $\g=(G,\beta)$, let
$$\tau^+(v)=\{a-b\mid a,b\ge 0, a+b=\deg(v),a-b\equiv \beta(v)\!\!\pmod 3\};$$
note that an orientation $\varphi$ of edges of $G$ is a nowhere-zero flow in $\g$
if and only if $\deg^+_\varphi v-\deg^-_\varphi v\in\tau^+(v)$ for every $v\in V(G)$.
Let $\tau(v)$ consist of the elements of $\tau^+(v)$ that are smallest in the absolute value.
The values of $\tau$ are given by the following table.
\begin{center}
\begin{tabular}{c|ccc}
\backslashbox{degree}{$\beta$} & $0$ & $1$ & $-1$\\
\hline
even& $\{0\}$ & $\{-2\}$ & $\{2\}$\\
odd& $\{-3,3\}$ & $\{1\}$ & $\{-1\}$
\end{tabular}
\end{center}

Note that $\tau(v)$ has more than one element only when $\deg(v)$ is odd and $\beta(v)=0$,
and in that case both elements of $\tau(v)$ have the same absolute value $3$.  Hence,
slightly abusing the notation, we define $|\tau(v)|$ to be the common absolute value of the
elements of $\tau(v)$.  Hence, the values of $|\tau(v)|$ are given by the following table.
\begin{center}
\begin{tabular}{c|cc}
\backslashbox{degree}{$\beta$} & $0$ & $\pm 1$\\
\hline
even& $0$ & $2$\\
odd& $3$ & $1$
\end{tabular}
\end{center}

Similarly, we write $\tau(v)>0$ or $\tau(v)<0$ to mean that the $\tau(v)$ consists only of a single positive or negative element.
For a set $A\subseteq V(G)$, let $\deg(A)$ be the number of edges of $G$ with exactly one end in $A$, and let us define $\tau(A)$
and $|\tau(A)|$ analogously to the definition for single vertices.  Clearly, $\tau(v)=\tau(\{v\})$ for every $v\in V(G)$.
\end{comment}
\subsection{Preliminaries on tame canvases}\label{subsec:tamecanvases}

In this subsection, we reformulate Theorem \ref{thm:lovaszrealtheorem} to show
that tame non-trivial flow-critical canvases have $\deg(z) \geq 6 + |\tau(z)|$.
We then use this to argue that tame canvases have the property that all tight
sets are single vertices\textemdash that is, if a set $A\subsetneq V(\g)\setminus\{z\}$ satisfies $\deg(A) =
4 + |\tau(A)|$ and $A$ does not contain $z$, then $A = \{v\}$ for some vertex
$v$. We will also prove a crucial lemma: Tall tame critical easels are
$2$-connected.

We start off by making some basic observations about boundary functions and $\tau$.
Note that if $\beta$ is a boundary function for a graph $G$ and $A \subseteq V(G)$, then we have $\beta(V(G)\setminus A)=-\beta(A)$, and thus $|\tau(V(G)\setminus A)|=|\tau(A)|$. The following observation is critical, and also immediate from the definition of $\tau$.

\begin{observation}\label{obs-tauprop}
Let $A$ and $B$ be subsets of vertices of a canvas.
\begin{itemize}
\item[(a)] $\deg(A)$ and $|\tau(A)|$ have the same parity.
\item[(b)] If $\deg(A) \not\equiv \deg(B) \pmod 2$ and $\beta(A)\neq \beta(B)$, then $\bigl||\tau(A)|-|\tau(B)|\bigr|=1$.
\item[(c)] If $\deg(A) \not \equiv \deg(B) \pmod 2$ and $\beta(A)=\beta(B)\neq 0$, then $\bigl||\tau(A)|-|\tau(B)|\bigr|=1$.
\end{itemize}
\end{observation}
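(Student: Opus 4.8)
The statement to prove is Observation~\ref{obs-tauprop}, which collects three parity/arithmetic facts about $\tau$. The plan is to verify each part directly from the two defining tables for $\tau(A)$ and $|\tau(A)|$, organizing the case analysis by the parity of $\deg(A)$ and the value of $\beta(A)\in\{0,1,-1\}$.

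\medskip

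\noindent\textbf{Proof proposal.}

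For part (a), I would simply read off the $|\tau|$-table: when $\deg(A)$ is even, $|\tau(A)|\in\{0,2\}$, both even; when $\deg(A)$ is odd, $|\tau(A)|\in\{1,3\}$, both odd. Hence $\deg(A)$ and $|\tau(A)|$ always have the same parity, with no further work needed.

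For part (b), assume $\deg(A)\not\equiv\deg(B)\pmod 2$ and $\beta(A)\neq\beta(B)$. By symmetry say $\deg(A)$ is even and $\deg(B)$ is odd. I would split on whether $\beta(A)$ or $\beta(B)$ equals $0$. If $\beta(A)=0$ then $|\tau(A)|=0$, and since $\beta(B)\neq\beta(A)=0$ we have $\beta(B)=\pm 1$, so $|\tau(B)|=1$, giving $\big||\tau(A)|-|\tau(B)|\big|=1$. If $\beta(A)=\pm 1$ then $|\tau(A)|=2$; now either $\beta(B)=0$, giving $|\tau(B)|=3$ and difference $1$, or $\beta(B)=\pm1$ with $\beta(B)\neq\beta(A)$, which forces $\{\beta(A),\beta(B)\}=\{1,-1\}$ and $|\tau(B)|=1$, again difference $1$. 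This exhausts all cases. Part (c) is the same kind of computation but easier: with $\deg(A),\deg(B)$ of opposite parity and $\beta(A)=\beta(B)\neq 0$, we have $|\tau|$ equal to $2$ on the even side and $1$ on the odd side, so the difference is exactly $1$.

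I do not anticipate a genuine obstacle here — everything follows by inspecting the two small tables. The only mild subtlety is remembering that $|\tau(A)|$ is well-defined (the two elements of $\tau(A)$, in the one case where $\tau(A)$ has two elements, share the same absolute value $3$), which is already recorded in the definition preceding the statement; I would reference that rather than reprove it. The proof will therefore be a short verification: establish (a) by table-reading, then handle (b) and (c) by the case splits on $\beta(A)$ described above.
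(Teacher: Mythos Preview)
Your proposal is correct and matches the paper's approach: the paper states that the observation is ``immediate from the definition of $\tau$'' and offers no further proof, and your table-reading verification is exactly that.
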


\begin{comment}
Lovász et al.~\cite{ltwz} proved the following key result, which forms the starting point of our paper.
\begin{theorem}\label{thm-mainlov}
Let $(\g,z)$ be a canvas with $\deg(z)\le 4+|\tau(z)|$.  If $\deg(A)\ge 4+|\tau(A)|$ for every non-empty $A\subsetneq V(\g)\setminus\{z\}$,
then every tip preflow extends to a nowhere-zero flow in $\g$.
\end{theorem}
A canvas $(\g,z)$ is \emph{tame} if $\deg(v)\ge 4+|\tau(v)|$ for every $v\in V(\g)\setminus\{z\}$.  In the language of flow-criticality,
\end{comment}

With this, we can reformulate Theorem \ref{thm:lovaszrealtheorem} in the following manner.

\begin{corollary}\label{cor-mainlov}
If $(\g,z)$ is a tame non-trivial flow-critical canvas, then $\deg(z) \ge 6+|\tau(z)|$.
\end{corollary}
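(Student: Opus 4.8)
The plan is to derive this as a near-immediate consequence of Theorem~\ref{thm:lovaszrealtheorem}, using the hypothesis that $(\g,z)$ is both tame and non-trivial flow-critical. First I would argue by contradiction: suppose $(\g,z)$ is a tame non-trivial flow-critical canvas with $\deg(z) \le 5 + |\tau(z)|$. Since $|\tau(z)|$ and $\deg(z)$ have the same parity (Observation~\ref{obs-tauprop}(a)), and $4+|\tau(z)|$ also has this parity, the only value of $\deg(z)$ strictly between $4+|\tau(z)|$ and $6+|\tau(z)|$ with the correct parity is $\deg(z) = 4+|\tau(z)|$ exactly; in other words, $\deg(z)\le 5+|\tau(z)|$ forces $\deg(z)\le 4+|\tau(z)|$. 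This parity collapse is the small observation that makes the argument clean.

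Next, with $\deg(z) \le 4 + |\tau(z)|$ in hand, I want to apply Theorem~\ref{thm:lovaszrealtheorem} directly. Its hypothesis requires that $\deg(A) \ge 4 + |\tau(A)|$ for every non-empty $A \subsetneq V(\g)\setminus\{z\}$. For singletons $A = \{v\}$ with $v \ne z$, this is exactly the tameness condition. For larger sets $A$, I would need a submodularity-type argument: $\deg$ is submodular, and one shows that if every vertex $v \in A$ satisfies $\deg(v)\ge 4+|\tau(v)|$ then the set $A$ itself satisfies $\deg(A)\ge 4+|\tau(A)|$. This is the kind of standard fact about the $\tau$ function that the paper's preliminary section is built to supply; if it is not already available as a cited lemma, the cleanest route is to note that $\deg(A) = \sum_{v\in A}\deg(v) - 2e(A)$ where $e(A)$ is the number of internal edges, combined with a parity/size bound on $|\tau(A)|$ versus $|\tau(v)|$ (using Observation~\ref{obs-tauprop}(b),(c)). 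If $A$ contains a vertex of degree much larger than $4+|\tau|$ there is plenty of slack; the tight case is when $A$ is a set of degree-$(4+|\tau|)$ vertices, and then a short direct check handles small internal edge counts, while any sizeable $e(A)$ only helps since it means many edges leave $A$ from the high aggregate degree. I expect verifying this set version of tameness to be the main obstacle, though it should already be essentially packaged in the subsection's lemmas on $\tau$.

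Finally, Theorem~\ref{thm:lovaszrealtheorem} then tells us that every tip preflow of $(\g,z)$ extends to a nowhere-zero flow in $\g$. But $(\g,z)$ is non-trivial flow-critical, so by Observation~\ref{obs-critopsi} there is a tip-respecting contraction $(\g',z)$ of $(\g,z)$ and a tip preflow $\psi$ that is $\psi$-critical; in particular $\psi$ does not extend to a nowhere-zero flow in $\g'$. Since $\deg$ of the tip and the tameness/connectivity structure are preserved under the contractions involved (contracting a tip-respecting partition does not increase $\deg(z)$ or change $|\tau(z)|$, and does not decrease degrees of surviving vertices in the relevant way — here one should double-check that tameness is inherited by $(\g',z)$, which may require first passing to the restriction and invoking Observation~\ref{obs-subcrit}), we get a contradiction with what Theorem~\ref{thm:lovaszrealtheorem} just gave us. Hence $\deg(z)\ge 6+|\tau(z)|$. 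The one subtlety to watch is that flow-criticality alone does not guarantee tameness is inherited by every contraction, so I would instead apply the argument to the $\psi$-critical canvas $(\g',z)$ supplied by Observation~\ref{obs-critopsi} directly — noting that a $\psi$-critical canvas has no non-trivial tight contraction and is therefore itself tame-like enough for the hypothesis of Theorem~\ref{thm:lovaszrealtheorem} to apply — and conclude from the non-extension of $\psi$ in $\g'$ that $\deg_{\g'}(z)\ge 6+|\tau(z)|$, which equals $\deg_\g(z)$ for a tip-respecting contraction.
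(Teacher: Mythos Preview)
Your plan has a genuine gap at step~2. You propose to verify the full hypothesis of Theorem~\ref{thm:lovaszrealtheorem}, namely $\deg(A)\ge 4+|\tau(A)|$ for every non-empty $A\subsetneq V(\g)\setminus\{z\}$, by a ``submodularity-type'' argument starting from vertex-tameness alone. This does not work: vertex-tameness plus elementary degree/parity bookkeeping does not force set-tameness. Your own formula $\deg(A)=\sum_{v\in A}\deg(v)-2e(A)$ makes the problem visible --- once $e(A)$ is large there is no lower bound on $\deg(A)$ coming just from the individual $\deg(v)$, and nothing in Observation~\ref{obs-tauprop} rescues this. In fact the paper proves the set version (indeed the stronger inequality $\deg(A)\ge 6+|\tau(A)|$ for $|A|\ge 2$) as Lemma~\ref{lemma-cut}, and that lemma is \emph{derived from} Corollary~\ref{cor-mainlov}, not the other way around; so your proposed route is effectively circular.

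The paper avoids this entirely by a minimal-counterexample argument. Take a tame non-trivial flow-critical $(\g,z)$ with $\deg(z)\le 4+|\tau(z)|$ (your parity reduction is fine) and fewest vertices. Some tip preflow $\psi$ fails to extend in $\g$, so by Theorem~\ref{thm:lovaszrealtheorem} there is a non-empty $A\subsetneq V(\g)\setminus\{z\}$ with $\deg(A)<4+|\tau(A)|$; tameness forces $|A|\ge 2$. Now restrict: $(\g',b)=(\g,z)\restriction A$ is flow-critical by Observation~\ref{obs-subcrit}, tame by inspection, non-trivial since $|A|\ge 2$, and smaller --- hence not a counterexample, giving $\deg(b)\ge 6+|\tau(b)|$. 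But $\deg(b)=\deg(A)<4+|\tau(A)|=4+|\tau(b)|$, a contradiction. The key move you are missing is exactly this: rather than proving the set hypothesis holds, you use its failure to pass to a smaller instance. Your step~4 detour through Observation~\ref{obs-critopsi} and contractions is also unnecessary --- non-trivial flow-criticality already supplies a non-extending tip preflow in $\g$ itself.
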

\begin{proof}
Suppose for a contradiction that $(\g,z)$ is a minimal counterexample, i.e., a tame non-trivial flow-critical canvas with the smallest number of vertices and such that $\deg(z) <6+|\tau(z)|$. Since $\deg(z)$ and $|\tau(z)|$ have the same parity, we have $\deg(z)\le 4+|\tau(z)|$.
Since the canvas $(\g,z)$ is flow-critical and non-trivial, there exists a tip-respecting preflow $\psi$ does not extend to a nowhere-zero flow in $\g$.
By Theorem~\ref{thm:lovaszrealtheorem}, there exists a non-empty set $A\subsetneq V(\g)\setminus\{z\}$ such that $\deg(A) < 4 + |\tau(A)|$.
Since $(\g,z)$ is tame, we have $|A|\ge 2$.  The canvas $(\g',b)=(\g,z)\restriction A$ is flow-critical by Observation~\ref{obs-subcrit}, and tame by inspection.
Since $\g'$ has fewer vertices than $\g$, $(\g',b)$ is not a counterexample to Corollary~\ref{cor-mainlov}, and thus $\deg(b) \ge 6+|\tau(b)|$.  However, $\deg(b)=\deg(A)< 4 + |\tau(A)|=4+|\tau(b)|$, which is a contradiction.
\end{proof}

Thus, the tameness condition extends from vertices to larger subsets, even in a stronger form.

\begin{lemma}\label{lemma-cut}
Let $(\g,z)$ be a flow-critical canvas and let $A\subseteq V(\g)\setminus\{z\}$ be a set of size at least two.
If $\deg(v)\ge 4+|\tau(v)|$ for every $v\in A$, then $\deg(A) \ge 6+|\tau(A)|$.
\end{lemma}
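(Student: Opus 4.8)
The plan is to deduce this directly from Corollary~\ref{cor-mainlov} via the restriction operation, exactly the way Corollary~\ref{cor-mainlov} itself was proved to be tameness-propagating. Set $B = V(\g)\setminus A$ and let $(\g',b) = (\g,z)\restriction A = (\g/B, b)$. Since $A$ has size at least two, $\g'$ has at least three vertices, so the canvas $(\g',b)$ is non-trivial; and since $(\g,z)$ is flow-critical, Observation~\ref{obs-subcrit} tells us $(\g',b)$ is flow-critical as well.

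The next step is to check that $(\g',b)$ is tame. The vertices of $\g'$ other than the tip $b$ are precisely the vertices of $A$. Contracting $B$ to the single vertex $b$ does not create loops at vertices of $A$ and does not delete any edge incident with a vertex of $A$, so $\deg_{\g'}(v) = \deg_{\g}(v)$ for every $v\in A$; similarly the boundary of each $v\in A$ is unchanged, so $\tau$ is unchanged on $A$. Hence the hypothesis $\deg(v)\ge 4+|\tau(v)|$ for all $v\in A$ is exactly the statement that $(\g',b)$ is tame. Applying Corollary~\ref{cor-mainlov} to the non-trivial tame flow-critical canvas $(\g',b)$ gives $\deg(b)\ge 6+|\tau(b)|$.

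Finally I translate this back to $\g$. The degree of $b$ in $\g'$ is the number of edges between $A$ and $B$ in $\g$, which is $\deg(A)$. For the $\tau$-term, the boundary of $b$ in $\g'$ is $\beta(B)\equiv-\beta(A)\pmod 3$ (using that $\beta$ is a $\Z$-boundary of $\g$), so $\beta_{\g'}(b)$ is zero exactly when $\beta(A)$ is, and is $\pm1$ exactly when $\beta(A)$ is; combined with $\deg(b)=\deg(A)$ having the same parity, the table defining $|\tau|$ gives $|\tau(b)| = |\tau(A)|$. Therefore $\deg(A) = \deg(b)\ge 6+|\tau(b)| = 6+|\tau(A)|$, as claimed.

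I do not anticipate a genuine obstacle here: the only thing to be careful about is that the restriction really does preserve tameness and non-triviality and that $\deg$ and $|\tau|$ transfer cleanly between $A$ in $\g$ and $b$ in $\g'$ — all of which are routine once the restriction is set up — and that Corollary~\ref{cor-mainlov} is available, which it is. (One should also note the degenerate case $A = V(\g)\setminus\{z\}$, where $B=\{z\}$ and $(\g,z)\restriction A = (\g,z)$ itself; the argument goes through verbatim.)
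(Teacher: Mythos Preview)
Your proof is correct and follows exactly the same approach as the paper: restrict to $A$, observe that the restricted canvas is non-trivial, tame, and flow-critical, then apply Corollary~\ref{cor-mainlov} and translate $\deg(b)$ and $|\tau(b)|$ back to $\deg(A)$ and $|\tau(A)|$. You have simply spelled out in more detail the identifications $\deg(b)=\deg(A)$ and $|\tau(b)|=|\tau(A)|$ that the paper leaves implicit.
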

\begin{proof}
By Observation~\ref{obs-subcrit}, $(\g',b)=(\g,z)\restriction A$ is a flow-critical canvas.  Note that $(\g',b)$ is tame and has at least three vertices,
and thus by Corollary~\ref{cor-mainlov}, we have $\deg(A)= \deg(b) \ge 6+|\tau(b)|=6+|\tau(A)|$.  
\end{proof}

In particular, we have the following important consequence.

\begin{corollary}\label{cor-contrtame}
If $(\g,z)$ is a tame flow-critical canvas, then $\deg(A)\ge 6+|\tau(A)|$ holds for every set $A\subseteq V(\g)\setminus \{z\}$
of size at least two.  In particular, every tip-respecting contraction of $(\g,z)$ is tame.
\end{corollary}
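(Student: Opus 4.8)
The plan is to derive both parts directly from Lemma~\ref{lemma-cut}, which already packages the hard work: it rests on Corollary~\ref{cor-mainlov}, hence on Theorem~\ref{thm:lovaszrealtheorem} together with the restriction Observation~\ref{obs-subcrit}. So no new induction or flow-existence argument is needed here.

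First I would handle the statement about subsets $A\subseteq V(\g)\setminus\{z\}$ with $|A|\ge 2$. Since $(\g,z)$ is tame, by definition $\deg(v)\ge 4+|\tau(v)|$ for every $v\in V(\g)\setminus\{z\}$, and in particular for every $v\in A$. Lemma~\ref{lemma-cut} then applies verbatim and yields $\deg(A)\ge 6+|\tau(A)|$, which is the first claim.

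For the second claim, let $\PP$ be any tip-respecting partition of $V(\g)$ and let $w$ be a vertex of $\g/\PP$ distinct from $z$, arising from a part $P\in\PP$; since $\PP$ is tip-respecting, $z\notin P$, so $P\subseteq V(\g)\setminus\{z\}$. The key observation is that contraction preserves the relevant quantities: $\deg_{\g/\PP}(w)=\deg_\g(P)$ by the definition of the degree of a set, and $(\beta/\PP)(w)\equiv\beta(P)\pmod 3$ by the definition of $\beta/\PP$; since $|\tau(\cdot)|$ depends only on the parity of the degree and on whether the boundary value is $0$ or $\pm 1$, this gives $|\tau(w)|=|\tau(P)|$. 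Now I split into two cases. If $|P|=1$, say $P=\{v\}$, then tameness of $(\g,z)$ at $v$ gives $\deg_{\g/\PP}(w)=\deg_\g(v)\ge 4+|\tau(v)|=4+|\tau(w)|$. If $|P|\ge 2$, the first claim gives $\deg_{\g/\PP}(w)=\deg_\g(P)\ge 6+|\tau(P)|\ge 4+|\tau(w)|$. In either case the degree condition defining tameness holds at $w$, so $(\g/\PP,z)$ is tame.

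I do not anticipate a genuine obstacle: both parts are immediate once Lemma~\ref{lemma-cut} is in hand, and the only point needing (minor) care is verifying that $\deg$, $\beta$ (mod $3$), and hence $|\tau|$ transfer correctly from a part $P$ to its contracted vertex $w$ — which is exactly what the definitions of $\g/\PP$ and of $\deg,\beta,\tau$ on vertex sets say. The trivial-partition case (all parts singletons) is subsumed, as then $\g/\PP=\g$, which is tame by hypothesis.
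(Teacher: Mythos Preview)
Your proof is correct and follows essentially the same approach as the paper: both parts are derived directly from Lemma~\ref{lemma-cut}, with the second part handled by the same case split on whether a part of the partition has size one or at least two. Your additional care in spelling out why $\deg_{\g/\PP}(w)=\deg_\g(P)$ and $|\tau(w)|=|\tau(P)|$ is fine and makes the argument a bit more self-contained.
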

\begin{proof}
The first claim follows by Lemma~\ref{lemma-cut}.
If $(\g',z)$ is a tip-respecting contraction of $(\g,z)$ and $v\in V(\g')$, then consider the set $A\subseteq V(\g)$ contracted
into $v$.  If $|A|\ge 2$, then $\deg(v)=\deg(A) \ge 6+|\tau(A)|=6+|\tau(v)|>4+|\tau(v)|$, as we have just proved.
If $A$ consists of a single vertex $u\in V(\g)$, then $\deg(v)=\deg(u)\ge 4+|\tau(u)|=4+|\tau(v)|$, since $(\g,z)$ is
tame.  Hence, $(\g',z)$ is also tame.
\end{proof}

We next prove several lemmas on the connectivity of critical canvases and easels.

\begin{lemma}\label{lemma-conn}
Let $(\g,z)$ be a canvas and $x\in V(\g)\setminus \{z\}$ be a vertex such that $\deg(v)\ge 4+|\tau(v)|$ for every $v\in V(\g)\setminus\{x,z\}$.
If $(\g,z)$ is non-trivial and flow-critical, then $\g$ is connected.
\end{lemma}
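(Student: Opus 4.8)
The plan is to argue by contradiction: suppose $\g$ is disconnected, and derive a contradiction with flow-criticality. First I would invoke Observation~\ref{obs-critopsi} (or rather the definition of flow-criticality directly, since $(\g,z)$ is non-trivial, hence $|V(\g)|\ge 3$): there is a tip preflow $\psi$ and a non-trivial tip-respecting partition $\PP$ such that $\psi$ extends to a nowhere-zero flow in $\g/\PP$ but not in $\g$. The strategy is to show that $\psi$ in fact \emph{does} extend to a nowhere-zero flow in $\g$, contradicting its non-extendability. The key point is that components can be handled independently once we know each component admits the flow that $\psi$ forces on it; so the real content is to show each component of $\g$ is ``small enough'' or structured enough that $\psi$ (restricted to it) extends.

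Here is the main line of reasoning. Let $C_z$ be the component of $\g$ containing $z$, and let $C$ be any other component. Since all arcs of $\psi$ are incident with $z$, the preflow $\psi$ places no constraint on $C$; a nowhere-zero flow in $\g$ extending $\psi$ exists if and only if (i) there is a nowhere-zero flow in $(C_z,z)$ extending $\psi|_{C_z}$, and (ii) every other component $C$ admits a nowhere-zero flow (with the inherited boundary). For (ii): the restriction $(\g,z)\restriction A$ with $A=V(C)$ is, by Observation~\ref{obs-subcrit}, a flow-critical canvas; but if $C$ has at least two vertices then, since every vertex of $C$ other than possibly $x$ satisfies $\deg(v)\ge 4+|\tau(v)|$ and the cut isolating any proper subset of $C$ from the rest is just the internal edges (as $C$ is a component, $\deg_{\g}(W)=\deg_C(W)$ for $W\subseteq V(C)$), Lemma~\ref{lemma-cut} would give $\deg(V(C))\ge 6+|\tau(V(C))|$; but $\deg(V(C))=0$, forcing $|\tau(V(C))|$ negative, absurd. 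So each non-$z$ component is a single vertex $v$, with $\deg_{\g}(v)=0$, hence $\tau(v)\ni 0$ forces $\beta(v)=0$ and $v$ is already a (trivial) nowhere-zero flow. Wait — I need to also rule out that a singleton component $v$ has $v=x$; but even if $v=x$, $\deg(x)=0$ means $\beta(x)=0$ and the empty orientation works. So (ii) holds automatically. It remains to handle (i), i.e., to show $C_z$ is all of $\g$ or that $\psi|_{C_z}$ extends in $C_z$.

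The cleanest finish: having shown all components other than $C_z$ are isolated vertices with zero boundary, I would instead directly contradict flow-criticality via a contraction argument. Take any isolated vertex $v\ne z$ (which exists since $\g$ is disconnected and $C_z$ contains $z$); note $v\ne x$ is not needed. Consider the partition $\PP_0$ that merges $v$ with some other vertex $u$ in its... — no, $v$ is isolated, so contracting $\{u,v\}$ doesn't change whether a preflow extends, since $v$ contributes nothing and has $\beta(v)=0$. Concretely: for \emph{every} non-trivial tip-respecting partition $\PP$ and every tip preflow $\psi$, a nowhere-zero flow in $\g/\PP$ extending $\psi$ restricts/lifts to one in $\g$ extending $\psi$, because the isolated zero-boundary vertex $v$ (and its contraction) impose no constraints and the edge-set of $\g$ is unchanged under contracting $v$ into any part. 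Hence no tip preflow witnesses flow-criticality — more precisely, $\psi$ extends in $\g$ iff it extends in $\g/\PP$ — contradicting that $(\g,z)$ is flow-critical and non-trivial (which requires some $\psi$ extending in some $\g/\PP$ but not in $\g$). I expect the main obstacle to be bookkeeping the boundary/parity conditions on the small components cleanly (ensuring $\deg=0$ really forces $\beta=0$ via the $\tau$-table and Lemma~\ref{lemma-cut}), and making the ``contraction of an isolated zero-boundary vertex changes nothing'' step airtight using Observation~\ref{obs-allbutone}.
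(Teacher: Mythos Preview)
Your argument has a gap at the point where you invoke Lemma~\ref{lemma-cut} with $A=V(C)$ to conclude $\deg(V(C))\ge 6+|\tau(V(C))|$. The hypothesis of that lemma requires $\deg(v)\ge 4+|\tau(v)|$ for \emph{every} $v\in A$, and you yourself note that this is only known for $v\ne x$. If $x$ happens to lie in the non-$z$ component $C$, the lemma does not apply as stated, so you cannot yet conclude that $C$ is a singleton. This is fixable: when $x\in C$ and $|V(C)|\ge 3$, apply Lemma~\ref{lemma-cut} instead to $A=V(C)\setminus\{x\}$ and use that $C$ is a full component (so $\deg_{\g}(A)=\deg_{\g}(x)$ and $|\tau(A)|=|\tau(x)|$ since $\beta(V(C))=0$) to deduce $\deg(x)\ge 6+|\tau(x)|$; now the degree condition holds at $x$ too and your original application to $V(C)$ goes through. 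The case $|V(C)|=2$ is immediate from Lemma~\ref{lemma-simple} and the degree bound on the second vertex. With this patch, your finishing contraction argument (merging an isolated zero-boundary vertex into any other non-tip vertex does not change which tip preflows extend, contradicting flow-criticality) is correct.

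For comparison, the paper does not try to force the non-$z$ components down to singletons. Instead it shows directly that any such component $\comp$ admits a nowhere-zero flow: for cuts $A\subseteq V(\comp)\setminus\{x\}$ the degree bound follows from Lemma~\ref{lemma-cut} and the tameness hypothesis, and for cuts containing $x$ one uses the symmetry $\deg(A)=\deg(V(\comp)\setminus A)$, $|\tau(A)|=|\tau(V(\comp)\setminus A)|$ (since $\beta(V(\comp))=0$); then Theorem~\ref{thm:lovaszrealtheorem} applied with an added isolated tip gives the flow. The paper then contracts $V(\comp)$ together with one extra vertex $v\ne z$ outside $\comp$, observes this contraction is isomorphic to $\g-V(\comp)$, and combines the resulting flow with the flow on $\comp$ to extend $\psi$ in $\g$. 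Your route is a bit more elementary once patched (it avoids re-invoking Theorem~\ref{thm:lovaszrealtheorem} explicitly and ends with a cleaner contradiction), while the paper's route avoids the case split on whether $x\in C$.
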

\begin{proof}
Suppose for a contradiction that $\g=(G,\beta)$ is not connected, and let $\comp$ be a component of $\g$ that does not contain $z$.
We claim that $\comp$ has a nowhere-zero flow.  This is trivially the case if $|V(\comp)|=1$, and thus suppose that $\comp$ has at least two vertices.
By Lemma~\ref{lemma-cut}, we have $\deg(A)\ge 4+|\tau(A)|$ for every non-empty $A\subseteq V(\comp)\setminus \{x\}$.
Since $\beta$ is a $\Z$-boundary, we have $\beta(V(\comp))\equiv 0\pmod 3$, and consequently $|\tau(A)|=|\tau(V(\comp)\setminus A)|$.
Since $\deg(A) =\deg(V(\comp)\setminus A)$, we conclude that $\deg(A')\ge 4+|\tau(A')|$ holds also for sets $A'\subseteq V(\comp)$ containing $x$
(in the case that $x\in V(\comp)$).  Let $\comp'$ be obtained from $\comp$ by adding an isolated vertex $z'$ with boundary value $0$.
By Theorem~\ref{thm:lovaszrealtheorem} applied to the canvas $(\comp',z')$, we conclude that $\comp$ indeed has a nowhere-zero flow.

Since $(\g,z)$ is non-trivial, flow-critical, and $\comp$ has a nowhere-zero flow, there must exist a vertex $v\in V(\g)\setminus V(\comp)$ distinct from $z$.
Since $(\g,z)$ is flow-critical, there exists a tip preflow $\psi$ that does not extend to a nowhere-zero flow in $\g$, but extends to a nowhere-zero flow $\varphi$ in the canvas $(\g',z)$ obtained from $(\g,z)$
by contracting $V(\comp)\cup\{v\}$.  However, the canvas $(\g',z)$ is isomorphic to $\g-V(\comp)$, and thus
$\varphi$ combines with a nowhere-zero flow in $\comp$ to a nowhere-zero flow in $\g$ extending $\psi$, which is a contradiction.
\end{proof}

\begin{lemma}\label{lemma-2con}
Suppose $(\g,z)$ is a $\psi$-critical canvas for a tip preflow $\psi$.  If $|V(\g)|\ge 4$, then $\g-z$ is 2-connected.
\end{lemma}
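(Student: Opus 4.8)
The plan is to suppose, for contradiction, that $\g-z$ is not 2-connected, and then to produce a nowhere-zero flow in $\g$ extending $\psi$, contradicting $\psi$-criticality. Two facts will be used repeatedly. First, by Observation~\ref{obs-allbutone}, an orientation of $\g$ is a nowhere-zero flow as soon as the conservation condition holds at every vertex other than $z$; thus, when I assemble an orientation of $\g$ out of orientations coming from several contractions, I never have to verify conservation at the tip $z$. Second, a $\psi$-critical canvas is flow-critical (taking $\PP$-extensions of $\psi$ itself witnesses this), so Observation~\ref{obs-subcrit} applies: every restriction $(\g,z)\restriction A$ is again flow-critical. Since $|V(\g)|\ge 4$, the graph $\g-z$ has at least three vertices, so failure of 2-connectivity means that $\g-z$ is disconnected or has a cut vertex.

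I would first dispose of the case in which the components of $\g-z$ can be grouped into two parts $H_1,H_2$, each of size at least two. Let $\PP_i$ be the tip-respecting partition contracting $V(H_i)$ to a single vertex; since $|V(H_i)|\ge 2$ it is non-trivial, so $\psi$ extends to nowhere-zero flows $\varphi_1$ in $\g/\PP_1$ and $\varphi_2$ in $\g/\PP_2$. As there are no edges between $H_1$ and $H_2$, the orientation of $\g$ that follows $\varphi_2$ on the edges inside $\{z\}\cup V(H_1)$ and $\varphi_1$ on the edges inside $\{z\}\cup V(H_2)$ is well defined; it satisfies conservation at every vertex of $H_1\cup H_2$ (each is a singleton vertex of $\g/\PP_2$, respectively $\g/\PP_1$), hence at $z$ as well, and it agrees with $\psi$ on every edge $\psi$ orients, all of which are incident with $z$. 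This is the desired nowhere-zero flow in $\g$, a contradiction. A small variant of this idea -- contracting all but one non-tip vertex, which yields a flow correctly orienting all edges at that vertex -- settles the case where $\g-z$ consists of three isolated vertices (here necessarily $|V(\g)|=4$).

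The cases that remain are: $\g-z$ is the disjoint union of a connected graph $K$ on at least two vertices and a single vertex $w$ joined only to $z$; and $\g-z$ is connected with a cut vertex $v$. I expect these to be the crux, and would treat them by induction on $|V(\g)|$. In both, fix a partition $V(\g)=\{z\}\cup A'\cup S\cup B'$ with $S=\{v\}$ or $S=\emptyset$, with $A'$ and $B'$ nonempty and no edges between $A'$ and $B'$. Take a nowhere-zero flow $\varphi^*$ extending $\psi$ in the tip-respecting contraction that shrinks each of $A'$ and $B'$ to a single vertex while keeping $z$ and (if present) $v$ as singletons; this contraction is non-trivial unless $|V(\g)|=4$, and the base case $|V(\g)|=4$ together with this degenerate situation can be checked directly. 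Reading off the orientations that $\varphi^*$ assigns to the edges from $A'$ to $\{z\}\cup S$ gives a tip preflow $\psi_{A'}$ on the flow-critical restriction $(\g,z)\restriction A'$, and a short computation with conservation in $\varphi^*$ (at the contracted copy of $A'$, and at $z$ and $v$) confirms that $\psi_{A'}$ is genuinely a tip preflow; similarly one gets $\psi_{B'}$ on $(\g,z)\restriction B'$. If both $\psi_{A'}$ and $\psi_{B'}$ extend, I can glue their extensions to $\varphi^*$ -- the latter supplying a consistent orientation of all edges at $z$ and at $v$ -- to obtain a nowhere-zero flow in $\g$ extending $\psi$, a contradiction. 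The main obstacle is that a flow-critical canvas need not have every tip preflow extend, so this last hypothesis is not automatic: to handle it I would pass to a minimal further tip-respecting contraction on which $\psi_{A'}$ (say) fails to extend, which is $\psi_{A'}$-critical and has strictly fewer vertices than $\g$, apply the inductive hypothesis to it, and derive a contradiction from the resulting structure -- again only ever checking conservation away from the tip. The bookkeeping of the flow values crossing the cut vertex $v$ is the part I expect to be the most delicate.
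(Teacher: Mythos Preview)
Your steps 1 and 2 are correct, but step 3 has a genuine gap. You propose that if $\psi_{A'}$ fails to extend in the restriction $(\g,z)\restriction A'$, you pass to a smaller $\psi_{A'}$-critical contraction and invoke the inductive hypothesis on it. But the inductive hypothesis only tells you that this smaller canvas minus its tip is 2-connected; it does not tell you that $\psi_{A'}$ extends anywhere, and that extension is precisely what your gluing needs. You never say how 2-connectedness of the smaller object yields a contradiction, and there is no evident way to extract one. The induction, as written, does not close.

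The paper's proof sidesteps all of this with a single move that also absorbs your steps 1 and 2. Since $\g-z$ has at least three vertices and is not 2-connected, one can always choose a vertex $x\in V(\g)\setminus\{z\}$ and a partition $V(\g)=\{z\}\cup\{x\}\cup A_1\cup A_2$ with $A_1,A_2$ non-empty and no $A_1$--$A_2$ edges (take $x$ to be a cut vertex of $\g-z$, or, if $\g-z$ is disconnected, any vertex chosen so that the remaining vertices split non-trivially among components). For $i\in\{1,2\}$ contract $\{x\}\cup A_{3-i}$; this is a non-trivial tip-respecting contraction, so by $\psi$-criticality $\psi$ extends to a nowhere-zero flow $\vec G_i$. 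Now orient the edges incident with $A_i$ according to $\vec G_i$ and the edges between $z$ and $x$ according to $\psi$. Conservation then holds at every vertex of $V(\g)\setminus\{x\}$ (including $z$, since all edges at $z$ are oriented by $\psi$), and Observation~\ref{obs-allbutone} supplies conservation at $x$ for free. The idea you were missing is to \emph{put the cut vertex into the contracted set}, so that conservation at it never has to be checked directly; this removes the need for restrictions, induction, and the delicate bookkeeping at $v$ that you anticipated.
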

\begin{proof}
Let $\g=(G,\beta)$. If $\g-z$ is not 2-connected, then since $|V(\g)|\ge 4$, there exists a partition $\{A_1,A_2,\{z\},\{x\}\}$ of $V(\g)$ such that
$A_1$ and $A_2$ are non-empty and there are no edges between $A_1$ and $A_2$.  For $i\in\{1,2\}$, since $(\g,z)$ is $\psi$-critical,
$\psi$ extends to a nowhere-zero flow $\vec{G}_i$ in $\g/(\{x\}\cup A_{3-i})$.  Let $\vec{G}$ be the orientation of $G$ matching $\vec{G}_i$ on the
edges incident with $A_i$ for $i\in\{1,2\}$ and $\psi$ on the edges between $z$ and $x$.   Clearly, we have $\deg^+(v)-\deg^-(v)\equiv \beta(v)$ for every $v\in V(\g)\setminus\{x\}$,
 and by Lemma~\ref{obs-allbutone}, it follows that $\vec{G}$ is a nowhere-zero flow.  Since $\vec{G}$ extends $\psi$, this is a contradiction. 
\end{proof}

Let us remark that in Lemma~\ref{lemma-2con}, it is not sufficient to assume that $(\g,z)$ is flow-critical even to conclude that $\g-z$ is connected.
As an example, consider the canvas $(\g,z)$ consisting of a matching $u_1v_1$ and $u_2v_2$ and the tip $z$ joined to each of $u_1$, $v_1$, $u_2$, and $v_2$
by a double edge, with zero boundary (see Figure \ref{fig:connectivityfig}). It is easy to see that $(\g,z)$ is flow-critical.
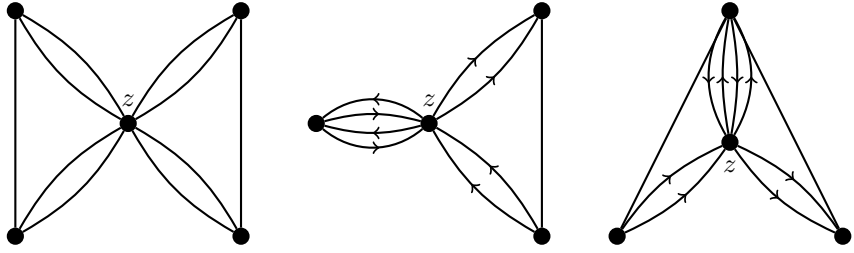
\begin{figure}
\begin{center}
\begin{tikzpicture}
\node[blackvertexv2] at (0,0) (v1) {};
\node[blackvertexv2] at (0,-3) (v2) {};
\node[blackvertexv2] at (1.5,-1.5) (z) [label =above:$z$] {};
\node[blackvertexv2] at (3,0) (v3) {};
\node[blackvertexv2] at (3,-3) (v4) {};
\draw[thick,black] (v1)--(v2);
\draw[thick,black] (v3)--(v4) {};
\draw[thick,black,bend left = 15] (z) to (v1);
\draw[thick,black,bend right =15] (z) to (v1);
\draw[thick,black,bend right = 15] (z) to (v2);
\draw[thick,black,bend left = 15] (z) to (v2);
\draw[thick,black,bend right =15] (z) to (v3);
\draw[thick,black,bend left =15] (z) to (v3);
\draw[thick,black,bend right =15] (z) to (v4);
\draw[thick,black,bend left =15] (z) to (v4);

\begin{scope}[xshift = 4cm]
\node[blackvertexv2] at (0,-1.5) (v1) {};
\node[blackvertexv2] at (1.5,-1.5) (z) [label =above:$z$] {};
\node[blackvertexv2] at (3,0) (v3) {};
\node[blackvertexv2] at (3,-3) (v4) {};

\draw[thick,black, bend left = 15, postaction={decoration={markings,mark=at position 0.5 with {\arrow{>}}},decorate}] (z) to (v1);
\draw[thick,black, bend left =40, postaction={decoration={markings,mark=at position 0.5 with {\arrow{<}}},decorate}] (z) to (v1);
\draw[thick,black, bend right =15, postaction={decoration={markings,mark=at position 0.5 with {\arrow{<}}},decorate}] (z) to (v1);
\draw[thick,black, bend right =40, postaction={decoration={markings,mark=at position 0.5 with {\arrow{>}}},decorate}] (z) to (v1);
\draw[thick,black, bend left = 15, postaction={decoration={markings,mark=at position 0.5 with {\arrow{>}}},decorate} ] (z) to (v3);
\draw[thick,black, bend right = 15,postaction={decoration={markings,mark=at position 0.5 with {\arrow{>}}},decorate}] (z) to (v3);
\draw[thick,black, bend left = 15,postaction={decoration={markings,mark=at position 0.5 with {\arrow{<}}},decorate}] (z) to (v4);
\draw[thick,black, bend right =15,postaction={decoration={markings,mark=at position 0.5 with {\arrow{<}}},decorate}] (z) to (v4);
\draw[thick,black] (v3)--(v4);
\end{scope}

\begin{scope}[xshift = 8cm]
\node[blackvertexv2] at (1.5,0) (v1) {};
\node[blackvertexv2] at (0,-3) (v2) {};
\node[blackvertexv2] at (1.5,-1.75) (z) [label =below:$z$] {};
\node[blackvertexv2] at (3,-3) (v4) {};

\draw[thick,black] (v1)--(v4);
\draw[thick,black] (v1)--(v2);

\draw[thick,black, bend left = 15, postaction={decoration={markings,mark=at position 0.5 with {\arrow{>}}},decorate}] (z) to (v4);
\draw[thick,black, bend right = 15, postaction={decoration={markings,mark=at position 0.5 with {\arrow{>}}},decorate}] (z) to (v4);

\draw[thick,black, bend right = 15, postaction={decoration={markings,mark=at position 0.5 with {\arrow{<}}},decorate}] (z) to (v2);
\draw[thick,black, bend left = 15, postaction={decoration={markings,mark=at position 0.5 with {\arrow{<}}},decorate}] (z) to (v2);

\draw[thick,black, bend left = 10, postaction={decoration={markings,mark=at position 0.5 with {\arrow{>}}},decorate}] (z) to (v1);

\draw[thick,black, bend left = 30, postaction={decoration={markings,mark=at position 0.5 with {\arrow{<}}},decorate}] (z) to (v1);

\draw[thick,black, bend right = 10, postaction={decoration={markings,mark=at position 0.5 with {\arrow{<}}},decorate}] (z) to (v1);

\draw[thick,black, bend right = 30, postaction={decoration={markings,mark=at position 0.5 with {\arrow{>}}},decorate}] (z) to (v1);
\end{scope}

\end{tikzpicture}
    \caption{A flow-critical graph $(G,\beta)$ where $G-z$ is not connected. Here $\beta(v) =0$ for all $v \in V(G)$.  Two tip-respecting contractions which do not extend to $G$ are shown.}
    \label{fig:connectivityfig}
\end{center}
\end{figure}
\begin{lemma}\label{lemma-at4}
Every tall tame critical easel has at least four vertices.
\end{lemma}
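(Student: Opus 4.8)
The plan is to argue by contradiction. Suppose $(\g,z,x,\psi)$ is a tall tame critical easel with fewer than four vertices. Since $(\g,z)$ is $\psi$-critical, Observation~\ref{obs-ge3} gives $|V(\g)|\ge 3$, so $V(\g)=\{z,x,w\}$ for a third vertex $w$. Write $a,b,m$ for the numbers of edges $zx$, $zw$, $xw$ respectively, so $\deg(z)=a+b$, $\deg(x)=a+m$, $\deg(w)=b+m$; tameness of the easel gives $b+m=\deg(w)\ge 4+|\tau(w)|\ge 4$. The only non-trivial tip-respecting partition of $\{z,x,w\}$ is $\{\{z\},\{x,w\}\}$, so if $m\ge 1$ then $\psi$-criticality will provide a nowhere-zero flow $\varphi$ of $\g/\{x,w\}$ extending $\psi$; viewed back in $\g$, this $\varphi$ orients all $zx$- and $zw$-edges, agrees with $\psi$ there, and satisfies the flow condition at $z$.

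First I would dispose of the case $m=0$ using only the numerics: tallness forces $a+b=\deg(z)\le\deg(x)+2=a+2$, i.e. $b\le 2$, contradicting $b\ge 4$. So $m\ge 1$, and we have the flow $\varphi$ above. The core step is then to extend $\varphi$ to a nowhere-zero flow of $\g$ by orienting the $m$ edges between $x$ and $w$; by Observation~\ref{obs-allbutone} we may ignore the flow constraint at $x$, so it suffices to make the constraint hold at $w$. If $c$ of the $xw$-edges are oriented toward $w$, their contribution to $\deg^+(w)-\deg^-(w)$ is $2c-m$, so a valid extension exists iff some $c\in\{0,\dots,m\}$ lies in a prescribed residue class modulo $3$. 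When $m\ge 2$ this interval meets every residue class, so such a $c$ exists, contradicting criticality of the easel.

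This leaves $m=1$, which I expect to be the main obstacle: with a single $xw$-edge there are only two possible orientations, and a priori neither need work. Here tameness ($b+1=\deg(w)\ge 4$) and tallness ($a+b=\deg(z)\le\deg(x)+2=a+3$) force $b=3$, hence $\deg(w)=4$ and, by tameness once more, $|\tau(w)|=0$, i.e. $\beta(w)=0$. Moreover $\deg(z)=\deg(x)+2$, so the second clause in the definition of \emph{tall} applies: $\psi$ contains an arc on a $zw$-edge directed toward $z$ and another directed away from $z$. Since $\varphi$ extends $\psi$, among the three $zw$-edges $\varphi$ orients at least one toward $w$ and at least one away, so their contribution $s$ to $\deg^+(w)-\deg^-(w)$ is $\pm 1\not\equiv 0\pmod 3$; orienting the unique $xw$-edge appropriately makes the total $\equiv 0=\beta(w)$. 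In every case we obtain a nowhere-zero flow of $\g$ extending $\psi$, contradicting that the easel is critical, which completes the proof.

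In summary, the whole argument is just bookkeeping with vertex degrees and residues modulo $3$, anchored by Observations~\ref{obs-ge3} and~\ref{obs-allbutone}; the one genuinely delicate point is the $m=1$ case, and it is precisely there that the extra hypothesis in the definition of a tall easel is used (without it there really would be tame critical easels on three vertices).
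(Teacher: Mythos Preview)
Your proof is correct and follows essentially the same route as the paper's: reduce to a three-vertex easel, use tameness and tallness to force $m=1$, $\deg(w)=4$, $\beta(w)=0$, and then invoke the extra clause in the definition of tallness to orient the single $xw$-edge. Two small remarks: the paper disposes of $m\ge 2$ immediately via Lemma~\ref{lemma-simple} (multiple edges must touch $z$), so your residue argument for that case is handling a vacuous situation; and your appeal to $\psi$-criticality to obtain $\varphi$ on $\g/\{x,w\}$ is unnecessary, since in a two-vertex canvas the tip preflow $\psi$ itself already is a nowhere-zero flow.
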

\begin{proof}
Let $(\g,z,x,\psi)$ be a tall tame critical easel with $\g=(G,\beta)$.  Since $\psi$ is not itself a nowhere-zero flow in $\g$,
we have $|V(\g)|\ge 3$.  Suppose for a contradiction that $V(\g)=\{v,z,x\}$.  Let $m$ be the number of edges between $v$ and $x$;
by Lemma~\ref{lemma-simple}, we have $m\le 1$.  Note that since the easel is tame, $\deg(v)\ge 4+|\tau(v)|$.  Since the easel is tall, we have
$\deg(x)\ge \deg(z)-2=(\deg(x)+\deg(v)-2m)-2$, and thus $\deg(v)\le 2m+2$.  We conclude that $m=1$, $\deg(v)=4$ and $\beta(v)=0$.
Since the easel is tall and $\deg(z)=\deg(x)+2$, the three edges between $v$ and $z$
are not all oriented by $\psi$ in the same direction (all towards $z$ or all away from $z$).
Hence, it is possible to direct the edge $vx$ so that $v$ has the same indegree and outdegree.
By Lemma~\ref{obs-allbutone}, this extends $\psi$ to a nowhere-zero flow in $\g$.
This is a contradiction, and thus $|V(\g)|\ge 4$.
\end{proof}

Lemmas~\ref{lemma-2con} and \ref{lemma-at4} have the following consequence.

\begin{corollary}\label{cor-2con}
If $(\g,z,x,\psi)$ is a tall tame critical easel, then $\g-z$ is 2-connected.
\end{corollary}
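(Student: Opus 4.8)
The statement is an immediate corollary of the two preceding lemmas, so the plan is simply to chain them together. By the definition of a \emph{critical} easel, saying that $(\g,z,x,\psi)$ is critical means precisely that the canvas $(\g,z)$ is $\psi$-critical. Hence $(\g,z)$ fits the hypothesis of Lemma~\ref{lemma-2con} as soon as we know $|V(\g)|\ge 4$.

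To obtain that, I would invoke Lemma~\ref{lemma-at4}: every tall tame critical easel has at least four vertices. This is where the genuine content lies, but it is already established; the tallness and tameness assumptions are exactly what rules out the degenerate case $V(\g)=\{v,z,x\}$, since in that case $\deg(v)\le 2m+2$ (with $m\le 1$ the multiplicity of $vx$ by Lemma~\ref{lemma-simple}) forces $m=1$, $\deg(v)=4$, $\beta(v)=0$, and then tallness guarantees the three $vz$-edges are not all oriented the same way, so Observation~\ref{obs-allbutone} lets one direct $vx$ to extend $\psi$, contradicting criticality.

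With $|V(\g)|\ge 4$ in hand and $(\g,z)$ being $\psi$-critical, Lemma~\ref{lemma-2con} applies verbatim and yields that $\g-z$ is $2$-connected, which is exactly the claim. So the ``main obstacle'' here is not in this proof at all; it was absorbed into Lemma~\ref{lemma-at4} (handling the three-vertex case via the tallness condition on $\psi$) and Lemma~\ref{lemma-2con} (the cut-splitting argument using Observation~\ref{obs-allbutone}). The corollary itself is a two-line deduction: apply Lemma~\ref{lemma-at4}, then apply Lemma~\ref{lemma-2con}.
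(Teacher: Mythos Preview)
Your proposal is correct and matches the paper's approach exactly: the paper states the corollary immediately after Lemmas~\ref{lemma-2con} and~\ref{lemma-at4} with no written proof, treating it as a direct consequence of the two. Your deduction---Lemma~\ref{lemma-at4} gives $|V(\g)|\ge 4$, criticality means $(\g,z)$ is $\psi$-critical, then Lemma~\ref{lemma-2con} applies---is precisely the intended chain.
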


We will now use our newfound connectivity properties to make a simple observation on canvases which have no ``mixed'' edges. This requires some definitions.
\begin{definition}
\label{def:xhom}
We say that a vertex $v$ is \emph{in-friendly} if $\tau(v)$ contains a non-positive value, and \emph{out-friendly} if $\tau(v)$ contains a non-negative value.  Note that if $\beta(v)=0$, then $v$ is both in-friendly and out-friendly.
An edge $uv$ of a canvas $(\g, z)$ is \emph{mixed} if $u\neq z\neq v$, one of $u$ and $v$ is in-friendly, and the other one is out-friendly.
Let $x$ be a vertex of $\g$; we say that the canvas $(\g,z)$ is \emph{$x$-homogeneous} if all its mixed edges are incident with $x$.
Note that $x=z$ is possible, and in that case equivalently $(\g,z)$ has no mixed edges.
\end{definition}

\begin{observation}\label{obs-allplusminus}
Let $(\g,z)$ be an $x$-homogeneous $\psi$-critical canvas for a tip preflow $\psi$.
If $|V(\g)|\ge 4$, then either $\tau(v)>0$ for every $v\in V(\g)\setminus\{x,z\}$, or $\tau(v)<0$ for every $v\in V(\g)\setminus\{x,z\}$.
\end{observation}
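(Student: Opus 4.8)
The plan is to derive the statement from the $2$-connectivity of $\g-z$ together with a direct inspection of the defining table of $\tau$. Write $H:=\g-z-x$, so that $V(H)=V(\g)\setminus\{x,z\}$ (and $H=\g-z$ in the degenerate case $x=z$). Since $(\g,z)$ is $\psi$-critical and $|V(\g)|\ge 4$, Lemma~\ref{lemma-2con} gives that $\g-z$ is $2$-connected; deleting the single vertex $x$ (when $x\neq z$) from a $2$-connected graph leaves a connected graph, so $H$ is connected, and since $|V(H)|\ge|V(\g)|-2\ge 2$, the graph $H$ has no isolated vertex. This is the only place the hypothesis $|V(\g)|\ge 4$ enters.

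Next I would show that $\beta(v)\neq 0$ for every $v\in V(H)$. Suppose $\beta(v)=0$. Then $v$ is simultaneously in-friendly and out-friendly; moreover, inspecting the possible values of $\tau$ shows that every vertex of $\g$ is in-friendly or out-friendly. Hence, for every neighbour $u\neq z$ of $v$, the edge $uv$ is mixed, so $x$-homogeneity forces $u=x$. Thus $N_{\g}(v)\subseteq\{x,z\}$, i.e. $v$ is isolated in $H$, a contradiction. Consequently every $v\in V(H)$ satisfies $\beta(v)\neq 0$, hence exactly one of $\tau(v)>0$ and $\tau(v)<0$ holds.

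Finally I would show the sign of $\tau$ is constant on $V(H)$. Inspecting the table once more: $\tau(v)>0$ forces $v$ to be out-friendly but not in-friendly, and $\tau(v)<0$ forces $v$ to be in-friendly but not out-friendly. So an edge $uv\in E(H)$ with $\tau(u)>0$ and $\tau(v)<0$ would have $u\neq z\neq v$, be mixed, and not be incident with $x$, contradicting $x$-homogeneity. Hence the sign of $\tau$ is constant along every edge of the connected graph $H$, so either $\tau(v)>0$ for all $v\in V(H)$ or $\tau(v)<0$ for all $v\in V(H)$, which is the assertion.

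I do not anticipate a genuine obstacle: the argument is short, and the only delicate points are invoking Lemma~\ref{lemma-2con} (and thereby using the hypothesis $|V(\g)|\ge 4$) to guarantee that $H$ is connected and free of isolated vertices, and noticing that an $x$-homogeneous canvas cannot contain a boundary-zero vertex other than $x$ and $z$, since such a vertex would be adjacent only to $x$ and $z$.
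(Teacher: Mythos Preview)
Your proof is correct and follows essentially the same approach as the paper: both use Lemma~\ref{lemma-2con} to get that $\g-\{x,z\}$ is connected (with at least two vertices), then observe that along any edge of this graph the sign of $\tau$ cannot change because the edge is not mixed. You spell out explicitly the intermediate step that $\beta(v)\neq 0$ for every $v\in V(\g)\setminus\{x,z\}$, which the paper leaves implicit in its one-line deduction that a non-mixed edge forces $\tau(u),\tau(v)$ to have the same strict sign.
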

\begin{proof}
Consider any edge $uv$ of $\g-\{x,z\}$.  Since this edge is not mixed, we either have $\tau(u),\tau(v)>0$, or $\tau(u),\tau(v)<0$.
Since $\g-\{x,z\}$ is connected by Lemma~\ref{lemma-2con}, the claim of the observation follows.
\end{proof}

Finally, we end this section with a simple proof that Theorem \ref{thm-deg} implies Theorem \ref{cor-tall}. 
\begin{proof}[Proof of Theorem \ref{cor-tall}]
Suppose for a contradiction that a tip preflow $\psi$ does not extend to a nowhere-zero flow in $\g$.
Then $(\g,z)$ has a tip-respecting $\psi$-critical contraction $(\g/\PP,z)$.  Note that $|V(\g/\PP)|\ge 3$ by Observation~\ref{obs-ge3}.
Since $\deg(A)\ge 4+|\tau(A)|$ for every non-empty $A\subsetneq V(\g)\setminus\{z\}$,
we conclude that the canvas $(\g/\PP,z)$ is tame.  Let $x'$ be the vertex of $\g/\PP$ corresponding to the part $X\in \PP$ containing $x$.
Since $\deg(X)>\deg(z)-2$ for every $X\subseteq V(\g)\setminus\{z\}$ containing $x$, we have $\deg(x')>\deg(z)-2$. This contradicts Theorem~\ref{thm-deg}.
\end{proof}

\section{Maximum degrees of tame critical canvases}\label{sec:thmtall-proof}
In this section, we prove Theorem \ref{thm:tall}, which implies Theorem
\ref{thm-deg} immediately. Let us outline again how the proof goes. We study the properties
of the (hypothetical) minimal counterexample, using a rather convoluted definition of
minimality. This is an artifact of the proof: One of the reductions would
fail if we used a more standard notion of minimality. Our first step is to define
this notion, and to argue that it indeed gives rise to a
strict partial order on canvases. This will also be relevant for future
sections.

With that out of the way, Step 2 of the proof is to argue that our minimal
counterexample has the right conditions to allow us to split off edges
effectively. That is, we argue that for sets of size at least two not
containing $z$, the tameness condition holds with slack; and similarly we argue
that there are no sets of size at least two that separate $x$ from $z$ in our
easel and have at most $\deg(z) +2$ edges. Fortunately, the tame part is
already done in Lemma \ref{lemma-cut}. Unfortunately, the
tall part is not nearly as easy: In fact this is the most delicate and
technical part of the proof, and requires a very careful choice of splitting
off edges for the induction to work.

Once we have this, Step 3 is to impose structure on the easel. In particular,
we show that we have no vertices of degree four in the canvas, and further,
that our easel is $x$-homogeneous (recall Definition \ref{def:xhom}). We can
actually push the structure even further: Not only is the easel
$x$-homogeneous, but in fact we can assert that $z$ has degree at most $\deg(x)
+1$, and all of the edges incident to $z$ are oriented either towards $z$ or
away from $z$.

With this, Step 4\textemdash the final step\textemdash is to perform
essentially the same reduction as in the proof of Theorem
\ref{thm:lovaszrealtheorem}. In particular, we take one of the arcs incident to
$z$,  and replace it with two arcs directed in the opposite direction. By how
we defined our partial order on the set of counterexamples, this new easel will
be strictly smaller in our partial ordering on the set of counterexamples,  and
hence not a counterexample; but of course a flow in the resulting easel is a
flow in the original, contradicting the fact that we had a counterexample at
all.

\subsection{Step 1: A partial order for canvases}
We start with an important definition. 
\begin{definition}
\label{def:ordering}
For a canvas $(\g, z)$, we define $o(\g,z)=|V(\g)|+|E(\g-z)|$. For triples $(\g_1,z_1,x_1)$ and $(\g_2,z_2,x_2)$ where $(\g_i,z_i)$ is a canvas and $x_i$ is a vertex of $\g_i$ for $i\in\{1,2\}$, we write $(\g_1,z_1,x_1)\prec (\g_2,z_2,x_2)$ if
\begin{enumerate}
\item $o(\g_1,z_1)<o(\g_2,z_2)$; or,
\item $o(\g_1,z_1)=o(\g_2,z_2)$, $(\g_1,z_1)$ is not $x_1$-homogeneous, and $(\g_2,z_2)$ is $x_2$-homogeneous; or,
\item $o(\g_1,z_1)=o(\g_2,z_2)$, $(\g_i,z_i)$ is $x_i$-homogeneous for $i\in\{1,2\}$, and $\deg(z_1)<\deg(z_2)$.
\end{enumerate}
\end{definition}
As this is a non-standard ordering, we provide the following straightforward check that $\prec$ indeed gives a strict partial ordering.

\begin{observation}\label{obs-precord}
The relation $\prec$ is a strict partial ordering with no infinite decreasing chains.
\end{observation}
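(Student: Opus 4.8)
The plan is to verify the three ingredients of the statement: $\prec$ is irreflexive, $\prec$ is transitive (these two together make $\prec$ a strict partial order, since asymmetry is then automatic), and $\prec$ has no infinite decreasing chains. The guiding picture is that $\prec$ orders triples $(\g,z,x)$ lexicographically by the three quantities $o(\g,z)$, then a homogeneity bit (``not $x$-homogeneous'' ranked below ``$x$-homogeneous''), then $\deg(z)$ --- except that the $\deg(z)$ coordinate is consulted only when \emph{both} triples are $x$-homogeneous. This one caveat means $\prec$ is not literally a lexicographic order, so rather than black-boxing ``lexicographic orders are partial orders'', I would check transitivity by hand; this small case analysis is the only place any care is needed, and I regard it as the main (mild) obstacle.

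First I would dispatch the easy parts. For \textbf{irreflexivity}: clause (1) fails for $(\g,z,x)\prec(\g,z,x)$ because $o(\g,z)\not<o(\g,z)$; clause (2) fails because it would require $(\g,z)$ to be both $x$-homogeneous and not $x$-homogeneous; clause (3) fails because $\deg(z)\not<\deg(z)$. For \textbf{no infinite decreasing chains}, suppose $(\g_1,z_1,x_1)\succ(\g_2,z_2,x_2)\succ\cdots$. Each of clauses (1)--(3) gives $o(\g_{i+1},z_{i+1})\le o(\g_i,z_i)$, so the non-negative integers $o(\g_i,z_i)$ are eventually constant, say for $i\ge N$. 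For $i\ge N$ the step from $i$ to $i+1$ uses clause (2) or (3), and both of these require the larger triple $(\g_i,z_i,x_i)$ to be $x_i$-homogeneous; hence $(\g_i,z_i)$ is $x_i$-homogeneous for every $i\ge N$. But then clause (2), which requires the smaller triple to be non-homogeneous, cannot be used for any $i\ge N$ (as $(\g_{i+1},z_{i+1})$ is homogeneous too), so every step with $i\ge N$ uses clause (3) and gives $\deg(z_{i+1})<\deg(z_i)$ --- an infinite strictly decreasing sequence of non-negative integers, a contradiction.

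It remains to prove \textbf{transitivity}. Assume $(\g_1,z_1,x_1)\prec(\g_2,z_2,x_2)\prec(\g_3,z_3,x_3)$. Since $o$ is weakly decreasing along $\prec$, we have $o(\g_1,z_1)\le o(\g_2,z_2)\le o(\g_3,z_3)$. If $o(\g_1,z_1)<o(\g_3,z_3)$, clause (1) yields $(\g_1,z_1,x_1)\prec(\g_3,z_3,x_3)$ and we are done. Otherwise all three values of $o$ agree, so neither given relation used clause (1), and each used clause (2) or (3). Whichever of (2) or (3) the first relation used, it forces $(\g_2,z_2)$ to be $x_2$-homogeneous; therefore the second relation cannot have used clause (2) (which needs $(\g_2,z_2)$ non-homogeneous), so it used clause (3), giving that $(\g_3,z_3)$ is $x_3$-homogeneous and $\deg(z_2)<\deg(z_3)$. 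Now split on the first relation: if it used clause (3), then $(\g_1,z_1)$ is $x_1$-homogeneous and $\deg(z_1)<\deg(z_2)<\deg(z_3)$, so clause (3) gives $(\g_1,z_1,x_1)\prec(\g_3,z_3,x_3)$; if it used clause (2), then $(\g_1,z_1)$ is not $x_1$-homogeneous while $o(\g_1,z_1)=o(\g_3,z_3)$ and $(\g_3,z_3)$ is $x_3$-homogeneous, so clause (2) gives $(\g_1,z_1,x_1)\prec(\g_3,z_3,x_3)$. In either case transitivity holds, which together with irreflexivity completes the proof. The key observation making the case analysis work is that the ``middle'' triple's homogeneity status is pinned down, ruling out the only combination of clauses that would otherwise be problematic.
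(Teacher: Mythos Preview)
Your proof is correct and follows essentially the same approach as the paper: the same case analysis for transitivity (reduce to the case $o(\g_1,z_1)=o(\g_2,z_2)=o(\g_3,z_3)$, note that $(\g_2,z_2)$ and $(\g_3,z_3)$ must be homogeneous, then split on whether $(\g_1,z_1)$ is), and the same well-foundedness argument (the $o$-values stabilize, after which all triples are homogeneous and the $\deg(z_i)$ strictly decrease). You additionally check irreflexivity explicitly, which the paper leaves implicit.
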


\begin{proof}
We first argue that $\prec$ is transitive.  Suppose that 
$$(\g_1,z_1,x_1)\prec (\g_2,z_2,x_2)\prec(\g_3,z_3,x_3).$$
This implies that $o(\g_1,z_1)\le o(\g_2,z_2)\le o(\g_3,z_3)$.  If $o(\g_1,z_1)<o(\g_2,z_2)$ or $o(\g_2,z_2)<o(\g_3,z_3)$,
then $o(\g_1,z_1)<o(\g_3,z_3)$ and $(\g_1,z_1,x_1)\prec (\g_3,z_3,x_3)$.  Hence, suppose that $o(\g_1,z_1)=o(\g_2,z_2)=o(\g_3,z_3)$.
Since $(\g_1,z_1,x_1)\prec (\g_2,z_2,x_2)\prec(\g_3,z_3,x_3)$, the canvases $(\g_i,z_i)$ for $i\in\{2,3\}$ are $x_i$-homogeneous
and $\deg(z_2)<\deg(z_3)$.  If $(\g_1,z_1)$ is not $x_1$-homogeneous, then $(\g_1,z_1,x_1)\prec (\g_3,z_3,x_3)$ since $(\g_3,z_3)$ is $x_3$-homogeneous.
If $(\g_1,z_1)$ is $x_1$-homogeneous, then $\deg(z_1)<\deg(z_2)<\deg(z_3)$ and $(\g_1,z_1,x_1)\prec (\g_3,z_3,x_3)$.

Next, suppose for a contradiction that 
$$(\g_1,z_1,x_1)\succ (\g_2,z_2,x_2)\succ (\g_3,z_3,x_3) \succ \cdots$$
is an infinite decreasing chain.
Since $o(\g_i,z_i)$ is a non-negative integer and 
$$o(\g_1,z_1)\ge o(\g_2,z_2)\ge \cdots,$$ there exist $i_0$ and $m$ such that
$o(\g_i,z_i)=m$ for every $i\ge i_0$.  For every $i\ge i_0$, since $o(\g_{i+1},z_{i+1})=o(\g_i,z_i)$ and $(\g_{i+1},z_{i+1},x_{i+1})\prec (\g_i,z_i,x_i)$,
it follows that $(\g_{i+1},z_{i+1})$ is $x_i$-homogeneous.  Hence, 
$$(\g_{i_0},z_{i_0},x_{i_0})\succ (\g_{i_0+1},z_{i_0+1},x_{i_0+1})\succ \cdots$$ implies
$$\deg(z_{i_0}) > \deg(z_{i_0+1}) > \cdots,$$ which is a contradiction since degrees are non-negative integers.
\end{proof}

By a \textit{minimal} tall tame critical easel $(\g,z,x,\psi)$, we mean one for which the triple $(\g,z,x)$ is minimal in the $\prec$ ordering.  Note that since there are no decreasing infinite
chains in $\prec$, if Theorem~\ref{thm:tall} were false, then a minimal tall tame critical easel $(\g,z,x,\psi)$ would exist. 

\subsection{Step 2: There are no small cuts around $x$}

As outlined at the start, we prove in two steps that for a minimal tall tame critical easel $(\g, z, x, \psi)$, there are no small cuts around $x$. The first of these steps is particularly easy:
\begin{lemma}\label{lemma-sepxsmall}
Suppose $(\g,z,x,\psi)$ is a minimal tall tame critical easel.  If $A\subsetneq V(\g)\setminus\{z\}$ contains $x$
and $|A|\ge 2$, then $\deg(A) \ge \deg(x)+2$.
\end{lemma}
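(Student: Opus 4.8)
The plan is to argue by contradiction, using the minimality of $(\g,z,x,\psi)$ in the $\prec$ ordering together with a splitting-off / restriction argument. Suppose there is a set $A\subsetneq V(\g)\setminus\{z\}$ with $x\in A$, $|A|\ge 2$, and $\deg(A)\le \deg(x)+1$; choose such an $A$ with $|A|$ minimum (or maximum — I would experiment, but minimum seems right so that the ``inside'' canvas is as simple as possible). Let $B=V(\g)\setminus A$, so $z\in B$ and $|B|\ge 2$ since $|V(\g)|\ge 4$ by Lemma~\ref{lemma-at4}. The idea is to split the problem into the canvas ``inside $A$'', namely $(\g,z)\restriction A=(\g/B,b)$ with $b$ playing the role of the tip, and the canvas ``outside'', in which $A$ is contracted to a single vertex $a$; then to combine flows on the two pieces. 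Since $(\g,z)$ is $\psi$-critical, for every non-trivial tip-respecting contraction $\psi$ extends; I want to use contractions supported on $B$ to extend $\psi$ to a nowhere-zero flow on the outside piece, and then handle the inside piece by a separate induction or by Theorem~\ref{thm:lovaszrealtheorem}.

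The key point is that the inside canvas $(\g/B,b)$ should itself be a tall tame critical-or-flow-critical easel that is strictly $\prec$-smaller, so that by minimality some tip preflow extends there; combining that extension with a flow on the outside piece would extend $\psi$ on $\g$, a contradiction. Concretely, the steps I would carry out: (1) Check $(\g,z)\restriction A$ is flow-critical (Observation~\ref{obs-subcrit}) and tame except possibly at $x$ (the only non-$x$, non-$b$ vertices of $\g/B$ are vertices of $A$, which satisfy $\deg(v)\ge 4+|\tau(v)|$ since the original easel is tame). (2) Verify the ``tall'' condition for the inside easel: we need $\deg(b)\le \deg(x)+2$, which holds since $\deg(b)=\deg(A)\le \deg(x)+1\le \deg(x)+2$, and in the boundary case $\deg(b)=\deg(x)+2$ the preflow condition is vacuous or easy because $\deg(A)\le\deg(x)+1<\deg(x)+2$, so in fact we are strictly in the tall-with-slack regime. (3) Verify $(\g/B,b,x,\cdot)$ is $\prec$-smaller than $(\g,z,x)$: since $|B|\ge 2$, contracting $B$ strictly decreases $|V(\g)|$ and does not increase $|E(\g-z)|$ in the relevant way, so $o$ strictly decreases, giving clause~(1) of $\prec$. (4) By minimality, every tip preflow of $(\g/B,b,x)$ extends to a nowhere-zero flow; in particular there is a nowhere-zero flow of $\g/B$ whose restriction to the edges between $A$ and $B$ matches whatever boundary is forced by a flow on the outside piece. (5) On the outside piece (contract $A$ to $a$, keeping $z$ as tip), use $\psi$-criticality of $(\g,z)$: the partition collapsing $A$ to one part is non-trivial and tip-respecting, so $\psi$ extends to a nowhere-zero flow $\varphi$ there. (6) Glue: $\varphi$ determines an orientation of the $A$–$B$ edges giving $a$ the right value, i.e. it induces a tip preflow around $b$ on $\g/B$; by step~(4) this preflow extends to a nowhere-zero flow of $\g/B$; the two flows agree on the cut edges, so their union is a nowhere-zero flow of $\g$ extending $\psi$ — contradiction.

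The main obstacle I anticipate is step~(4)–(6): making the gluing precise, i.e. ensuring that the flow $\varphi$ on the outside canvas really does induce a \emph{tip preflow around $b$} (the parity/boundary condition $\deg^+(b)-\deg^-(b)\equiv\beta(b)\pmod 3$ must hold, which should follow from Observation~\ref{obs-allbutone} applied to the outside canvas since $\beta(A)=\beta(b)$ and everything else is consistent), and that a preflow \emph{around} $b$ — rather than an arbitrary preflow — is what the minimality hypothesis for the inside easel delivers. A secondary subtlety is whether the inside easel is genuinely \emph{critical} (i.e. the canvas $(\g/B,b)$ is $\psi'$-critical for the relevant preflow $\psi'$) or merely flow-critical; if it is only flow-critical we may need to first pass to a further tip-respecting contraction to get criticality (as in Observation~\ref{obs-critopsi}), being careful that this contraction stays $\prec$-smaller and tall — but since it only shrinks $o$ further, tallness of the smaller object should be inherited, so this is a technical rather than conceptual hurdle. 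I would also double-check the degenerate cases $|A|=2$ and $\deg(A)$ even versus odd, where the $\tau$-bookkeeping (Observation~\ref{obs-tauprop}) is needed to confirm tameness of $b$ is not actually required — only tallness is — so the argument goes through.
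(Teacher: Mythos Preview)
Your approach is essentially the paper's: restrict to the inside canvas $(\g,z)\restriction A$, pass to a $\psi'$-critical contraction, and exhibit a strictly $\prec$-smaller tall tame critical easel. Two remarks. First, steps (5)--(6) are unnecessary: since $(\g,z)\restriction A$ is flow-critical (Observation~\ref{obs-subcrit}) and non-trivial, some tip preflow fails to extend, so your step~(4) already yields the contradiction directly---no gluing with an outside flow is needed. Second, the point you flag as a ``technical hurdle'' is exactly where the minimality of $A$ is used and deserves to be made explicit: after passing to the $\psi'$-critical contraction, if the part $X$ containing $x$ has $|X|\ge 2$, then $X\subsetneq A$ and the minimality of $A$ forces $\deg(X)\ge \deg(x)+2$, so $\deg(x')\ge \deg(x)$ and tallness is preserved (with $\deg(b)\le \deg(x)+1\le \deg(x')+1$). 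This is the one substantive check, and your choice of $|A|$ minimal is the right one for precisely this reason.
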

\begin{proof}
Suppose for a contradiction that $|A|\ge 2$ but $\deg(A)\le \deg(x)+1$, and let us choose a minimal set $A$ with this property.
Let $(\g_0,b)=(\g,z)\restriction A$. By Observation~\ref{obs-subcrit}, $(\g_0, b)$ is flow-critical, and since $|V(\g_0)|=|A|+1>2$,
there exists a tip preflow $\psi'$ and a tip-respecting partition $\PP$ of $V(\g_0)$ such that $\g'=\g_0/\PP$ is $\psi'$-critical. Let $x'$ be the vertex of $\g'$ corresponding to the part $X$ of $\PP$ containing $x$. 

Corollary \ref{cor-contrtame} implies that $(\g',b,x',\psi')$ is tame. 
Moreover, either $|X| =1$, in which case we have
$\deg(x') = \deg(x) \geq \deg(z) -1$, or $|X| \geq 2$, in which case the
minimality of $A$ implies $\deg(x') = \deg(X) \geq \deg(x)+2$.
In both cases we have $\deg(x') \geq \deg(x) \geq \deg(A)-1 = \deg(b)-1$, and
thus it follows that the easel $(\g',b,x',\psi')$ is a tall.

Moreover, $o(\g',b) < o(\g,z)$, and thus the tall tame critical easel $(\g',b,x',\psi')$ contradicts the minimality of $(\g,z,x,\psi)$.
\end{proof}

In the proof of Lemma \ref{lemma-sepxsmall}, we do not need to worry about the case that $(\g',b,x',\psi')$ is non-tall
because all edges between $b$ and $x'$ have the same direction and $\deg(x')=\deg(b)-2$,
since our assumption is that $\deg(A)<\deg(x)+2$. Now we show that with some clever selection of edges we can improve the bound in the above lemma by one.

\begin{lemma}\label{lemma-sepx}
Suppose $(\g,z,x,\psi)$ is a minimal tall tame critical easel.  If $A\subsetneq V(\g)\setminus\{z\}$ contains $x$
and $|A|\ge 2$, then $\deg(A) > \deg(x)+2$.
\end{lemma}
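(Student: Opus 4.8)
The plan is to argue by contradiction, reusing the reduction from the proof of Lemma~\ref{lemma-sepxsmall} but extracting one extra unit by treating carefully the case where the auxiliary easel we produce sits exactly on the boundary of tallness. By Lemma~\ref{lemma-sepxsmall} we already have $\deg(A)\ge\deg(x)+2$, so suppose for a contradiction that some set $A$ with $x\in A$, $|A|\ge 2$ satisfies $\deg(A)=\deg(x)+2$, and among all such sets fix one of smallest size. Since $A\subsetneq V(\g)\setminus\{z\}$, the set $V(\g)\setminus A$ has at least two vertices, so $(\g_0,b):=(\g,z)\restriction A$ has at most $|V(\g)|-1$ vertices and hence $o(\g_0,b)<o(\g,z)$. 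By Observation~\ref{obs-subcrit}, $(\g_0,b)$ is flow-critical, and as it has at least three vertices, Observation~\ref{obs-critopsi} yields a tip preflow $\psi'$ and a non-trivial tip-respecting partition $\PP$ of $V(\g_0)$ with $(\g',b):=(\g_0/\PP,b)$ being $\psi'$-critical; let $X\in\PP$ be the part containing $x$ and $x'$ the resulting vertex. By Corollary~\ref{cor-contrtame}, the easel $(\g',b,x',\psi')$ is tame, and $o(\g',b)\le o(\g_0,b)<o(\g,z)$.

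If $|X|\ge 2$, then $X$ is a proper subset of $A$ (otherwise $(\g',b)$ would be a trivial canvas, contradicting Observation~\ref{obs-ge3}), so by the minimality of $A$ we get $\deg(x')=\deg(X)\ge\deg(x)+3=\deg(b)+1$; then $\deg(b)<\deg(x')+2$, so $(\g',b,x',\psi')$ is a tall (the extra clause of tallness being vacuous), tame, critical easel, and $o(\g',b)<o(\g,z)$ contradicts the minimality of $(\g,z,x,\psi)$. Hence $|X|=1$, i.e.\ $x'=x$ and $\deg_{\g'}(x)=\deg(x)=\deg(b)-2$, so the easel lies exactly on the boundary $\deg(b)=\deg(x)+2$. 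If $\psi'$ contains an arc not incident with $x$ directed towards $b$ and another such arc directed away from $b$, then $(\g',b,x,\psi')$ is a tall tame critical easel and $o(\g',b)<o(\g,z)$ again gives a contradiction. So we may assume $\psi'$ directs all $\ell$ edges of $\g'$ joining $b$ to $V(\g')\setminus\{b,x\}$ in the same way, and here $\ell\ge 2$: otherwise $b$ would be joined to $x$ by at least $\deg(b)-1=\deg(x)+1$ edges, more than the $\deg(x)$ edges available at $x$.

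It remains — and this is the crux, the ``clever selection of edges'' — to replace $\psi'$ by a tip preflow $\psi''$ around $b$ for which the missing clause of tallness holds and which still does not extend to a nowhere-zero flow in $\g'$ (equivalently, for which $(\g',b)$, or a suitable tip-respecting contraction of it controlled by Corollary~\ref{cor-contrtame}, remains critical); any such $\psi''$ yields a tall tame critical easel of order at most $o(\g',b)<o(\g,z)$, the desired contradiction. To build $\psi''$, one reverses one of the $\ell\ge 2$ equally-directed edges between $b$ and $V(\g')\setminus\{b,x\}$ and repairs the resulting $\pm2$ change in $\deg^+(b)-\deg^-(b)$ by reversing further arcs at $b$ of the opposite type (one such arc, or two if all arcs at $b$ already point the same way); the abundance of edges at the tip $b$ — in particular the $\deg(b)-\ell$ edges joining $b$ to $x$, which are parallel by Lemma~\ref{lemma-simple} — makes it possible to carry this out while leaving both orientations present among the $\ell$ non-$x$-edges. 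That $\psi''$ still fails to extend is where the work lies: using Observation~\ref{obs-allbutone} one reformulates ``a tip preflow around $b$ extends'' as ``$\g'-b$ admits an orientation realizing the boundary $\beta$ corrected, at each vertex, by the contribution of the arcs at $b$'', notes that passing from $\psi'$ to $\psi''$ alters this corrected boundary only at $x$ and at finitely many neighbours of $b$, and only by even amounts summing to $0$ modulo $3$, and then cancels these changes inside $\g'-b$ using its cycle space (which is ample, $\g'-b$ being $2$-connected by Corollary~\ref{cor-2con} when $|V(\g')|\ge 4$); the conclusion is that $\psi''$ extends if and only if $\psi'$ does, namely not. The main obstacle is precisely this verification together with the degenerate configurations it must absorb — notably $|V(\g')|=3$, and the case $\deg(b)=\ell$ in which $x$ is not adjacent to $b$ at all and one must instead redistribute the reversals among the $\ell$ edges themselves — which call for extra bookkeeping but no new idea.
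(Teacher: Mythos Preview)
Your reduction to the restriction $(\g_0,b)=(\g,z)\restriction A$ and the case analysis up to the point where $X=\{x\}$ and all $\ell\ge 2$ non-$x$-edges at $b$ are oriented the same way by $\psi'$ is fine. The gap is in the final step, where you claim that the modified preflow $\psi''$ still fails to extend, ``using the cycle space'' of $\g'-b$. This argument does not go through. Reformulating ``$\psi$ extends'' as ``$\g'-b$ admits an orientation with the corrected $\Z_3$-boundary'' is correct, but whether such an orientation exists is \emph{not} invariant under your modification of the boundary. Adding a cycle-space element to a $\Z_3$-flow preserves the boundary but destroys the nowhere-zero condition; and conversely, two $\Z_3$-boundaries on the same graph that differ by a zero-sum vector can have different solvability (for a trivial instance, a path $u\!-\!w\!-\!v$ with boundary $(1,0,-1)$ versus $(0,0,0)$; more pertinently, $K_4$ with the zero boundary versus any nonzero boundary, cf.\ Lemma~\ref{lemma-small}). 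Two-connectivity of $\g'-b$ is far too weak here --- you would need $\Z_3$-connectedness, which there is no reason to expect. So the asserted equivalence ``$\psi''$ extends iff $\psi'$ does'' is simply unjustified, and without it you have no control over $\psi''$.

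The paper's proof proceeds quite differently and sidesteps this issue entirely. It never tries to repair an arbitrary preflow on the restriction. Instead it works from the \emph{outside}: taking $A$ \emph{maximal} (not minimal) with $\deg(A)=\deg(x)+2$, it contracts $A$ to a vertex $a$ in $\g$, then carefully selects two edges $e_1,e_2$ between $C\cup\{z\}$ and $A\setminus\{x\}$ (the selection depending on how $\psi$ orients the $z$--$C$ edges) and splits them off. By minimality, the resulting easel on the outer side has a flow extending $\psi$; undoing the split gives a flow $\vec G_a$ in $\g/A$ with $e_1,e_2$ oriented oppositely and not incident with $x$. The restriction of $\vec G_a$ to the cut \emph{is} the desired tall preflow $\psi_2$ for $(\g,z)\restriction A$: it fails to extend (since $\psi$ does not extend in $\g$) and it witnesses tallness by construction. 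In short, the ``clever selection of edges'' happens on the outside of the cut and uses the global $\psi$, not on the inside by massaging an arbitrary $\psi'$.
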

\begin{proof}
Suppose for a contradiction that $A\neq V(\g)\setminus\{z\}$ but
$\deg(A)=\deg(x)+2$, and let us consider a maximal such set $A$.  Let
$C=V(\g)\setminus (A\cup \{z\})$. Our goal is to find two edges $e_{1}$ and
$e_{2}$ between $C$ and $A \setminus \{x\}$ such that the canvas obtained by
contracting $A$ to a vertex and splitting off $e_{1}$ and $e_{2}$ has a flow extending $\psi$.

Since $\g-\{x,z\}$ is connected by Corollary~\ref{cor-2con} and $C\neq\emptyset$, there exists an edge $e_1$ between $C$ and $A\setminus\{x\}$.
Let $u_1$ be the end of $e_1$ in $C$.  Let $Q$ be the set of edges of $\g$ between $C\cup\{z\}$ and $A\setminus\{x\}$; since $\deg(A)=\deg(x)+2$, we have $|Q|\ge 2$.
Let us now describe how to choose an edge $e_2\in Q\setminus\{e_1\}$:
\begin{enumerate}
\item If $\psi$ directs all edges between $z$ and $C$ away from $z$ and there exists an edge $e$ between $A\setminus\{x\}$ and $z$ directed by $\psi$
towards $z$, then let $e_2=e$.
\item Otherwise, if $\psi$ directs all edges between $z$ and $C$ towards $z$ and there exists an edge $e$ between $A\setminus\{x\}$ and $z$ directed by $\psi$
away from $z$, then let $e_2=e$.
\item Otherwise, if there exists an edge in $Q$ not incident with $u_1$, choose $e_2$ as such an edge.
\item Otherwise choose $e_2\in Q\setminus\{e_1\}$ arbitrarily. 
\end{enumerate}

See Figure \ref{fig:pickingedgescases} for an illustration of the cases.
%start of picture
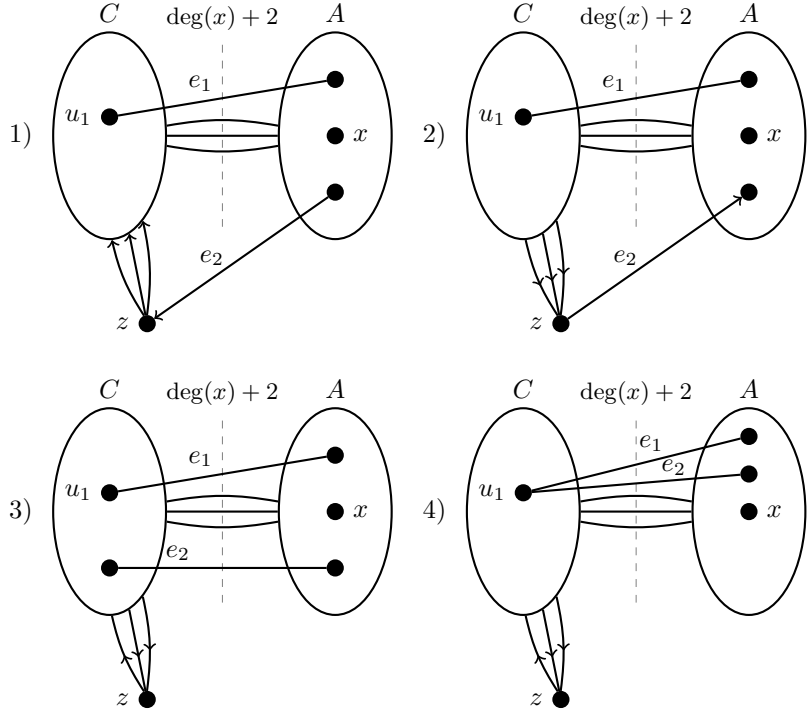
\begin{figure}
\begin{center}
\begin{tikzpicture}
    \node[blackvertexv2] at (0,.25) (u1) [label = left:$u_{1}$] {};
    \node[dummywhite] at (-1.5,0) (case1) [label =right: $1)$] {};
    \node[ellipsenodev1] at (0,0) (ellipse1) [label = above:$C$] {};
    \node[ellipsenodev1] at (3,0) (ellipse2) [label = above:$A$] {};
    \node[blackvertexv2] at (3,0) (x) [label = right:$x$] {};
    \node[blackvertexv2] at (3,.75) (u2) {};
    \node[blackvertexv2] at (.5,-2.5) (z) [label = left:$z$] {};
    \node[blackvertexv2] at (3,-.75) (u3) {};
    \node[dummywhite] at (1.5,1.25) (dummywhite1) [label = above: \small $\deg(x) +2$]  {};
    \node[dummywhite] at (1.5,-1.25) (dummywhite2){};
     \draw[ultra thin,dashed,gray] (dummywhite1) to (dummywhite2);
     \draw[thick,black] (u2)--node[left,,yshift=.2cm]{$e_{1}$}(u1);
     \draw[thick,black, ->] (z) to (ellipse1);
     \draw[thick,black, bend left = 10,->] (z) to (ellipse1);
     \draw[ thick,black, bend right = 10,->] (z) to (ellipse1);
     \draw[thick,black] (ellipse1) to (ellipse2);
     \draw[thick,black,bend right =10] (ellipse1) to (ellipse2);
     \draw[thick,black,bend left =10] (ellipse1) to (ellipse2);
     \draw[thick,black,->] (u3) to node[left,xshift=-.1cm]{$e_{2}$}(z);

     \begin{scope}[xshift =5.5cm]
           \node[blackvertexv2] at (0,.25) (u1) [label = left:$u_{1}$] {};
    \node[ellipsenodev1] at (0,0) (ellipse1) [label = above:$C$] {};
    \node[ellipsenodev1] at (3,0) (ellipse2) [label = above:$A$] {};
      \node[dummywhite] at (-1.5,0) (case1) [label =right:$2)$] {};
    \node[blackvertexv2] at (3,0) (x) [label = right:$x$] {};
    \node[blackvertexv2] at (3,.75) (u2) {};
    \node[blackvertexv2] at (.5,-2.5) (z) [label = left:$z$] {};
    \node[blackvertexv2] at (3,-.75) (u3) {};
    \node[dummywhite] at (1.5,1.25) (dummywhite1) [label = above: \small $\deg(x) +2$]  {};
    \node[dummywhite] at (1.5,-1.25) (dummywhite2){};
     \draw[ultra thin,dashed,gray] (dummywhite1) to (dummywhite2);
     \draw[thick,black] (u2)--node[left,,yshift=.2cm]{$e_{1}$}(u1);
     \draw[thick,black,  postaction={decoration={markings,mark=at position 0.5 with {\arrow{<}}},decorate}] (z) to (ellipse1);
     \draw[thick,black, bend left = 10,postaction={decoration={markings,mark=at position 0.5 with {\arrow{<}}},decorate}] (z) to (ellipse1);
     \draw[ thick,black, bend right = 10,postaction={decoration={markings,mark=at position 0.5 with {\arrow{<}}},decorate}] (z) to (ellipse1);
     \draw[thick,black] (ellipse1) to (ellipse2);
     \draw[thick,black,bend right =10] (ellipse1) to (ellipse2);
     \draw[thick,black,bend left =10] (ellipse1) to (ellipse2);
     \draw[thick,black,<-] (u3) to node[left,xshift=-.1cm]{$e_{2}$}(z);
     \end{scope}

      \begin{scope}[xshift =5.5cm,yshift =-5cm]
       \node[blackvertexv2] at (0,.25) (u1) [label = left:$u_{1}$] {};
    \node[ellipsenodev1] at (0,0) (ellipse1) [label = above:$C$] {};
    \node[ellipsenodev1] at (3,0) (ellipse2) [label = above:$A$] {};
     \node[dummywhite] at (-1.5,0) (case1) [label =right:$4)$] {};
    \node[blackvertexv2] at (3,0) (x) [label = right:$x$] {};
    \node[blackvertexv2] at (3,.5) (u2) {};
    \node[blackvertexv2] at (.5,-2.5) (z) [label = left:$z$] {};
    \node[blackvertexv2] at (3,1) (u3) {}; 
    \node[dummywhite] at (1.5,1.25) (dummywhite1) [label = above: \small $\deg(x) +2$]  {};
    \node[dummywhite] at (1.5,-1.25) (dummywhite2){};
     \draw[ultra thin,dashed,gray] (dummywhite1) to (dummywhite2);
     \draw[thick, black] (u2)--node[above,yshift=.3cm,xshift=.2cm]{$e_{1}$}(u1);
     \draw[thick,black,  postaction={decoration={markings,mark=at position 0.5 with {\arrow{<}}},decorate}] (z) to (ellipse1);
     \draw[thick,black, bend left = 10,postaction={decoration={markings,mark=at position 0.5 with {\arrow{>}}},decorate}] (z) to (ellipse1);
     \draw[ thick,black, bend right = 10,postaction={decoration={markings,mark=at position 0.5 with {\arrow{<}}},decorate}] (z) to (ellipse1);
     \draw[thick,black] (ellipse1) to (ellipse2);
     \draw[thick,black,bend right =10] (ellipse1) to (ellipse2);
     \draw[thick,black,bend left =10] (ellipse1) to (ellipse2);
    \draw[thick,black] (u1) to node[below,yshift =.2cm, xshift =.5cm]{$e_{2}$}(u3);

     \end{scope}

      \begin{scope}[yshift =-5cm]
 \node[blackvertexv2] at (0,.25) (u1) [label = left:$u_{1}$] {};
    \node[ellipsenodev1] at (0,0) (ellipse1) [label = above:$C$] {};
    \node[ellipsenodev1] at (3,0) (ellipse2) [label = above:$A$] {};
        \node[dummywhite] at (-1.5,0) (case1) [label =right:$3)$] {};
    \node[blackvertexv2] at (3,0) (x) [label = right:$x$] {};
    \node[blackvertexv2] at (3,.75) (u2) {};
    \node[blackvertexv2] at (.5,-2.5) (z) [label = left:$z$] {};
    \node[blackvertexv2] at (3,-.75) (u3) {}; 
    \node[blackvertexv2] at (0,-.75) (u4) {};
    \node[dummywhite] at (1.5,1.25) (dummywhite1) [label = above: \small $\deg(x) +2$]  {};
    \node[dummywhite] at (1.5,-1.25) (dummywhite2){};
     \draw[ultra thin,dashed,gray] (dummywhite1) to (dummywhite2);
     \draw[thick, black] (u2)--node[left,,yshift=.2cm]{$e_{1}$}(u1);
     \draw[thick,black,  postaction={decoration={markings,mark=at position 0.5 with {\arrow{<}}},decorate}] (z) to (ellipse1);
     \draw[thick,black, bend left = 10,postaction={decoration={markings,mark=at position 0.5 with {\arrow{>}}},decorate}] (z) to (ellipse1);
     \draw[ thick,black, bend right = 10,postaction={decoration={markings,mark=at position 0.5 with {\arrow{<}}},decorate}] (z) to (ellipse1);
     \draw[thick,black] (ellipse1) to (ellipse2);
     \draw[thick,black,bend right =10] (ellipse1) to (ellipse2);
     \draw[thick,black,bend left =10] (ellipse1) to (ellipse2);
     \draw[thick,black] (u3)--node[left,yshift=.2cm, xshift=-.3cm]{$e_{2}$}(u4);
     \end{scope}
\end{tikzpicture}
\caption{The four possible cases in Lemma \ref{lemma-sepx} for how we choose the edges $e_{1}$ and $e_{2}$. Note that in cases $1$ and $2$, we must have all of the edges from $z$ to $C$ oriented in the same direction, whereas in cases $3$ and $4$, either there is no edge from $z$ to $A$, or not all the edges go the same direction. Hence, these figures are merely examples of the four possible cases.}
\label{fig:pickingedgescases}
\end{center}
\end{figure}

%end of picture
Let $\g_1$ be the $\Z$-bordered graph obtained from $\g$ by contracting $A$ to a single vertex $a$ and then splitting off the edges $e_1$ and $e_2$;
let $e$ denote the resulting edge added in the case that $e_1$ and $e_2$ are not incident with the same vertex in $V(\g)\setminus A$.
We can view $\psi$ as a tip preflow in $(\g_1,z)$ (in the case that $e_2$ is incident with $z$, $e$ inherits its orientation).

\begin{claim*}
\label{claim:preflowextension}
The preflow $\psi$ extends to a nowhere-zero flow $\vec{G}_{1}$ in $\g_1$.
\end{claim*}

\begin{subproof}
If $\psi$ does not extend to a nowhere-zero flow in $\g_1$, then there exists a $\psi$-critical tip-respecting contraction $(\g'_1,z)$ of $(\g_1,z)$.
Let $a'$ be the vertex of $\g'_1$ into which we contracted $a$, and consider
the critical easel $(\g'_1,z,a',\psi)$. We aim to show that
$(\g'_{1},z,a',\psi)$ is a tall tame critical easel, contradicting the minimality of
$(\g,z,x,\psi)$.

\begin{subclaim*}
   The easel $(\g',z,a',\psi)$ is tame.
\end{subclaim*}

\begin{subsubproof}
 Let $u'$ be any vertex in $V(\g'_1)\setminus \{z,a'\}$, and let $U$ be the set of vertices of $\g$ contracted into $u'$. 
    Note that if both $e_1$ and $e_2$ have exactly one end in $U$, then $\deg_{\g'_1}(u')=\deg_{\g}(U)-2$, otherwise $\deg_{\g'_1}(u')=\deg_{\g}(U)$.
    In particular, $\deg_{\g'_1}(u')$ and $\deg_{\g}(U)$ have the same parity, and thus $\tau(u')=\tau(U)$.
    We now discuss three cases depending on the size of $U$ and the incidence of $e_1$ and $e_2$ with the vertices of $U$.
\begin{itemize} 
\item If $U$ contains at least two vertices, then by Lemma~\ref{lemma-cut} we have $\deg_{\g'_1}(u')\ge \deg_{\g}(U)-2\ge 4+|\tau(U)|=4+|\tau(u')|$.
\item If $U$ consists of a single vertex, say $u$, and  $\deg_{\g'_1}(u')=\deg_{\g}(U)=\deg_{\g}(u)$, then $\deg_{\g'_1}(u')=\deg_{\g}(u)\ge 4+|\tau(u)|=4+|\tau(u')|$
by the tameness of $(\g,z,x,\psi)$. 
\item If $|U|=1$ and $\deg_{\g'_1}(u')<\deg_{\g}(U)$, then $U=\{u_1\}$ and both $e_1$ and $e_2$ are incident with $u_{1}$.  By our choice of $e_2$, this means that all edges of $Q$ are incident with $u_{1}$. In $\g'_1$, all edges of $Q\setminus\{e_1,e_2\}$ join $u'$ with $a'$.  By Lemma~\ref{lemma-simple}
applied to $(\g'_1,z)$, there is at most one edge between $u'$ and $a'$, and thus $|Q|\le 3$.  Since $\deg(A)=\deg(x)+2$,
there are $\deg(A)-|Q|=\deg(x)+2-|Q|$ edges between $x$ and $C\cup\{z\}$, and thus there are $|Q|-2$ edges between $x$ and $A\setminus\{x\}$.
Consequently, $\deg(A\setminus\{x\})=2|Q|-2\le 4$.  By Lemma~\ref{lemma-simple} for $(\g,z)$, the edges $e_1$ and $e_2$ are incident with
distinct vertices of $A\setminus\{x\}$, and thus $|A\setminus \{x\}|\ge 2$.  This contradicts Lemma~\ref{lemma-cut} for the set $A\setminus\{x\}$ in $(\g,z)$, and thus this case does not happen.
\end{itemize}
Therefore, we have $\deg_{\g'_1}(u')\ge 4+|\tau(u')|$ for every $u'\in V(\g'_1)\setminus\{z,a'\}$, and thus the easel $(\g'_1,z,a',\psi)$
is tame. 
\end{subsubproof}

Now we check that $(\g_{1}',z,a',\psi)$ is tall.

\begin{subclaim*}
The easel $(\g_{1}',z,a',\psi)$ is tall.
\end{subclaim*}

\begin{subsubproof}
Let $A'$ be the subset of vertices of $\g$ contracted into $a'$. Note that $A'$ is a superset of $A$.
\begin{itemize}
\item If $A'\neq A$, then the maximality of $A$ implies $\deg_{\g'}(a')\ge \deg_{\g}(A')-2>\deg(x)\ge \deg(z)-2$.
\item If $A'=A$, then $\deg(a')\ge \deg_{\g}(A)-2=\deg(x)\ge \deg(z)-2$.  Moreover, we claim that if $\deg(z)=\deg(x)+2$, then $\psi$ does not direct all edges between $z$ and $V(\g'_1)\setminus \{z,a'\}$ the same way.  Indeed, since $\deg(z)=\deg(x)+2=\deg(A)$ and at least the edge $e_1$ with exactly one end in $A$ is not incident with $z$, there exists at least one edge between $z$
and $C$.  If $\psi$ directs all edges from $z$ to $C$ in the same way, say away from $z$, and $\deg(z)=\deg(x)+2$, then the tallness of $(\g,z,x,\psi)$ implies that there exists an edge between $A\setminus\{x\}$ and $z$ directed by $\psi$ towards $z$, and $e_2$ is chosen as such an edge.
Therefore, the edge $e$ of $\g'_1$ arising from the splitting off $e_1$ and $e_2$ is not incident with $a'$ and it is directed in the same way as $e_2$ by $\psi$, i.e., towards $z$.
\end{itemize}
We conclude that the easel $(\g'_1,z,a',\psi)$ is tall. 
\end{subsubproof}

  The above two subclaims contradict our assumption that $(\g,z,x,\psi)$ is a minimal tall tame critical easel, and thus $\psi$ extends to a nowhere-zero flow $\vec{G}_1$ in $\g_1$.
\end{subproof}

The claim implies that $\psi$ extends to a nowhere-zero flow $\vec{G}_a$ in $\g/A$ with one of the edges $e_1$ and $e_2$ directed towards $a$ and the other one away from $a$,
obtained from $\vec{G}_1$ by directing $e_1$ and $e_2$ according to the orientation of $e$ (or arbitrarily in opposite directions in the
case that $e_1$ and $e_2$ are incident with the same vertex of $C$, and thus the edge $e$ is not added when splitting off $e_1$ and $e_2$).

Let $(\g_2,z_2)=(\g,z)\restriction A$ and let $\psi_2$ be the tip preflow matching the orientations of the edges of $\vec{G}_a$ incident with $a$.
If $\psi_2$ extended to a nowhere-zero flow in $\g_2$, then this nowhere-zero flow would combine with $\vec{G}_a$
to a nowhere-zero flow in $\g$ extending $\psi$, which is a contradiction.  Hence, there exists a $\psi_2$-critical tip-respecting contraction $\g'_2$
of $\g_2$.  Let $x_2$ be the vertex of $\g'_2$ into which we contracted $x$, and let $X_2$ be the set of vertices of $\g$ contracted to $x_2$.
Then $(\g'_2,z_2,x_2,\psi_2)$ is a critical easel, and it is easy to see that this easel is tame by Lemma~\ref{lemma-cut}. We now show that it is tall:
\begin{itemize}
\item If $|X_2|>1$, then Lemma~\ref{lemma-sepxsmall} implies $\deg(x_2)=\deg(X_2)\ge \deg(x)+2=\deg(A)=\deg(z_2)$.
\item If $X_2=\{x\}$, then $\deg(x_2)=\deg(x)=\deg(z_2)-2$. The choice of $\psi_2$ implies that $\psi_2$ directs the edges $e_1$ and $e_2$
in the opposite direction.  Moreover, since neither $e_1$ nor $e_2$ are incident with $x$ in $\g$, they are also not incident with $x_2$
in $\g'_2$.
\end{itemize}
Now, since $o(\g'_2,z_2)<o(\g,z)$, this contradicts the assumption that $(\g,z,x,\psi)$ is a minimal tall tame critical easel.
\end{proof}

\subsection{Step 3: Minimal tall tame easels are $x$-homogeneous}

We are now sufficiently prepared to get rid of degree four vertices (which necessarily have boundary $0$). In fact, we could similarly get rid of every even degree vertex with boundary zero, but this is not needed for our proof.

\begin{lemma}\label{lemma-no4}
If $(\g,z,x,\psi)$ is a minimal tall tame critical easel, then every vertex $v\in V(\g)\setminus\{x,z\}$ has degree at least five.
\end{lemma}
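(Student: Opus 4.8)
The plan is to suppose for a contradiction that some vertex $v\in V(\g)\setminus\{x,z\}$ has degree exactly four; since the easel is tame, $\deg(v)=4+|\tau(v)|$ forces $|\tau(v)|=0$, i.e. $\beta(v)=0$. I would first record that $v$ is not incident with a multiple edge (Lemma \ref{lemma-simple}), so $v$ has four distinct neighbours, or three neighbours with one doubled — but the doubled case is only allowed if the double edge is incident with $z$, so either all four edges go to distinct vertices, or two of them are parallel edges to $z$. The strategy is the standard split-off: pick a neighbour $u_1$ of $v$ and a neighbour $u_2$ of $v$ (with $u_1\neq u_2$), delete the edges $vu_1$ and $vu_2$, and add the edge $u_1u_2$ (deleting a resulting parallel pair if $u_1u_2$ was already an edge, and doing nothing if $u_1=u_2$ — but here they are distinct). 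This leaves $v$ with degree two; pair up its remaining two edges by a further split-off, obtaining a $\Z$-bordered graph $\g_1=(G_1,\beta_1)$ on $V(\g)\setminus\{v\}$ with $\beta_1=\beta\restriction(V(\g)\setminus\{v\})$, into which $\psi$ descends as a tip preflow (one must check $\psi$ is not disturbed — it is not, since $v\notin\{x,z\}$ so at most one of the split edges, none in fact, is incident with $z$... actually an edge $vz$ could be present; if $vz\in E(\g)$ I would be careful to route the split-offs so that the preflow orientation is preserved, using that $\beta(v)=0$ and $\deg(v)=4$ gives enough freedom).

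The key point is that any nowhere-zero flow of $\g_1$ extending $\psi$ lifts to one of $\g$ extending $\psi$: orient the split-off edges as in the reduced flow and propagate through $v$, which works because $\beta(v)=0$ and we paired all four edges. Hence it suffices to produce a tall tame critical contraction of $\g_1$ smaller in $\prec$, contradicting minimality. So I would take a $\psi$-critical tip-respecting contraction $(\g_1',z)$ of $(\g_1,z)$ (which exists if $\psi$ does not already extend, and if it does extend we are immediately done), let $x'$ be the image of $x$, and verify that $(\g_1',z,x',\psi)$ is a tall tame critical easel. Tameness of $(\g_1',z,x',\psi)$ follows exactly as in the subclaims of Lemma \ref{lemma-sepx}: for a vertex $u'$ with preimage set $U$, if $|U|\ge 2$ use Lemma \ref{lemma-cut} to get $\deg(U)\ge 6+|\tau(U)|$, absorbing the loss of at most $2$ from split-offs; if $|U|=1$ and no split edge was incident, tameness of $\g$ gives it; and the problematic case where both split-off edges landed on the same single vertex $u'$ must be ruled out by a parallel-edge/Lemma \ref{lemma-simple} argument as before. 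Tallness needs $\deg_{\g_1'}(x')\ge \deg(z)-2$ with the extra preflow condition if equality; since $x\notin\{v,u_1,u_2\}$ can be arranged (we have freedom in choosing $u_1,u_2$ among $v$'s neighbours, and $v$ has four of them while we must avoid only $x$ — wait, we might be forced to use $x$ if $x$ is adjacent to $v$ and $v$ has few other neighbours), the degree of $x$ is unchanged and the preflow edges around $z$ are untouched, so tallness is inherited; and $o(\g_1',z)<o(\g,z)$ since we deleted $v$, giving $\prec$-smallness via the first clause of Definition \ref{def:ordering}.

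The main obstacle I anticipate is the choice of the two split-off neighbours $u_1,u_2$ of $v$ so as to simultaneously: (i) avoid creating a parallel edge not incident with $z$ (so Lemma \ref{lemma-simple} is not violated in $\g_1$ — actually parallel edges in $\g_1$ are fine since we immediately pass to a critical contraction, but we need them controlled for the tameness case analysis), (ii) not route a split-off through an edge incident with $z$ in a way that conflicts with $\psi$ (handled by pairing the $vz$-edges with each other when $v$ has two parallel edges to $z$, using $\beta(v)=0$), and (iii) ensure $x$'s degree and the preflow around $z$ are untouched — the last is automatic because $v\ne x,z$ and we split at $v$. Most delicate is when $v$ is adjacent to both $x$ and $z$; then with $\deg(v)=4$ the four edges go to at most four vertices including $x$ and $z$, and I would need to check that some valid pairing avoids putting a split-off edge on $x$ or, if it cannot, verify that even splitting through $x$ preserves tallness (it changes $\deg(x)$ by $-2$, which could break tallness) — so the safe route is to always pair the edge(s) to $z$ together with an edge to a non-$x$ neighbour, or to $z$-edges with each other, which is possible precisely because $v$ has degree four and $\beta(v)=0$ leaves both pairings of the four edges available; I would argue by a short case check on how many of $v$'s edges go to $\{x,z\}$ that a pairing avoiding $x$ always exists, completing the contradiction.
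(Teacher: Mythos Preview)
Your overall approach matches the paper's: split off the four edges at $v$ in two pairs, delete $v$, pass to a $\psi$-critical tip-respecting contraction, and check that the resulting easel is tall, tame, critical, and $\prec$-smaller. However, the tallness verification has genuine gaps.

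First, a misconception: splitting off at $v$ preserves the degree of every vertex other than $v$, so there is no danger of $\deg(x)$ dropping by $2$; your worry about ``splitting through $x$'' is misplaced, and there is no need to choose a pairing avoiding $x$.

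Second, two real issues with tallness. (a) After contraction, the vertex $x'$ may correspond to a set $X$ with $|X|\ge 2$, and you need Lemma~\ref{lemma-sepx} (which the paper proved precisely for this purpose) to get $\deg_{\g}(X)>\deg(x)+2$, so that after losing at most $2$ from split-offs one still has $\deg(x')>\deg(z)-2$. You do not invoke this. (b) In the delicate case $X=\{x\}$, $\deg(x)=\deg(z)-2$, and $\deg_{\g}(z)=\deg_{\g''}(z)$, tallness requires two $\psi$-edges between $z$ and $V(\g'')\setminus\{z,x'\}$ oriented oppositely. If one of the two original witnessing edges is a $vz$-edge, it disappears in the split-off. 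The paper's pairing rule is tailored to rescue this: if two $vz$-edges are oriented oppositely by $\psi$, pair \emph{them} together (so $\deg(z)$ drops and the condition becomes vacuous); if all $vz$-edges are oriented the same way, pair one of them with the edge $e_1$ to a neighbour in $V(\g)\setminus\{v,x,z\}$, so the new split-off edge inherits the lost witness's direction and is not incident with $x$. Your proposal to ``pair the $vz$-edges with each other'' does not distinguish these cases; in particular, pairing two $vz$-edges oriented the \emph{same} way would break the tip-preflow condition at $z$. The ``short case check'' you promise is exactly the content of the paper's cases (i)--(iii), and without it the argument does not close.
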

\begin{proof}
Let $\g=(G,\beta)$.  Suppose for a contradiction that there exists a vertex $v \in V(\g) \setminus \{x,z\}$ such that $\deg(v)\le 4$.  Since the easel is tame, we have $\deg(v)\ge 4+|\tau(v)|$, and thus $\deg(v)=4$, $|\tau(v)|=0$, and $\beta(v)=0$.  By Lemmas~\ref{lemma-2con} and \ref{lemma-at4},
there exists an edge $e_1$ between $v$ and a vertex $v_1\in V(\g)\setminus\{v,x,z\}$.
Let us now distinguish several cases:
\begin{itemize}
\item[(i)] If there are two edges between $v$ and $z$ that are directed oppositely by $\psi$, then let $e_3$ and $e_4$ be such edges
and let $e_2$ be the edge incident with $v$ and distinct from $e_1$, $e_3$, and $e_4$.
\item[(ii)] If $v$ is adjacent to $z$ but all edges between $v$ and $z$ are directed in the same way by $\psi$ (all towards $z$ or all away from $z$),
then note that there are at most two such edges, as otherwise $\psi$ would not extend to a nowhere-zero flow
in $\g/\{v_1,x\}$, contradicting the $\psi$-criticality of $(\g,z)$.  We let $e_2$ be an edge between $z$ and $v$,
and let $e_3$ and $e_4$ be the edges incident with $v$ and distinct from $e_1$ and $e_2$.
\item[(iii)] Finally, if $v$ is not adjacent to $z$, then we assign the labels $e_2$, $e_3$, and $e_4$ to
the edges incident with $v$ and distinct from $e_1$ arbitrarily.
\end{itemize}
See Figure \ref{fig:deg4splitting} for an illustration of these cases.
%start of picture

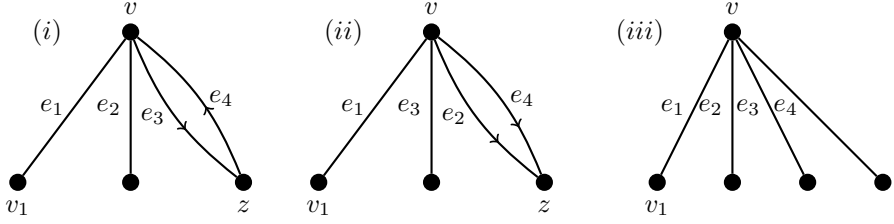
\begin{figure}
\begin{center}
\begin{tikzpicture}
\node[blackvertexv2] at (0,0) (v) [label= above:$v$] {};
\node[blackvertexv2] at (1.5,-2) (z) [label = below:$z$] {};
\node[blackvertexv2] at (-1.5,-2) (v1) [label = below:$v_{1}$] {};
\node[blackvertexv2] at (0,-2) (v2) {};
\node[dummywhite] at (-.75,0) (dummywhite1) [label = left:$(i)$]  {};
\draw[thick,black] (v) -- node[left]{$e_{1}$}(v1);
\draw[thick,black, bend left = 15,postaction={decoration={markings,mark=at position 0.6 with {\arrow{<}}},decorate}] (v) to node[right]{$e_{4}$}(z);
\draw[thick,black, bend right = 15,postaction={decoration={markings,mark=at position 0.6 with {\arrow{>}}},decorate}] (v) to node[left]{$e_{3}$}(z);
\draw[thick,black] (v) to node[left]{$e_{2}$} (v2);

\begin{scope}[xshift = 4cm]
\node[blackvertexv2] at (0,0) (v) [label= above:$v$] {};
\node[blackvertexv2] at (1.5,-2) (z) [label = below:$z$] {};
\node[blackvertexv2] at (-1.5,-2) (v1) [label = below:$v_{1}$] {};
\node[dummywhite] at (-.75,0) (dummywhite1) [label = left:$(ii)$]  {};
\node[blackvertexv2] at (0,-2) (v2) {};
\draw[thick,black] (v) -- node[left]{$e_{1}$}(v1);
\draw[thick,black, bend left = 15,postaction={decoration={markings,mark=at position 0.7 with {\arrow{>}}},decorate}] (v) to node[right]{$e_{4}$}(z);
\draw[thick,black, bend right = 15,postaction={decoration={markings,mark=at position 0.7 with {\arrow{>}}},decorate}] (v) to node[left]{$e_{2}$}(z);
\draw[thick,black] (v) to node[left]{$e_{3}$} (v2);
\end{scope}

\begin{scope}[xshift = 8cm]
\node[blackvertexv2] at (0,0) (v) [label= above:$v$] {};
\node[blackvertexv2] at (1,-2) (z) {};
\node[blackvertexv2] at (-1,-2) (v1) [label = below:$v_{1}$] {};
\node[blackvertexv2] at (0,-2) (v2) {};
\node[blackvertexv2] at (2,-2) (v3) {};
\node[dummywhite] at (-.75,0) (dummywhite1) [label = left:$(iii)$]  {};
\draw[thick,black] (v) -- node[left,yshift =-.1]{$e_{1}$}(v1);
\draw[thick,black] (v) to node[left,yshift =-.1]{$e_{3}$}(z);
\draw[thick,black] (v) to node[left,yshift=-.1]{$e_{4}$}(v3);
\draw[thick,black] (v) to node[left,yshift =-.1]{$e_{2}$} (v2);
\end{scope}
\end{tikzpicture}
\caption{The three cases in Lemma \ref{lemma-no4}. Note in case (i), the edge $e_{2}$ may be incident to $z$, and in case (iii) there may be parallel edges.} 
\label{fig:deg4splitting}
\end{center}
\end{figure}
Let $\g'$ be obtained from $\g$ by splitting off $e_1$ with $e_2$ and $e_3$ with $e_4$ and deleting the now isolated vertex $v$.
Note that if $e_3$ and $e_4$ are both directed by $\psi$, then they are directed in opposite directions.
Hence, $\psi$ naturally corresponds to a tip preflow $\psi'$ in the canvas $(\g',z)$.
Moreover, since $\psi$ does not extend to a nowhere-zero flow in $\g$, it is easy to see that $\psi'$
does not extend to a nowhere-zero flow in $\g'$, either.  Hence, there exists a $\psi'$-critical tip-respecting
contraction $(\g'',z)$ of $(\g',z)$.  Let $x'$ be the vertex of $\g''$ into which we contracted $x$. Consider now the easel $(\g'',z,x',\psi')$. We will show it is tame and tall.  

\begin{claim*}
The easel $(\g'',z,x',\psi')$ is tame.
\end{claim*}

\begin{subproof}
 Consider any vertex $u\in V(\g'')\setminus\{z,x'\}$, and let $A_0$ be the set of vertices of $\g$ contracted into $u$.  If at least three of the edges $e_1$, \ldots, $e_4$ have an
end in $A_0$, then let $A=A_0\cup\{v\}$; otherwise let $A=A_0$. Observe that not all four edges $e_{1},\ldots,e_{4}$ have an end in $A_{0}$, as otherwise we contradict criticality of $(\g,z,x,\psi)$.  Note that this ensures that either $\deg(u)=\deg_{\g}(A)$
or $\deg(u)=\deg_{\g}(A)-2$.  Moreover, in the latter case $v$ has either two or three neighbours in $A$, and thus by Lemma~\ref{lemma-simple},
we have $|A|\ge 2$.  Lemma~\ref{lemma-cut} and the tameness of the easel $(\g,z,x,\psi)$ then imply that $|\tau(u)|=|\tau(A)|\ge 4+\deg_{\g}(A)=4+\deg(u)$ in the former case,
and $|\tau(u)|=|\tau(A)|\ge 6+\deg_{\g}(A)=4+\deg(u)$ in the latter case.
\end{subproof}

\begin{claim*}
The easel $(\g'',z,x',\psi')$ is tall.
\end{claim*}

\begin{subproof}
Consider the set $X_0$ of the vertices of $\g$ contracted into $x'$, and let $X=X_0\cup\{v\}$ if at
least three of the edges $e_1$, \ldots, $e_4$ have an end in $X_0$ and $X=X_0$ otherwise. As before, not all four edges can have an end in $X_{0}$, as otherwise we contradict the criticality of $(\g,z,x,\psi)$.
Thus either $\deg(x')=\deg_{\g}(X)$, or $\deg(x')=\deg_{\g}(X)-2$ and $|X|\ge 2$.
\begin{itemize}
\item If $|X|\ge 2$, then Lemma~\ref{lemma-sepx} implies
$\deg(x') \ge \deg_{\g}(X)-2>\deg(x)\ge \deg_{\g}(z)-2\ge \deg_{\g''}(z)-2$, and thus the easel $(\g'',z,x',\psi')$ is tall.
\item If $X=\{x\}$, then $\deg(x')=\deg_{\g}(X)=\deg(x)$.  If $\deg(x)>\deg_{\g}(z)-2$ or $\deg_{\g}(z)>\deg_{\g''}(z)$,
this again implies that the easel $(\g'',z,x',\psi')$ is tall.
\item Finally, suppose that $X=\{x\}$, $\deg(x)=\deg_{\g}(z)-2$, and $\deg_{\g} (z)=\deg_{\g''}(z)$.
In particular, the labels of $e_2$, $e_3$, and $e_4$ were not chosen according to (i),
as in that case splitting off $e_3$ with $e_4$ decreases the degree of $z$.
Since the easel $(\g,z,x,\psi)$ is tall and $\deg(x)=\deg_{\g}(z)-2$, there exist two edges $e_5$ and $e_6$ incident with $z$, not
incident with $x$, and directed oppositely by $\psi$.  Since the labels were not chosen according to (i), we may assume without loss of generality that $e_6$ is not incident with $v$, and thus that $e_6$ is an edge of $\g''$ not incident with $x'$.
If $e_5$ is incident with $v$, then case (iii) does not occur, and further the choice in (ii) ensures that $e_2$ is incident with $z$ and directed in the same way as $e_5$. Moreover,
since $e_1$ is not incident with $x$, we have that $e_1$ and $e_2$ are split off to an edge not incident with $x$ and directed in the
same way as $e_5$. Hence, in this case we again conclude that the easel $(\g'',z,x',\psi')$ is tall.
\end{itemize}
\end{subproof}

The above claims imply that $(\g'',z,x',\psi')$ is a tall tame critical easel, and since $o(\g'',z)<o(\g,z)$, we have that $(\g'',z,x,\psi')$ is smaller than $(\g,z,x,\psi)$, contradicting the assumption that $(\g,z,x,\psi)$ is a minimal tall tame critical easel. This concludes the proof.
\end{proof}

With this, we can remove mixed edges.

\begin{lemma}\label{lemma-norede}
If $(\g,z,x,\psi)$ is a minimal tall tame critical easel, then no edge in $E(\g-\{x,z\})$ is mixed, and thus the canvas $(\g,z)$ is $x$-homogeneous.
\end{lemma}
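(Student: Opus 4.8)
The plan is a boundary-modification reduction. Suppose for contradiction that $uv\in E(\g-\{x,z\})$ is mixed, say with $u$ in-friendly and $v$ out-friendly, and write $\g=(G,\beta)$. Let $\g'=(G-uv,\beta')$, where $\beta'(u)=\beta(u)+1$, $\beta'(v)=\beta(v)-1$, and $\beta'$ agrees with $\beta$ on all other vertices. By Corollary~\ref{cor-2con} the graph $\g-z$ is $2$-connected (it has at least three vertices by Lemma~\ref{lemma-at4}), hence $2$-edge-connected, so $G-uv$ is connected; since $\sum_w\beta'(w)\equiv\sum_w\beta(w)\equiv 0\pmod 3$, the function $\beta'$ is a $\Z$-boundary, and $\psi$ is still a tip preflow in $(\g',z)$. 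First I would observe that $\psi$ does not extend to a nowhere-zero flow in $\g'$: adding the arc $(v,u)$ to any such flow yields a nowhere-zero flow in $\g$ extending $\psi$, since restoring that arc corrects the imbalance at $u$ and $v$ exactly by the $\pm 1$ shift in the boundary. Hence there is a $\psi$-critical tip-respecting contraction $(\g'',z)$ of $(\g',z)$; let $x'$ be the vertex of $\g''$ obtained from $x$. Since we deleted an edge of $\g-z$ and contracting only removes vertices and edges, $o(\g'',z)<o(\g,z)$. The goal is to show $(\g'',z,x',\psi)$ is a tall tame critical easel, contradicting the minimality of $(\g,z,x,\psi)$.

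Criticality holds by construction. For tameness, take $w\in V(\g'')\setminus\{x',z\}$, arising from a set $W\subseteq V(\g)\setminus\{x,z\}$. If $W$ contains both or neither of $u,v$, then $\deg_{\g''}(w)=\deg_\g(W)$ and $\beta'(W)=\beta(W)$, so tameness follows from the tameness of $(\g,z,x,\psi)$ when $|W|=1$ and from Lemma~\ref{lemma-cut} (which gives $\deg_\g(W)\ge 6+|\tau(W)|$) when $|W|\ge 2$. If $W$ contains exactly one of $u,v$, then $\deg_{\g''}(w)=\deg_\g(W)-1$ and $\beta'(W)=\beta(W)\pm 1$. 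When $|W|=1$, say $W=\{u\}$, this says that deleting one edge at $u$ and shifting $\beta(u)$ by $+1$ keeps $u$ tame; since $u$ is in-friendly and $\deg_\g(u)\ge 5$ by Lemma~\ref{lemma-no4}, a short case check on $\beta(u)\in\{0,1,-1\}$ (using also the tameness bound $\deg_\g(u)\ge 4+|\tau(u)|$) confirms this, and symmetrically for $W=\{v\}$ since $v$ is out-friendly. When $|W|\ge 2$, Lemma~\ref{lemma-cut} gives $\deg_\g(W)\ge 6+|\tau(W)|$; as $\deg_{\g''}(w)$ and $\deg_\g(W)$ have opposite parities while $\beta'(W)\not\equiv\beta(W)\pmod 3$, Observation~\ref{obs-tauprop}(b) yields $\bigl||\tau_{\g''}(w)|-|\tau(W)|\bigr|=1$, so $\deg_{\g''}(w)=\deg_\g(W)-1\ge 5+|\tau(W)|\ge 4+|\tau_{\g''}(w)|$.

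For tallness, note first that $\deg_{\g''}(z)=\deg_\g(z)$, since no edge incident with $z$ is deleted or becomes a loop. Let $X'\ni x$ be the part of the contraction giving $x'$; since $|V(\g'')|\ge 3$ by Observation~\ref{obs-ge3}, we have $X'\subsetneq V(\g)\setminus\{z\}$. If $X'=\{x\}$, then $\deg_{\g''}(x')=\deg_\g(x)\ge\deg_\g(z)-2=\deg_{\g''}(z)-2$, and in case of equality the two oppositely directed edges at $z$ not incident with $x$ guaranteed by the tallness of $(\g,z,x,\psi)$ survive in $\g''$ and avoid $x'=x$. If $|X'|\ge 2$, then Lemma~\ref{lemma-sepx} gives $\deg_\g(X')\ge\deg_\g(x)+3$, so $\deg_{\g''}(x')=\deg_{\g'}(X')\ge\deg_\g(X')-1\ge\deg_\g(x)+2\ge\deg_\g(z)=\deg_{\g''}(z)$, so the easel is tall and the equality case does not arise. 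This contradicts the minimality of $(\g,z,x,\psi)$, so no mixed edge lies in $E(\g-\{x,z\})$; since every mixed edge has both ends distinct from $z$, every mixed edge is incident with $x$, i.e., $(\g,z)$ is $x$-homogeneous.

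The step I expect to be the main obstacle is the tameness verification for a contracted set $W$ meeting $\{u,v\}$ in exactly one vertex: deleting $uv$ lowers $\deg(W)$ by one while the boundary shifts by $\pm 1$, and one must check that the induced change in $|\tau(W)|$ does not outpace the lost edge. For $|W|\ge 2$ this is balanced precisely by the extra unit of slack in Lemma~\ref{lemma-cut} combined with Observation~\ref{obs-tauprop}(b); for $|W|=1$ it reduces to the finite case analysis at $u$ and $v$ made possible by $\deg\ge 5$ and in-/out-friendliness.
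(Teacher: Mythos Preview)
Your proof is correct and follows essentially the same approach as the paper's: delete the mixed edge, shift the boundary at its endpoints by $\pm 1$, pass to a $\psi$-critical contraction, and verify tameness (via Lemma~\ref{lemma-cut}, Observation~\ref{obs-tauprop}(b), and Lemma~\ref{lemma-no4}) and tallness (via Lemma~\ref{lemma-sepx}) to contradict minimality. The only cosmetic difference is that the paper spells out the $|W|=1$ case analysis explicitly rather than leaving it as a ``short case check,'' but your identification of the relevant ingredients (in-/out-friendliness plus $\deg\ge 5$) is exactly what is needed.
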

\begin{proof}
Suppose for a contradiction that $uv\in E(\g-\{x,z\})$ is a mixed edge where, say, $\tau(u)$ is out-friendly and $\tau(v)$ is in-friendly.  Let $\g=(G,\beta)$ and let $\g'=(G-uv,\beta')$, where $\beta'(y)=\beta(y)$ for $y\in V(G)\setminus\{u,v\}$, $\beta'(u)=\beta(u)-1$,
and $\beta'(v)=\beta(v)+1$.  Note that $G-uv$ is connected by Corollary~\ref{cor-2con}, and since $\beta'(V(G))=\beta(V(G))=0$,
$\beta'$ is a $\Z$-boundary for $G-uv$.  If $\g'$ had a nowhere-zero flow extending $\psi$, we could extend it to a nowhere-zero flow in $\g$
by directing the edge $uv$ towards $v$.  Hence, $\psi$ does not extend to a nowhere-zero flow in $\g'$, and thus $(\g',z)$ has a tip-respecting
$\psi$-critical contraction $(\g'',z)$.  Let $x'$ be the vertex of $\g''$ into which we contracted $x$, and consider the easel $(\g'',z,x',\psi)$. We claim this easel is tame and tall.

\begin{claim*}
The easel $(\g'',z,x',\psi)$ is tame.
\end{claim*}

\begin{subproof}
 Consider any vertex $y\in V(\g'')\setminus \{z,x'\}$, and let $Y$ be the set of vertices of $\g$ contracted into $y$.  If $|\{u,v\}\cap Y|\in\{0,2\}$, then $\deg(y)=\deg_{\g}(Y)$ and $\beta'(y)=\beta(Y)$, and thus $\deg(y)\ge 4+|\tau(y)|$
by the tameness of $(\g,z,x,\psi)$ and Lemma~\ref{lemma-cut}.  Hence, by symmetry we can assume that $u\in Y$ and $v\not\in Y$.
Then $\deg(y)=\deg_{\g}(Y)-1$ and $\beta'(y)=\beta(Y)-1$, and thus $|\tau(y)|\le |\tau(Y)|+1$ by Observation~\ref{obs-tauprop}(b).
\begin{itemize}
\item If $|Y|\ge 2$, then Lemma~\ref{lemma-cut} gives $\deg(y)=\deg_{\g}(Y)-1\ge 5+|\tau(Y)|\ge 4+|\tau(y)|$.
\item Otherwise, $Y=\{u\}$.  If $\tau(u)$ contains a positive element $b$, then $\tau(y)=\{b-1\}$, $|\tau(y)|=|\tau(u)|-1$,
and by the tameness of  $(\g,z,x,\psi)$ we have $\deg(y)=\deg(u)-1\ge (4+|\tau(u)|)-1\ge 4+|\tau(y)|$.
\item Finally, if $Y=\{u\}$ and $\tau(u)=\{0\}$, then $\deg(u)$ is even, and Lemma~\ref{lemma-no4} gives $\deg(u)\ge 6$.
Since $|\tau(y)|=1$, it follows that $\deg(y)=\deg(u)-1\ge 5=4+|\tau(y)|$.
\end{itemize}
Thus we conclude that $(\g'',z,x',\psi)$ is tame.
\end{subproof}

\begin{claim*}
The easel $(\g'',z,x',\psi)$ is tall.
\end{claim*}

\begin{subproof}
 Let $X$ be the subset of $V(\g)$ contracted into $x'$.  If $|X|\ge 2$, then Lemma~\ref{lemma-sepx}
implies $\deg(x') \ge \deg_{\g}(X) -1\ge \deg(x) + 2\ge \deg(z)$.  If $X=\{x\}$, then $\deg(x')=\deg(x)\ge \deg(z)-2$; and if $\deg(x)=\deg(z)-2$,
the two oppositely directed edges witnessing that $(\g,z,x,\psi)$ is tall are not incident with $x'$ in $\g''$ and witness that $(\g'',z,x',\psi)$ is tall.
\end{subproof}

The above two claims imply $(\g'',z,x',\psi)$ is a critical tall tame easel, and further that $o(\g'',z)<o(\g,z)$. This contradicts the assumption that $(\g,z,x,\psi)$ is a minimal tall tame critical easel, completing the proof.
\end{proof}

A similar reduction applies to suitably directed edges at $z$.

\begin{lemma}\label{lemma-sameor}
If $(\g,z,x,\psi)$ is a minimal tall tame critical easel, then $z$ is not adjacent to $x$,
and either $\tau(v)>0$ for every $v\in V(\g)\setminus\{x,z\}$ and $\psi$ directs all edges away from $z$, or
$\tau(v)<0$ for every $v\in V(\g)\setminus\{x,z\}$ and $\psi$ directs all edges towards $z$.
Moreover, $\deg(z)\le \deg(x)+1$.
\end{lemma}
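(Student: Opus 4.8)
The plan is to analyze the given minimal easel $(\g,z,x,\psi)$ and pin down its structure via the edge-deletion reductions already used in Lemmas~\ref{lemma-norede} and \ref{lemma-no4}. By Lemma~\ref{lemma-norede} the canvas $(\g,z)$ is $x$-homogeneous, and by Lemma~\ref{lemma-at4} we have $|V(\g)|\ge 4$, so Observation~\ref{obs-allplusminus} gives a dichotomy: either $\tau(v)>0$ for every $v\in V(\g)\setminus\{x,z\}$, or $\tau(v)<0$ for every such $v$. Reversing every orientation and negating $\beta$ interchanges the two cases, so I will assume $\tau(v)>0$ for all $v\in V(\g)\setminus\{x,z\}$ and then establish, in this order: (i) no arc of $\psi$ points towards $z$ with its other end in $V(\g)\setminus\{x,z\}$; (ii) $\deg(z)\le\deg(x)+1$; and (iii) $z$ is not adjacent to $x$. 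Claim~(iii) is the first assertion of the lemma, (ii) is the last, and (i)+(iii) say that every arc of $\psi$ points away from $z$, which is the middle assertion.

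For (i), I would suppose that $e=vz\in\psi$ is directed towards $z$ with $v\in V(\g)\setminus\{x,z\}$, and reduce to a smaller easel. Let $\g'=(G-e,\beta')$ with $\beta'(v)=\beta(v)-1$, $\beta'(z)=\beta(z)+1$, and $\beta'$ unchanged elsewhere; then $\beta'$ is a $\Z$-boundary, $G-e$ is connected (as $\g-z$ is $2$-connected by Corollary~\ref{cor-2con} and $\deg(z)\ge 2$), and $\psi':=\psi\setminus\{e\}$ is a tip preflow of $(\g',z)$. If $\psi'$ extended to a nowhere-zero flow of $\g'$, re-adding $e$ towards $z$ would extend $\psi$ in $\g$; so $(\g',z)$ has a $\psi'$-critical tip-respecting contraction $(\g'',z)$, giving a critical easel $(\g'',z,x',\psi')$ with $x'$ the image of $x$. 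Exactly as in Lemmas~\ref{lemma-no4} and \ref{lemma-norede}, I would check that this easel is tame — using Lemma~\ref{lemma-cut}, tameness of $(\g,z,x,\psi)$, and Observation~\ref{obs-tauprop}(b) together with $\deg(v)\ge 5$ (Lemma~\ref{lemma-no4}) to absorb the $-1$ change in $\tau$ at $v$ — and tall, the crucial point being that deleting $e$ drops $\deg(z)$ by one so the equality case of tallness never arises (with Lemma~\ref{lemma-sepx} handling the subcase where the part contracted to $x'$ properly contains $\{x\}$). Finally $(\g'',z,x')\prec(\g,z,x)$: since $e$ meets $z$ we have $o(\g'',z)\le o(\g',z)=o(\g,z)$; if equality holds then $\g''=\g'$, and then either changing $\beta(v)$ made $\tau(v)=\{0\}$ and created a mixed edge, so $(\g',z)$ is not $x$-homogeneous and $\prec$ follows from the second case of Definition~\ref{def:ordering}, or $(\g',z)$ stays $x$-homogeneous and $\prec$ follows from the third case since $\deg_{\g'}(z)<\deg_\g(z)$. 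This contradicts minimality, proving (i).

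Claim~(ii) is then immediate: tallness gives $\deg(z)\le\deg(x)+2$, and if $\deg(z)=\deg(x)+2$ the definition of tallness supplies an arc of $\psi$ not incident with $x$ directed towards $z$, whose non-$z$ end lies in $V(\g)\setminus\{x,z\}$ and has $\tau>0$, contradicting (i). For (iii), if $e=zx$ is an edge of $\g$ that is oriented by $\psi$ (in particular if $\psi$ has any arc at $x$), I would run the same reduction on this $e$, modifying $\beta$ at $x$ and $z$ by $\pm1$ according to the orientation of $e$; now (ii) makes the reduced easel tall with no equality case, and the rest goes through verbatim, so minimality is again contradicted. The case where a $zx$ edge exists but is not oriented by $\psi$ reduces to the previous one after the preliminary observation that $\psi$ may be taken to orient every edge incident with $z$. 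This yields (iii), and combining (i) and (iii) completes the proof.

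The main obstacle is, as usual in this paper, the tame/tall bookkeeping for $(\g'',z,x',\psi')$: one must split into cases by whether the vertex whose boundary changed is absorbed into the part contracted to $x'$ and whether a further contraction merges it with other vertices, and in each case balance the $\pm1$ boundary change against the slack supplied by Lemma~\ref{lemma-cut} and Lemma~\ref{lemma-no4}, mirroring the arguments in Lemmas~\ref{lemma-norede} and \ref{lemma-no4}. A secondary subtlety — and the reason the nonstandard order $\prec$ is needed precisely here — is that these reductions leave $o(\g,z)$ unchanged, so the contradiction with minimality must be drawn from the $x$-homogeneity and $\deg(z)$ tie-breakers of Definition~\ref{def:ordering} rather than from $o$.
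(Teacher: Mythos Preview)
Your approach is correct and essentially mirrors the paper's: delete an edge at $z$, adjust the boundary, and invoke the $\prec$ tie-breakers (homogeneity, then $\deg z$) to contradict minimality. The paper streamlines this slightly by observing that $(\g',z)$ is already $\psi'$-critical (since $\psi$ extends in every proper tip-respecting contraction of $\g$, so does $\psi'$ in the corresponding contraction of $\g'$), so no further contraction step is needed, and by noting that a tip preflow orients \emph{every} edge incident with $z$, so the ``unoriented $zx$ edge'' case you flag at the end does not arise.
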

\begin{proof}

Lemma~\ref{lemma-norede} together with Observation~\ref{obs-allplusminus} imply that either $\tau(v)>0$ for every $v\in V(\g)\setminus\{x,z\}$,
or $\tau(v)<0$ for every $v\in V(\g)\setminus\{x,z\}$.  By symmetry, we can assume the former.

First we claim that there are no edges between $x$ and $z$.   Suppose this is not true and let $e$ be be such an edge. Then deleting $e$ and adjusting the boundary at $x$ and $z$ accordingly (depending on which way $e$ is directed by $\psi$) would give a tall tame critical easel $(\g_0,z,x,\psi_0)$
such that $o(\g_0,z)=o(\g,z)$ and $\deg_{\g_0}(z)<\deg_{\g}(z)$. Since the canvas $(\g,z)$ is $x$-homogeneous, we would have $(\g_0,z,x)\prec (\g,z,x)$.  This is a contradiction, and thus $z$ and $x$ are non-adjacent.

Now we claim that $\psi$ orients all edges away from $z$. Suppose for a contradiction that $\psi$ orients an edge $e=uz$ towards $z$.
Let $\g=(G,\beta)$ and let $\g'=(G-e,\beta')$, where $\beta'(y)=\beta(y)$ for $y\in V(G)\setminus\{u,z\}$, $\beta'(u)=\beta(u)-1$,
and $\beta'(z)=\beta(z)+1$.  Note that $\beta'$ is a $\Z$-boundary for $G-e$: By Corollary~\ref{cor-2con}, $G-z$ is connected,
and thus $G-e$ is disconnected only if $\deg(z)=1$.  But then the definition of a tip preflow ensures that $\beta(z)=-1$ and $\beta'(z)=0$,
and thus also $\beta'(V(G-z))=0$.

Let $\psi'$ be obtained from $\psi$ by deleting the edge $e$.
Note that $\psi'$ does not extend to a nowhere-zero flow in $\g'$, as otherwise adding $e$ oriented as in $\psi$ would give a nowhere-zero flow in $\g$ extending $\psi$.
Conversely, $\psi'$ extends to a nowhere-zero flow in any proper tip-respecting contraction $(\g'/ \PP,z)$, since $\psi$ extends to a nowhere-zero flow in $(\g / \PP,z)$
which can be turned into a nowhere-zero flow in $(\g'/ \PP,z)$ by deleting $e$.  Therefore, $(\g',z,x,\psi')$ is a critical easel.
Since $\deg_{\g'}(z)\le \deg_{\g}(z)-1\le \deg(x)+1$, this easel is tall.  As in the proof of Lemma~\ref{lemma-norede}, we can also show that this easel is tame.
Note that $o(\g',z)=o(\g,z)$, the canvas $(\g,z)$ is $x$-homogeneous, and $\deg_{\g'}(z)<\deg_{\g}(z)$, and thus
$(\g',z,x)\prec (\g,z,x)$.  This contradicts the assumption that $(\g,z,x,\psi)$ is a minimal tall tame critical easel.

Lastly, we need to show that $\deg(z) \leq \deg(x) +1$. This is the case by the definition of tallness,
since $\psi$ directs all edges between $z$ and $V(\g)\setminus \{x,z\}$ in the same
direction.
\end{proof}

\subsection{Step 4: No minimal tall tame critical easels exist.}

Finally, we complete the proof of Theorem \ref{thm:tall}, stating that tall tame easels are not critical.
See Figure~\ref{fig:step4} for a sketch of the situation.

\begin{figure}
\begin{center}
\begin{tikzpicture}
\node[blackvertexv2] at (0,0) (z) [label = below: \footnotesize{$\deg(z) \leq \deg(x) + 1$}] [label = left:$z$] {};
\node[blackvertexv2] at (3,0) (x) [label = below:$x$] {};
\draw[black,dotted] (z) to (x);
\node[ellipsenodev3] at (1.5,2) (ellipse) [label = above:{$\g - \{z,x\}$}] [label = center:{$\tau(v) >0$}] {};
\draw[thick,black, postaction={decoration={markings,mark=at position 0.5 with {\arrow{>}}},decorate}] (z) to (ellipse.south west);
\draw[thick,black,bend left = 30,postaction={decoration={markings,mark=at position 0.5 with {\arrow{>}}},decorate}] (z) to (ellipse.195);
\draw[thick,black,bend left = 60,postaction={decoration={markings,mark=at position 0.5 with {\arrow{>}}},decorate}] (z) to (ellipse.190);
\draw[thick,black,bend right = 30,postaction={decoration={markings,mark=at position 0.5 with {\arrow{>}}},decorate}] (z) to (ellipse.205);
\draw[thick,black,bend right = 60,postaction={decoration={markings,mark=at position 0.5 with {\arrow{>}}},decorate}] (z) to (ellipse.210);
\draw[thick,black,bend right = 30] (x) to (ellipse.350);
\draw[thick,black,bend right = 15] (x) to (ellipse.345);
\draw[thick,black,bend left = 30] (x) to (ellipse.335);
\draw[thick,black,bend left = 15] (x) to (ellipse.south east);
%startofscope
\begin{scope}[xshift = 5cm]
\node[blackvertexv2] at (0,0) (z) [label = below: \footnotesize{$\deg(z) \leq \deg(x) + 2$}] [label = left:$z$] {};
\node[blackvertexv2] at (3,0) (x) [label = below:$x$] {};
\draw[black,dotted] (z) to (x);
\node[ellipsenodev3] at (1.5,2) (ellipse) [label = above:{$\g - \{z,x\}$}] [label = center:{$\tau(v) >0$}] {};

\draw[thick,black, postaction={decoration={markings,mark=at position 0.5 with {\arrow{<}}},decorate}, bend left = 80] (z) to (ellipse.190);
\draw[thick,black, postaction={decoration={markings,mark=at position 0.5 with {\arrow{>}}},decorate}] (z) to (ellipse.south west);
\draw[thick,black,bend left = 25,postaction={decoration={markings,mark=at position 0.5 with {\arrow{>}}},decorate}] (z) to (ellipse.195);
\draw[thick,black,bend left = 40,postaction={decoration={markings,mark=at position 0.5 with {\arrow{<}}},decorate}] (z) to (ellipse.190);
\draw[thick,black,bend right = 30,postaction={decoration={markings,mark=at position 0.5 with {\arrow{>}}},decorate}] (z) to (ellipse.205);
\draw[thick,black,bend right = 60,postaction={decoration={markings,mark=at position 0.5 with {\arrow{>}}},decorate}] (z) to (ellipse.210);
\draw[thick,black,bend right = 30] (x) to (ellipse.350);
\draw[thick,black,bend right = 15] (x) to (ellipse.345);
\draw[thick,black,bend left = 30] (x) to (ellipse.335);
\draw[thick,black,bend left = 15] (x) to (ellipse.south east);
\end{scope}
\end{tikzpicture}
\caption{The situation at the start of Step 4. All vertices $v \in \g - \{z,x\}$ have $\tau(v) >0$, the vertex $z$ has degree at most $\deg(x) +1$, and moreover $z$ does not have an arc to $x$. The final reduction is to take any arc incident to $z$, reverse it, and observe that the resulting easel contradicts our choice of counterexample.}\label{fig:step4}
\end{center}
\end{figure}
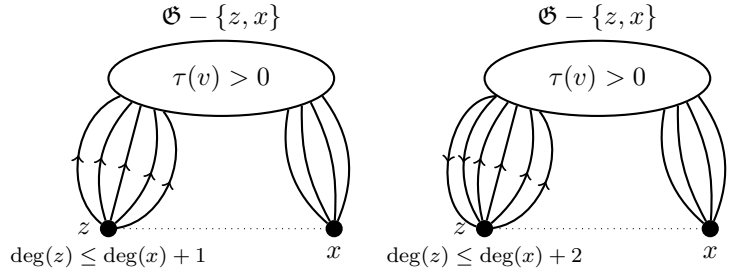
\begin{proof}[Proof of Theorem~\ref{thm:tall}]
Suppose for a contradiction that Theorem~\ref{thm:tall} is false, and thus there exists a minimal tall tame critical easel $(\g,z,x,\psi)$.
By Lemma~\ref{lemma-sameor} and symmetry, we may assume that $xz\not\in E(\g)$, that $\tau(v)>0$ for every $v\in V(\g)\setminus\{x,z\}$, that $\psi$ directs all edges away from $z$, and that $\deg(z)\le \deg(x)+1$.  Since $\g$ is connected by Lemma~\ref{lemma-conn}, there exists at least one edge $e=vz$ incident with $z$.
Let $\g=(G,\beta)$. Let $\g'=(G',\beta)$ where $G'$ is obtained from $G$ by adding an edge $e'$ parallel to $e$, and let $\psi'$ be the preflow around $z$ in $\g'$ obtained from $\psi$ by reversing $e$ and directing $e'$ towards $z$.  Note that $\psi'$ does not extend to a nowhere-zero flow in $\g'$, as otherwise
the same orientation of the edges of $\g-z$ would together with $\psi$ give a nowhere-zero flow in $\g$.
Moreover, note that $\psi'$ extends to a nowhere-zero flow in every proper tip-respecting contraction of $(\g',z)$, since
$\psi$ extends to a nowhere-zero flow in the corresponding tip-respecting contraction of $(\g,z)$.
Therefore, $(\g',z)$ is $\psi'$-critical, and $(\g',z,x,\psi')$ is a critical easel. We show below that $(\g',z,x,\psi')$ is tall and tame.

\begin{claim*}
The easel $(\g',z,x,\psi')$ is tame.
\end{claim*}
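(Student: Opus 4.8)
The plan is to exploit that passing from $\g$ to $\g'$ changes nothing relevant to tameness except at the single vertex $v$. First I would do the bookkeeping: $V(\g')=V(\g)$, the boundary function is the same for $\g$ and $\g'$, and the new edge $e'$ is incident only with $v$ and $z$, so the only vertex whose degree changes is $v$, with $\deg_{\g'}(v)=\deg_\g(v)+1$. Moreover, since $xz\notin E(\g)$ (one of the standing assumptions at the start of Step 4) we have $v\neq x$, so $v$ really is one of the vertices for which the tameness inequality must be verified.

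Next I would split the verification into three cases. For $u\in V(\g')\setminus\{x,z,v\}$ there is nothing to do: $\deg_{\g'}(u)=\deg_\g(u)$ and the boundary is unchanged, hence $|\tau(u)|$ is the same whether computed in $\g$ or in $\g'$, and $\deg_{\g'}(u)\ge 4+|\tau(u)|$ is inherited directly from the tameness of $(\g,z,x,\psi)$. The only real point is $v$. Here I would use that $\tau_\g(v)>0$, which was established in Lemma~\ref{lemma-sameor}; inspecting the chart defining $\tau$, this forces $\beta(v)\in\{1,-1\}$, so in particular $\beta(v)\neq 0$. Adding $e'$ flips the parity of $\deg(v)$ while leaving $\beta(v)$ fixed, and since $\beta(v)\neq 0$, inspecting the $|\tau|$-chart in this situation (equivalently, applying Observation~\ref{obs-tauprop}(c)) gives $|\tau_{\g'}(v)|\le |\tau_\g(v)|+1$. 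Combining with the tameness inequality $\deg_\g(v)\ge 4+|\tau_\g(v)|$ then yields
$$\deg_{\g'}(v)=\deg_\g(v)+1\ge 4+|\tau_\g(v)|+1\ge 4+|\tau_{\g'}(v)|,$$
which is what we need. Putting the three cases together shows that $(\g',z,x,\psi')$ is tame.

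I do not expect a genuine obstacle: the claim is routine, being essentially a degree/$\tau$ parity computation. The two places to stay careful are (i) remembering that $\beta(v)\neq 0$, so that the parity flip at $v$ costs at most one unit of $|\tau|$ rather than potentially more, and (ii) remembering that $v\neq x$, so that $v$ is indeed subject to the tameness condition — this is exactly where the assumption $xz\notin E(\g)$ is used.
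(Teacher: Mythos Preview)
Your proof is correct and follows essentially the same approach as the paper's: both observe that only the degree of $v$ changes, use $\tau_\g(v)>0$ to get $\beta(v)\neq 0$, apply Observation~\ref{obs-tauprop}(c) to bound $|\tau_{\g'}(v)|$, and conclude. Your explicit remark that $v\neq x$ (because $xz\notin E(\g)$) is a nice point of care that the paper's proof leaves implicit.
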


\begin{subproof}
The degree and boundary of all vertices that are not $v$ and $z$ are the same as in $\g$, and hence it suffices to show that $\deg_{\g'}(v) \geq 4 + |\tau_{\g'}(v)|$. 
Since $\tau(v)>0$, we have $\beta(v)\neq 0$.
Moreover, $\deg_{\g'} (v)=\deg_{\g}(v)+1$, and thus $|\tau_{\g'}(v)|\le |\tau_{\g}(v)|+1$ by Observation~\ref{obs-tauprop}(c).  Therefore, $\deg_{\g'}(v)=\deg_{\g}(v)+1\ge 5+|\tau_{\g}(v)|\ge 4+|\tau_{\g'}(v)|$. 
\end{subproof}

\begin{claim*}
The easel $(\g',z,x,\psi')$ is tall.
\end{claim*}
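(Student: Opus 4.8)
The plan is to verify both requirements in the definition of a \emph{tall} easel for $(\g',z,x,\psi')$, using the structure of the minimal counterexample established in Step~3, and in particular the conclusions of Lemma~\ref{lemma-sameor}. I would begin with the effect of the construction on degrees. The only edge of $\g'$ not already present in $\g$ is $e'$, which joins $v$ to $z$; moreover $v\neq x$, since $e=vz\in E(\g)$ while $xz\notin E(\g)$ by Lemma~\ref{lemma-sameor}. Hence $e'$ is not incident with $x$, so $\deg_{\g'}(x)=\deg_{\g}(x)$, while $\deg_{\g'}(z)=\deg_{\g}(z)+1$. By the last sentence of Lemma~\ref{lemma-sameor} we have $\deg_{\g}(z)\le\deg_{\g}(x)+1$, and therefore $\deg_{\g'}(z)=\deg_{\g}(z)+1\le\deg_{\g}(x)+2=\deg_{\g'}(x)+2$, which is the degree inequality demanded by tallness.

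It then remains to treat the borderline case $\deg_{\g'}(z)=\deg_{\g'}(x)+2$, equivalently $\deg_{\g}(z)=\deg_{\g}(x)+1$; here tallness additionally requires an edge of $\psi'$ not incident with $x$ directed towards $z$, and another such edge directed away from $z$. The first witness is $e$ itself: it joins $v\neq x$ to $z$ and $\psi'$ directs it from $v$ to $z$. For the second witness I would first argue $\deg_{\g}(z)\ge 2$: indeed $\g$ is connected by Lemma~\ref{lemma-conn} and has at least four vertices by Lemma~\ref{lemma-at4}, so $\deg_{\g}(x)\ge 1$ and thus $\deg_{\g}(z)=\deg_{\g}(x)+1\ge 2$. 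Consequently $\g$ has an edge $f\neq e$ incident with $z$; by Lemma~\ref{lemma-sameor}, $\psi$ directs every edge at $z$ away from $z$, and $\psi'$ agrees with $\psi$ on all edges except $e$, so $\psi'$ directs $f$ away from $z$; finally $f$ is not incident with $x$ since $xz\notin E(\g)$. This supplies the required witnesses, and so $(\g',z,x,\psi')$ is tall.

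The only step here that is not pure bookkeeping is the borderline case, and the one point that needs an argument there is the bound $\deg_{\g}(z)\ge 2$, which guarantees that an out-edge at $z$ survives the reversal of $e$; I expect this to be the main (and only) obstacle for this claim. Once this claim together with the tameness claim is established, $(\g',z,x,\psi')$ is a tall tame critical easel with $o(\g',z)=o(\g,z)$; moreover, for any choice of $e$, the vertex $v$ has $\tau_{\g'}(v)<0$ (adding one edge flips the degree parity of $v$, and $\tau(v)>0$ in $\g$ forces $\beta(v)\neq 0$, so the sign of $\tau$ flips), while every other vertex of $\g'-\{x,z\}$ still has positive $\tau$. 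Since $\g-z$ is $2$-connected by Corollary~\ref{cor-2con} and has at least three vertices, $v$ has at least two neighbours other than $z$, at most one of which is $x$, so $v$ has a neighbour $w\notin\{x,z\}$; then $vw$ is a mixed edge of $(\g',z)$ not incident with $x$, whence $(\g',z)$ is not $x$-homogeneous and $(\g',z,x)\prec(\g,z,x)$. This contradicts the minimality of $(\g,z,x,\psi)$ and completes the proof of Theorem~\ref{thm:tall}.
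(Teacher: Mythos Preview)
Your proof of the claim is correct and follows essentially the same approach as the paper: you use Lemma~\ref{lemma-sameor} to get $\deg_{\g}(z)\le\deg(x)+1$ and $xz\notin E(\g)$, and in the borderline case you produce $e$ (directed towards $z$) and a second edge $f\neq e$ at $z$ (directed away from $z$, not incident with $x$) after arguing $\deg_{\g}(z)\ge 2$ via connectedness. Your subsequent sketch of the contradiction (non-$x$-homogeneity of $(\g',z)$ via a neighbour of $v$ in $V(\g)\setminus\{x,z\}$) also matches the paper.
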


\begin{subproof}
Since $\deg_{\g}(z)\le \deg(x)+1$, we have $\deg_{\g'}(z)\le \deg(x)+2$.
Moreover, if $\deg_{\g'}(z)=\deg(x)+2$, then $\deg_{\g}(z) = \deg(x)+1\ge 2$ (we have $\deg(x)\ge 1$, since $\g$ is connected by Lemma~\ref{lemma-conn}),
and thus $\g'$ contains both edges $e$ and $e'$ directed by $\psi'$ towards $z$
and an edge $e''\neq e$ of $\g$ directed by $\psi'$ away from $z$.
\end{subproof}

Note that $o(\g',z)=o(\g,z)$.  Since $\tau_{\g}(v)>0$ and the degree of $v$ in $\g'$ differs in parity, we have $\tau_{\g'}(v)<0$.
By Lemmas~\ref{lemma-at4} and \ref{lemma-2con}, $v$ has a neighbour $u$ in $V(\g)\setminus \{x,z\}$, and $\tau_{\g'}(u)=\tau_{\g}(u)>0$.
Therefore, the canvas $(\g',z)$ is not $x$-homogeneous.  It follows that $(\g',z,x)\prec (\g,z,x)$, contradicting the choice of $(\g,z,x,\psi)$
as a minimal tall tame critical easel. This completes the proof of Theorem \ref{thm:tall}.
\end{proof}
\section{Generating flow-critical canvases}\label{sec:canvasgeneration}

The goal of this section is to prove a theorem which allows us to generate flow-critical tame canvases $(\g,z)$ with $\deg(z) \leq k$ efficiently. Here, ``efficiently" means relative to generating all tame canvases with $n$ vertices and testing them for flow-criticality. The idea is to take the argument given in Theorem \ref{thm:tall} and tweak it to give a generation theorem for  generating flow-critical tame canvases. Before we can state our theorem, we need quite a bit of preparation.

\subsection{Operations for generating flow-critical canvases}\label{subsec:canvasops}

We start off with an important definition.

\begin{definition}
Given a canvas $(\g,z)$ with $\deg(z)\le k+1$, we say that a tip preflow $\psi$
is a \emph{$k$-tallness-witnessing preflow} if $\psi$ does not extend to a
nowhere-zero flow in $\g$ and if $\deg(z)=k+1$, then additionally $\psi$ does
not direct all edges incident with $z$ in the same direction. If there exists a
$k$-tallness-witnessing preflow in $(\g,z)$, then we say that $(\g,z)$ is
\emph{$k$-tall}. For an integer $k$, let $\GG_k$ denote the class of all
$k$-tall flow-critical tame canvases.
\end{definition}

This seems to be a strange definition, as one might anticipate the class $\GG_k$ only containing canvases where $\deg(z)\le k$. 
However, as in the final reduction in Theorem~\ref{thm:tall}, we will need to
generate some canvases where $\deg(z) = k+1$, hence the exception in the above
definition. Observe that our notion of $k$-tall is very close to the notion for
easels; indeed, if $(\g,z,x,\psi)$ is a tall easel and $\psi$ does not extend
to a nowhere-zero flow in $\g$, then $(\g,z)$ is $(\deg(x)+1)$-tall, with
$\psi$ being a $(\deg(x)+1)$-tallness-witnessing preflow. Let us remark that
$\GG_k$ only contains non-trivial canvases (since each of them has a
$k$-tallness-witnessing preflow and this tip preflow does not extend to a
nowhere-zero flow).  Moreover, note that Corollary~\ref{cor-mainlov} implies
that $\GG_k=\emptyset$ for $k\le 4$, and that $\GG_5$ contains only canvases with tips of degree $6$.

We now introduce the operations needed in the algorithm for generating canvases in $\GG_{k}$.

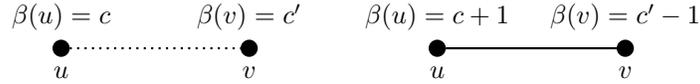
\begin{figure}
\begin{center}
\begin{tikzpicture}
  \node[blackvertexv2] at (0,0) (u) [label = above:$\beta(u) \equal c$] [label = below:$u$] {};
  \node[blackvertexv2] at (2.5,0) (v) [label = above:$\beta(v) \equal c'$] [label = below:$v$]{};
  \draw[thick,black,dotted] (u) to (v);

  \begin{scope}[xshift = 5cm]
    \node[blackvertexv2] at (0,0) (u) [label = above:$\beta(u) \equal c +1 $] [label = below:$u$]{};
  \node[blackvertexv2] at (2.5,0) (v) [label = above:$\beta(v) \equal c'-1$] [label = below:$v$]{};
  \draw[thick,black] (u) to (v);
  \end{scope}
\end{tikzpicture} 
\caption{A \textit{$1$-alteration} at $u$ and $v$. If either $u$ or $v$ is $z$, then this is a \textit{tip-alteration}. }
\label{fig:onealt}
\end{center}
\end{figure}

\begin{itemize}
\item Suppose that a canvas $(\g_0,z)$ is obtained from a canvas $(\g',z)$ by adding an edge between distinct vertices $u$ and $v$,
increasing the boundary at $u$ by $1$ and decreasing the boundary at $v$ by $1$.
If $u\neq z\neq v$, then we say that $(\g_0,z)$ is a \emph{1-alteration} of
$(\g',z)$ at $u$ and $v$; otherwise, $(\g_0,z)$ is a \emph{tip-alteration} of
$(\g',z)$ at the vertex in $\{u,v\}\setminus\{z\}$.  %In a tip alteration, we orient the new edge such that the boundary condition is satisfied at $z$. 
See Figure~\ref{fig:onealt}.
\item Let $(\g',z)$ be a canvas containing more than one edge between $z$ and a vertex $v\in V(\g')\setminus\{z\}$.
The canvas obtained from $(\g',z)$ by deleting one edge between $z$ and $v$, keeping the boundary unchanged, is a \emph{tip-reduction} of $(\g',z)$ at $v$. See Figure \ref{fig:tipreduct}.
\item Let $(\g', z)$ be a canvas. Let $X$ be a non-empty set of size at most
two whose elements are edges and vertices of $\g'$. A \emph{2-alteration} of $(\g', z)$
on the elements of $X$ is the canvas obtained as follows: We delete from $\g'$
the edges contained in $X$, and add a new vertex $y$ with boundary zero. For
every edge $uv \in X$, we add the edges $uy$ and $yv$. For every vertex $w \in
X$, we add two copies of the edge $yw$. %In the case where $z \in X$, we direct
%the two edges from $z$ to $y$ in opposite ways. In the case where $uz \in X$
%for some vertex $u$, we orient the edge $yz$ to match the orientation of $uz$
%(either toward or away from $z$). 
See Figure \ref{fig:twoalt}. 

\item Let $\GG$ be a class of canvases.  A \emph{$\GG$-partition} for a canvas $(\g,z)$ is a tip-respecting partition $\PP$ of $V(\g)$
such that for every part $P\in \PP$ of size at least two, the canvas $(\g,z)\restriction P$ is isomorphic to a canvas in $\GG$. We say a canvas $(\g,z)$ is a \emph{$\GG$-expansion} of a canvas $(\g',z)$ if
there exists a $\GG$-partition $\PP$ for $(\g,z)$ such that $\g/\PP=\g'$.  That is, we can obtain $(\g,z)$ from the canvas $(\g',z)$
by choosing for some of the vertices $v\in V(\g')\setminus\{z\}$ a canvas $(\g_v,z_v)\in \GG$ such that $\deg(v)=\deg(z_v)$
and replacing $v$ by $\g_{v}-z_{v}$, with the edges incident with $v$ in $\g'$ redirected to the ends of edges incident with $z_v$. See Figure \ref{fig:gexpansion}.
\end{itemize}

\begin{figure}
\begin{center}
    \begin{tikzpicture}
    \node[blackvertexv2] at (0,0) (z) [label = above:$\beta(z) \equal c$][label = below:$z$] {};
    \node[blackvertexv2] at (2,0) (v) [label = above:$\beta(z) \equal c'$] [label = below:$v$] {};
    \draw[thick,black, bend right = 45] (z) to (v);
    \draw[thick,black] (v) to (z);
    \begin{scope}[xshift = 5cm]
     \node[blackvertexv2] at (0,0) (z) [label = above:$\beta(z) \equal c$][label = below:$z$] {};
    \node[blackvertexv2] at (2,0) (v) [label = above:$\beta(z) \equal c'$] [label = below:$v$] {};
    \draw[thick,black,dotted] (z) to (v);
    \draw[thick,black, bend right = 45] (z) to (v);
    \end{scope}
    \end{tikzpicture}
    \caption{A \textit{tip-reduction} at $z$. Note that the boundaries remain unchanged,
    and there can be more than two edges from $z$ to $v$. }
    \label{fig:tipreduct}
\end{center}
\end{figure}
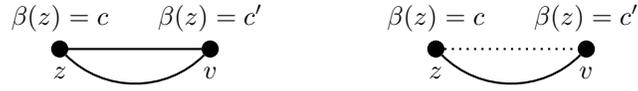

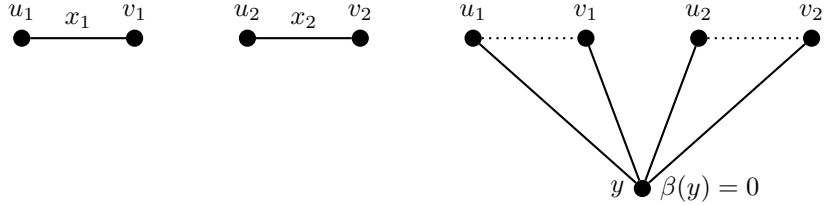
\begin{figure}
\begin{center}
    \begin{tikzpicture}
        \node[blackvertexv2] at (0,0) (u1) [label = above:$u_{1}$] {};
        \node[blackvertexv2] at (1.5,0) (v1) [label = above:$v_{1}$] {};
        \node[blackvertexv2] at (3,0) (u2) [label = above:$u_{2}$] {};
        \node[blackvertexv2] at (4.5,0) (v2) [label = above:$v_{2}$] {};
        \draw[thick,black]  (u1) to node[above]{$x_{1}$}(v1);
        \draw[thick,black] (u2) to node[above]{$x_{2}$}(v2);
        \begin{scope}[xshift = 6cm]
         \node[blackvertexv2] at (0,0) (u1) [label = above:$u_{1}$] {};
        \node[blackvertexv2] at (1.5,0) (v1) [label = above:$v_{1}$] {};
        \node[blackvertexv2] at (3,0) (u2) [label = above:$u_{2}$] {};
        \node[blackvertexv2] at (4.5,0) (v2) [label = above:$v_{2}$] {};
        \node[blackvertexv2] at (2.25,-2) (y) [label= left:$y$] [label = right:$\beta(y) \equal 0$] {};
        \draw[thick,black] (y) --(u1);
        \draw[thick,black] (y) --(v1);
        \draw[thick,black] (y)--(u2);
        \draw[thick,black] (y)--(v2);
        \draw[thick,black,dotted] (u1) to (v1);
        \draw[thick,black,dotted] (u2) to (v2);
        \end{scope}
    \end{tikzpicture}
\end{center}
\caption{A 2-alteration where $X = \{x_1,x_2\}$.}
\label{fig:twoalt}
\end{figure}

%startofGexpansionfigure
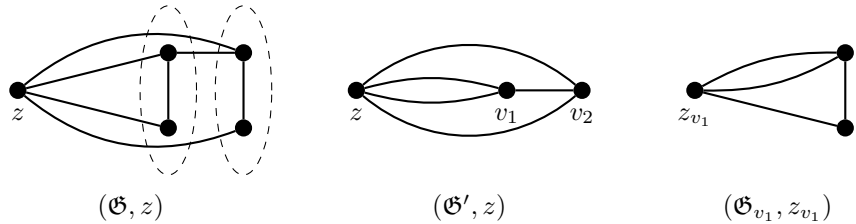
\begin{figure}
\begin{center}
\begin{tikzpicture}
\node[blackvertexv2] at (0,0) (z)  [label =below:$z$] {};
\node[blackvertexv2] at (2,-.5) (v1) {};
\node[blackvertexv2] at (2,.5) (u1) {};
\node[blackvertexv2] at (3,-.5) (v2) {};
\node[blackvertexv2] at (3,.5) (u2) {};
\node[dummywhite] at (1.5,-1.9) (dummy1) [label = above:{$(\g,z)$}] {}; 
 \node[ellipsenodev2] at (2,0) (ellipse1) {};
 \node[ellipsenodev2] at (3,0) (ellipse2) {};
\draw[thick,black] (v1) to (u1);
\draw[thick,black] (v2) to (u2);
\draw[thick,black] (z) to (v1);
\draw[thick,black] (z) to (u1);
\draw[thick,black,bend left =30] (z) to (u2);
\draw[thick,black,bend right =30] (z) to (v2);
\draw[thick,black] (u1)--(u2);

\begin{scope}[xshift = 4.5cm]
\node[blackvertexv2] at (0,0) (z)  [label =below:$z$] {};
\node[blackvertexv2] at (2,0) (u1) [label = below:$v_{1}$]{};
\node[blackvertexv2] at (3,0) (v2) [label = below:$v_{2}$]{};
\node[dummywhite] at (1.5,-1.9) (dummy1) [label = above:{$(\g',z)$}] {}; 
\draw[thick,black, bend left = 15] (z) to (u1);
\draw[thick,black, bend right = 15] (z) to (u1);
\draw[thick,black, bend left =40] (z) to (v2);
\draw[thick,black, bend right = 40] (z) to (v2);
\draw[thick,black] (u1)--(v2);
\end{scope}

\begin{scope}[xshift = 9cm]
\node[blackvertexv2] at (0,0) (z)  [label =below:$z_{v_{1}}$] {};
\node[blackvertexv2] at (2,-.5) (v1) {};
\node[blackvertexv2] at (2,.5) (u1) {};
\draw[thick,black] (v1) to (u1);
\draw[thick,black, bend left = 15] (z) to (u1);
\draw[thick,black, bend right =15] (z) to (u1);
\draw[thick,black] (z) to (v1);
\node[dummywhite] at (1.1,-1.9) (dummy1) [label = above:{$(\g_{v_{1}},z_{v_{1}})$}] {}; 
\end{scope}
\end{tikzpicture}
\caption{A canvas $(\g,z)$ which is a $\GG$-expansion of a canvas $(\g',z)$. Here $(\g_{v_{1}},z_{v_{1}})$ is shown, and is actually isomorphic to $(\g_{v_{2}},z_{v_{2}})$. The $\GG$-partition is shown via the dashed ellipses, with the part containing $z$ omitted.}
\label{fig:gexpansion}
\end{center}
\end{figure}

With that, we can state our canvas generation theorem. Recall our partial order $\prec$ from Definition \ref{def:ordering};
for canvases $(\g_1,z_1)$ and $(\g_2,z_2)$, we write $(\g_1,z_1)\prec (\g_2,z_2)$ if $(\g_1,z_1,z_1)\prec (\g_2,z_2,z_2)$.

\begin{theorem}\label{thm-gen}
Let $k$ be a positive integer.  For every canvas $(\g,z)\in\GG_k$, at least one of the following claims holds:
\begin{itemize}[align=left]
\item[(SMALL)] $|V(\g)|=3$; or,
\item[(EXPA)] $\GG_k$ contains a canvas $(\g',z)\prec (\g,z)$ such that $(\g,z)$ is a $\GG_k$-expansion of $(\g',z)$ or
a $\GG_k$-expansion of a 2-alteration of $(\g',z)$; or,
\item[(EXPB)] $\GG_k$ contains a canvas $(\g',z)\prec (\g,z)$ such that $(\g,z)$ is a $\GG_k$-expansion of a 1-alteration of $(\g',z)$; or,
\item[(ADD)] $(\g,z)$ is a tip-alteration of a canvas $(\g',z)\prec (\g,z)$ belonging to $\GG_k$; or,
\item[(REM)] $(\g,z)$ is a tip-reduction of a canvas $(\g',z)\prec (\g,z)$ belonging to $\GG_k$.
\end{itemize}
Moreover, if $\g$ has minimum degree at most four, then (SMALL) or (EXPA) holds.
\end{theorem}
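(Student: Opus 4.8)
The proof will follow the now-familiar pattern: take a minimal (in $\prec$) counterexample $(\g,z)\in\GG_k$ to Theorem~\ref{thm-gen}, fix a $k$-tallness-witnessing preflow $\psi$, and show that one of the five reduction clauses applies. By Observation~\ref{obs-critopsi}-style reasoning (and since $(\g,z)$ is flow-critical), we may pass to a $\psi$-critical tip-respecting contraction; in fact one argues that $(\g,z)$ itself is essentially $\psi$-critical after the standard reductions, or one keeps both a flow-critical and a $\psi$-critical picture in play. The key point is that a minimal counterexample behaves exactly like a minimal tall tame critical easel with $x=z$: running through Steps~1--4 of Section~\ref{sec:thmtall-proof} verbatim (but with the roles of the easel's distinguished vertex $x$ now played by whatever vertex receives a large contraction, and recording at each reduction which clause it produces), we learn that $(\g,z)$ has no small separating sets, no vertex of degree four (this is where (SMALL)/(EXPA) must be produced — see below), is $z$-homogeneous, and has all edges at $z$ oriented the same way with $\deg(z)\le k+1$; then the final arc-reversal reduction applies.

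**Step-by-step.** First I would set up minimality with respect to $\prec$ and note $\GG_k\ne\emptyset$ forces $|V(\g)|\ge 3$; if $|V(\g)|=3$ we are in (SMALL), so assume $|V(\g)|\ge 4$. Next, the cut lemmas: a set $A\subsetneq V(\g)\setminus\{z\}$ with $|A|\ge 2$ and $\deg(A)$ small must give, via Observation~\ref{obs-subcrit}, a smaller canvas $(\g,z)\restriction A$ which (being tame, flow-critical, and — crucially — $k$-tall because $\deg(A)$ is small relative to $k$) lies in $\GG_k$; combined with the fact that $(\g,z)$ is then a $\GG_k$-expansion of $\g/A$ or of a $1$- or $2$-alteration thereof, we get (EXPA) or (EXPB) unless every tight-ish cut is a single vertex. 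This is the analogue of Lemmas~\ref{lemma-sepxsmall}--\ref{lemma-sepx}, and it is where the $\GG_k$-expansion operation is consumed. Then the degree-four reduction: if $\g$ has a vertex $v$ of degree $4$ (necessarily $\beta(v)=0$), the splitting-off argument of Lemma~\ref{lemma-no4} either produces a smaller canvas in $\GG_k$ of which $(\g,z)$ is a $\GG_k$-expansion of a $2$-alteration — i.e.\ (EXPA) — or collapses; this is exactly the ``moreover'' clause, so I would isolate it and make sure the produced canvas's $k$-tallness is verified. After that, the mixed-edge reduction (Lemma~\ref{lemma-norede}) giving $z$-homogeneity: removing a mixed edge and adjusting $\beta$ by $\pm1$ yields a smaller canvas; if it is still in $\GG_k$ we expand from a $1$-alteration of it, i.e.\ (EXPB), else contradiction. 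Then the edge-at-$z$ reductions: an edge $uz$ oriented ``wrongly'' by $\psi$, deleted with a $\pm1$ boundary adjustment, gives a tip-alteration target, i.e.\ (ADD); a multiple edge at $z$ with two copies available gives a tip-reduction target, i.e.\ (REM). Finally, if none of these applies we are in the clean situation $\deg(z)\le k+1$, all edges at $z$ oriented away, $z$-homogeneous, no vertex of degree four, no small cuts, $z$ not adjacent to any "large" vertex — and the arc-reversal trick of the proof of Theorem~\ref{thm:tall} (reverse $e=vz$, add a parallel copy $e'$ directed into $z$) produces a strictly $\prec$-smaller canvas that is still in $\GG_k$ (tameness and $k$-tallness checked as in that proof) of which $(\g,z)$ is a $\GG_k$-expansion of a $1$-alteration — contradiction, or directly clause (EXPB)/(ADD).

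**The main obstacle.** The delicate part is bookkeeping the $k$-tallness condition through each reduction: unlike in the easel proof, where ``tall'' was a clean inequality $\deg(z)\le\deg(x)+2$ plus a preflow-direction caveat, here $k$ is fixed externally, and when a reduction decreases $\deg(z)$ (tip-reduction, deleting an edge at $z$) or increases it by one (the final arc-reversal), one must recheck that the new canvas still admits a $k$-tallness-witnessing preflow — in particular the ``$\deg(z)=k+1$ $\Rightarrow$ not all edges at $z$ parallel-directed'' exception has to be maintained, which is precisely why the definition of $k$-tall is set up with the $k+1$ slack. The second subtlety is that when we pass to a contraction $\g/\PP$ to obtain the smaller canvas $(\g',z)$, we must ensure $(\g',z)$ is itself \emph{flow-critical} (not merely that some preflow fails to extend), so that it genuinely lies in $\GG_k$; this is handled by taking a minimal such contraction and invoking Observation~\ref{obs-subcrit}/the flow-criticality of restrictions, but it needs to be stated carefully for each of the five cases. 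The rest is a faithful, if lengthy, transcription of the Section~\ref{sec:thmtall-proof} arguments with ``easel'' replaced by ``$k$-tall canvas'' and the conclusion ``contradicts minimality'' replaced by ``this is clause (X).''
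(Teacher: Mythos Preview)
Your overall architecture is right and matches the paper: minimal $k$-counterexample, show $\psi$-criticality, show restrictions to small-degree sets lie in $\GG_k$, kill degree-four vertices to get (EXPA), kill mixed edges to get (EXPB), kill wrongly-oriented tip edges to get (ADD), then finish with the arc-reversal. But you have misrouted the final two operations.

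Your proposed source of (REM), ``a multiple edge at $z$ with two copies available gives a tip-reduction target,'' is backwards. If $(\g,z)$ already has a double edge $zv$ and you delete one copy to form $(\g',z)$, then $(\g',z)$ is a tip-reduction of $(\g,z)$, not the other way around; to invoke (REM) you need $(\g,z)$ to be a tip-reduction of some $\prec$-smaller $(\g',z)$, i.e.\ $(\g',z)$ must have \emph{more} edges at $z$ than $(\g,z)$. There is no reduction in your list that produces (REM).

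The place (REM) actually comes from is precisely the final arc-reversal, which you misclassified as (EXPB)/(ADD). When you reverse $e=vz$ and add a parallel copy $e'$, the resulting $(\g',z)$ has one extra edge between $z$ and $v$, the same $o$-value as $(\g,z)$, and is not $z$-homogeneous (since $\tau_{\g'}(v)$ flips sign while its neighbour's does not), so $(\g',z)\prec(\g,z)$. One checks $(\g',z)$ is $\psi'$-critical, tame, and $k$-tall (here $\deg_{\g'}(z)=\deg_{\g}(z)+1\le k+1$, and $\psi'$ now has edges in both directions, so the $k$-tallness exception is satisfied). Then $(\g,z)$ is obtained from $(\g',z)$ by deleting the parallel edge $e'$ with boundary unchanged: this is exactly the definition of tip-reduction, hence (REM). It is not a $1$-alteration (which changes boundary) nor a tip-alteration (same reason), so neither (EXPB) nor (ADD) is available here.

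A smaller point: your ``cut lemma'' paragraph suggests you will \emph{forbid} small cuts, as in Lemmas~\ref{lemma-sepxsmall}--\ref{lemma-sepx}. The paper does not do this here; instead it proves the positive statement that if $|A|\ge 2$ and $\deg(A)\le k+1$ then $(\g,z)\restriction A\in\GG_k$ (this is Lemma~\ref{lemma-gen-sepx}, relying on the validity lemma~\ref{lemma-extori}), and this is what makes the $\GG_k$-expansion claims in (EXPA)/(EXPB) go through after the degree-four and mixed-edge reductions. You will need that statement, and the $\deg(A)=k+1$ case requires genuine work (finding a nowhere-zero flow in $\g/A$ that does not orient all cut edges the same way).
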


Before proving this theorem, let us remark that it easily implies the algorithmic result stated in Theorem~\ref{informalgenerationtheorem1}:
All non-trivial flow-critical tame canvases $(\g,z)$ with $\deg(z) \leq k$ belong to $\GG_k$ and can be obtained by a sequence
of operations from the statement of Theorem~\ref{thm-gen}.  The reader might perhaps be worried about the operation of
$\GG_k$-expansion and its effect on the time complexity; indeed, there usually are superpolynomially many (in the number of vertices)
$\GG_k$-expansions that we need to test for flow-criticality.  However, it is easy to see that for $k\ge 6$, the number of non-trivial flow-critical tame canvases $(\g,z)$ with $\deg(z) \leq k$
and at most $n$ vertices is (at least) exponential in $n$, and thus the number of $\GG_k$-expansions to test is bounded by a polynomial in the number
of returned canvases.

\subsection{The proof of Theorem \ref{thm-gen}}\label{subsec:canv-gen}
The outline of the proof of Theorem~\ref{thm-gen} is quite similar to the proof of Theorem~\ref{thm:tall}.  We start off with Step 1, which is  an analogue of Step 2 in the proof of Theorem \ref{thm:tall}, showing that if we are given a set $A$ where the degree of $A$ is at most $k+1$, then the restriction to $A$ is also in $\GG_{k}$. This contains the majority of the technical work in the proof. 
 Once we have this we can proceed to Step 2, which says that minimal counterexamples have minimum degree at least five, there are no mixed edges not incident to $z$, and our preflow orients all edges towards $z$ or away from $z$. This step is nearly identical to Step $3$ in the proof of Theorem \ref{thm:tall}. 
Once we have that, Step 3 of the proof of Theorem~\ref{thm-gen} is to deduce no minimal counterexamples exist, and here the finish is nearly identical to Step 4 in the proof of Theorem \ref{thm:tall}.

Let us start the proof by defining what we mean by a minimal counterexample. 

\begin{definition}
A \emph{minimal $k$-counterexample} is a triple $(\g,z,\psi)$, where $(\g,z)\in\GG_k$ is a minimal canvas in the $\prec$ ordering that does not
satisfy the conclusion of Theorem~\ref{thm-gen} and $\psi$ is a $k$-tallness-witnessing preflow.
\end{definition}

Note that a minimal $k$-counterexample clearly satisfies $|V(\g)|\ge 4$, as otherwise $(\g,z)$ satisfies (SMALL).

\subsection{Step 1: Restrictions from sets with small cuts are in $\GG_{k}$.}

The goal of this subsection is to generalize the following observation to sets with degree at least $k+1$. Note that this observation holds without any assumption on minimality, whereas all other statements in this section require being a minimal $k$-counterexample. 

\begin{observation}\label{obs-inGk}
Let $(\g,z)$ be a tame flow-critical canvas.  If $A\subseteq V(\g)\setminus \{z\}$ has size at least two
and $\deg(A)\le k$, then $(\g,z)\restriction A\in\GG_k$.  Consequently, if $(\g',z)$ is a tip-respecting
contraction of $(\g,z)$ and every vertex other than $z$ has degree at most $k$, then $(\g,z)$ is a $\GG_k$-expansion of $(\g',z)$.
\end{observation}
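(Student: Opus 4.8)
The plan is to verify the three membership conditions for $\GG_k$ directly: that $(\g,z)\restriction A$ is tame, that it is flow-critical, and that it is $k$-tall, and then to derive the ``consequently'' part. Write $(\g',b)=(\g,z)\restriction A$; by definition this is $(\g/B,b)$ where $B=V(\g)\setminus A$. First I would record that $(\g',b)$ is flow-critical: this is exactly Observation~\ref{obs-subcrit}. Tameness is equally quick: for any vertex $v\in V(\g')\setminus\{b\}$ we have $v\in A$, so $\deg_{\g'}(v)=\deg_{\g}(v)\ge 4+|\tau(v)|$ since $(\g,z)$ is tame; hence $(\g',b)$ is tame. Finally, the tip $b$ has degree $\deg_{\g'}(b)=\deg_{\g}(A)\le k$, so in particular $\deg(b)\le k+1$.

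It remains to produce a $k$-tallness-witnessing preflow $\psi$ in $(\g',b)$, i.e.\ a tip preflow around $b$ that does not extend to a nowhere-zero flow, with the additional same-direction condition being vacuous here since $\deg(b)\le k< k+1$. Since $(\g',b)$ is a non-trivial flow-critical canvas (it has $|A|+1\ge 3$ vertices), Observation~\ref{obs-critopsi} gives a tip preflow $\psi$ and a $\psi$-critical tip-respecting contraction of $(\g',b)$; in particular $\psi$ does not extend to a nowhere-zero flow in $(\g',b)$. As $\deg(b)=\deg_\g(A)\le k$, this $\psi$ is a $k$-tallness-witnessing preflow, so $(\g',b)\in\GG_k$. (One could equivalently note that flow-criticality and non-triviality already force the existence of a non-extending tip preflow.)

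For the consequence, let $(\g',z)$ be a tip-respecting contraction of $(\g,z)$ in which every vertex other than $z$ has degree at most $k$, arising from a tip-respecting partition $\PP$. For each part $P\in\PP$ with $|P|\ge 2$, the corresponding vertex of $\g'$ has degree $\deg_\g(P)\le k$, so the first part of the observation applied to $A=P$ gives $(\g,z)\restriction P\in\GG_k$. Thus $\PP$ is a $\GG_k$-partition, and $(\g,z)$ is by definition a $\GG_k$-expansion of $(\g/\PP,z)=(\g',z)$. The main (minor) obstacle is just being careful that $\deg_\g(A)\le k$, rather than merely $\le k+1$, is what makes $\psi$ genuinely $k$-tallness-witnessing without needing the same-direction clause; everything else is a direct appeal to the already-established Observations~\ref{obs-subcrit} and~\ref{obs-critopsi}.
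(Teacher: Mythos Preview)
Your proof is correct and follows essentially the same approach as the paper: flow-criticality via Observation~\ref{obs-subcrit}, tameness by inspection, and $k$-tallness from the existence of a non-extending tip preflow together with $\deg(b)=\deg(A)\le k$. The only difference is that you are slightly more explicit (invoking Observation~\ref{obs-critopsi} rather than directly noting that a non-trivial flow-critical canvas admits a non-extending tip preflow), and you spell out the ``consequently'' part in detail where the paper leaves it to the definition.
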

\begin{proof}
The canvas $(\g_0,z_0)=(\g,z)\restriction A$ is flow-critical by Observation~\ref{obs-subcrit}, and clearly tame.
Since $|V(\g_0)|\ge 3$ and $(\g_0,z_0)$ is flow-critical, there exists a tip preflow $\psi_0$ that does not extend to a nowhere-zero flow in $\g_0$.
Since $\deg(z_0)=\deg(A)\le k$, $\psi_0$ is a $k$-tallness-witnessing preflow, and thus $(\g_0,z_0)\in \GG_k$. The second part of the observation follows immediately from the definition.
\end{proof}

Before we can extend Observation \ref{obs-inGk}, we will need some lemmas on minimal $k$-counterexam\-ples. We start off by observing that if $(\g,z,\psi)$ is a minimal $k$-counterexample, then it is $\psi$-critical.

\begin{lemma}\label{lemma-gen-psi}
If $(\g,z,\psi)$ is a minimal $k$-counterexample for a positive integer $k$, then the canvas $(\g,z)$ is $\psi$-critical.
\end{lemma}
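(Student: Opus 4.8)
The plan is to argue by contradiction. Suppose $(\g,z)$ is not $\psi$-critical. Since $\psi$ is a $k$-tallness-witnessing preflow it does not extend to a nowhere-zero flow in $\g$, so there is a non-trivial tip-respecting partition $\PP$ of $V(\g)$ for which $\psi$ fails to extend to a nowhere-zero flow in $\g/\PP$; choosing $\PP$ with $|V(\g/\PP)|$ minimal and writing $(\g',z)=(\g/\PP,z)$, the canvas $(\g',z)$ is $\psi$-critical, hence flow-critical, and $|V(\g')|\ge 3$ by Observation~\ref{obs-ge3}.

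The core of the argument is to show that $(\g',z)\in\GG_k$, that $(\g',z)\prec(\g,z)$, and that $(\g,z)$ is a $\GG_k$-expansion of $(\g',z)$; together these say that $(\g,z)$ satisfies conclusion (EXPA) of Theorem~\ref{thm-gen}, contradicting the assumption that $(\g,z,\psi)$ is a minimal $k$-counterexample. That $(\g',z)$ is tame is immediate from Corollary~\ref{cor-contrtame}. For $k$-tallness, the key point is that a tip-respecting contraction leaves the edges incident with $z$ (and their orientations under $\psi$) untouched: $\deg_{\g'}(z)=\deg_{\g}(z)\le k+1$, and if this common value is $k+1$ then, since $\psi$ witnesses $k$-tallness of $(\g,z)$, it still does not orient all edges at $z$ in the same direction in $\g'$. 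Hence $\psi$ is a $k$-tallness-witnessing preflow for $(\g',z)$ and $(\g',z)\in\GG_k$. Since $\PP$ is non-trivial we have $|V(\g')|<|V(\g)|$, and contracting cannot increase the number of edges avoiding $z$, so $o(\g',z)<o(\g,z)$ and therefore $(\g',z)\prec(\g,z)$.

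It remains to check that $(\g,z)$ is a $\GG_k$-expansion of $(\g',z)$. We have $\g/\PP=\g'$, so it suffices to show that $\PP$ is a $\GG_k$-partition of $(\g,z)$, i.e.\ that $(\g,z)\restriction P\in\GG_k$ for every part $P\in\PP$ with $|P|\ge 2$. Here we invoke the maximum-degree bound for critical tame canvases: since $(\g',z)$ is a non-trivial flow-critical tame canvas, Theorem~\ref{thm-deg} shows that the vertex of $\g'$ obtained by contracting $P$ has degree at most $\deg_{\g'}(z)-2\le k-1$, that is, $\deg_{\g}(P)\le k$. So Observation~\ref{obs-inGk} applies to the tame flow-critical canvas $(\g,z)$ and the set $P$, giving $(\g,z)\restriction P\in\GG_k$. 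This yields the desired contradiction.

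I do not expect any single step to be a serious obstacle: the lemma is essentially bookkeeping. The one point that needs a little care is confirming that the $k$-tallness-witnessing property of $\psi$ is preserved under the contraction, which reduces to the observation that contracting a tip-respecting partition does not change the star at $z$; and the one genuinely external input is Theorem~\ref{thm-deg}, which is exactly what guarantees $\deg_\g(P)\le k$ so that Observation~\ref{obs-inGk} can be used to recognize each nontrivial part of $\PP$ as a member of $\GG_k$.
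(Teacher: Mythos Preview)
Your proof is correct and follows essentially the same approach as the paper: take a $\psi$-critical tip-respecting contraction $(\g',z)$, verify it is tame (Corollary~\ref{cor-contrtame}) and $k$-tall (the star at $z$ is unchanged), apply Theorem~\ref{thm-deg} to bound the degrees of the contracted vertices by $k-1$, and use Observation~\ref{obs-inGk} to conclude that $(\g,z)$ is a $\GG_k$-expansion of $(\g',z)\prec(\g,z)$, which is case (EXPA). The only cosmetic difference is that the paper phrases the conclusion as ``$(\g',z)\not\prec(\g,z)$, hence $(\g',z)=(\g,z)$'' rather than as a contradiction.
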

\begin{proof}
Since $\psi$ does not extend to a nowhere-zero flow in $\g$, there exists a $\psi$-critical tip-respecting contraction $(\g',z)$ of $(\g,z)$.
Note that $(\g',z)$ is $k$-tall as witnessed by $\psi$ and tame by Corollary~\ref{cor-contrtame}, and thus $(\g',z)\in\GG_k$.
By Theorem~\ref{thm-deg}, every vertex other than $z$ has degree at most $\deg(z)-2\le k-1$ in $\g'$.  
By Observation~\ref{obs-inGk}, the canvas $(\g,z)$ is a $\GG_k$-expansion of $(\g',z)$.  Since $(\g,z)$ is not obtained
according to Theorem~\ref{thm-gen} (EXPA), it follows that $(\g',z)\not\prec (\g,z)$, and thus $(\g',z)=(\g,z)$.  Therefore, $(\g,z)$ is $\psi$-critical.
\end{proof}

Let us also note that in a tame flow-critical canvas $(\g,z)$, contracting any set of vertices not separated from $z$ by
an edge cut of size at most $\deg(z)-2$ results in a canvas where every tip preflow extends.

\begin{lemma}\label{lemma-Gk-contr}
Let $(\g,z)$ be a tame flow-critical canvas, and let $A\subseteq V(\g)\setminus \{z\}$ be a non-empty set of its vertices.
If $\deg(X) \ge\deg(z)-1$ for every set $X$ such that $A\subseteq X\subsetneq V(\g)\setminus \{z\}$,
then every tip preflow extends to a nowhere-zero flow in $(\g',z)=(\g / A,z)$. 
\end{lemma}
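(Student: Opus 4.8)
The statement is essentially a repackaging of Theorem~\ref{cor-tall}: the plan is to apply that theorem to the contracted canvas $(\g',z)=(\g/A,z)$, taking the vertex $a$ obtained by contracting $A$ as the distinguished vertex (the ``$x$'') of Theorem~\ref{cor-tall}. First I would record the elementary bookkeeping facts about contraction that make the translation work. Since $z\notin A$, contracting $A$ does not touch any edge incident with $z$, so $\deg_{\g'}(z)=\deg_{\g}(z)$. For a set $B\subseteq V(\g')\setminus\{z\}$, let $\widehat B\subseteq V(\g)\setminus\{z\}$ be the union of the parts of $\g$ corresponding to the vertices of $B$ (so $a\in B$ iff $A\subseteq\widehat B$). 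The edges with exactly one end in $B$ in $\g'$ are in bijection with the edges with exactly one end in $\widehat B$ in $\g$ (loops deleted during contraction lie inside $A\subseteq\widehat B$ and so never have exactly one end in $\widehat B$), hence $\deg_{\g'}(B)=\deg_{\g}(\widehat B)$; moreover $\beta_{\g'}(B)\equiv\beta_{\g}(\widehat B)\pmod 3$ by definition of $\beta/\PP$, and together with the equality of degrees this gives $|\tau_{\g'}(B)|=|\tau_{\g}(\widehat B)|$.

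Next I would verify the two hypotheses of Theorem~\ref{cor-tall} for $(\g',z)$ with the chosen vertex $a$. For the ``tame-like'' hypothesis, take any non-empty $B\subsetneq V(\g')\setminus\{z\}$ and pull it back to $\widehat B$: if $|\widehat B|\ge 2$, then $\deg_{\g'}(B)=\deg_{\g}(\widehat B)\ge 6+|\tau_{\g}(\widehat B)|=6+|\tau_{\g'}(B)|\ge 4+|\tau_{\g'}(B)|$ by Corollary~\ref{cor-contrtame}; if $|\widehat B|=1$, the same inequality (with $4$ in place of $6$) follows from the tameness of $(\g,z)$. For the hypothesis concerning $a$, let $X\subseteq V(\g')\setminus\{z\}$ contain $a$. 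If $X=V(\g')\setminus\{z\}$, then $\deg_{\g'}(X)=\deg_{\g'}(z)>\deg_{\g'}(z)-2$ trivially. Otherwise $X$ corresponds to a set $\widehat X$ with $A\subseteq\widehat X\subsetneq V(\g)\setminus\{z\}$, so the hypothesis of the lemma applies and gives $\deg_{\g'}(X)=\deg_{\g}(\widehat X)\ge\deg_{\g}(z)-1=\deg_{\g'}(z)-1>\deg_{\g'}(z)-2$. Thus both hypotheses of Theorem~\ref{cor-tall} hold for $(\g',z)$ (with $x=a$), and the theorem yields that every tip preflow extends to a nowhere-zero flow in $(\g',z)$.

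This argument is uniform and in particular covers the degenerate case $A=V(\g)\setminus\{z\}$: there $V(\g')\setminus\{z\}=\{a\}$, so the ``tame-like'' hypothesis is vacuous and the only set $X$ to check is $X=\{a\}=V(\g')\setminus\{z\}$, handled by the trivial subcase above (equivalently, one may just observe directly via Observation~\ref{obs-allbutone} that every tip preflow extends in a canvas on two vertices). I do not expect any real obstacle here; the only point requiring a little care is the bookkeeping of the previous paragraph—checking that cut sizes, degree parities, and $\beta$-sums are genuinely preserved under contraction, so that the $\tau$-values of $B$ and $\widehat B$ match and the hypotheses transfer cleanly.
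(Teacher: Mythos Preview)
Your proposal is correct and follows essentially the same approach as the paper: apply Theorem~\ref{cor-tall} to the contracted canvas with the contracted vertex $a$ playing the role of $x$, verifying the tameness hypothesis via Lemma~\ref{lemma-cut}/Corollary~\ref{cor-contrtame} and the large-cut hypothesis via the lemma's assumption. Your version is simply more explicit about the bookkeeping (the $B\leftrightarrow\widehat B$ correspondence and the degenerate case $A=V(\g)\setminus\{z\}$) than the paper's terse three-sentence proof.
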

\begin{proof}
Note that $\deg_{\g'}(B)\ge 4+|\tau(B)|$ for every non-empty $B\subsetneq V(\g')\setminus\{z\}$
by the tameness of $(\g,z)$ and Lemma~\ref{lemma-cut}.
Let $x$ be the vertex of $(\g',z)$ to which $A$ is contracted;
then $\deg(X')\ge \deg(z)-1$ for every $X'\subseteq V(\g')\setminus\{z\}$
containing $x$ by the assumptions.  Theorem~\ref{cor-tall} implies that
every tip preflow extends to a nowhere-zero flow in $(\g',z)$.
\end{proof}

As mentioned above, we aim to extend Observation~\ref{obs-inGk} to subsets with $\deg(A)=k+1$, and to this end, we need a technical lemma. We start off with a definition.
\begin{definition}
\label{def:validflow} 
Let $(\g,z)$ be a canvas, let $\psi$ be a tip preflow, let $A$ be a subset of $V(\g)\setminus \{z\}$, and let $a$ be the vertex of $\g/A$
to which $A$ is contracted.  Let $x$ be a vertex of $A$.  We say that a nowhere-zero flow $\vec{G}$ in $\g/A$ is \emph{$(k,x,\psi,A)$-valid}
if
\begin{itemize}
\item $\vec{G}$ extends $\psi$,
\item there exist edges $e_1$ and $e_2$ with exactly one end in $A$ (i.e., incident in $\g/A$ with $a$) which $\vec{G}$ orients
in opposite directions (one of them to $a$ and the other one away from $a$), and moreover,
\item $e_1$ and $e_2$ are not incident with $x$, unless $\deg(z)=k+1$ and $\psi$ orients all
edges between $z$ and $V(\g)\setminus\{z,x\}$ in the same direction.
\end{itemize}
\end{definition}
\begin{lemma}\label{lemma-extori}
Let $k$ be a positive integer, $(\g,z)\in \GG_k$ a canvas, and $\psi$ a $k$-tallness-witnessing preflow in $(\g,z)$.
Suppose $A\subseteq V(\g)\setminus \{z\}$ satisfies $\deg(A)=k+1$ and let $x$ be a vertex of $A$.  If $\psi$ extends to a nowhere-zero flow in $\g/A$, then it also extends to a $(k, x,\psi,A)$-valid one.
\end{lemma}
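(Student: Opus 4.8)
## Proof proposal for Lemma~\ref{lemma-extori}

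The plan is to start from an arbitrary nowhere-zero flow $\vec{G}$ in $\g/A$ extending $\psi$ (which exists by hypothesis) and argue that it can be modified, if necessary, to gain the two required oppositely-oriented edges at $a$ without disturbing the agreement with $\psi$. Let $a$ be the contraction of $A$ and recall $\deg(a)=\deg(A)=k+1$, which is odd exactly when $k$ is even; in particular $\tau(a)$ and the quantity $\deg^+_{\vec{G}}(a)-\deg^-_{\vec{G}}(a)$ are constrained mod $3$. The first observation is that if $\vec{G}$ already orients \emph{all} $k+1$ edges at $a$ in the same direction, then $\deg^+(a)-\deg^-(a)=\pm(k+1)$, and since this must lie in $\tau^+(a)$ and $\psi$ is a $k$-tallness-witnessing preflow (so $\deg(z)\le k+1$), one derives a contradiction with the fact that $\vec{G}$ restricted to the edges at $z$ is $\psi$, much as in the argument in the paragraph following Theorem~\ref{thm-degbetter} and in Lemma~\ref{lemma-at4}: summing $\deg^+(v)-\deg^-(v)$ over $V(\g/A)$ gives $0$, the terms at $a$ and $z$ are large in absolute value and same-signed (or can be balanced against the rest), forcing $|\deg^+(z)-\deg^-(z)|$ to exceed $\deg(z)$. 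So $\vec{G}$ automatically has at least one edge into $a$ and one out of $a$; call a generic such pair $e_1,e_2$.

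The remaining work is the third bullet: ensuring $e_1,e_2$ can be chosen not incident with $x$, \emph{unless} we are in the exceptional case $\deg(z)=k+1$ and $\psi$ orients all edges between $z$ and $V(\g)\setminus\{z,x\}$ the same way. So assume we are not in that exceptional case. We need two oppositely oriented edges at $a$ avoiding $x$. Suppose this fails. Then among the edges at $a$ not incident with $x$, all are oriented the same way by $\vec{G}$, say all out of $a$; let $d_x$ be the number of edges between $x$ and $V(\g)\setminus A$. Since $A$ has size at least two (as $\deg(A)=k+1\ge 5$ and tameness forces $|A|\ge 2$ via Lemma~\ref{lemma-cut} — indeed a single vertex has degree at most $4+|\tau|$ which for the relevant parities is $<k+1$ once $k\ge 6$; the cases $k\le 5$ need $\GG_k$ to be checked but there $\deg(z)=6$ and $\deg(A)=7$ is odd so $|\tau(A)|=1,3$ and $4+|\tau(A)|\le 7$ can occur only for... ) — I would handle the small-$k$ boundary separately — there is at least one edge of $\g$ with exactly one end in $A$ and not incident with $x$, so $d_x\le k$. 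Now I would try to \emph{reverse orientation along a path or cycle} through $x$ inside $\g/A$ to flip one of the $x$-incident edges at $a$: since $\g/A$ is $2$-edge-connected around $a$ (each vertex has degree $\ge 4+|\tau|$ by tameness plus Lemma~\ref{lemma-cut}, and $\g/A$ is flow-critical-like), there is a second edge-disjoint route, and flipping a directed cycle preserves all boundary conditions and, if the cycle avoids $z$, preserves $\psi$. The key point: if every edge at $a$ not at $x$ points out, and we also need the $x$-incident edges at $a$ to not all point out (else $\deg^+(a)-\deg^-(a)=\deg(a)$, contradiction as above), so some $x$-incident edge at $a$ points in — giving us one of the pair; for the second we reverse a cycle through two edges at $a$ none of which is the chosen one and none incident with $z$, which exists by counting (there are $\ge k-d_x\ge 1$ out-edges at $a$ plus we can route through $\g/A - z$).

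The step I expect to be the main obstacle is precisely this cycle-reversal argument: making sure that when we reverse a directed cycle to flip an edge at $a$, (i) the cycle can be chosen to avoid $z$ so that $\psi$ is preserved, and (ii) it does not inadvertently destroy the opposite-orientation property we already secured. I would set this up by working in the auxiliary directed graph $\vec{G}$, using that $\g/A - z$ is connected (Lemma~\ref{lemma-2con}/Corollary~\ref{cor-2con}-type statements, applicable since a $\psi$-critical contraction exists, and minimal $k$-counterexamples are $\psi$-critical by Lemma~\ref{lemma-gen-psi}; but here we do not have minimality, so I would instead invoke flow-criticality of $\g/A$ together with Corollary~\ref{cor-contrtame} to get the needed $2$-edge-connectivity around $a$ directly), and choosing the cycle greedily. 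The exceptional clause in Definition~\ref{def:validflow} is exactly what lets the argument terminate: in that one configuration the parity/sign bookkeeping at $z$ no longer forbids all-same-direction at $a$ after routing, so we are permitted to let $e_1$ or $e_2$ touch $x$. I would conclude by a short case split on whether $x$ is incident with $z$ (handled by tameness and $\deg(x)\ge 4+|\tau(x)|$) to close the remaining degenerate subcase.
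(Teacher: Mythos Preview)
Your approach is fundamentally different from the paper's and, as written, does not go through.

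\medskip

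\textbf{The summing argument is not a proof.} Your ``first observation'' claims that in any extension $\vec{G}$ of $\psi$ to $\g/A$, the edges at $a$ cannot all point the same way, by summing $\deg^+ - \deg^-$ over $V(\g/A)$. But that sum is identically zero for \emph{every} orientation, and the contributions of the vertices in $C=V(\g)\setminus(A\cup\{z\})$ are essentially unconstrained; nothing forces $|\deg^+(z)-\deg^-(z)|>\deg(z)$. The analogy with Lemma~\ref{lemma-at4} is misleading: there the graph had only three vertices, so the sum involved only $z$, $x$, and $v$. Here $C$ can be large and absorbs the imbalance. So you have not established even the second bullet of validity for an arbitrary extension; you must \emph{construct} a good extension, not massage a given one.

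\textbf{The cycle-reversal step is the real gap.} Even granting the first point, your plan is to reverse a directed cycle in $\vec{G}$ through $a$ that avoids $z$. But you give no reason such a cycle exists: undirected $2$-edge-connectivity of $\g/A - z$ (which you also do not have, since $(\g/A,z)$ is not known to be flow-critical---you have conflated $\g/A$ with $(\g,z)\restriction A$) says nothing about directed reachability in the specific orientation $\vec{G}$. There can easily be flows where no directed cycle through $a$ misses $z$.

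\textbf{A minor slip.} Your argument for $|A|\ge 2$ has the tameness inequality backwards: tameness says $\deg(v)\ge 4+|\tau(v)|$, not $\le$. The correct reason is Theorem~\ref{thm-deg}: every $v\neq z$ has $\deg(v)\le \deg(z)-2\le k-1<k+1=\deg(A)$, so $A$ cannot be a singleton.

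\medskip

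\textbf{What the paper actually does.} The paper's proof is by induction on $|V(\g)|$. It takes a maximal bad $A$, shows all larger cuts have degree $\ge k+2$ (via the induction hypothesis applied to $(\g,z)\restriction A'$), then carefully chooses edges $e_1,e_2$ between $A\setminus\{x\}$ and its complement and \emph{splits them off} in $\g/A$. If the resulting preflow extended, undoing the split would give a valid flow with $e_1,e_2$ oppositely oriented by construction. So it does not extend; the resulting $\psi_1$-critical contraction is then shown to be a tall tame critical easel, contradicting Theorem~\ref{thm:tall}. The crucial structural input---Theorem~\ref{thm:tall}---never appears in your argument, and without it there is no leverage to force the existence of the required flow.
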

\begin{proof}
We prove Lemma~\ref{lemma-extori} by induction on the number of vertices of $\g$.  
Suppose for a contradiction that there exists a set $A\subseteq V(\g)\setminus \{z\}$ such that $\deg(A)=k+1$, $x\in A$, and $\psi$ extends to a
nowhere-zero flow in $\g/A$, but no such flow is $(k, x,\psi,A)$-valid.
Let us choose such a set $A$ of maximal size.  Note that $A\neq V(\g)\setminus \{z\}$: Otherwise,
$\deg(z)=k+1$ and $\psi$ does not orient all edges between $z$ and $A$ in the same direction since it is $k$-tallness-witnessing,
and consequently $\psi$ would give a $(k, x,\psi,A)$-valid nowhere-zero flow in $\g/A$.

First, let us show that all edge cuts separating $A$ from $z$ are large.
\begin{claim*}
\label{subclaim:insideGKcontract}
We have that $\deg(A')\ge k+2$ for every $A'\subsetneq V(\g)\setminus \{z\}$ such that $A\subsetneq A'$.
\end{claim*}
\begin{subproof}
Suppose that there exists $A'\subsetneq V(\g)\setminus \{z\}$ such that $A\subsetneq A'$
and $\deg(A') \le k+1$, and let us choose such a set $A'$ of minimum size.
Since $\psi$ extends to a nowhere-zero flow in $\g/A$, it also extends to a nowhere-zero flow $\vec{G}_0$ in $\g/A'$.
Let $a'$ be the vertex resulting from the contraction of $A'$.
If $\deg(A')=k+1$, then by the maximality of $A$, we can assume that $\vec{G}_0$ is $(k,x,\psi,A')$-valid;
and in particular, $\vec{G}_0$ does not orient all edges between $V(\g)\setminus A'$ and $A'\setminus \{x\}$ in the
same direction, unless $\deg(z)=k+1$ and $\psi$ orients all edges between $z$ and $V(\g)\setminus\{z,x\}$ in the same direction.

Let $(\g',b)=(\g,z)\restriction A'$ and let $\psi'$ be the tip preflow in which the edges incident with $b$
are directed the way $\vec{G}_0$ orients the corresponding edges incident with $a'$.  Since $\psi$ does not
extend to a nowhere-zero flow in $\g$, we see that $\psi'$ does not extend to a nowhere-zero flow in $\g'$.
Moreover, $\deg(b)\le k+1$, and if $\deg(b)=k+1$, then $\psi'$ does not orient all edges incident with $b$ in the same way.
Therefore, $\psi'$ is a $k$-tallness-witnessing preflow in $(\g',b)$, and $(\g',b)\in\GG_k$.

Since $A'$ is a minimal superset of $A$ with $\deg(A')\le k+1$, and since $\deg(A)=k+1$,
every set $X$ such that $A\subseteq X\subsetneq A'=V(\g')\setminus\{b\}$ satisfies $\deg(X)\ge k+1\ge \deg(A')=\deg(b)$.
By Lemma~\ref{lemma-Gk-contr}, $\psi'$ extends to a nowhere-zero flow $\vec{G}_1$ in $(\g'/A,b)$.
Note that $|V(\g')|<|V(\g)|$, and thus by the induction hypothesis, we can assume that $\vec{G}_1$
is $(k,x,\psi',A)$-valid.

The combination of $\vec{G}_0$ and $\vec{G}_1$ gives a $(k,x,\psi,A)$-valid nowhere-zero flow in $(\g /A,z)$:
Note that $\vec{G}_1$ can only orient all edges between $V(\g')\setminus A$ and $A\setminus \{x\}$
in the same direction if $\deg(b)=\deg(A')=k+1$ and $\psi'$ orients all edges between $b$ and $V(\g')\setminus \{b,x\}=A'\setminus\{x\}$ in the same direction,
which by the choice of $\vec{G}_0$ can only happen when $\deg(z)=k+1$ and $\psi$ orients all edges between $z$ and $V(\g)\setminus\{z,x\}$ in the same direction.
This contradicts the existence~of~$A$. 
\end{subproof}
Since $\deg(A)=k+1$ and $\deg(z)\le k+1$, Theorem~\ref{thm-deg} implies $|A|\ge 2$.  Moreover, since $A\neq V(\g-z)$,
we have $C=V(\g)\setminus(A\cup\{z\})\neq\emptyset$ and $|V(\g)|\ge 4$.  Therefore, Lemma~\ref{lemma-2con} implies that $\g-\{z,x\}$ is connected,
and thus there exists an edge $e_1$ with one end $u_1\in C$ and the other end in $A\setminus \{x\}$.
\begin{itemize}
\item If $\deg(z)=k+1$ and $\psi$ directs all edges between $z$ and $C$ in one direction
(there exists at least one such edge, since $\deg(A)=k+1=\deg(z)$ and $e_1$ is not incident with $z$), then since $\psi$ is a $k$-tallness-witnessing preflow,
there exists an edge between $z$ and $A$ directed in the opposite direction; let $e_2$ be such an edge, not incident with $x$ if possible, and let $u_2=z$.
\item Otherwise, since $\deg(x)\le \deg(z)-2\le k-1$ by Theorem~\ref{thm-deg} and $\deg(A)=k+1$, there exists an edge $e_2\neq e_1$ between $u_2\in C \cup \{z\}$
and $A\setminus\{x\}$.  If possible, choose $e_2$ so that $u_2\neq u_1$.
\end{itemize}

Let $\g_1$ be obtained from $\g/A$ by splitting off $e_1$ with $e_2$, and if $u_1\neq u_2$, then let $e$ be the resulting edge.  Note that $\psi$ can be naturally interpreted
as a tip preflow $\psi_1$ in $(\g_1,z)$.  If $\psi_1$ extended to a nowhere-zero flow in $\g_1$, then, directing $e_1$ and $e_2$ according
to $e$ (or arbitrarily in the opposite directions if $u_1=u_2$) would give a $(k, x,\psi,A)$-valid nowhere-zero flow in $\g/A$
(note that $e_2$ can be chosen to be incident with $x$ only if $\deg(z)=k+1$ and $\psi$ directs all edges not incident with $x$ in the same direction);
this would contradict the choice of $A$.  Therefore, $\psi_1$ does not extend to a nowhere-zero flow in $\g_1$, and thus
there exists a $\psi_1$-critical tip-respecting contraction $(\g'_1,z)$ of $(\g_1,z)$.
Let $a'$ be the vertex of $\g'_1$ into which we contracted $A$, and let $A'\supseteq A$ be the set of vertices of $\g$ contracted into $a'$.

In the case that $u_1=u_2$, consider the set $U$ of vertices of $\g_1$ containing $u_1$ and contracted to a vertex $u$ of $\g'_1$,
and suppose that $u\neq a'$.  If $|U|\ge 2$, then $\deg(u)\ge \deg(U)-2\ge 4+|\tau(U)|=4+|\tau(u)|$ by Corollary~\ref{cor-contrtame}.
If $U=\{u_1\}$, then by Lemma~\ref{lemma-simple}, there is at most one edge between $u_1$ and $a'$ in $\g'_1$, and thus
there are at most three edges between $u_1$ and $A$ in $\g$.  We only chose $e_2$ incident with $u_1$ because every edge with exactly one end in $A$
is incident with $u_1$ or $x$, and since $\deg(A)=k+1$, at least $k-2$ of these edges are incident with $x$.  By Theorem~\ref{thm:tall},
we have $\deg(x)\le \deg(z)-2\le k-1$, and thus there is at most one edge between $x$ and $A\setminus\{x\}$.  Consequently,
$\deg(A\setminus \{x\})\le 4$, and Corollary~\ref{cor-mainlov} implies that $|A\setminus\{x\}|=1$.  However, since $e_1$ and $e_2$
both end in $A\setminus \{x\}$, this contradicts Lemma~\ref{lemma-simple} for $(\g,z)$.
Therefore, if $u\neq a'$, then $\deg(u)\ge 4+|\tau(U)|=4+|\tau(u)|$.

Using Corollary~\ref{cor-contrtame}, it is now easy to see that $\deg_{\g'_1}(v)\ge 4+|\tau(v)|$ holds for every vertex $v\in V(\g'_1)\setminus\{z,a'\}$. Hence $(\g',z,a',\psi_{1})$ is a critical tame easel. We now argue that $(\g',z,a',\psi_{1})$ is tall. 
We consider two cases:
\begin{itemize}
\item If $A'\neq A$, then $\deg_{\g}(A')\ge k+2$ by the above claim, and thus $\deg_{\g'_1}(a') \ge k\ge \deg(z)-1$, and the easel $(\g'_1,z,a',\psi_1)$ is tall.
\item If $A' = A$, then $\deg_{\g'_{1}}(a') = \deg_{\g}(A) -2 = k-1$. 
If $\deg(z) \leq k$, this again implies that the easel $(\g'_{1},z,a',\psi_{1})$ is tall.
Therefore, we can assume have $\deg(z)=k+1$.
Since $\deg(A) = \deg(z)$ and $e_{1}$
is an edge between $A$ and $C$, there exists at least one edge $e_{0}$ of $\g$
between $z$ and $C$.
\begin{itemize}
\item If $\psi$ does not direct all edges between $z$ and $C$ in
the same way, then $\psi_{1}$ also does not direct the corresponding edges
between $z$ and $V(\g'_{1}) \setminus \{a',z\}$ in the same way. 
Therefore, the easel $(\g'_1,z,a',\psi_1)$ is tall.

\item If $\psi$ directs all edges between $z$ and $C$ in the same way, then since
$\deg(z)=k+1$ and $\psi$ is $k$-tallness-witnessing, $\psi$ directs an edge
between $z$ and $A$ in the opposite way, and such an edge was chosen as $e_2$.
But then $e$ is an edge of $\g'_1$ between $z$ and $V(\g'_1)\setminus\{a',z\}$
directed opposite to $e_0$.
Hence, we again conclude that the easel $(\g'_1,z,a',\psi_1)$ is tall.
\end{itemize}
\end{itemize}
However, this contradicts Theorem~\ref{thm:tall}.
\end{proof}

We are now ready to strengthen Observation~\ref{obs-inGk}.

\begin{lemma}\label{lemma-gen-sepx}
Let $(\g,z)\in \GG_k$ be a canvas for a positive integer $k$, let $\psi$ be a $k$-tallness-witnessing preflow in $(\g,z)$,
and suppose that $(\g,z)$ is $\psi$-critical.  If a set $A\subseteq V(\g)\setminus \{z\}$ of size at least two satisfies $\deg(A)\le k+1$,
then $(\g,z)\restriction A\in \GG_k$.
\end{lemma}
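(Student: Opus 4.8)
The plan is to handle the two possible sizes of $\deg(A)$ separately. If $\deg(A) \le k$, then Observation~\ref{obs-inGk} applies directly (using that $(\g,z)$ is tame flow-critical, which follows since $(\g,z)\in\GG_k$), giving $(\g,z)\restriction A \in \GG_k$ immediately. So the substance is the case $\deg(A) = k+1$. Write $(\g_0, b) = (\g,z)\restriction A$; by Observation~\ref{obs-subcrit} this is a flow-critical canvas, and it is tame by inspection (or Corollary~\ref{cor-contrtame}), and $\deg(b) = \deg(A) = k+1$. To conclude $(\g_0,b)\in\GG_k$ I must exhibit a $k$-tallness-witnessing preflow in $(\g_0,b)$, i.e., a tip preflow $\psi_0$ around $b$ that does not extend to a nowhere-zero flow in $\g_0$ \emph{and} does not orient all edges at $b$ in the same direction (the latter being the extra condition forced because $\deg(b) = k+1$).

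The natural candidate for $\psi_0$ comes from the flow-critical structure: since $(\g_0,b)$ is flow-critical with at least three vertices, there is \emph{some} tip preflow $\psi_0$ around $b$ not extending to a nowhere-zero flow in $\g_0$. The only thing that could go wrong is that every such $\psi_0$ orients all edges at $b$ the same way. So first I would suppose, for contradiction, that this bad situation holds, and in particular fix such a $\psi_0$ (all edges at $b$ directed, say, away from $b$). The key move is then to transfer $\psi_0$ back to $\g$: view it as specifying an orientation of all edges of $\g$ with exactly one end in $A$, all directed from $A$ to $V(\g)\setminus A$. Call this partial orientation $\psi_A$. Because $(\g,z)$ is $\psi$-critical and $A\ne V(\g)\setminus\{z\}$ (if $A=V(\g)\setminus\{z\}$ then $(\g_0,b)$ is the trivial two-vertex-plus-tip situation handled directly, or the $k$-tallness-witnessing condition on $\psi$ itself already does the job), there is a $\psi$-critical picture to exploit: I want to find a tip preflow $\psi$ in $(\g,z)$ whose extension to a nowhere-zero flow of $\g$ restricted to the edges leaving $A$ agrees with $\psi_A$, deriving a contradiction with $\psi$ not extending. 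This is where Lemma~\ref{lemma-extori} enters: applying it with this $A$ and any choice of $x\in A$, if $\psi$ extends to a nowhere-zero flow in $\g/A$ at all, it extends to a $(k,x,\psi,A)$-valid one, which in particular does \emph{not} orient all edges at the contracted vertex $a$ the same way — contradicting that $\psi_A$ (the only possible boundary behaviour, by our assumption) is monodirectional.

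To make this precise I would argue as follows. Pick $x\in A$. Since $(\g,z)$ is $\psi$-critical and hence flow-critical, $\psi$ extends to a nowhere-zero flow in every proper tip-respecting contraction of $(\g,z)$; in particular $\psi$ extends to a nowhere-zero flow $\vec{G}$ in $\g/A$. By Lemma~\ref{lemma-extori}, we may take $\vec{G}$ to be $(k,x,\psi,A)$-valid, so there are edges $e_1,e_2$ incident with $a$ in $\g/A$ oriented oppositely by $\vec{G}$. Now let $\psi_0'$ be the tip preflow of $(\g_0,b)$ matching $\vec{G}$ on the edges incident with $b$: it orients $e_1$ and $e_2$ in opposite directions, hence is \emph{not} monodirectional at $b$. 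If $\psi_0'$ did not extend to a nowhere-zero flow in $\g_0$, then $\psi_0'$ is a $k$-tallness-witnessing preflow and we are done, $(\g_0,b)\in\GG_k$. If $\psi_0'$ does extend to a nowhere-zero flow $\vec{G_0}$ in $\g_0$, then gluing $\vec{G_0}$ with $\vec{G}$ (they agree on the edges between $A$ and the rest) produces a nowhere-zero flow in $\g$ extending $\psi$, contradicting $\psi$-criticality. So in all cases $(\g_0,b)\in\GG_k$.

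The main obstacle I anticipate is making sure the $(k,x,\psi,A)$-valid flow really does what I need, namely that its two oppositely-oriented cut edges translate into a non-monodirectional tip preflow on $(\g_0,b)$ that we can then certify as $k$-tallness-witnessing — the bookkeeping around the exceptional clause in Definition~\ref{def:validflow} (the ``unless $\deg(z)=k+1$ and $\psi$ orients all edges between $z$ and $V(\g)\setminus\{z,x\}$ the same way'' escape hatch) needs care, since in that exceptional situation $\psi$ itself already is monodirectional-away-from-$z$ on all but the $x$-edges, and one should double-check that this does not obstruct producing a valid witness. Handling the degenerate boundary case $A = V(\g)\setminus\{z\}$ up front, and invoking Lemma~\ref{lemma-gen-psi} only implicitly through the hypothesis that $(\g,z)$ is $\psi$-critical, keeps the argument clean; the rest is the routine gluing-of-flows argument used repeatedly earlier in the paper.
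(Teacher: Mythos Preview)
Your proposal is correct and follows essentially the same route as the paper: reduce to $\deg(A)=k+1$ via Observation~\ref{obs-inGk}, use $\psi$-criticality to get a nowhere-zero flow in $\g/A$, upgrade it via Lemma~\ref{lemma-extori} to one with two oppositely-oriented edges at $a$, and observe that the induced tip preflow at $b$ cannot extend (else gluing contradicts $\psi$-criticality) and is not monodirectional, hence is $k$-tallness-witnessing. Your worry about the exceptional clause in Definition~\ref{def:validflow} is unfounded here: for this lemma you only need the ``oppositely oriented $e_1,e_2$'' part of validity, not the ``not incident with $x$'' refinement (that refinement only matters later in Lemma~\ref{lemma-genladeg-sepx}), and the degenerate case $A=V(\g)\setminus\{z\}$ is already absorbed into Lemma~\ref{lemma-extori}, so no separate handling is required.
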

\begin{proof}
By Observation~\ref{obs-inGk}, we can assume that $\deg(A)=k+1$.
The canvas $(\g_1,b)=(\g,z)\restriction A$ is flow-critical by Observation~\ref{obs-subcrit} and it is clearly tame.
Hence, it suffices to show that it is $k$-tall.

Since $(\g,z)$ is $\psi$-critical, $\psi$ extends to a nowhere-zero $\vec{G}_1$ flow in $(\g/A,z)$.  Let $a$ be the vertex arising from the contraction of $A$.
By Lemma~\ref{lemma-extori} (with $x$ being any vertex of $A$), we can assume that $\vec{G}_1$ does not direct all edges incident with $a$ in the same way.
The restriction of $\vec{G}_1$ to the edges incident with $a$ can be interpreted as preflow $\psi_1$
around $b$ in $\g_1$. Since $\psi$ does not extend to a nowhere-zero flow in $\g$, $\psi_1$ does not extend to a nowhere-zero flow in $\g_1$,
and thus $\psi_1$ is $k$-tallness-witnessing in $(\g_1,b)$.  Therefore, $(\g_1,b)$ is indeed $k$-tall.
\end{proof}

Note that by Lemma~\ref{lemma-gen-psi}, the previous lemma applies when $(\g,z,\psi)$ is a minimal $k$-counter\-exam\-ple.

\subsection{Step 2: Minimal counterexamples are $z$-homogeneous}\label{subsec:z-homog}
We start this subsection by eliminating vertices of degree four.
\begin{lemma}\label{lemma-gen-no4}
If $(\g,z,\psi)$ is a minimal $k$-counterexample for a positive integer $k$, then $\g$ has minimum degree at least five.
\end{lemma}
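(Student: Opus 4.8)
The plan is to follow the proof of Lemma~\ref{lemma-no4} step for step, with the distinguished vertex $x$ and the tallness of easels replaced by the $k$-tallness condition, and with the concluding ``contradiction with minimality'' replaced by an explicit verification that $(\g,z)$ satisfies (EXPA). Suppose for a contradiction that $\g$ has a vertex $v\neq z$ with $\deg(v)\le 4$. By Lemma~\ref{lemma-gen-psi} the canvas $(\g,z)$ is $\psi$-critical, hence $|V(\g)|\ge 4$ and, by Lemma~\ref{lemma-2con}, $\g-z$ is $2$-connected; tameness then forces $\deg(v)=4$, $|\tau(v)|=0$ and $\beta(v)=0$. As in Lemma~\ref{lemma-no4}, I would pick an edge $e_1=vv_1$ with $v_1\notin\{v,z\}$ (available by $2$-connectivity) and label the other three edges at $v$ as $e_2,e_3,e_4$ according to whether $v$ is adjacent to $z$ and whether $\psi$ directs the $v$--$z$ edges consistently (cases (i)--(iii) of Lemma~\ref{lemma-no4}), using $\psi$-criticality to see that $v$ has at most two edges to $z$. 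Let $\g'$ arise from $\g$ by splitting off $e_1$ with $e_2$ and $e_3$ with $e_4$ and deleting the now isolated $v$; then $\psi$ induces a tip preflow $\psi'$ in $(\g',z)$ which does not extend to a nowhere-zero flow in $\g'$. Fix a $\psi'$-critical tip-respecting contraction $(\g'',z)=(\g'/\PP,z)$.

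First I would show $(\g'',z)\in\GG_k$ and $(\g'',z)\prec(\g,z)$. Flow-criticality is immediate; tameness follows from Lemmas~\ref{lemma-cut} and~\ref{lemma-simple} and the tameness of $(\g,z)$ exactly as in the tameness claim of Lemma~\ref{lemma-no4} (for a part $A_0$ of $\PP$ containing an end of at least three of $e_1,\dots,e_4$ one argues about $A_0\cup\{v\}$, and $\psi$-criticality rules out a part containing an end of all four). For $k$-tallness one tracks $z$ through the construction: always $\deg_{\g''}(z)\le\deg_\g(z)\le k+1$, and if $\deg_{\g''}(z)=k+1$ then no split deleted two $z$-edges, so case~(i) did not occur and all $v$--$z$ edges were directed the same way by $\psi$; since $\psi$ is $k$-tallness-witnessing, $\psi'$ still does not direct all edges at $z$ in $\g''$ the same way, so $\psi'$ is $k$-tallness-witnessing. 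Finally $o(\g'',z)<o(\g,z)$: we deleted $v$, and the splits strictly decreased $|E(\g-z)|$; so $(\g'',z)\prec(\g,z)$ by Definition~\ref{def:ordering}.

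The step not present in Lemma~\ref{lemma-no4} is to derive (EXPA). The idea is that $\g$ is recovered from $\g'$ by a $2$-alteration with new vertex $v$: the two split edges (or, in the degenerate cases where a split merged two $z$-edges, one split edge together with the vertex $z$) form a set $X$ of at most two edges and vertices of $\g'$ with $\g$ the $2$-alteration of $\g'$ on $X$. Pushing $X$ forward through $\PP$ (replacing an edge both of whose ends lie in one part by that part's vertex) yields a set $\bar X$ of at most two edges and vertices of $\g''$, and one checks that $\g/(\PP\cup\{\{v\}\})$ is isomorphic to the $2$-alteration of $\g''$ on $\bar X$. It remains to see that $(\g,z)$ is a $\GG_k$-expansion of this canvas, i.e.\ that $(\g,z)\restriction P\in\GG_k$ for every part $P\in\PP$ with $|P|\ge 2$: since the configuration in which all four of $e_1,\dots,e_4$ have an end in a single part is excluded by $\psi$-criticality, un-splitting raises the degree of each such $P$ by at most two, so $\deg_\g(P)\le\deg_{\g'}(P)+2\le(\deg_{\g''}(z)-2)+2\le k+1$ by Theorem~\ref{thm-deg}, and Lemma~\ref{lemma-gen-sepx} applies. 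Hence $(\g,z)$ is a $\GG_k$-expansion of a $2$-alteration of $(\g'',z)\in\GG_k$ with $(\g'',z)\prec(\g,z)$, so (EXPA) holds, contradicting the minimality of $(\g,z,\psi)$.

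The main obstacle will be the bookkeeping in the last paragraph: making the $2$-alteration and the contraction commute so that $\g/(\PP\cup\{\{v\}\})$ is exactly the $2$-alteration of $\g''$ on $\bar X$, handling the degenerate configurations of $e_1,\dots,e_4$ (parallel edges to $z$, and coincident non-$z$ endpoints, both constrained by Lemma~\ref{lemma-simple}), excluding via $\psi$-criticality the configuration where all four of $e_1,\dots,e_4$ land in one part of $\PP$, and pinning down the precise changes in $\deg(z)$ and in the orientations of the edges at $z$ needed for $k$-tallness. Each of these is routine but fiddly.
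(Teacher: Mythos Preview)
Your proposal is correct and follows the same overall strategy as the paper's proof: split off the four edges at $v$ in pairs, pass to a $\psi'$-critical contraction $(\g'',z)$, verify that $(\g'',z)\in\GG_k$ with $(\g'',z)\prec(\g,z)$, and then recognise $(\g,z)$ as a $\GG_k$-expansion of (a $2$-alteration of) $(\g'',z)$, yielding (EXPA).

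The one noteworthy difference is in the edge labelling. You import cases (i)--(iii) from Lemma~\ref{lemma-no4}, which were designed to protect the tallness of an \emph{easel} (avoiding the distinguished vertex $x$ and controlling orientations at $z$ relative to $x$). The paper exploits the absence of a distinguished $x$: since $\g-z$ is $2$-connected and $|V(\g)|\ge 4$, the vertex $v$ has at least two edges $e_1,e_3$ into $C=V(\g)\setminus\{v,z\}$, and the remaining two edges are labelled $e_2,e_4$ arbitrarily. Because neither $e_1$ nor $e_3$ is incident with $z$, each split either leaves the set of $z$-edges untouched or replaces a $z$-edge by another $z$-edge with the same $\psi$-direction; hence $k$-tallness of $\psi_1$ is immediate, and the degenerate case ``a split merges two $z$-edges'' simply never occurs. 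This shortens the bookkeeping you flag as the main obstacle. Both routes are valid; the paper's labelling just buys a cleaner argument.

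Your exclusion of ``all four $v_i$ in one part'' via $\psi$-criticality is correct (in your cases (i) and (ii) it is vacuous since some $v_i=z$; in case (iii) it follows because $\g/\PP^*\cong\g''$ for the obvious $\PP^*$), and your degree bound $\deg_\g(P)\le\deg_{\g''}(p)+2\le k+1$ together with Lemma~\ref{lemma-gen-sepx} is exactly what the paper uses to certify the $\GG_k$-expansion.
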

\begin{proof}
Suppose for a contradiction that $\g$ has a vertex $v$ of degree at most four.  Note that $v\neq z$ by Corollary~\ref{cor-mainlov}, and by the tameness of $(\g,z)$ we have further that $\deg(v)=4$ and $v$ has zero boundary.
By Lemma~\ref{lemma-gen-psi}, the canvas $(\g, z)$ is $\psi$-critical.  Let $C=V(\g)\setminus\{v,z\}$; since $|V(\g)|\ge 4$, we have $|C|\ge 2$.
Lemma~\ref{lemma-2con} implies that there exist two edges $e_1$ and $e_3$ between $v$ and vertices in $C$. 
Label the remaining two edges incident with $v$ by $e_2$ and $e_4$ arbitrarily.  For $i\in\{1,\ldots,4\}$, let $v_i$ be the end of $e_i$ different from $v$,
and let $N=\{v_1,\ldots, v_4\}$. By Lemma \ref{lemma-simple}, $v_1$ and $v_3$ are distinct.

% \begin{figure}
% \begin{center}
% \begin{tikzpicture}
% \node[dummywhite] at (-.75,0) (dummywhite1) [label = left:$(i)$]  {};
% \node[blackvertexv2] at (0,0) (v) [label= above:$v$] {};
% \node[blackvertexv2] at (1,-2) (z) [label = below:$z$] {};
% \node[blackvertexv2] at (-1,-2) (v1) {};
% \draw[thick,black, bend left = 45,postaction={decoration={markings,mark=at position 0.5 with {\arrow{<}}},decorate}] (z) to node[left]{$e_{2}$}(v);
% \draw[thick,black, bend right = 45,postaction={decoration={markings,mark=at position 0.5 with {\arrow{<}}},decorate}] (z) to node[left]{$e_{4}$}(v);
% \draw[thick,black,postaction={decoration={markings,mark=at position 0.5 with {\arrow{>}}},decorate}] (z) to node[left]{$e_{3}$}(v);
% \draw[thick,black] (v) to node[left]{$e_{1}$}(v1);
% \begin{scope}[xshift = 4cm]
% \node[blackvertexv2] at (0,0) (v) [label= above:$v$] {};
% \node[blackvertexv2] at (1,-2) (z) {};
% \node[blackvertexv2] at (-1,-2) (v1) [label = below:$v_{1}$] {};
% \node[blackvertexv2] at (0,-2) (v2) {};
% \node[blackvertexv2] at (2,-2) (v3) {};
% \node[dummywhite] at (-.75,0) (dummywhite1) [label = left:$(ii)$]  {};
% \draw[thick,black] (v) -- node[left,yshift =-.1]{$e_{1}$}(v1);
% \draw[thick,black] (v) to node[left,yshift =-.1]{$e_{3}$}(z);
% \draw[thick,black] (v) to node[left,yshift=-.1]{$e_{4}$}(v3);
% \draw[thick,black] (v) to node[left,yshift =-.1]{$e_{2}$} (v2);
% \end{scope}
% \end{tikzpicture}
% \caption{The two cases in Lemma \ref{lemma-gen-no4}.}
% \label{fig:deg4splittings}
% \end{center}
% \end{figure}

Let $\g_1$ be the $\Z$-bordered graph obtained from $\g$ by splitting off $e_1$ with $e_2$ (giving an edge $e'_1$) and $e_3$ with $e_4$
(giving an edge $e'_3$) and deleting the now isolated vertex $v$.
The preflow $\psi$ naturally corresponds to a tip preflow $\psi_1$ in $(\g_1,z)$, and $\psi_1$ does not extend to a nowhere-zero flow in $\g_1$.
Hence, $(\g_1,z)$ has a tip-respecting $\psi_1$-critical contraction $(\g',z)$. Let $\PP'$ be the tip-respecting partition of $V(\g_1)$
such that $\g'=\g_1/\PP'$.
\begin{claim*}
The canvas $(\g',z)$ is tame, $\psi_{1}$ is a $k$-tall-witnessing preflow and $(\g',z) \prec (\g,z)$. 
\end{claim*}

\begin{subproof}
We first argue that $(\g',z)$ is tame. Consider any $A'\in \PP'$ other than $\{z\}$, and let $a$ be the corresponding vertex of $\g'$.
Let $A=A'\cup\{v\}$ if $|A'\cap N|\ge 3$ and $A=A'$ otherwise.
If $|A|\ge 2$, then Corollary~\ref{cor-contrtame} implies $\deg(a)\ge \deg_{\g}(A)-2\ge 4+|\tau(A)|=4+|\tau(a)|$.
If $|A|=1$, then by Lemma~\ref{lemma-simple} at most one of the edges $e_1$, \ldots, $e_4$ has an end in $A=A'$,
and $\deg(a)=\deg_{\g}(A)\ge 4+|\tau(A)|=4+|\tau(a)|$ since $(\g,z)$ is tame.  Therefore, the canvas $(\g',z)$ is tame.

Now observe that if $\deg_{\g'}(z)=k+1$, then $\deg_{\g}(z)=k+1$ and $\psi$ directs two edges incident with $z$ in opposite ways,
and thus so does $\psi_1$.  Therefore, $\psi_1$ is a $k$-tallness-witnessing preflow for $(\g', z)$, and $(\g',z)\in \GG_k$. Lastly, we also have $o(\g',z)<o(\g,z)$, and thus $(\g',z)\prec (\g,z)$.
\end{subproof}
 For $i\in \{1,2\}$, define $x_i:=e'_{2i-1}$ if this edge is present in $\g'$, and otherwise, let $x_i$ be the vertex into which the part $X_i\in\PP'$ containing $v_{2i-1}$ and $v_{2i}$ was contracted.
\begin{itemize}
\item If $x_1\neq x_2$, then let $(\g_0,z)$ be the $2$-alteration of $(\g',z)$ on $x_1$ and $x_2$, with the newly added vertex labelled $v$,
and let $\PP=\PP'\cup\{\{v\}\}$.
\item If $x_1=x_2$, then note that $x_1$ is a vertex other than $z$; let $(\g_0,z)=(\g',z)$ and let $\PP=(\PP'\setminus\{X_1\})\cup\{X_1\cup \{v\}\}$.
\end{itemize}
Observe that in either case, we have $(\g_0,z)=(\g / \PP,z)$. To finish, it suffices to show that $(\g,z)$ is a $\GG_{k}$-expansion of $(\g_{0},z)$.  Consider any part $P\in\PP$ of size at least two, and let $p$ be the corresponding vertex of $\g_0$.
Since $(\g',z)\in \GG_k$, Theorem~\ref{thm-deg} implies $\deg_{\g'}(p)\le k-1$, and thus $\deg(P)\le \deg_{\g'}(p)+2\le k+1$.
By Lemma~\ref{lemma-gen-sepx}, we have $(\g,z)\restriction P\in \GG_k$.  
Therefore, $(\g,z)$ is $\GG_k$-expansion of $(\g_0,z)$, and thus $(\g,z)$ is obtained as in Theorem~\ref{thm-gen} (EXPA), contradicting our choice.
\end{proof}

Next, let us get rid of mixed edges.

\begin{lemma}\label{lemma-gen-norede2}
If $(\g,z,\psi)$ is a minimal $k$-counterexample for a positive integer $k$, then
there is no mixed edge $uv\in E(\g-z)$.
\end{lemma}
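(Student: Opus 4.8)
The plan is to mirror the proof of Lemma \ref{lemma-norede} almost verbatim, with $x$ replaced by $z$ throughout (since here the ambient notion of $x$-homogeneity specializes to $z$-homogeneity, i.e.\ no mixed edges at all). Suppose for a contradiction that $(\g,z,\psi)$ is a minimal $k$-counterexample and $uv\in E(\g-z)$ is mixed, with, say, $\tau(u)$ out-friendly and $\tau(v)$ in-friendly. Write $\g=(G,\beta)$ and form $\g'=(G-uv,\beta')$ where $\beta'(u)=\beta(u)-1$, $\beta'(v)=\beta(v)+1$, and $\beta'$ agrees with $\beta$ elsewhere. By Lemma \ref{lemma-gen-psi} the canvas $(\g,z)$ is $\psi$-critical, so $|V(\g)|\ge 4$ and hence $\g-z$ is $2$-connected by Lemma \ref{lemma-2con}; in particular $G-uv$ is connected, and $\beta'(V(G))=0$, so $\beta'$ is a $\Z$-boundary. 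A nowhere-zero flow in $\g'$ extending $\psi$ would extend (by orienting $uv$ towards $v$) to one in $\g$; since $\psi$ does not extend in $\g$, it does not extend in $\g'$. Moreover, $\psi$ extends to a nowhere-zero flow in every proper tip-respecting contraction of $(\g',z)$: indeed $\psi$ extends in the corresponding contraction of $(\g,z)$ by $\psi$-criticality, and deleting $uv$ (the boundary adjustment is consistent) turns that flow into one for the contracted $\g'$. Hence $(\g',z)$ has a $\psi$-critical tip-respecting contraction $(\g'',z)$.

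Next I would check that $(\g'',z)\in\GG_k$ and $(\g'',z)\prec(\g,z)$, which will contradict minimality. Let $x'$ be the vertex of $\g''$ into which $x$... — rather, there is no distinguished $x$ here, so I only need to verify tameness and $k$-tallness of $(\g'',z)$. Tameness follows exactly as in the first claim of Lemma \ref{lemma-norede}: for $y\in V(\g'')\setminus\{z\}$ with contracted set $Y$, if $|\{u,v\}\cap Y|\in\{0,2\}$ then $\deg(y)=\deg_\g(Y)$ and $\beta'(y)=\beta(Y)$, so tameness of $(\g,z)$ together with Lemma \ref{lemma-cut} gives $\deg(y)\ge 4+|\tau(y)|$; otherwise, by symmetry, $u\in Y$, $v\notin Y$, so $\deg(y)=\deg_\g(Y)-1$ and $\beta'(y)=\beta(Y)-1$, whence $|\tau(y)|\le|\tau(Y)|+1$ by Observation \ref{obs-tauprop}(b). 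If $|Y|\ge 2$, Lemma \ref{lemma-cut} gives $\deg(y)\ge 5+|\tau(Y)|\ge 4+|\tau(y)|$. If $Y=\{u\}$ and $\tau(u)$ has a positive element $b$, then $\tau(y)=\{b-1\}$ and tameness of $(\g,z)$ gives $\deg(y)=\deg(u)-1\ge 3+|\tau(u)|= 4+|\tau(y)|$; and if $Y=\{u\}$ with $\tau(u)=\{0\}$ then $\deg(u)$ is even, so $\deg(u)\ge 6$ by Lemma \ref{lemma-gen-no4}, and since $|\tau(y)|=1$ we get $\deg(y)=\deg(u)-1\ge 5=4+|\tau(y)|$. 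For $k$-tallness: $\deg_{\g''}(z)\le\deg_\g(z)\le k+1$, and if $\deg_{\g''}(z)=k+1$ then $\deg_\g(z)=k+1$ and $\psi$ does not orient all edges at $z$ the same way (as $\psi$ is $k$-tallness-witnessing), so neither does the induced preflow; hence $\psi$ is $k$-tallness-witnessing in $(\g'',z)$ and $(\g'',z)\in\GG_k$. Finally $o(\g'',z)<o(\g,z)$ (we deleted an edge and possibly contracted), so $(\g'',z)\prec(\g,z)$.

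This contradicts the minimality of $(\g,z,\psi)$, completing the proof. The main thing to be careful about — and where the argument is not purely a transcription of Lemma \ref{lemma-norede} — is the second-to-last subcase of the tameness check: there, deleting the half-edge at $u$ must not drop $\deg(u)$ below the required threshold, and this is exactly why Lemma \ref{lemma-gen-no4} (minimum degree $\ge 5$, forcing degree-$4$ vertices, which have $\tau=\{0\}$, to have already been excluded) must be invoked before this lemma. Everything else is routine bookkeeping with $\deg$, $\beta$, and the $\tau$-table via Observation \ref{obs-tauprop}.
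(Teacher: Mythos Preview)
Your argument has a genuine gap at the very end: you conclude that $(\g'',z)\in\GG_k$ with $(\g'',z)\prec(\g,z)$ ``contradicts the minimality of $(\g,z,\psi)$'', but it does not. A minimal $k$-counterexample is minimal among canvases in $\GG_k$ that \emph{fail the conclusion of Theorem~\ref{thm-gen}}; merely producing a smaller element of $\GG_k$ is no contradiction at all. (This is the key difference from Lemma~\ref{lemma-norede}, where minimality is among tall tame critical easels and the theorem being proved asserts none exist.) What you must show instead is that $(\g,z)$ itself satisfies one of the outcomes (EXPA) or (EXPB) of Theorem~\ref{thm-gen}, and this is exactly what the paper does: after obtaining the $\psi$-critical contraction $(\g',z)$ (your $(\g'',z)$) with partition~$\PP$, it observes that $(\g/\PP,z)$ equals either $(\g',z)$ (if $u,v$ lie in the same part) or a $1$-alteration of $(\g',z)$ (otherwise), and then uses Theorem~\ref{thm-deg} to bound $\deg_{\g}(A)\le k$ for each non-trivial part $A$, so that Observation~\ref{obs-inGk} gives $(\g,z)\restriction A\in\GG_k$. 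This exhibits $(\g,z)$ as a $\GG_k$-expansion of $(\g',z)$ or of a $1$-alteration thereof, i.e.\ (EXPA) or (EXPB) holds, contradicting the choice of $(\g,z)$ as a counterexample.

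A smaller point: your sentence ``Moreover, $\psi$ extends to a nowhere-zero flow in every proper tip-respecting contraction of $(\g',z)$'' is both unnecessary and not obviously correct. A flow in the corresponding contraction of $(\g,z)$ may orient the edge $uv$ the wrong way, in which case deleting it does not yield a flow for the boundary $\beta'$. You only need that $\psi$ does not extend in $\g'$, which already guarantees the existence of some $\psi$-critical tip-respecting contraction; drop this sentence.
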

\begin{proof}
Let $\g=(G,\beta)$.  Recall that the canvas $(\g,z)$ is $\psi$-critical by Lemma~\ref{lemma-gen-psi}.
Suppose for a contradiction that $e=uv\in E(\g-z)$ is a mixed edge, say with $v$ in-friendly and
$u$ out-friendly.  Let $\g_1=(G-e,\beta')$, where $\beta'(y)=\beta(y)$ for $y\in V(G)\setminus\{u,v\}$, $\beta'(u)=\beta(u)-1$, and $\beta'(v)=\beta(v)+1$.
Lemmas~\ref{lemma-conn} and \ref{lemma-2con} imply that $G-e$ is connected, and thus $\beta'$ is a $\Z$-boundary for $G-e$.
If $\g_1$ had a nowhere-zero flow extending $\psi$, it would give a nowhere-zero flow in $\g$ extending $\psi$
by directing the edge $e$ towards $v$.  Hence, $\psi$ does not extend to a nowhere-zero flow in $\g_1$, and thus $(\g_1,z)$ has a tip-respecting
$\psi$-critical contraction $(\g',z)$. Let $\g'=\g_1/\PP$.  Note that $o(\g',z)<o(\g,z)$, and thus $(\g',z)\prec (\g,z)$.

As in the proof of Lemma~\ref{lemma-norede}, observe that the assumptions on in-friendliness and out-friendliness of $u$ and $v$
together with Lemma~\ref{lemma-gen-no4} imply that $(\g_1,z)$ is tame, and together with Corollary~\ref{cor-contrtame},
it follows that $(\g',z)$ is tame.  Moreover, $(\g',z)$ is $k$-tall, since $\psi$ is a $k$-tallness-witnessing preflow around $z$ in $\g'$.
Hence $(\g',z)\in\GG_k$.

If $u$ and $v$ are contained in the same part of $\PP$, then let $(\g_0,z)=(\g',z)$.  Otherwise, let $u'$ and $v'$ be the vertices of $\g'$
into which $u$ and $v$ were contracted, and let $(\g_0,z)$ be the $1$-alteration of $(\g',z)$ obtained by adding the edge $u'v'$,
increasing the boundary at $u'$ by one, and decreasing it at $v'$ by one.  Note that in either case, $(\g_0,z)=(\g / \PP,z)$.

For every vertex $y\in V(\g_0)\setminus\{z\}$, if $A$ is the set of vertices of $\g$ contracted into $y$ and $|A|\ge 2$,
then $\deg(A)\le \deg_{\g'}(y)+1\le \deg(z)-1\le k$ by Theorem~\ref{thm-deg}, and thus $(\g,z)\restriction A$
belongs to $\GG_k$ by Observation~\ref{obs-inGk}.
Therefore, $(\g,z)$ is a $\GG_k$-expansion of $(\g_0,z)$ and it is obtained as in Theorem~\ref{thm-gen} (EXPA) or (EXPB), contradicting our choice. 
\end{proof}

Consequently, $(\g,z)$ is $z$-homogeneous, and by Observation~\ref{obs-allplusminus},
either $\tau(v)>0$ for every $v\in V(\g)\setminus\{z\}$, or $\tau(v)<0$ for every $v\in V(\g)\setminus\{z\}$. We  now argue that $\psi$ orients all edges towards $z$ or away from $z$.

\begin{lemma}\label{lemma-gen-norede1}
If $(\g,z,\psi)$ is a minimal $k$-counterexample for a positive integer $k$, then
either $\tau(v)>0$ for every $v\in V(\g)\setminus\{z\}$ and $\psi$ directs all edges away from $z$,
or $\tau(v)<0$ for every $v\in V(\g)\setminus\{z\}$ and $\psi$ directs all edges towards $z$.
\end{lemma}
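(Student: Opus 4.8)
The plan is to transpose the proof of Lemma~\ref{lemma-sameor} from the easel setting to the canvas setting. As noted just before the statement of this lemma, $(\g,z)$ is $z$-homogeneous and either $\tau(v)>0$ for every $v\in V(\g)\setminus\{z\}$ or $\tau(v)<0$ for every $v\in V(\g)\setminus\{z\}$; reversing all arcs and negating $\beta$ interchanges these two cases, so I would assume $\tau(v)>0$ for all $v\in V(\g)\setminus\{z\}$ and show that $\psi$ directs every edge away from $z$. Recall also that $(\g,z)$ is $\psi$-critical by Lemma~\ref{lemma-gen-psi}.

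Suppose for a contradiction that $\psi$ directs some edge $e=uz$ towards $z$. Form $\g_1=(G-e,\beta_1)$ by deleting $e$ and modifying $\beta$ at $u$ and at $z$ exactly as in the proof of Lemma~\ref{lemma-sameor}, so that appending $e$ with its $\psi$-orientation turns a nowhere-zero flow of $\g_1$ extending $\psi_1$ (the restriction of $\psi$) into a nowhere-zero flow of $\g$ extending $\psi$. As there, $\beta_1$ is a $\Z$-boundary: $\g-z$ is $2$-connected by Lemma~\ref{lemma-2con}, so $G-e$ is disconnected only if $\deg(z)=1$, in which case the definition of a tip preflow forces the boundary at $z$ to make $\beta_1(z)\equiv 0$. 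Since $\psi$ does not extend to a nowhere-zero flow in $\g$, $\psi_1$ does not extend to one in $\g_1$; and since $\psi$ extends to a nowhere-zero flow in every proper tip-respecting contraction of $(\g,z)$, deleting $e$ shows $\psi_1$ does so in every proper tip-respecting contraction of $(\g_1,z)$. Hence $(\g_1,z)$ is $\psi_1$-critical.

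Next I would check that $(\g_1,z)\in\GG_k$. We have $\deg_{\g_1}(z)=\deg_\g(z)-1\le k$, so $\psi_1$ is automatically $k$-tallness-witnessing and $(\g_1,z)$ is $k$-tall. For tameness, every vertex other than $u$ and $z$ keeps its degree and boundary, while at $u$ one argues as in Lemma~\ref{lemma-norede}: since $\tau(u)>0$, the boundary modification at $u$ lowers $|\tau(u)|$ by $1$ (immediate from the table defining $\tau$), so $\deg_{\g_1}(u)=\deg_\g(u)-1\ge(4+|\tau_\g(u)|)-1=4+|\tau_{\g_1}(u)|$ by the tameness of $(\g,z)$. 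Therefore $(\g_1,z)\in\GG_k$.

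Finally I would verify $(\g_1,z)\prec(\g,z)$ and conclude. Because $e$ is incident with $z$, we have $o(\g_1,z)=o(\g,z)$, and $(\g,z)$ is $z$-homogeneous. If $(\g_1,z)$ is not $z$-homogeneous, then clause~(2) of Definition~\ref{def:ordering} gives $(\g_1,z)\prec(\g,z)$; otherwise both canvases are $z$-homogeneous and $\deg_{\g_1}(z)=\deg_\g(z)-1<\deg_\g(z)$, so clause~(3) gives $(\g_1,z)\prec(\g,z)$. But $(\g,z)$ is recovered from $(\g_1,z)$ by adding the edge $uz$ and adjusting the boundary at $u$ and $z$ by $1$ each, i.e., $(\g,z)$ is a tip-alteration of $(\g_1,z)$; together with $(\g_1,z)\in\GG_k$ and $(\g_1,z)\prec(\g,z)$ this is precisely case (ADD) of Theorem~\ref{thm-gen}, contradicting the choice of $(\g,z,\psi)$ as a minimal $k$-counterexample. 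I expect the only genuine subtlety to be the step $(\g_1,z)\prec(\g,z)$: since the reduction changes only edges at $z$, the primary parameter $o$ does not drop, so one must rely on the $z$-homogeneity and $\deg(z)$ tiebreakers built into $\prec$; the remaining verifications (that $\beta_1$ is a boundary, that $(\g_1,z)$ is $\psi_1$-critical and tame) are bookkeeping essentially identical to that in Lemmas~\ref{lemma-sameor} and \ref{lemma-norede}.
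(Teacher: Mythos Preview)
Your proposal is correct and follows essentially the same approach as the paper: delete the offending edge $e=uz$, adjust the boundary, verify that the resulting canvas is $\psi_1$-critical, tame, and $k$-tall, and then use the tip-alteration to conclude (ADD), contradicting minimality. Your treatment is in fact slightly more explicit than the paper's in two places---the case split on whether $(\g_1,z)$ is $z$-homogeneous when verifying $(\g_1,z)\prec(\g,z)$, and the computation showing $|\tau(u)|$ drops by $1$---but these are exactly the details the paper sweeps under ``as in the proof of Lemma~\ref{lemma-sameor}.''
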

\begin{proof}
Let $\g=(G,\beta)$.  Recall that the canvas $(\g,z)$ is $\psi$-critical by Lemma~\ref{lemma-gen-psi}.
By Lemma~\ref{lemma-gen-norede2}, we can by symmetry assume that $\tau(v)>0$ for every $v\in V(\g)\setminus\{z\}$.
Suppose for a contradiction that $\psi$ directs an edge $e=uz$ of $\g$ towards $z$.
Let $\g'=(G-e,\beta')$, where $\beta'(y)=\beta(y)$ for $y\in V(G)\setminus\{z,v\}$, $\beta'(z)=\beta(z)+1$, and $\beta'(u)=\beta(u)-1$.
Let $\psi'$ be obtained from $\psi$ by removing the edge $e$. As in the proof of Lemma~\ref{lemma-sameor}, we argue that $(\g',z)$ is a $\psi'$-critical tame canvas, and since $\deg_{\g'}(z)\le \deg_{\g}(z)-1\le k$, we have $(\g',z)\in \GG_k$.
Note that $(\g,z)$ is a tip-alteration of $(\g',z)$.
Since $o(\g',z)=o(\g,z)$, the canvas $(\g,z)$ is $z$-homogeneous, and $\deg_{\g'}(z)<\deg_{\g}(z)$,
we have $(\g',z)\prec (\g,z)$.  Therefore, $(\g,z)$ is obtained as in Theorem~\ref{thm-gen} (ADD), which is a contradiction.
\end{proof}
\subsection{Step 3: There are no $k$-minimal counterexamples}\label{subsec:finish}
It is now easy to finish the proof of the canvas generation theorem.

\begin{proof}[Proof of Theorem~\ref{thm-gen}]
Suppose for a contradiction that there exists a minimal $k$-counterexample $(\g,z,\psi)$ for a positive integer $k$.
By Lemma~\ref{lemma-gen-norede1} and symmetry, we can assume that $\tau(v)>0$ for every $v\in V(\g)\setminus\{z\}$ and $\psi$ directs all edges away from $z$.
Since $\psi$ is $k$-tallness-witnessing, it follows that $\deg(z)\le k$.  By Corollary~\ref{cor-mainlov}, we have $\deg(z)\ge 6$.

Consider any edge $e=zv$ incident with $z$ and let $e_0\neq e$ be another edge between $z$ and $V(\g)\setminus \{z\}$.
Let $\g'$ be the $\Z$-boundaried graph obtained from $\g$ by adding
an edge $e'$ parallel to $e$ and let $\psi'$ be the preflow around $z$ in $\g_1$ obtained from $\psi$ by
directing $e$ and $e'$ towards $z$.  As in the proof of Theorem~\ref{thm:tall}, we argue that $(\g',z)$ is $\psi'$-critical and tame.
Since $\deg_{\g'}(z)=\deg_{\g}(z)+1\le k+1$ and $\psi'$ directs $e$ and $e_0$ in the opposite ways, we have $(\g',z)\in\GG_k$.
Note that $(\g,z)$ is a tip-reduction of $(\g',z)$.

By Lemma~\ref{lemma-2con}, $v$ has a neighbour $u \neq z$.  Since we changed the parity of the degree of $v$, we have $\tau_{\g'}(v)<0$, while $\tau_{\g'}(u)=\tau_{\g}(u)>0$.
Hence, $(\g',z)$ is not $z$-homogeneous.  Since $o(\g',z)=o(\g,z)$ and $(\g,z)$ is $z$-homogeneous, it follows that $(\g',z)\prec (\g,z)$.
Therefore, $(\g,z)$ is obtained as in Theorem~\ref{thm-gen} (REM), which is a contradiction.
\end{proof}

\section{Generating $(k,r)$-tall easels}\label{sec:geneasels}

It might seem that Theorem~\ref{thm-gen} is ideally suited for inductive proofs of statements such as Theorem~\ref{thm-degbetter}.
This unfortunately is not the case: The step (EXPB) turns out to be problematic, as it allows for the possibility that $(\g,z)$ is a 1-alteration of a canvas $(\g',z)\prec (\g,z)$. In this case, the underlying graph of $\g'$ is obtained from the underlying graph of $\g$ by deleting an edge, and thus $\g'$ does not necessarily contain a vertex of degree $\deg(z)-2$ or $\deg(z)-3$.  Thus, in order to prove Theorem~\ref{thm-degbetter}, we need a more technical variation on Theorem~\ref{thm-gen}.

We require the following definition:

\begin{definition}
\label{def:krtall}
Let $k$ be a positive integer, $r$ an integer, and $(\g,z)$ a canvas with $\deg(z) \le k+1$.  We say that a pair $(x,\psi)$ is a \emph{witness of $(k,r)$-tallness} of $(\g,z)$ if
\begin{itemize}
\item $\psi$ is a $k$-tallness-witnessing preflow,
\item $x\neq z$ is a vertex of $\g$ of degree at least $k-2-r$, and
\item if $\deg(x)=k-2-r$ and $\deg(z)=k+1$, then $\psi$ does not orient all edges between $z$ and $V(\g)\setminus\{z,x\}$ in the same direction
(all towards $z$ or all away from $z$).
\end{itemize}
\end{definition}
Let $\GG_{k,r}$ be the class of all easels $(\g,z,x,\psi)$ such that $(\g,z)$ is a tame flow-critical canvas with $\deg(z)\le k+1$ and $(x,\psi)$ is a witness of $(k,r)$-tallness.
Note that $(\g,z,x,\psi)\in \GG_{k,r}$ implies $(\g,z)\in\GG_k$. 
Moreover, if $(x, \psi)$ is a witness of $(k,-1)$-tallness, then $(\g,z,x,\psi)$ is a tall easel, and thus Theorem~\ref{thm:tall} implies that $\GG_{k,r}=\emptyset$ for $r\le 0$.

Compared to Theorem~\ref{thm-gen}, in the result on generation of $(k,r)$-tall canvases, we need an additional operation, see Figure~\ref{fig-xalter}.

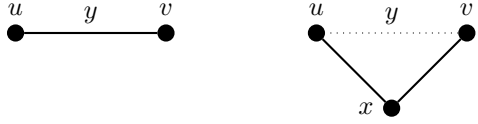
\begin{figure}
\begin{center}
\begin{tikzpicture}
    \node[blackvertexv2] at (0,0) (u) [label = above:$u$] {};
    \node[blackvertexv2] at (2,0) (v) [label = above:$v$] {};
    \draw[thick, black] (v) to node[above]{$y$}(u); 
    \begin{scope}[xshift = 4cm]
    \node[blackvertexv2] at (0,0) (u) [label = above:$u$] {};
    \node[blackvertexv2] at (2,0) (v) [label = above:$v$] {};
    \node[blackvertexv2] at (1,-1) (x) [label = left:$x$] {};
    \draw[dotted, black] (v) to node[above]{$y$}(u); 
    \draw[thick, black] (x)--(u);
    \draw[thick,black] (x)--(v);
    \end{scope}
\end{tikzpicture}
\caption{An $x$-alteration where $y$ is an edge.}\label{fig-xalter}
\end{center}
\end{figure}
\begin{definition}
Suppose $(\g',z)$ is a canvas, $x\neq z$ is a vertex of $\g'$, and $y$ is either a vertex $v\in V(\g-\{x,z\})$, or an edge $uv\in E(\g-\{x,z\})$; in the former case, let $u=v$.
A canvas $(\g_0,z)$ is the \emph{$x$-alteration} of $\g'$ on $y$ if $\g_0$ is obtained from $\g'$ by adding (possibly parallel) edges $ux$ and $vx$  and if $y$ is an edge, deleting it.
\end{definition}

Moreover, we need the following more precise variant of the expansion operation.
\begin{definition}
Let $\GG$ be a class of canvases and $\mathcal{H}$ be a class of easels. 
If $(\g,z)$ is a canvas containing a vertex $x\neq z$, $(\g',z)$ is a canvas with a vertex $x'\neq z$, and $Y$ is a set of vertices and edges of $\g'$, we say that
$(\g,z)$ is a \emph{$(\GG,x\to x',\mathcal{H},Y)$-expansion} of $(\g',z)$ if $(\g,z)$ is a $\GG$-expansion of $(\g',z)$ and moreover the following points hold. If $\PP$ is the tip-respecting partition such that $\g'=\g/\PP$, then
\begin{itemize}
\item the part $P\in \PP$ containing $x$ is contracted into the vertex $x'$; moreover,
\item letting $(\g_1,b)=(\g,z)\restriction P$, if $|P|>1$ then there exists a tip preflow $\psi$ such that $(\g_1,b,x,\psi)\in \GG'$, and
\item for every \emph{vertex} $y\in Y$ different from $z$, the part of $\PP$ contracted into $y$ has size at least two.
\end{itemize}
\end{definition}

Let us remark that the edges which may appear in $Y$ are ignored in this definition; this is just to simplify the notation,
see e.g. the operation (EXPA) below, where we would otherwise have to handle the case that $y_1$ or $y_2$ is an edge.
We are now ready to state the generation theorem.

\begin{theorem}\label{thm-genladeg}
Let $k$ and $r$ be integers.  For every easel $(\g,z,x,\psi)\in\GG_{k,r}$, at least one of the following claims holds: 
\begin{itemize}%[leftmargin=1.5cm]
\item (SMALL) $|V(\g)|=3$; or,

\item $\GG_{k,r}$ contains an easel $(\g',z,x',\psi')$ such that $(\g',z,x')\prec (\g,z,x)$ and 
\begin{itemize}[align=left, leftmargin=3mm, labelwidth=1.5cm,itemindent=1.5cm,]
\item[(EXP)] $(\g,z)$ is a $(\GG_k,x\to x',\GG_{k,r},\emptyset)$-expansion of $(\g',z)$; or,
\item[(EXPA)] $(\g,z)$ is a $(\GG_k,x\to x',\GG_{k,r},\{y_1,y_2\})$-expansion of a 2-alteration of $(\g',z)$ on some vertices or edges $y_1$ and $y_2$; or,
\item[(EXPX)] $\deg(z)=k+1$, $\deg(x)=k-2-r$, $\deg(x') \ge\deg(x)$, and $(\g,z)$ is a $(\GG_k,x\to x',\GG_{k,r},\{y,x'\})$-expansion of an $x'$-alteration of $(\g',z)$ on some vertex or edge $y$; or,
\item[(EXPB)] $(\g,z)$ is a $(\GG_k,x\to x',\GG_{k,r},\emptyset)$-expansion of a 1-alteration of $(\g',z)$; or,
\item[(ADD)] $(\g,z)$ is a tip-alteration of $(\g',z)$ at a vertex different from $x=x'$; or,
\end{itemize}
\item (REM) $\GG_{k,r}\cup \GG_{k+1,r}$ contains an easel $(\g',z,x,\psi')$ such that $(\g',z,x')\prec (\g,z,x)$ and $(\g,z)$ is a tip-reduction of $(\g',z)$ at a vertex different from $x=x'$.
\end{itemize}
Moreover:
\begin{itemize}
\item If $\g$ contains a vertex other than $x$ of degree four, then (SMALL), (EXP), (EXPA), or (EXPX) holds.
\item If $(\g,z)$ is not $\psi$-critical, then (EXP) holds.
\end{itemize}
\end{theorem}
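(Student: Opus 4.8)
The plan is to follow the same three-step template used for Theorems~\ref{thm:tall} and \ref{thm-gen}, now carrying the distinguished vertex $x$ and the threshold degree $k-2-r$ through every reduction. Assume for contradiction that the statement fails; since $\prec$ has no infinite descending chains (Observation~\ref{obs-precord}), we may pick a counterexample $(\g,z,x,\psi)\in\GG_{k,r}$ with $(\g,z,x)$ minimal in $\prec$. Then $|V(\g)|\ge 4$ (else (SMALL) holds) and $\deg(z)\ge 6$ by Corollary~\ref{cor-mainlov}. As a preliminary step I would handle the non-$\psi$-critical case, which simultaneously establishes the last ``Moreover'' bullet: if $(\g,z)$ is not $\psi$-critical, pass to a $\psi$-critical tip-respecting contraction $(\g',z)$ of $(\g,z)$, with $x$ in a part contracting to a vertex $x'$; it is tame by Corollary~\ref{cor-contrtame}, $\psi$ witnesses $k$-tallness, and by Theorem~\ref{thm-deg} every vertex other than $z$ of $\g'$ has degree at most $\deg(z)-2\le k-1$, so that $(\g,z)$ is a $\GG_k$-expansion of $(\g',z)$ by Observation~\ref{obs-inGk}; provided the image $x'$ of $x$ stays at or above the threshold --- which is exactly what the Step~1 analysis of cuts around $x$ delivers, and is the raison d'être of operation (EXPX) --- one gets $(\g',z,x',\psi)\in\GG_{k,r}$ and a $(\GG_k,x\to x',\GG_{k,r},\emptyset)$-expansion, so minimality forces $(\g',z)=(\g,z)$, a contradiction. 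Hence we may assume $(\g,z)$ is $\psi$-critical, and then $\g-z$ is $2$-connected by Lemma~\ref{lemma-2con}.

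\emph{Step 1 (cuts around $x$).} This is the analogue of Lemmas~\ref{lemma-sepxsmall}, \ref{lemma-sepx}, \ref{lemma-extori} and \ref{lemma-gen-sepx}, and I expect it to be the main obstacle. The goal is to show that for every $A\subsetneq V(\g)\setminus\{z\}$ with $x\in A$ and $|A|\ge 2$, the cut $\deg(A)$ is large: first $\deg(A)\ge \deg(x)+2$ via an expansion/restriction argument, and then, by a careful choice of a pair of split-off edges avoiding $x$ exactly as in the proof of Lemma~\ref{lemma-sepx} (now additionally tracking the $\deg(z)=k+1$ exception of the $(k,r)$-tallness witness and the threshold at $x$), that $\deg(A)>\deg(x)+2$ unless $A=V(\g)\setminus\{z\}$. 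Together with a ``$(k,x,\psi,A)$-valid flow'' lemma in the spirit of Lemma~\ref{lemma-extori}, this yields that $(\g,z)\restriction A\in\GG_{k,r}$ whenever $x\in A$, $|A|\ge 2$ and $\deg(A)\le\deg(x)+2$ --- with one escape hatch: when contracting such an $A$ would leave the distinguished vertex of degree exactly $k-2-r$ while $\deg(z)=k+1$, the configuration is instead recognised as an $x'$-alteration. This escape hatch is precisely the origin of operation (EXPX) and of its side conditions $\deg(z)=k+1$, $\deg(x)=k-2-r$, $\deg(x')\ge\deg(x)$; the delicate part is juggling all the direction-of-edges conditions simultaneously while selecting the split-off pair.

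\emph{Step 2 ($x$-homogeneity).} Mirroring Lemmas~\ref{lemma-gen-no4}, \ref{lemma-gen-norede2} and \ref{lemma-gen-norede1}, I would show in turn: (i) every vertex of $\g$ other than $x$ and $z$ has degree at least five --- a degree-four vertex $v\neq x$ is eliminated by splitting its four incident edges into two pairs (choosing the pairing from the direction data at $z$ as in Lemma~\ref{lemma-gen-no4}), and the result is recognised as a $(\GG_k,x\to x',\GG_{k,r},\{y_1,y_2\})$-expansion of a $2$-alteration, or as an (EXP)- or (EXPX)-outcome when the split-off pairs merge or would drop $x'$ to the threshold; this also gives the first ``Moreover'' bullet. (ii) There is no mixed edge in $E(\g-\{x,z\})$ --- deleting it and correcting $\beta$ at its ends turns a non-$x$-homogeneous canvas into an $x$-homogeneous one, hence a $\prec$-smaller one, realised through a $1$-alteration as an (EXPB)-outcome; so by Observation~\ref{obs-allplusminus} either $\tau(v)>0$ for every $v\in V(\g)\setminus\{x,z\}$ or $\tau(v)<0$ for every such $v$. (iii) $z$ is non-adjacent to $x$ and $\psi$ orients every edge at $z$ the same way --- these reductions delete an edge at $z$ and adjust $\beta$, giving a $\prec$-smaller tip-alteration, i.e.\ an (ADD)-outcome. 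Throughout, tameness of the contracted canvases is checked with Lemma~\ref{lemma-cut} and Corollary~\ref{cor-contrtame}, simple edges are controlled via Lemma~\ref{lemma-simple}, and the $(k,r)$-tallness witness is checked to survive exactly as in the canvas case.

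\emph{Step 3 (finish).} After Step~2 we have $\deg(z)\le k$ (since $\psi$ is $k$-tallness-witnessing and orients all edges of $z$ one way), $xz\notin E(\g)$, and, up to symmetry, $\tau(v)>0$ for all $v\in V(\g)\setminus\{x,z\}$ with $\psi$ directing all edges away from $z$. Pick an edge $e=vz$, add a parallel copy $e'$, and reverse $e$ so that $e$ and $e'$ both point to $z$; as in the proof of Theorem~\ref{thm:tall}, the resulting $(\g',z)$ is $\psi'$-critical and tame, $\deg_{\g'}(z)=\deg_{\g}(z)+1\le k+1$, and $\psi'$ points two edges of $z$ oppositely, while $\deg(x)$ is unchanged (as $x\neq v$), so $(\g',z,x,\psi')\in\GG_{k,r}$; using $2$-connectivity of $\g-z$ (Lemma~\ref{lemma-2con}), $v$ has a neighbour $u\neq x,z$ with $\tau(u)>0$, so $(\g',z)$ is not $x$-homogeneous and $(\g',z,x)\prec(\g,z,x)$. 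Then $(\g,z)$ is a tip-reduction of $(\g',z)$ at $v\neq x$, so (REM) holds (the target class is $\GG_{k,r}\cup\GG_{k+1,r}$ only to allow $\deg(z)$ to rise to $k+1$). This contradicts the minimality of $(\g,z,x,\psi)$ and completes the proof.
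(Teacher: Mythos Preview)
Your overall three-step plan matches the paper's approach closely, and Steps~1 and 2(i)--(ii) are essentially on target. However, Step~2(iii) and Step~3 contain a genuine gap.

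You claim in Step~2(iii) that $z$ is non-adjacent to $x$, arguing that deleting a $zx$-edge gives a tip-alteration and hence an (ADD)-outcome. But look at the statement of (ADD): it requires the tip-alteration to be at a vertex \emph{different from} $x=x'$. So you cannot eliminate edges between $z$ and $x$ via this route; the reduction in (iii) only handles edges $uz$ with $u\neq x$ that are directed the ``wrong'' way. Consequently, after Step~2 you can only conclude that $\psi$ orients all edges between $z$ and $V(\g)\setminus\{z,x\}$ the same way (say away from $z$), not all edges at $z$.

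This breaks your Step~3 claim that $\deg(z)\le k$. What you actually get from the $(k,r)$-tallness-witness condition is the dichotomy $\deg(z)\le k$ \emph{or} $\deg(x)>k-2-r$: if $\deg(z)=k+1$, the third bullet of Definition~\ref{def:krtall} forces $\deg(x)>k-2-r$ (since all edges from $z$ to non-$x$ vertices now point one way). In the final reduction you must pick $e=zv$ with $v\neq x$ (possible since $\deg(z)\ge\deg(x)+2$ by Theorem~\ref{thm-deg}); adding a parallel edge then raises $\deg(z)$ by one. When $\deg_{\g}(z)=k+1$ this gives $\deg_{\g'}(z)=k+2$, and since then $\deg(x)\ge k-1-r=(k+1)-2-r$, the new easel lands in $\GG_{k+1,r}$, not $\GG_{k,r}$. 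This is the actual reason (REM) targets $\GG_{k,r}\cup\GG_{k+1,r}$; your parenthetical explanation ``only to allow $\deg(z)$ to rise to $k+1$'' is off by one, as $\GG_{k,r}$ already permits $\deg(z)=k+1$.

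A minor point: in your preliminary paragraph, showing that $(x',\psi)$ is a $(k,r)$-tallness witness in the non-$\psi$-critical case does not need the Step~1 cut analysis or the (EXPX) escape hatch; it follows directly from Theorem~\ref{thm-deg} applied to $(\g,z)\restriction X$ when $|X|\ge 2$ (giving $\deg(x')\ge\deg(x)+2>k-2-r$), and is immediate when $X=\{x\}$. The (EXPX) outcome arises in Step~1 proper (the analogue of Lemma~\ref{lemma-aroundx}), not here.
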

The fact that $(\g',z,x,\psi)$ may belong to $\GG_{k+1,r}$ rather than to $\GG_{k,r}$ in (REM) might seem somewhat problematic, but it does not pose any particular difficulty
for induction purposes, since the partial ordering $\prec$ does not have any infinite decreasing chains.

Let us remark that while Theorem~\ref{thm-genladeg} suffices to prove Theorem~\ref{thm-degbetter}, it does not seem to be strong enough
to attempt a proof of Conjecture~\ref{conj-censusplus}, since in (EXPB), we lose control over the degrees.
The proof of Theorem~\ref{thm-genladeg} is quite similar to the proof of Theorem~\ref{thm-gen}, with several technical complications. As before, we proceed in three steps, the first of which is to show there are no small cuts around $x$, the second of which is to argue that a minimal counterexample is $x$-homogeneous, and the final of which is to argue that there are no counterexamples. Let us define what our minimal counterexample is:

\begin{definition}
A \emph{minimal $(k,r)$-counterexample} is an easel $(\g,z,x,\psi)\in\GG_{k,r}$ not satisfying the conclusion of Theorem~\ref{thm-genladeg} with $(\g,z,x)$ minimal in the $\prec$ ordering. 
\end{definition}

Note that if $(\g,z,x,\psi)$ is a minimal $(k,r)$-counterexample, then clearly $|V(\g)|\ge 4$, as otherwise $(\g,z)$ satisfies (SMALL).

\subsection{Step 1: There are no small cuts around $x$}\label{subsec:easelcuts}
This subsection builds towards Lemma \ref{lemma-aroundx}, which shows there are no small cuts around $x$. We start off by arguing that minimal $(k,r)$-counterexamples are $\psi$-critical.
\begin{lemma}\label{lemma-genladeg-psi}
If $(\g,z,x,\psi)$ is a minimal $(k,r)$-counterexample for non-negative integers $k$ and $r$, then the canvas $(\g,z)$ is $\psi$-critical.
\end{lemma}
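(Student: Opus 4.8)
The plan is to mirror the argument used for the canvas-generation analogue (Lemma~\ref{lemma-gen-psi}), adapting it to the easel setting. Assume for contradiction that $\psi$ does not extend to a nowhere-zero flow in $\g$. Then, by definition of flow-criticality, there is a $\psi$-critical tip-respecting contraction $(\g',z)$ of $(\g,z)$; fix such a contraction with associated partition $\PP$, so $\g'=\g/\PP$. Let $x'$ be the vertex of $\g'$ into which the part $P\in\PP$ containing $x$ was contracted, and let $\psi'=\psi$ viewed as a tip preflow in $(\g',z)$. The goal is to show that $(\g',z,x',\psi')\in\GG_{k,r}$, that $(\g',z,x')\preceq(\g,z,x)$ in the $\prec$ ordering, and that $(\g,z)$ is a $(\GG_k,x\to x',\GG_{k,r},\emptyset)$-expansion of $(\g',z)$; this would mean $(\g,z)$ satisfies (EXP) (in particular one of the conclusions of Theorem~\ref{thm-genladeg}), and moreover that the ``if $(\g,z)$ is not $\psi$-critical, then (EXP) holds'' clause is verified, contradicting the choice of $(\g,z,x,\psi)$ as a counterexample.

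First I would check $(\g',z,x',\psi')\in\GG_{k,r}$. Tameness of $(\g',z)$ follows from Corollary~\ref{cor-contrtame}, and $(\g',z)$ is flow-critical as a contraction of a flow-critical canvas (via Observation~\ref{obs-subcrit} applied appropriately, or directly since $(\g',z)$ is $\psi$-critical). Next, $\psi'$ is $k$-tallness-witnessing in $(\g',z)$: it does not extend to a nowhere-zero flow in $\g'$ by $\psi$-criticality, and $\deg_{\g'}(z)=\deg_\g(z)\le k+1$ since $\PP$ is tip-respecting, with the ``not all in the same direction'' condition inherited from $\psi$ when $\deg(z)=k+1$. For the degree-of-$x'$ requirement: if $|P|=1$ then $\deg_{\g'}(x')=\deg_\g(x)\ge k-2-r$, and the ``all same direction'' exception clause transfers directly from $\psi$; if $|P|\ge 2$ then Corollary~\ref{cor-contrtame} gives $\deg_{\g'}(x')=\deg_\g(P)\ge 6+|\tau(P)|\ge 6 > k-2-r$ provided $r\ge -1$ (which holds since $\GG_{k,r}$ is nonempty only for $r\ge 1$, as noted after Definition~\ref{def:krtall}), so in that case $x'$ comfortably satisfies the degree bound and the exceptional clause is vacuous. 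Hence $(x',\psi')$ is a witness of $(k,r)$-tallness and $(\g',z,x',\psi')\in\GG_{k,r}$.

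Next I would verify the ordering and the expansion structure. Since $\g'$ is obtained from $\g$ by a nontrivial contraction, $o(\g',z)\le o(\g,z)$, with strict inequality unless $\PP$ is trivial; and if $o(\g',z)=o(\g,z)$ then $\PP$ is trivial and $(\g',z,x')=(\g,z,x)$, contradicting our assumption that $\psi$ is not extendable-critical on $\g$ but is on a proper contraction — so in fact the contraction is proper and $(\g',z,x')\prec(\g,z,x)$. (Here I am using that $\psi$ not extending to a flow in $\g$ but $(\g,z)$ not being $\psi$-critical forces a genuinely proper $\psi$-critical contraction.) For the expansion: by Theorem~\ref{thm-deg} (equivalently Corollary~\ref{cor-censmax}), every vertex of $\g'$ other than $z$ has degree at most $\deg_{\g'}(z)-2\le k-1\le k$, so by Observation~\ref{obs-inGk} each part $P'\in\PP$ of size at least two satisfies $(\g,z)\restriction P'\in\GG_k$; this shows $(\g,z)$ is a $\GG_k$-expansion of $(\g',z)$. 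For the part $P$ containing $x$, if $|P|\ge 2$ then $(\g,z)\restriction P\in\GG_k$ and, choosing the tip preflow inherited from a nowhere-zero flow of $\g/P$ (which exists since $(\g,z)$ is flow-critical but not fully via $\psi$ — more carefully, one argues as in Lemma~\ref{lemma-gen-sepx} using Lemma~\ref{lemma-extori} that $(\g,z)\restriction P$ is $k$-tall with an appropriate witness vertex $x$), we get $(\g,z)\restriction P\in\GG_{k,r}$ with witness vertex $x$; the ``$\emptyset$'' in the expansion label imposes no extra condition. Thus $(\g,z)$ is a $(\GG_k,x\to x',\GG_{k,r},\emptyset)$-expansion of $(\g',z)$, so (EXP) holds, contradicting that $(\g,z,x,\psi)$ fails the conclusion of Theorem~\ref{thm-genladeg}.

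The main obstacle I anticipate is the case $|P|\ge 2$ in establishing that $(\g,z)\restriction P$ lies in $\GG_{k,r}$ (not merely $\GG_k$): one needs to produce a $k$-tallness-witnessing preflow on $(\g,z)\restriction P$ together with a vertex of degree at least $k-2-r$ satisfying the exceptional direction clause. As just noted, the degree bound is easy since $\deg_\g(P)\ge 6$, but the preflow must be extracted from a flow on $\g/P$ compatible with $\psi$, and the ``not all same direction'' condition (needed when $\deg(b)=k+1$) requires invoking Lemma~\ref{lemma-extori} to choose the flow on $\g/P$ so that the edges incident with the contracted vertex are not all oriented uniformly — exactly the technical content that Lemma~\ref{lemma-gen-sepx} packaged in the canvas case. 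Everything else is bookkeeping that transfers directly from the proof of Lemma~\ref{lemma-gen-psi}.
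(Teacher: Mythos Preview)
Your overall strategy matches the paper's: pass to a $\psi$-critical tip-respecting contraction $(\g',z)$, check that $(\g',z,x',\psi)\in\GG_{k,r}$, and conclude that $(\g,z)$ is a $(\GG_k,x\to x',\GG_{k,r},\emptyset)$-expansion of $(\g',z)$, contradicting (EXP). However, there is a genuine gap in your verification that $(x',\psi)$ is a witness of $(k,r)$-tallness when $|P|\ge 2$.

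You claim that Corollary~\ref{cor-contrtame} gives $\deg_{\g'}(x')=\deg_\g(P)\ge 6+|\tau(P)|\ge 6>k-2-r$. The last inequality is false in general: it would require $k<8+r$, but $k$ may be arbitrarily large (and $r=0$ is allowed; $\GG_{k,0}$ is nonempty and is in fact the main case of interest in Section~6). The tameness bound $\deg_\g(P)\ge 6$ is simply not strong enough here. The correct argument, which the paper uses, is to apply Theorem~\ref{thm-deg} to the flow-critical tame canvas $(\g,z)\restriction P$: since $x\in P$ has $\deg_\g(x)\ge k-2-r$, Theorem~\ref{thm-deg} gives $\deg_{\g'}(x')=\deg_\g(P)\ge \deg_\g(x)+2>k-2-r$, which also makes the exceptional direction clause vacuous.

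Separately, you overcomplicate the check that $(\g,z)\restriction P$ lies in $\GG_{k,r}$ when $|P|\ge 2$, suggesting that Lemma~\ref{lemma-extori} or Lemma~\ref{lemma-gen-sepx} is needed. It is not: by Theorem~\ref{thm-deg} applied to $(\g',z)$, every vertex of $\g'$ other than $z$ has degree at most $\deg(z)-2\le k-1$, so the tip $b$ of $(\g,z)\restriction P$ has $\deg(b)\le k-1$. Any tip preflow $\psi_1$ that fails to extend is then automatically $k$-tallness-witnessing (the ``not all same direction'' clause is vacuous since $\deg(b)<k+1$), and $x$ still has $\deg_\g(x)\ge k-2-r$ in the restriction, so $(\g_1,b,x,\psi_1)\in\GG_{k,r}$ directly.
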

\begin{proof}
Since $\psi$ does not extend to a nowhere-zero flow in $\g$, there exists a $\psi$-critical tip-respecting contraction $(\g',z)$ of $(\g,z)$.
Note that $(\g',z)$ is tame by Corollary~\ref{cor-contrtame}, and $\psi$ is a $k$-tallness-witnessing preflow in $(\g',z)$.
By Theorem~\ref{thm-deg}, every vertex other than $z$ has degree at most $\deg(z)-2\le k-1$ in $\g'$.  
Let $x'$ be the vertex of $\g'$ to which we contracted the set $X\subseteq V(\g)\setminus\{z\}$ containing $x$.
 If $|X|>1$, let $(\g_1,b)=(\g,z)\restriction X$ and $\psi'$ be any tip preflow that does not extend to a nowhere-zero flow in $\g_1$.
Since $\deg(b)\le k-1$, the easel $(\g_1,b,x,\psi')$ belongs to $\GG_{k,r}$.
By Observation~\ref{obs-inGk}, the canvas $(\g,z)$ is a $(\GG_k,x\to x',\GG_{k,r},\emptyset)$-expansion of $(\g',z)$.  

Let us now argue that $(x',\psi)$ is a witness of $(k,r)$-tallness of $(\g',z)$.  To this end, if $|X|\ge 2$, then Theorem~\ref{thm-deg} applied to $(\g,z)\restriction X$ implies that $\deg(x')=\deg(X)\ge \deg(x)+2>k-2-r$.
If $X=\{x\}$, then $\deg(x')=\deg(x)\ge k-2-r$ and the edges between $z$ and $x'$ in $\g'$ are exactly the same as those between $z$ and $x$ in $\g$.
In either case, we conclude that $(x',\psi)$ is a witness of $(k,r)$-tallness of $(\g',z)$, and $(\g,z,x',\psi)\in \GG_{k,r}$.

Since $(\g,z,x,\psi)$ does not satisfy Theorem~\ref{thm-genladeg}(EXP), it follows that $(\g',z,x')\not\prec (\g,z,x)$, and thus $(\g',z)=(\g,z)$.
Therefore, the canvas $(\g,z)$ is $\psi$-critical.
\end{proof}

We will need the following analogue of Lemma~\ref{lemma-gen-sepx}.
\begin{lemma}\label{lemma-genladeg-sepx}
Let $k$ and $r$ be non-negative integers and let $(\g,z,x,\psi)\in \GG_{k,r}$ be an easel such that the canvas $(\g,z)$ is $\psi$-critical.
If a set $A\subseteq V(\g)\setminus \{z\}$ of size at least two satisfies $\deg(A)\le k+1$ and $x\in A$,
then letting $(\g_1,b)=(\g,z)\restriction A$, there exists a tip preflow $\psi_1$ such that $(\g_1,b,x,\psi_1)\in \GG_{k,r}$.
\end{lemma}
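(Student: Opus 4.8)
The plan is to follow the same strategy as in Lemma~\ref{lemma-gen-sepx}, but tracking the designated vertex $x$ throughout. First I would note that if $\deg(A)\le k$, then the argument is essentially Observation~\ref{obs-inGk}: the canvas $(\g_1,b)=(\g,z)\restriction A$ is flow-critical by Observation~\ref{obs-subcrit} and tame by inspection, so since $|V(\g_1)|\ge 3$ it has a tip preflow $\psi_1$ not extending to a nowhere-zero flow, and because $\deg(b)=\deg(A)\le k$, $\psi_1$ is automatically $k$-tallness-witnessing; since $\deg(x)\ge k-2-r$ still holds (the edges at $x$ are unchanged) and $\deg(b)\le k$ means the last clause of Definition~\ref{def:krtall} is vacuous, $(x,\psi_1)$ is a witness of $(k,r)$-tallness, giving $(\g_1,b,x,\psi_1)\in\GG_{k,r}$.

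The remaining case is $\deg(A)=k+1$. Here $(\g_1,b)$ is again flow-critical and tame, so it suffices to produce a tip preflow $\psi_1$ around $b$ that is $k$-tallness-witnessing, with $x$ satisfying the degree condition and (when $\deg(b)=k+1$) the non-uniformity condition of a $(k,r)$-tallness witness. Since $(\g,z)$ is $\psi$-critical, $\psi$ extends to a nowhere-zero flow $\vec G$ in $\g/A$. The key input is Lemma~\ref{lemma-extori}: applied with this vertex $x\in A$, it lets us assume $\vec G$ is $(k,x,\psi,A)$-valid, meaning $\vec G$ orients two edges $e_1,e_2$ incident with the contracted vertex $a$ in opposite directions, and these are not incident with $x$ unless $\deg(z)=k+1$ and $\psi$ orients all edges between $z$ and $V(\g)\setminus\{z,x\}$ the same way. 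Let $\psi_1$ be the restriction of $\vec G$ to the edges incident with $b$, viewed as a tip preflow in $(\g_1,b)$. Since $\psi$ does not extend to a nowhere-zero flow in $\g$, a flow extending $\psi_1$ in $\g_1$ would combine with $\vec G$ to extend $\psi$ in $\g$, a contradiction; hence $\psi_1$ does not extend, so it is $k$-tallness-witnessing provided the extra non-uniformity condition holds when $\deg(b)=k+1$. But $\deg(b)=\deg(A)=k+1$ forces $e_1,e_2$ to be two edges incident with $b$ oriented oppositely by $\psi_1$, so $\psi_1$ certainly does not orient all edges at $b$ uniformly, as needed. Finally $\deg(x)\ge k-2-r$ is preserved since the edges incident with $x$ are untouched by the restriction; and when $\deg(b)=k+1$ and $\deg(x)=k-2-r$, the validity of $\vec G$ guarantees (since it orients $e_1,e_2$ oppositely, and these avoid $x$ in that subcase or else the hypothesis of the exception clause transfers) that $\psi_1$ does not orient all edges between $b$ and $V(\g_1)\setminus\{b,x\}$ the same way. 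Hence $(x,\psi_1)$ is a witness of $(k,r)$-tallness, so $(\g_1,b,x,\psi_1)\in\GG_{k,r}$.

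The main obstacle I anticipate is the bookkeeping in the last clause of Definition~\ref{def:krtall}: one has to check carefully that the "$(k,x,\psi,A)$-valid" conclusion of Lemma~\ref{lemma-extori} exactly matches the non-uniformity requirement on $\psi_1$ relative to $x$, including the case-split on whether $\deg(z)=k+1$ and on whether $e_1,e_2$ are allowed to touch $x$. This is precisely why Definition~\ref{def:validflow} was set up with that conditional clause, so the matching should be mechanical, but it needs to be spelled out. Everything else — flow-criticality, tameness, and the fact that a combined flow would contradict $\psi$-criticality of $(\g,z)$ — is a direct transcription of the corresponding steps in the proof of Lemma~\ref{lemma-gen-sepx}, now with Lemma~\ref{lemma-extori} supplying the refined orientation.
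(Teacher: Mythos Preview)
Your proposal is correct and follows essentially the same approach as the paper's proof: handle $\deg(A)\le k$ via Observation~\ref{obs-inGk}, and for $\deg(A)=k+1$ invoke Lemma~\ref{lemma-extori} with the vertex $x$ to obtain a $(k,x,\psi,A)$-valid flow whose restriction to the cut gives the desired $\psi_1$. The only place where the paper is slightly more explicit is in noting that when $\deg(x)=k-2-r$, the exception clause in the definition of validity cannot be triggered (since $(\g,z,x,\psi)\in\GG_{k,r}$ forces $\psi$ not to orient all edges between $z$ and $V(\g)\setminus\{z,x\}$ uniformly when $\deg(z)=k+1$), so $e_1,e_2$ necessarily avoid $x$; your parenthetical ``or else the hypothesis of the exception clause transfers'' gestures at this but would benefit from being spelled out.
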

\begin{proof}
The claim follows from Observation~\ref{obs-inGk} when $\deg(A)\le k$, and thus suppose that $\deg(A)=k+1$.
We have $(\g_1,b)\in \GG_k$ by Lemma~\ref{lemma-gen-sepx}.

Since $(\g,z)$ is $\psi$-critical, the tip preflow $\psi$ extends to a nowhere-zero flow $\vec{G}_1$ in $(\g/A,z)$.  Let $a$ be the vertex arising from the contraction of $A$.
By Lemma~\ref{lemma-extori}, we can assume that $\vec{G}_1$ is $(k,x,\psi,A)$-valid (recall Definition \ref{def:validflow}).  Note that since $(\g,z,x,\psi)\in \GG_{k,r}$, if $\deg(z)=k+1$
and $\deg(x)=k-2-r$, then $\psi$ does not direct all edges between $z$ and $V(\g-\{z,x\})$ the same way, and thus $\vec{G}_1$ directs two edges not incident with $x$ in opposite directions.
The restriction of $\vec{G}_1$ to the edges incident with $a$ can be interpreted as preflow $\psi_1$
around $b$ in $\g_1$. Since $\psi$ does not extend to a nowhere-zero flow in $\g$, $\psi_1$ does not extend to a nowhere-zero flow in $\g_1$,
and thus $(x,\psi_1)$ is a witness of $(k,r)$-tallness of $(\g_1,b)$.  Therefore, $(\g_1,b,x,\psi_1)\in \GG_{k,r}$.
\end{proof}

Let us remark that by Lemma~\ref{lemma-genladeg-psi}, Lemma~\ref{lemma-genladeg-sepx} applies to minimal $(k,r)$-counterexamples.
Next, it will be convenient to restrict small cuts around $x$.
\begin{lemma}\label{lemma-aroundx}
Let $k$ and $r$ be non-negative integers.  If $(\g,z,x,\psi)$ is a minimal $(k,r)$-counter\-exam\-ple, $\deg(z)=k+1$, $\deg(x)=k-2-r$,
and $X$ is a set of vertices of $\g$ such that $\{x\}\subsetneq X\subsetneq V(\g)\setminus \{z\}$,
then $\deg(X) > \deg(x)+2$.
\end{lemma}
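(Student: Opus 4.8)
Looking at this lemma, I need to prove that in a minimal $(k,r)$-counterexample $(\g,z,x,\psi)$ with $\deg(z)=k+1$ and $\deg(x)=k-2-r$, any set $X$ with $\{x\}\subsetneq X\subsetneq V(\g)\setminus\{z\}$ satisfies $\deg(X)>\deg(x)+2$.

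\medskip

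\textbf{Plan.} The proof should mirror the structure of Lemma~\ref{lemma-sepx} from the proof of Theorem~\ref{thm:tall}, but with the bookkeeping adapted to the easel-generation setting. I would argue by contradiction: suppose $|X|\ge 2$ and $\deg(X)\le \deg(x)+2 = k-r$, and choose a maximal such set $X$ (among those not equal to $V(\g)\setminus\{z\}$). First, by Lemma~\ref{lemma-genladeg-sepx} (which applies since $(\g,z)$ is $\psi$-critical by Lemma~\ref{lemma-genladeg-psi}), the restriction $(\g,z)\restriction X$ together with an appropriate tip preflow gives an easel in $\GG_{k,r}$ — but this alone does not contradict minimality, because the restriction is smaller in $\prec$, so I need to reconstruct $(\g,z)$ from a $\prec$-smaller object via one of the generation operations. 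The natural candidate is an expansion: contract $X$ to get a canvas $(\g',z)$ with a vertex $x'$ (the contraction of $X$), and argue $(\g,z)$ is a $(\GG_k, x\to x', \GG_{k,r}, \emptyset)$-expansion of $(\g',z)$. The obstruction is that for $(\g',z,x',\psi)$ to land in $\GG_{k,r}$ and be $\prec$-smaller, I need $\psi$ to extend to a nowhere-zero flow in $\g/X$ that does not orient all edges at the contracted vertex the same way — this is exactly where the degree-two-slack reasoning of Lemma~\ref{lemma-sepx} enters, via careful choice of two edges $e_1,e_2$ to split off.

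\medskip

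\textbf{Key steps.} (1) Set $C = V(\g)\setminus(X\cup\{z\})$; since $X\ne V(\g)\setminus\{z\}$, $C\ne\emptyset$, and by Corollary~\ref{cor-2con} $\g-\{x,z\}$ is 2-connected, so there is an edge $e_1$ between $C$ and $X\setminus\{x\}$. (2) Let $Q$ be the edges between $C\cup\{z\}$ and $X\setminus\{x\}$; since $\deg(X)=k-r = \deg(x)+2$ and at most $\deg(x)$ of the boundary edges touch $x$, we have $|Q|\ge 2$, so a second edge $e_2\in Q\setminus\{e_1\}$ exists. Choose $e_2$ by a case analysis exactly parallel to the four cases in Lemma~\ref{lemma-sepx}: prioritizing edges to $z$ oriented opposite to the (uniform) orientation of $z$-to-$C$ edges when that situation arises, then edges avoiding the endpoint $u_1$ of $e_1$, then arbitrary. (3) Form $\g_1$ by contracting $X$ to $a$ and splitting off $e_1, e_2$; interpret $\psi$ as a tip preflow $\psi_1$. (4) Prove the key claim: $\psi_1$ extends to a nowhere-zero flow in $\g_1$. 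If not, take a $\psi_1$-critical contraction $(\g_1',z)$ with $a'$ the image of $a$, and show $(\g_1',z,a',\psi_1)\in\GG_{k,r}$ — tameness follows as in the analogous subclaim of Lemma~\ref{lemma-sepx} (using Lemma~\ref{lemma-cut}, Lemma~\ref{lemma-simple}, and Corollary~\ref{cor-mainlov} to rule out the degenerate case where both $e_1,e_2$ are incident to a single vertex forcing $|X\setminus\{x\}|\ge 2$ to violate Lemma~\ref{lemma-cut}); and the $(k,r)$-tallness witness condition follows from the maximality of $X$ (giving $\deg(a')>\deg(x)+2$ if the contracted set strictly contains $X$, hence $a'$ is large enough) combined with the careful choice of $e_2$ (ensuring that when $\deg(z)=k+1$, $\psi_1$ does not orient all edges at $z$ uniformly). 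Since $o(\g_1',z)<o(\g,z)$ this contradicts minimality. (5) Given the claim, $\psi$ extends to a nowhere-zero flow $\vec{G}_a$ in $\g/X$ with $e_1,e_2$ oriented oppositely; restrict to edges at $a$ to get a tip preflow $\psi_2$ on $(\g_2,z_2)=(\g,z)\restriction X$. If $\psi_2$ extended to a flow in $\g_2$ we'd combine to get a flow in $\g$ extending $\psi$, contradiction; so take a $\psi_2$-critical contraction $\g_2'$ with $x_2$ the image of $x$, and show $(\g_2',z_2,x_2,\psi_2)\in\GG_{k,r}$ (tameness via Lemma~\ref{lemma-cut}; the witness condition via Lemma~\ref{lemma-genladeg-sepx}-style reasoning: if $x_2$'s preimage strictly contains $x$ then $\deg(x_2)\ge\deg(x)+2$; if it equals $\{x\}$ then $\deg(x_2)=\deg(x)=k-2-r$ and since $e_1,e_2$ are not incident to $x$ and oriented oppositely by $\psi_2$, the tallness-witness clause is satisfied). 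Since $o(\g_2',z_2)<o(\g,z)$, this contradicts minimality.

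\medskip

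\textbf{Main obstacle.} The delicate part, just as in Lemma~\ref{lemma-sepx}, is the selection of $e_2$ and verifying that in every branch the resulting easel $(\g_1',z,a',\psi_1)$ genuinely satisfies the $(k,r)$-tallness witness clause — in particular handling the boundary case $\deg(z)=k+1$ where one must guarantee $\psi_1$ does not orient all edges at $z$ the same way, which is precisely why cases (1) and (2) in the choice of $e_2$ steer an edge of $Q$ incident with $z$ toward the "opposite" orientation. A secondary subtlety is keeping track of whether the edge $e$ produced by the split-off is incident with $a'$ or $x_2$ and in which direction it is oriented, since this feeds directly into the witness conditions; I expect this to require the same kind of case split as in the two subsubclaims of Lemma~\ref{lemma-sepx}, now additionally tracking the large-degree vertex $x$ and the $\deg(x)=k-2-r$ threshold rather than just the tallness inequality $\deg(z)\le\deg(x)+2$.
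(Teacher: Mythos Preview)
Your proof has a structural gap rooted in a misreading of ``minimal $(k,r)$-counterexample.'' In Lemma~\ref{lemma-sepx} the phrase ``minimal tall tame critical easel'' means minimal among hypothetical objects that are not supposed to exist, so producing any smaller one is an immediate contradiction. Here, by contrast, a minimal $(k,r)$-counterexample is minimal among easels in $\GG_{k,r}$ that \emph{fail} the conclusion of Theorem~\ref{thm-genladeg}. Exhibiting a $\prec$-smaller easel in $\GG_{k,r}$---as you do at the end of both step~(4) and step~(5)---only tells you that the smaller easel \emph{satisfies} Theorem~\ref{thm-genladeg}; it is not a contradiction, and it does not by itself say anything about $(\g,z,x,\psi)$.

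Your step~(5) is easily repaired: the easel $(\g_2',z_2,x_2,\psi_2)$ built from the restriction $(\g,z)\restriction X$ is actually \emph{tall} (since $\deg(z_2)=\deg(X)=\deg(x)+2$, and two applications of Theorem~\ref{thm-deg} force the preimage of $x_2$ to be $\{x\}$), tame, and critical, so Theorem~\ref{thm:tall} applies directly. But this proves the \emph{opposite} of what you want in~(4): the preflow $\psi_1$ does \emph{not} extend in $\g_1$. The case you now have to handle is precisely the one your step~(4) claims to dispatch, and your argument there cannot be fixed the same way: with $r\ge 0$ the easel $(\g_1',z,a',\psi_1)$ has $\deg(a')$ around $k-2-r<\deg(z)-2$, so it is not tall and Theorem~\ref{thm:tall} is unavailable. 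The paper closes this case by showing that $(\g,z)$ is a $(\GG_k,x\to x',\GG_{k,r},\{x',y\})$-expansion of an $x'$-alteration of the smaller canvas $(\g',z)$---outcome (EXPX) of Theorem~\ref{thm-genladeg}---or an ordinary expansion when one of $v_1,v_2$ falls into the part containing $x$ (outcome (EXP)). This is exactly why the $x'$-alteration operation and the (EXPX) clause appear in Theorem~\ref{thm-genladeg}; your outline never invokes them, so the argument is incomplete.
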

\begin{proof}
Suppose for a contradiction that there exists such a set $X$ with $\deg(X)\le \deg(x)+2$, and let us choose one of maximal size.
Theorem~\ref{thm-deg} applied to $(\g,z)\restriction X$ implies that $\deg(X)=\deg(x)+2$.  Let $C=V(\g)\setminus (X\cup\{z\})$.
By Lemma~\ref{lemma-2con}, there exists an edge $e_1$ between $C$ and $X\setminus\{x\}$; let $v_1$ be the end of $e_1$ in $C$.
We have $\deg(z)=k+1>(k-2-r)+2=\deg(x)+2=\deg(X)$, and thus there exists at least one
edge between $z$ and $C$. We now describe how to pick an edge $e_{2} \neq e_{1}$.
\begin{itemize}
\item If $\psi$ directs all edges between $z$ and $C$ in the same way, then since $(x,\psi)$ is a witness of $(k,r)$-tallness, $\deg(x)=k-2-r$, and $\deg(z)=k+1$,
we can choose $e_2$ as an edge between $z$ and $X\setminus\{x\}$ directed by $\psi$ in the opposite way.
\item  Otherwise, if there exists an edge between $\{z\}\cup C\setminus \{v_1\}$ and $X\setminus\{x\}$, choose $e_2$ as such an edge arbitrarily.
\item If no such edge exists, choose $e_2$ as an arbitrary edge between $v_1$ and $X\setminus\{x\}$ different from $e_1$ 
(which exists, since $\deg(X)=\deg(x)+2$). 
\end{itemize}
Let $v_2$ be the end of $e_2$ in $\{z\}\cup C$.
Let $(\g'_1,z)$ be obtained from $(\g/X,z)$ by splitting off $e_1$ with $e_2$, and if $v_1\neq v_2$, then denote by $e$ the resulting edge.
Let $x'_1$ be the vertex of $\g'_1$ into which we contracted $X$.
Let $\psi'$ be the tip preflow in $(\g'_1,z)$ obtained from $\psi$ by, in case that $e_2$ is incident with $z$, directing $e$ in the same way as $\psi$ directs $e_2$.

\begin{claim*}
The preflow $\psi'$ does not extend to a nowhere-zero flow in $(\g'_1,z)$.
\end{claim*}

\begin{subproof}
Suppose towards a contradiction that $\psi'$ extends to a nowhere-zero flow
$\vec{G}_1$ in $(\g'_1,z)$.  Let $(\g'_2,b)=(\g,z)\restriction X$ and let
$\psi_2$ be the tip preflow obtained from the restriction of $\vec{G}_1$ to
the edges with exactly one end in $X$ by directing $e_1$ and $e_2$ according to the orientation of $e$ in $\vec{G}_1$
(or arbitrarily in opposite directions in case that $v_1=v_2$).  Since $\psi$ does not extend to a nowhere-zero flow in $(\g,z)$,
we see that $\psi_2$ does not extend to a nowhere-zero flow in $(\g'_2,b)$.  Let $(\g_2,b)$ be a $\psi_2$-critical tip-respecting contraction of $(\g'_2,b)$,
and let $a$ be the vertex into which we contracted the set $A$ of vertices of $\g_2$ containing $x$.  If $|A|\ge 2$, then Theorem~\ref{thm-deg} for
$(\g,z)\restriction A$ implies that $\deg(a)=\deg(A)\ge \deg(x)+2=\deg(X)=\deg(b)$, but that contradicts Theorem~\ref{thm-deg} for $(\g_2,b)$.
Therefore, we have $A=\{x\}$ and $\deg(a)=\deg(x)=\deg(X)-2=\deg(b)-2$.  Since neither $e_1$ nor $e_2$ is incident with $x$, the preflow $\psi_2$ directs two edges not incident with $a$
in opposite ways.  We conclude that $(\g_2,b,a,\psi_2)$ is a tall tame critical easel, contradicting Theorem~\ref{thm:tall}.
\end{subproof}
Since $\psi'$ does not extend to a nowhere-zero flow in $(\g'_1,z)$, there exists a $\psi'$-critical tip-respecting contraction $(\g',z)$ of $(\g'_1,z)$, where $\g'=\g'_1/\PP'$.
Let $X'_1$ be the part of $\PP'$ containing $x'_1$, let $X'=(X'_1\setminus \{x'_1\})\cup X$, and let $\PP$ be obtained from $\PP'$ by replacing $X'_1$ by $X'$.
Let $x'$ be the vertex of $\g'$ into which we contracted $X'_1$.  Note that the choice of $e_2$ implies that $\psi'$ does not direct all edges around $z$ in the same way,
and thus $\psi'$ is a $k$-tallness-witnessing preflow in $(\g',z)$.
\begin{itemize}
\item If $X'\neq X$, then by the maximality of $X$ we have $\deg(x')\ge \deg_{\g}(X')-2>\deg_{\g}(x)=k-2-r$, and thus $(x',\psi')$ is a witness of $(k,r)$-tallness of $(\g',z)$.
\item If $X'=X$, then the choice of $e_2$ implies that $\psi'$ does not direct all edges between $z$ and $C$ in $\g'_1$ in the same way, and thus it also does not
direct all edges between $z$ and $V(\g')\setminus \{x',z\}$ in $\g'$ in the same way.  Moreover, we have $\deg(x')=\deg_{\g}(X)-2=\deg(x)=k-2-r$.
Hence, we again conclude that $(x',\psi')$ is a witness of $(k,r)$-tallness of $(\g',z)$.
\end{itemize}
Moreover, we claim that the canvas $(\g',z)$ is tame, and thus $(\g',z,x',\psi')\in \GG_{k,r}$.  To see that, we need the following observation.

\begin{claim*}
If $v_1$ and $v_2$ are contained in the same part $P\in \PP$, then $|P|\ge 2$.  
\end{claim*}
\begin{subproof}
Suppose for a contradiction that $|P|=1$, and thus $v_1=v_2$ and $P\neq X'$.
By the choice of $e_2$, this implies that every edge between $\{z\}\cup C$ and $X$ is incident with $v_1$ or $x$.
Let $m$ be the number of edges between $v_1$ and $X\setminus \{x\}$.
Since only $e_1$ and $e_2$ are split off, $\g'$ contains at least $m-2$ edges between the vertex corresponding to $P$ and $x'$, and by Lemma~\ref{lemma-simple} applied to $\g'$,
we have $m\le 3$.  Since $\deg(X)=\deg(x)+2$ and there are $\deg(X)-m$ edges between $\{z\}\cup C$ and $X$ that are incident with $x$,
there are $\deg(x)-(\deg(X)-m)=m-2$ edges between $x$ and $X\setminus\{x\}$.   We conclude that
$\deg_{\g}(X)\setminus\{x\} = 2m-2\le 4$.  By Observation~\ref{obs-subcrit} applied to $(\g,z)\restriction (X\setminus\{x\})$,
Corollary~\ref{cor-mainlov}, and the tameness of $(\g,z)$, we conclude that $m=3$ and $|X\setminus \{x\}|=1$.
However, that implies that $\g$ contains a triple edge between $v_1$ and the vertex of $X\setminus\{x\}$,
contradicting Lemma~\ref{lemma-simple}.
\end{subproof}

From this claim and Corollary~\ref{cor-contrtame}, it is easy to see that the canvas $(\g',z)$ is tame,
and thus $(\g',z,x',\psi')\in \GG_{k,r}$.  Clearly $o(\g',z)<o(\g,z)$, implying that $(\g',z,x')\prec (\g,z,x)$.

If $\{v_1,v_2\}\cap X'\neq\emptyset$, then $(\g',z)$ is isomorphic to $(\g / \PP,z)$.
By Theorem~\ref{thm-deg}, every vertex of $(\g',z)$ other than $z$ has degree at most $\deg(z)-2\le k-1$,
and thus by Observation~\ref{obs-inGk}, $(\g,z)$ is a $(\GG_k,x\to x',\GG_{k,r},\emptyset)$-expansion of $(\g',z)$.
Hence, $(\g,z,x,\psi)$ satisfies Theorem~\ref{thm-genladeg}(EXP), which is a contradiction.

It follows that $\{v_1,v_2\}\cap X'=\emptyset$.  If $v_1$ and $v_2$ are in the same part $P\in \PP$,
then let $y$ be the vertex of $\g'$ corresponding to this part; otherwise, $e$ is an edge of $\g'$ and we let $y=e$.
Let $(\g_0,z)$ be the $x'$-alteration of $(\g',z)$ on $y$, and observe that $(\g_0,z)=(\g / \PP,z)$.
By Theorem~\ref{thm-deg} applied to $(\g',z)$, every part $A\neq\{z\}$ of $\PP$ other than $X'$ and $P$ (in case that $y$ is a vertex) satisfies $\deg_{\g}(A)\le k-1$,
and $\deg_{\g}(X'),\deg_{\g}(P)\le k+1$.  Moreover, $|X'|\ge |X|\ge 2$, and by the claim above, if $y$ is a vertex, then $|P|\ge 2$.
Since $(\g,z)$ is $\psi$-critical by Lemma~\ref{lemma-genladeg-psi}, Lemma~\ref{lemma-gen-sepx}
implies that $(\g,z)\restriction P'\in \GG_k$ for every $P'\in \PP\setminus \{z\}$.  Together with Lemma~\ref{lemma-genladeg-sepx} applied to $X'$, this implies that
$(\g,z)$ is a $(\GG_k,x\to x',\GG_{k,r},\{x',y\})$-expansion of $(\g_0,z)$.
We conclude that $(\g,z,x,\psi)$ satisfies Theorem~\ref{thm-genladeg}(EXPX), which is a contradiction.
\end{proof}

\subsection{Step 2: Minimal $(k,r)$-counterexamples are $x$-\\homomogenous}\label{subsec:xhomogeasel}
We are now ready to deal with vertices of degree four.

\begin{lemma}\label{lemma-genladeg-no4}
Let $k$ and $r$ be non-negative integers.  If $(\g,z,x,\psi)$ is a minimal $(k,r)$-counter\-exam\-ple,
then all vertices of $\g$ other than $x$ have degree at least five.
\end{lemma}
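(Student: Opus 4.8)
The plan is to run the splitting-off argument of Lemma~\ref{lemma-gen-no4} essentially verbatim while carrying the vertex $x$ along, replacing the appeals to $\GG_k$-membership of restrictions by the finer statements tailored to easels (Lemmas~\ref{lemma-genladeg-psi}, \ref{lemma-genladeg-sepx}, \ref{lemma-aroundx}). So suppose for contradiction that $(\g,z,x,\psi)$ is a minimal $(k,r)$-counterexample with a vertex $v\in V(\g)\setminus\{x\}$ of degree at most four. By Corollary~\ref{cor-mainlov} we have $\deg(z)\ge 6$, so $v\ne z$, and tameness forces $\deg(v)=4$, $|\tau(v)|=0$, $\beta(v)=0$. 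By Lemma~\ref{lemma-genladeg-psi} the canvas $(\g,z)$ is $\psi$-critical; a minimal counterexample has $|V(\g)|\ge 4$, so $C:=V(\g)\setminus\{v,z\}$ has size at least two and contains $x$. By Lemma~\ref{lemma-2con}, $\g-z$ is $2$-connected, hence there are two edges $e_1,e_3$ from $v$ into $C$; label the remaining edges at $v$ by $e_2,e_4$, write $v_i$ for the end of $e_i$ other than $v$, and set $N=\{v_1,\dots,v_4\}$, noting $v_1\ne v_3$ and (by Lemma~\ref{lemma-simple}) $v_1\ne v_2$, $v_3\ne v_4$. Split off $e_1$ with $e_2$ and $e_3$ with $e_4$, delete the now isolated $v$, call the result $\g_1$ with induced tip preflow $\psi_1$ (which does not extend to a nowhere-zero flow in $\g_1$, since otherwise un-splitting off $v$ yields a flow in $\g$ extending $\psi$), and let $(\g',z)=(\g_1/\PP',z)$ be a $\psi_1$-critical tip-respecting contraction with $\psi'$ the descended preflow.

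As in Lemma~\ref{lemma-gen-no4}, one checks via Corollary~\ref{cor-contrtame} and Lemma~\ref{lemma-simple} that $(\g',z)$ is tame, that $\psi'$ is $k$-tallness-witnessing (if $\deg_{\g'}(z)=k+1$ then $\deg_\g(z)=k+1$ and $\psi$, hence $\psi_1$ and $\psi'$, directs two edges at $z$ oppositely, since splitting off preserves the orientation pattern at $z$), so $(\g',z)\in\GG_k$, and $o(\g',z)<o(\g,z)$. Let $X'$ be the part of $\PP'$ containing $x$ and $x'$ the corresponding vertex of $\g'$; note $x'\ne z$. Following Lemma~\ref{lemma-gen-no4}, define $x_i:=e'_{2i-1}$ for $i\in\{1,2\}$ if that split-off edge survives in $\g'$, and otherwise let $x_i$ be the vertex of $\g'$ into which the part $X_i\in\PP'$ containing $\{v_{2i-1},v_{2i}\}$ is contracted (this is a vertex $\ne z$, and $|X_i|\ge 2$ because $v_{2i-1}\ne v_{2i}$). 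If $x_1=x_2$, set $(\g_0,z)=(\g',z)$ and $\PP=(\PP'\setminus\{X_1\})\cup\{X_1\cup\{v\}\}$; otherwise let $(\g_0,z)$ be the $2$-alteration of $(\g',z)$ on $x_1,x_2$ with new vertex $v$ and $\PP=\PP'\cup\{\{v\}\}$. In either case $\g_0=\g/\PP$, and for every part $P\in\PP$ we have $\deg_\g(P)=\deg_{\g_0}(p)\le\deg_{\g'}(p)+2\le(\deg_{\g'}(z)-2)+2\le k+1$ by Theorem~\ref{thm-deg}. Hence Lemma~\ref{lemma-gen-sepx} gives $(\g,z)\restriction P\in\GG_k$ for every part $P$ with $|P|\ge 2$, and Lemma~\ref{lemma-genladeg-sepx} (using that $(\g,z)$ is $\psi$-critical and $x\in X'$ with $\deg(X')\le k+1$) gives that $(\g,z)\restriction X'$ extends to an easel in $\GG_{k,r}$. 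When $x_i$ is a vertex, the matching part of $\PP$ has size $\ge 2$, as the $\{y_1,y_2\}$-constraint demands. Thus $(\g,z)$ is a $(\GG_k,x\to x',\GG_{k,r},\emptyset)$-expansion of $(\g',z)$ (when $x_1=x_2$) or a $(\GG_k,x\to x',\GG_{k,r},\{x_1,x_2\})$-expansion of a $2$-alteration of $(\g',z)$ (when $x_1\ne x_2$); that is, $(\g,z,x,\psi)$ satisfies Theorem~\ref{thm-genladeg}(EXP) or (EXPA) — and hence contradicts minimality — provided $(\g',z,x',\psi')\in\GG_{k,r}$.

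So the only genuinely new content, and the main obstacle, is verifying $(\g',z,x',\psi')\in\GG_{k,r}$: since $(\g',z)$ is already a tame flow-critical canvas with $\deg_{\g'}(z)\le k+1$ and $\psi'$ is $k$-tallness-witnessing, this reduces to showing $\deg_{\g'}(x')\ge k-2-r$ together with, when $\deg_{\g'}(x')=k-2-r$ and $\deg_{\g'}(z)=k+1$, the third bullet of Definition~\ref{def:krtall}. Following the degree bookkeeping in the tameness claim of Lemma~\ref{lemma-gen-no4}, put $X=X'\cup\{v\}$ if $|X'\cap N|\ge 3$ and $X=X'$ otherwise; then $\deg_{\g'}(x')=\deg_\g(x)$ when $X=\{x\}$ (so $\deg_{\g'}(x')\ge k-2-r$ at once), and $|X|\ge 2$ with $\deg_{\g'}(x')\ge\deg_\g(X)-2$ otherwise. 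In the latter case I would split according to whether $\deg_\g(z)=k+1$ and $\deg_\g(x)=k-2-r$: if so, Lemma~\ref{lemma-aroundx} applies to $X$ — which satisfies $\{x\}\subsetneq X\subsetneq V(\g)\setminus\{z\}$, the strict inclusions coming from $|V(\g')|\ge 3$ (Observation~\ref{obs-ge3}) and $C\setminus X'\ne\emptyset$ — and yields $\deg_\g(X)>\deg_\g(x)+2=k-r$, whence $\deg_{\g'}(x')\ge k-1-r>k-2-r$; if not (so $\deg_\g(z)\le k$, or $\deg_\g(x)\ge k-1-r$), the bound $\deg_{\g'}(x')\ge k-2-r$ follows from the tameness estimates of Lemma~\ref{lemma-cut}/Corollary~\ref{cor-contrtame} together with the observation that in those regimes the tallness threshold imposed on $\GG_{k,r}$ is weaker or vacuous. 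Finally, when the threshold condition is active, one must check it survives the double splitting off: since $e_1$ and $e_3$ were chosen inside $C$ (hence not incident with $z$) and $v\ne x$, the property that $\psi$ does not orient all edges between $z$ and $V(\g)\setminus\{z,x\}$ the same way passes to $\psi'$ — this is the analogue of the $e_5,e_6$ argument in Lemma~\ref{lemma-no4}, and I expect this last piece of bookkeeping around $x$ to be the delicate point. Once it is in place, $(\g',z,x',\psi')\in\GG_{k,r}$ and the contradiction is complete.
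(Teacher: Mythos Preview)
Your overall strategy matches the paper's, and most of the bookkeeping is right, but the point you yourself flag as ``the delicate point'' is a genuine gap, and with your labeling it can actually fail. You take $C=V(\g)\setminus\{v,z\}$ and pick $e_1,e_3$ as two edges from $v$ into $C$; since $x\in C$, nothing prevents $e_1=vx$. Suppose $v$ has a single edge to $z$, which becomes $e_2$, and this $zv$-edge is the unique edge between $z$ and $V(\g)\setminus\{z,x\}$ oriented towards $z$ by $\psi$. After splitting $e_1$ with $e_2$ you obtain an edge $zx$ directed towards $z$; now every edge between $z$ and $V(\g_1)\setminus\{z,x\}$ is directed away from $z$. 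In the regime $\deg_{\g'}(z)=k+1$, $X'=\{x\}$, $\deg(x')=k-2-r$, the third bullet of Definition~\ref{def:krtall} then fails, so $(x',\psi_1)$ is not a witness of $(k,r)$-tallness and your contradiction does not close. The $e_5,e_6$ argument from Lemma~\ref{lemma-no4} that you invoke only works because that lemma's labeling was set up so that the split-off $z$-edge lands at a vertex $\neq x$; your arbitrary labeling does not guarantee this.

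The paper repairs exactly this: it sets $C=V(\g)\setminus\{v,z,x\}$ so that $e_1$ avoids $x$, and then performs a three-case labeling (i)/(ii)/(iii) depending on how $\psi$ orients the $zv$-edges. In case~(i) two oppositely directed $zv$-edges are split off together, so $\deg_{\g'}(z)\le k$ and the third bullet is vacuous; in cases (ii) and (iii) the label $e_2$ is forced to be a $zv$-edge (if any), so that the split-off edge $zv_1$ lands at $v_1\in C$, $v_1\neq x$, with the same orientation, preserving a witness in that direction. Two smaller remarks: the clean route to $\deg_{\g'}(x')\ge k-2-r$ when $|X|\ge 2$ is just Theorem~\ref{thm-deg} applied to $(\g,z)\restriction X$, giving $\deg_\g(X)\ge\deg(x)+2$ with no case split needed; and Lemma~\ref{lemma-aroundx} is used in the paper not to bound $\deg_{\g'}(x')$ but to \emph{exclude} $|X|\ge 2$ in the critical regime, forcing $X=\{x\}$ so that the carefully chosen labeling can finish the witness check.
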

\begin{proof}
Suppose for a contradiction that $\g$ has a vertex $v\neq x$ of degree at most four.  Note that $v\neq z$ by Corollary~\ref{cor-mainlov},
and $\deg(v)=4$ and $v$ has zero boundary by tameness of $(\g,z)$.
Lemma~\ref{lemma-genladeg-psi} implies that the canvas $(\g,z)$ is $\psi$-critical.  Let $C=V(\g)\setminus\{v,z,x\}$; since $|V(\g)|\ge 4$, we have $C\neq\emptyset$.
Lemma~\ref{lemma-2con} implies that there exists an edge $e_1$ between $v$ and a vertex in $C$.
\begin{itemize}
\item[(i)] If $\psi$ directs two edges between $z$ and $v$ in opposite ways, then let $e_3$ and $e_4$ be these edges and let $e_2$
be the edge incident with $v$ different from $e_1$, $e_3$, and $e_4$.
\item[(ii)] If $v$ is adjacent to $z$ and $\psi$ directs all edges between $z$ and $v$ in the same way, then 
since $\psi$ extends to a nowhere-zero flow in $\g/(C\cup \{x\})$, the edge between $z$ and $v$ has multiplicity at most two.
Let $e_2$ be an edge between $z$ and $v$, and assign labels $e_3$ and $e_4$ arbitrarily to the edges incident with $v$ different from $e_1$ and $e_2$.
\item[(iii)] If $v$ is not adjacent to $z$, then assign labels $e_2$, $e_3$, and $e_4$ to the edges incident with $v$ different from $e_1$ arbitrarily.
\end{itemize}
For $i\in\{1,\ldots,4\}$, let $v_i$ be the end of $e_i$ different from $v$, and let $N=\{v_1,\ldots, v_4\}$.

Let $\g_1$ be the $\Z$-bordered graph obtained from $\g$ by splitting off $e_1$ with $e_2$ (giving an edge $e'_1$) and $e_3$ with $e_4$
(giving an edge $e'_3$, unless $e_3$ and $e_4$ are both incident with $z$) and deleting the now isolated vertex $v$.
The preflow $\psi$ naturally corresponds to a tip preflow $\psi_1$ in $(\g_1,z)$, and $\psi_1$ does not extend to a nowhere-zero flow in $\g_1$. Hence, $(\g_1,z)$ has a tip-respecting $\psi_1$-critical contraction $(\g',z)$. Let $\PP'$ be the tip-respecting partition of $V(\g_1)$ such that $\g'=\g_1/\PP'$. 
Let $X'$ be the part of $\PP'$ containing $x$ and let $x'$ be the corresponding vertex of $\g'$.
\begin{claim*}
The easel $(\g',z,x',\psi_1)$ belongs to $\GG_{k,r}$ and $(\g',z,x')\prec (\g,z,x)$.
\end{claim*}
\begin{subproof}
Consider any part $A'\in \PP'$ other than $\{z\}$, and let $a$ be the corresponding vertex of $\g'$.
Let $A=A'\cup\{v\}$ if $|A'\cap N|\ge 3$ and $A=A'$ otherwise.
If $|A|\ge 2$, then Corollary~\ref{cor-contrtame} implies $\deg(a)\ge \deg_{\g}(A)-2\ge 4+|\tau(A)|=4+|\tau(a)|$.
If $|A|=1$, then by Lemma~\ref{lemma-simple} at most one of the edges $e_1$, \ldots, $e_4$ has an end in $A=A'$,
and $\deg(a)=\deg_{\g}(A)\ge 4+|\tau(A)|=4+|\tau(a)|$ since $(\g,z)$ is tame.  Therefore, the canvas $(\g',z)$ is tame.

Moreover, if $\deg_{\g'}(z)=k+1$, then $\deg_{\g}(z)=k+1$ and $\psi$ directs two edges incident with $z$ in opposite ways,
and thus so does $\psi_1$.  Therefore, $\psi_1$ is a $k$-tallness-witnessing preflow for $(\g', z)$, and $(\g',z)\in \GG_k$.
Clearly, we also have $o(\g',z)<o(\g,z)$, and thus $(\g',z,x')\prec (\g,z,x)$.

If $|X'\cap N|\ge 3$, then let $X=X'\cup\{v\}$, otherwise let $X=X'$.  If $|X|\ge 2$, then Theorem~\ref{thm-deg} applied to $(\g,z)\restriction X$
implies $\deg(x')\ge \deg_{\g}(X)-2\ge \deg(x)\ge k-2-r$.
If $X=\{x\}$, then $\deg(x')=\deg(x)\ge k-2-r$.

Suppose now that $\deg_{\g'}(z)=k+1$ and $\deg(x')=k-2-r$. It follows that $\deg_{\g}(z)=k+1$ and the labels of edges incident with $v$
were not chosen according to (i).  Furthermore, since $\deg(x')\ge \deg(x)\ge k-2-r$, we have $\deg(x)=k-2-r$.
If $|X|\ge 2$, then $\deg_{\g}(X)\le \deg(x')+2=\deg(x)+2$, contradicting Lemma~\ref{lemma-aroundx}; therefore, we have $X=\{x\}$.
Moreover, since $(x,\psi)$ is a witness of $(k,r)$-tallness of $(\g,z)$,
the tip preflow $\psi$ directs distinct edges between $z$ and $V(\g)\setminus \{z,x\}$ in opposite ways.  The choice of the labels in
(ii) and (iii) ensures that $\psi_1$ also directs distinct edges between $z$ and $V(\g_1)\setminus \{z,x\}$ in opposite ways,
and since $X=\{x\}$, the tip preflow $\psi_1$ directs distinct edges between $z$ and $V(\g')\setminus \{z,x'\}$ in $\g'$ in opposite ways as well.

Therefore, $(x',\psi_1)$ is a witness of $(k,r)$-tallness of $(\g',z)$, and $(\g',z,x',\psi_1)\in \GG_{k,r}$.
\end{subproof}
For $i\in \{1,2\}$, let $y_i$ be the edge $e'_{2i-1}$ if it is present in $\g'$, and let $y_i$ be the vertex
into which the part $Y_i\in\PP'$ containing $v_{2i-1}$ and $v_{2i}$ was contracted otherwise.  Note that
$v_3=v_4=y_2=z$ and $Y_2=\{z\}$ in the case (i); and that for $i\in\{1,2\}$, if $y_i\neq z$ is a vertex,
then $v_{2i-1}\neq v_{2i}$ by Lemma~\ref{lemma-simple}, and thus $|Y_i|\ge 2$.

If $y_1\neq y_2$, then let $(\g_0,z)$ be the $2$-alteration of $(\g',z)$ on $y_1$ and $y_2$, with the newly added vertex labelled $v$,
and let $\PP=\PP'\cup\{\{v\}\}$.
If $y_1=y_2$, then note that $y_1$ is a vertex (different from $z$), and let $(\g_0,z)=(\g',z)$ and $\PP=(\PP'\setminus\{Y_1\})\cup\{Y_1\cup \{v\}\}$.
Observe that in either case, we have $(\g_0/\PP,z)=(\g/\PP,z)$.

Consider any part $P\in\PP$ of size at least two, and let $p$ be the corresponding vertex of $\g_0$.
Since $(\g',z)\in \GG_k$, Theorem~\ref{thm-deg} implies $\deg_{\g'}(p)\le k-1$, and thus $\deg_{\g}(P)\le \deg_{\g'}(p)+2\le k+1$.
By Lemmas~\ref{lemma-gen-sepx} and \ref{lemma-genladeg-sepx}, we conclude that
$(\g,z)$ is a $(\GG_k,x\to x',\GG_{k,r},\{y_1,y_2\})$-expansion of $(\g_0,z)$, and thus $(\g,z,x,\psi)$ satisfies Theorem~\ref{thm-genladeg}(EXP) or (EXPA).
This is a contradiction.
\end{proof}

Next, let us get rid of mixed edges.

\begin{lemma}\label{lemma-genladeg-norede2}
Let $k$ and $r$ be non-negative integers.  If $(\g,z,x,\psi)$ is a minimal $(k,r)$-counter\-exam\-ple, then there is no mixed edge $uv\in E(\g-\{z,x\})$.
\end{lemma}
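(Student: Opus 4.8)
The plan is to follow the template established by Lemma~\ref{lemma-norede} (and its canvas-generation analogue Lemma~\ref{lemma-gen-norede2}), adapting the bookkeeping to the $\GG_{k,r}$ setting. Suppose for contradiction that $(\g,z,x,\psi)$ is a minimal $(k,r)$-counterexample with $\g=(G,\beta)$ and that $e=uv\in E(\g-\{z,x\})$ is mixed, say with $v$ in-friendly and $u$ out-friendly. By Lemma~\ref{lemma-genladeg-psi} the canvas $(\g,z)$ is $\psi$-critical. Form $\g_1=(G-e,\beta')$ where $\beta'$ agrees with $\beta$ off $\{u,v\}$, and $\beta'(u)=\beta(u)-1$, $\beta'(v)=\beta(v)+1$; then $\beta'$ is a $\Z$-boundary since $G-e$ is connected by Lemmas~\ref{lemma-conn} and~\ref{lemma-2con}. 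Any nowhere-zero flow in $\g_1$ extending $\psi$ would extend to one in $\g$ by directing $e$ towards $v$, so $\psi$ does not extend to a nowhere-zero flow in $\g_1$; hence $(\g_1,z)$ has a $\psi$-critical tip-respecting contraction $(\g',z)=(\g_1/\PP,z)$ with $o(\g',z)<o(\g,z)$, so $(\g',z,x')\prec(\g,z,x)$ where $x'$ is the vertex into which the part $X$ containing $x$ is contracted.

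First I would verify that the resulting easel (or its $1$-alteration / expansion) lands in $\GG_{k,r}$. Tameness of $(\g',z)$ follows as in Lemma~\ref{lemma-gen-norede2}: for a part $Y\in\PP$ contracted to $y$, if $|\{u,v\}\cap Y|\in\{0,2\}$ then degrees and boundaries are unchanged and Corollary~\ref{cor-contrtame} applies; if exactly one of $u,v$ lies in $Y$ (say $u$), then $\deg(y)=\deg_\g(Y)-1$, $\beta'(y)=\beta(Y)-1$, and $|\tau(y)|\le|\tau(Y)|+1$ by Observation~\ref{obs-tauprop}(b), and we split into the cases $|Y|\ge 2$ (use Lemma~\ref{lemma-cut}), $Y=\{u\}$ with $\tau(u)$ containing a positive element (use tameness directly), and $Y=\{u\}$ with $\tau(u)=\{0\}$ (here $\deg(u)$ even so $\deg(u)\ge 6$ by Lemma~\ref{lemma-genladeg-no4}, giving $\deg(y)\ge 5$). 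Next I would check the $(k,r)$-tallness witness: $\psi$ is still $k$-tallness-witnessing around $z$ in $\g'$ (the edges at $z$ are untouched unless $z\in\{u,v\}$, excluded since $uv\in E(\g-\{z,x\})$), so the only real content is bounding $\deg(x')$. If $|X|\ge 2$, then $\deg_\g(X)\le\deg_{\g'}(x')+1$; but also $x\in X\subsetneq V(\g)\setminus\{z\}$, and I would invoke Lemma~\ref{lemma-aroundx} (in the boundary case $\deg(z)=k+1$, $\deg(x)=k-2-r$) or simply Theorem~\ref{thm-deg} to get $\deg_\g(X)\ge\deg(x)+2$, hence $\deg(x')\ge\deg(x)+1>k-2-r$, so $(x',\psi)$ is automatically a witness (no orientation condition needed since $\deg(x')>k-2-r$). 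If $X=\{x\}$, then $\deg(x')=\deg(x)\ge k-2-r$ and the orientation condition at $z$ — if it applies — is inherited verbatim from $(\g,z,x,\psi)$, since the edges between $z$ and $V(\g)\setminus\{z,x\}$ are unchanged.

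Finally I would assemble the expansion: if $u,v$ lie in the same part of $\PP$, set $(\g_0,z)=(\g',z)$; otherwise, with $u',v'$ the vertices of $\g'$ they map to, let $(\g_0,z)$ be the $1$-alteration adding edge $u'v'$, increasing boundary at $u'$ by one and decreasing at $v'$ by one, so that $(\g_0/\PP,z)=(\g/\PP,z)$ in both cases. For each part $P\in\PP$ of size at least two I would bound $\deg_\g(P)\le\deg_{\g'}(p)+1\le\deg(z)-1\le k$ (via Theorem~\ref{thm-deg}) except when $P=X$, where $\deg_\g(X)\le k+1$; Observation~\ref{obs-inGk} handles the former parts and Lemma~\ref{lemma-genladeg-sepx} handles $P=X$ (to produce the required $\GG_{k,r}$-easel on the part containing $x$), while Lemma~\ref{lemma-gen-sepx} handles the remaining large parts for the $\GG_k$-expansion condition. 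Thus $(\g,z)$ is a $(\GG_k,x\to x',\GG_{k,r},\emptyset)$-expansion of $(\g_0,z)$ which is either $(\g',z)$ or a $1$-alteration of it, so $(\g,z,x,\psi)$ satisfies (EXP) or (EXPB) of Theorem~\ref{thm-genladeg}, contradicting the choice of a minimal counterexample. The main obstacle I anticipate is the tameness verification in the case $X=\{x\}$ combined with the degree-four elimination: one must be careful that Lemma~\ref{lemma-genladeg-no4} is available (it is, proved just before) so that a degree-$4$ vertex with $\tau=\{0\}$ losing an incident edge still has degree $\ge 5$; and in the $(k,r)$-bookkeeping, ensuring the orientation clause of the $(k,r)$-tallness witness transfers correctly when $X=\{x\}$, which hinges on $e$ not being incident with $z$.
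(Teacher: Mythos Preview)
Your proposal is correct and follows essentially the same route as the paper's proof: delete the mixed edge, adjust the boundary, pass to a $\psi$-critical tip-respecting contraction, verify tameness via Lemma~\ref{lemma-genladeg-no4} and Corollary~\ref{cor-contrtame}, verify that $(x',\psi)$ is a witness of $(k,r)$-tallness by case-splitting on $|X|$, and conclude via (EXP) or (EXPB). Two minor remarks: the invocation of Lemma~\ref{lemma-aroundx} is unnecessary, since Theorem~\ref{thm-deg} applied to $(\g,z)\restriction X$ already gives $\deg_{\g}(X)\ge\deg(x)+2$ unconditionally when $|X|\ge 2$; and your worry that $\deg_{\g}(X)$ might reach $k+1$ is unfounded, since $\deg_{\g}(X)\le\deg_{\g'}(x')+1\le(\deg(z)-2)+1\le k$ uniformly for all parts including $X$ (though Lemma~\ref{lemma-genladeg-sepx} would handle $k+1$ anyway, so this is harmless).
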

\begin{proof}
Let $\g=(G,\beta)$.  Recall that the canvas $(\g,z)$ is $\psi$-critical by Lemma~\ref{lemma-genladeg-psi}.
Suppose for a contradiction that $e=uv\in E(\g-\{z,x\})$ is a mixed edge, say with $v$ in-friendly and
$u$ out-friendly.  Let $\g_1=(G-e,\beta')$, where $\beta'(y)=\beta(y)$ for $y\in V(G)\setminus\{u,v\}$, $\beta'(u)=\beta(u)-1$, and $\beta'(v)=\beta(v)+1$.
Lemmas~\ref{lemma-conn} and \ref{lemma-2con} imply that $G-e$ is connected, and thus $\beta'$ is a $\Z$-boundary for $G-e$.
If $\g_1$ had a nowhere-zero flow extending $\psi$, it would give a nowhere-zero flow in $\g$ extending $\psi$
by directing the edge $e$ towards $v$.  Hence, $\psi$ does not extend to a nowhere-zero flow in $\g_1$, and thus $(\g_1,z)$ has a tip-respecting
$\psi$-critical contraction $(\g',z)$. Let $\PP$ be the tip-respecting partition such that $\g'=\g_1/\PP$, and let $x'$ be the vertex obtained by contracting the part $X\in \PP$ containing $x$.
Note that $o(\g',z)<o(\g,z)$, and thus $(\g',z,x')\prec (\g,z,x)$.

As in the proof of Lemma~\ref{lemma-norede}, observe that the assumptions on in-friendliness and out-friendliness of $u$ and $v$ together with
Lemma~\ref{lemma-genladeg-no4} imply that $(\g_1,z)$ is tame, and together with Corollary~\ref{cor-contrtame},
it follows that $(\g',z)$ is tame.  Moreover, $(\g',z)$ is $k$-tall, since $\psi$ is a $k$-tallness-witnessing preflow around $z$ in $\g'$.
Hence $(\g',z)\in\GG_k$.

If $|X|\ge 2$, then Theorem~\ref{thm-deg} implies $\deg(x')\ge \deg_{\g}(X)-1 >\deg(x) \ge k-2-r$.  If $X=\{x\}$,
then $\deg(x')=\deg(x)\ge k-2-r$; and moreover, if $\psi$ directs two edges not incident with $x$ in the opposite direction in $\g$,
then it does so in $\g'$ as well.  Therefore, $(x',\psi)$ is a witness of $(k,r)$-tallness of $(\g',z)$, and we have $(\g',z,x',\psi)\in\GG_{k,r}$.

If $u$ and $v$ are contained in the same part of $\PP$, then let $(\g_0,z)=(\g',z)$.  Otherwise, let $u'$ and $v'$ be the vertices of $\g'$
into which the parts containing $u$ and $v$ were contracted, and let $(\g_0,z)$ be the $1$-alteration of $(\g',z)$ obtained by adding the edge $u'v'$,
increasing the boundary at $u'$ by one, and decreasing it at $v'$ by one.  Note that in either case, $(\g_0,z)=(\g/\PP,z)$.

For every vertex $y\in V(\g_0)\setminus\{z\}$, if $A$ is the set of vertices of $\g$ contracted into $y$ and $|A|\ge 2$,
then note that $\deg_{\g}(A)\le \deg_{\g'}(y)+1\le \deg(z)-1\le k$ by Theorem~\ref{thm-deg}.
By Lemmas~\ref{lemma-gen-sepx} and \ref{lemma-genladeg-sepx}, we conclude that $(\g,z)$ is a $(\GG_k,x\to x',\GG_{k,r},\emptyset)$-expansion of $(\g_0,z)$.
It follows that the easel $(\g,z,x,\psi)$ satisfies Theorem~\ref{thm-genladeg}(EXP) or (EXPB),
which is a contradiction.
\end{proof}

Consequently, for any minimal $(k,r)$-counter\-exam\-ple $(\g,z,x,\psi)$, the canvas $(\g,z)$ is $x$-homogeneous, and by Observation~\ref{obs-allplusminus},
either $\tau(v)>0$ for every $v\in V(\g)\setminus\{x,z\}$, or $\tau(v)<0$ for every $v\in V(\g)\setminus\{x,z\}$.

\begin{lemma}\label{lemma-genladeg-norede1}
Let $k$ and $r$ be non-negative integers.  If $(\g,z,x,\psi)$ is a minimal $(k,r)$-counter\-example, then
either $\tau(v)>0$ for every $v\in V(\g)\setminus\{x,z\}$ and $\psi$ directs all edges not incident with $x$ away from $z$,
or $\tau(v)<0$ for every $v\in V(\g)\setminus\{x,z\}$ and $\psi$ directs all edges not incident with $x$ towards $z$.
\end{lemma}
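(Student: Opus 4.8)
The plan is to adapt the argument of Lemma~\ref{lemma-sameor} and its canvas analogue Lemma~\ref{lemma-gen-norede1} to the $(k,r)$-setting. As observed just above, the canvas $(\g,z)$ is $x$-homogeneous, so by Observation~\ref{obs-allplusminus} either $\tau(v)>0$ for every $v\in V(\g)\setminus\{x,z\}$ or $\tau(v)<0$ for every such $v$; by symmetry I would assume the former. Recall $(\g,z)$ is $\psi$-critical by Lemma~\ref{lemma-genladeg-psi}, and $\deg(z)\ge 6$ by Corollary~\ref{cor-mainlov}, since $(\g,z)$ is tame, non-trivial (as $|V(\g)|\ge 4$), and flow-critical. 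Suppose for a contradiction that $\psi$ orients some edge $e=uz$ with $u\neq x$ towards $z$. Write $\g=(G,\beta)$, let $\g'=(G-e,\beta')$ where $\beta'$ agrees with $\beta$ except $\beta'(u)=\beta(u)-1$ and $\beta'(z)=\beta(z)+1$, and let $\psi'$ be obtained from $\psi$ by deleting the arc $e$.

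First I would verify that $\g'$ is a genuine $\Z$-bordered graph: by Lemma~\ref{lemma-2con} the graph $G-z$ is $2$-connected, and since $\deg(z)\ge 6$, $G-e$ is connected; as $\beta'(V(G))=\beta(V(G))=0$, $\beta'$ is a $\Z$-boundary. Next, $\psi'$ does not extend to a nowhere-zero flow in $\g'$, since re-inserting $e$ oriented towards $z$ would otherwise give a nowhere-zero flow in $\g$ extending $\psi$; and for every proper tip-respecting contraction $(\g'/\PP,z)$, the preflow $\psi'$ does extend, because $\psi$ extends in $(\g/\PP,z)$ and deleting $e$ converts that flow into one of $(\g'/\PP,z)$. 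Hence $(\g',z)$ is $\psi'$-critical. For tameness one only has to check $u$ (the sole vertex whose degree and boundary change): because $\tau_\g(u)>0$ forces $\beta(u)\neq 0$, simultaneously lowering $\deg(u)$ and $\beta(u)$ by one decreases $|\tau(u)|$ by exactly one, so the inequality $\deg(u)\ge 4+|\tau(u)|$ survives; thus $(\g',z)$ is tame. Since $\deg_{\g'}(z)=\deg_\g(z)-1\le k$, the preflow $\psi'$ is $k$-tallness-witnessing and $(\g',z)\in\GG_k$.

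Because $e$ is not incident with $x$, the vertex $x$ is unaffected; set $x'=x$, so $\deg(x')=\deg(x)\ge k-2-r$, and the remaining conditions in the definition of a witness of $(k,r)$-tallness hold vacuously as $\deg_{\g'}(z)\le k<k+1$. Therefore $(\g',z,x',\psi')\in\GG_{k,r}$. Moreover $(\g,z)$ is precisely the tip-alteration of $(\g',z)$ at $u$ (add the edge $uz$, raising the boundary at $u$ by one and lowering it at $z$ by one), and $u\notin\{z,x\}$, which is exactly case (ADD) of Theorem~\ref{thm-genladeg}. It remains to see $(\g',z,x')\prec(\g,z,x)$: we have $o(\g',z)=o(\g,z)$ since deleting an edge incident with $z$ changes neither $|V(\g)|$ nor $|E(\g-z)|$; and as $(\g,z)$ is $x$-homogeneous, if $(\g',z)$ fails to be $x'$-homogeneous we conclude by clause~(2) of Definition~\ref{def:ordering}, while if it is $x'$-homogeneous we conclude by clause~(3) using $\deg_{\g'}(z)<\deg_\g(z)$. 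Either way $(\g,z,x,\psi)$ satisfies Theorem~\ref{thm-genladeg}(ADD), contradicting minimality, which completes the proof. The only real care needed is the bookkeeping ensuring that $(\g',z,x',\psi')$ genuinely lands in $\GG_{k,r}$ (tameness at $u$ and the witness conditions) together with the small observation — shared with the canvas case — that one need not control the homogeneity of $(\g',z)$ itself, since clause~(2) of $\prec$ already disposes of the non-homogeneous subcase.
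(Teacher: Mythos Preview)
Your proof is correct and follows essentially the same approach as the paper's: delete the wrongly-oriented edge $e=uz$ (with $u\neq x$), adjust the boundary, verify the resulting canvas is $\psi'$-critical, tame, and lies in $\GG_{k,r}$, then use $x$-homogeneity of $(\g,z)$ together with $\deg_{\g'}(z)<\deg_{\g}(z)$ to conclude $(\g',z,x')\prec(\g,z,x)$ and invoke (ADD). You spell out several details (connectivity of $G-e$, the explicit tameness check at $u$, and the two $\prec$ subcases) that the paper compresses into ``As in the proof of Lemma~\ref{lemma-sameor}'', but the argument is the same.
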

\begin{proof}
Let $\g=(G,\beta)$.  Recall that the canvas $(\g,z)$ is $\psi$-critical by Lemma~\ref{lemma-genladeg-psi}.
By Lemma~\ref{lemma-genladeg-norede2}, we can by symmetry assume that $\tau(v)>0$ for every $v\in V(\g)\setminus\{x,z\}$.
Suppose for a contradiction that $\psi$ directs an edge $e=uz$ of $\g$ with $u\neq x$ towards $z$.
Let $\g'=(G-e,\beta')$, where $\beta'(y)=\beta(y)$ for $y\in V(G)\setminus\{z,v\}$, $\beta'(z)=\beta(z)+1$, and $\beta'(u)=\beta(u)-1$.
Let $\psi'$ be obtained from $\psi$ by removing the edge $e$.  As in the proof of Lemma~\ref{lemma-sameor}, we argue that $(\g',z)$ is
a $\psi'$-critical tame canvas, and since $\deg_{\g'}(z)\le \deg_{\g}(z)-1\le k$, we have $(\g',z)\in \GG_k$.
Moreover, since $\deg_{\g'}(x)=\deg_{\g}(x)$, it follows that $(\g',z,x,\psi')\in\GG_{k,r}$.

Note that $(\g,z)$ is a tip-alteration of $(\g',z)$.
Since $o(\g',z)=o(\g,z)$, the canvas $(\g,z)$ is $x$-homogeneous, and $\deg_{\g'}(z)<\deg_{\g}(z)$,
we have $(\g',z,x)\prec (\g,z,x)$.  Therefore, $(\g,z,x,\psi)$ satisfies Theorem~\ref{thm-genladeg}(ADD), which is a contradiction.
\end{proof}
\subsection{Step 3: There are no $(k,r)$-minimal-counterexamples}\label{subsec:easelfinisher}
It is now easy to finish the proof of the $(k,r)$-tall easel generation theorem.

\begin{proof}[Proof of Theorem~\ref{thm-genladeg}]
Let $k$ and $r$ be non-negative integers and suppose for a contradiction that there exists a minimal $(k,r)$-counterexample $(\g,z,x,\psi)$.
By Lemma~\ref{lemma-genladeg-norede1} and symmetry, we can assume that $\tau(v)>0$ for every $v\in V(\g)\setminus\{x,z\}$ and that $\psi$ directs all edges not incident with $x$ away from $z$.
Since $(x,\psi)$ is a witness of $(k,r)$-tallness of $(\g,z)$, it follows that $\deg(z) \le k$ or $\deg(x) >k-2-r$.

By Theorem~\ref{thm-deg}, we have $\deg(z)\ge \deg(x)+2$,
and thus there exist distinct edges $e=zv$ and $e_0$ between $z$ and $V(\g)\setminus \{x,z\}$.
Let $\g'$ be the $\Z$-boundaried graph obtained from $\g$ by adding
an edge $e'$ parallel to $z$ and let $\psi'$ be the preflow around $z$ in $\g$ obtained from $\psi$ by
directing $e$ and $e'$ towards $z$.  As in the proof of Theorem~\ref{thm:tall}, we argue that $(\g',z)$ is $\psi'$-critical and tame.
Moreover, note that $\psi'$ directs the edges $e$ and $e_0$ between $z$ and $V(\g')\setminus \{x,z\}$ in the opposite ways.
If $\deg_{\g}(z)\le k$, then $\deg_{\g'}(z)=\deg_{\g}(z)+1\le k+1$, and since $\deg_{\g'}(x)=\deg_{\g}(x)\ge k-2-r$,
we have $(\g',z,x,\psi')\in\GG_{k,r}$.
If $\deg_{\g}(z)=k+1$, then recall that $\deg(x) >k-2-r$; consequently $\deg_{\g'}(z)=\deg_{\g} (z)+1=k+2$ and
$\deg_{\g'}(x)=\deg_{\g}(x)\ge (k+1)-2-r$, and $(\g',z,x,\psi')\in\GG_{k+1,r}$.

By Lemma~\ref{lemma-2con}, $v$ has a neighbour $u\neq x$.  Since we changed the parity of the degree of $v$, we have $\tau_{\g'}(v)<0$, while $\tau_{\g'}(u)=\tau_{\g}(u)>0$.
Hence, $(\g',z)$ is not $x$-homogeneous.  Since $o(\g',z)=o(\g,z)$ and $(\g,z)$ is $x$-homogeneous, it follows that $(\g',z,x)\prec (\g,z,x)$.
Therefore, $(\g,z,x,\psi)$ satisfies Theorem~\ref{thm-genladeg}(REM), which is a contradiction.
\end{proof}
\section{Bounding the censuses: Proving Theorem \ref{thm-degbetter}}
Our goal in this section is to prove Theorem \ref{thm-degbetter}. We restate the theorem for the ease of the reader.

\thmdegbetter*

To prove Theorem \ref{thm-degbetter} we simply apply our easel generation
theorem to a minimal counterexample and examine the possible outcomes. In case
(EXPB), we find that several small graphs can appear, but we show that
they are not flow-critical.  We start by presenting observations that are useful in this second
part.
\subsection{Small graphs without nowhere zero flows}
We make the following useful observation.
\begin{observation}\label{obs-sumdeg}
If $(\g,z)$ is a flow-critical canvas and every vertex of $\g$ has degree at least five, then
$$\sum_{d \in C(\g,z)} d \leq \deg z+|C(\g,z)|(|C(\g,z)|-1).$$
Moreover, if equality holds, then $\g-z$ is a complete graph.
\end{observation}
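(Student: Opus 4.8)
The plan is to unwind the definitions and reduce the claimed inequality to the single fact that $\g-z$ is a simple graph, which is exactly Lemma~\ref{lemma-simple}. So the proof will be very short.

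First I would set $A=V(\g)\setminus\{z\}$ and $n=|A|$. The hypothesis that every vertex of $\g$ has degree at least five implies in particular that no vertex of $A$ has degree exactly four, so the census $C(\g,z)$ is simply the multiset of degrees $\{\deg(v):v\in A\}$ and $|C(\g,z)|=n$. Consequently $\sum_{d\in C(\g,z)}d=\sum_{v\in A}\deg(v)$. Next I would rewrite this degree sum by a standard double count: each edge of $\g-z$ is counted twice and each of the $\deg(z)$ edges joining $z$ to $A$ is counted once, so
\[
\sum_{d\in C(\g,z)}d=\sum_{v\in A}\deg(v)=2|E(\g-z)|+\deg(z).
\]

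The key input is Lemma~\ref{lemma-simple}: since $(\g,z)$ is flow-critical, every edge of multiplicity greater than one is incident with $z$, and hence $\g-z$ is a simple (loopless) graph on $n$ vertices. Therefore $|E(\g-z)|\le\binom{n}{2}$, and substituting gives
\[
\sum_{d\in C(\g,z)}d\le n(n-1)+\deg(z)=|C(\g,z)|\bigl(|C(\g,z)|-1\bigr)+\deg z,
\]
as desired. For the equality case, equality forces $|E(\g-z)|=\binom{n}{2}$, and a simple graph on $n$ vertices with $\binom{n}{2}$ edges is $K_n$, so $\g-z$ is complete.

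There is essentially no obstacle here; the only points requiring a little care are (i) noting that under the minimum-degree-five hypothesis the census coincides with the full degree multiset of $A$, so that $|C(\g,z)|$ equals $|A|$ rather than a proper submultiset, and (ii) checking that the $\deg(z)$ edges at $z$ cancel correctly between the two sides of the inequality in the double count.
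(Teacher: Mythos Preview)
Your proof is correct and follows essentially the same approach as the paper: both identify $C(\g,z)$ with the full degree multiset of $V(\g)\setminus\{z\}$, rewrite $\sum_{d\in C(\g,z)} d$ as $\deg z + 2|E(\g-z)|$, and invoke Lemma~\ref{lemma-simple} to bound $|E(\g-z)|$ by $\binom{|C(\g,z)|}{2}$.
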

\begin{proof}
Since every vertex of $\g$ has degree at least five, we have $|C(\g,z)|=|V(\g-z)|$
and
$$\sum_{d \in C(\g,z)}d=\sum_{v\in V(\g)\setminus\{z\}} \deg_{\g} v=\deg z+2|E(\g-z)|.$$
The claim follows, since by Lemma~\ref{lemma-simple}, $\g-z$ does not have more edges than the complete graph.
\end{proof}
We will need several standard results on flows in small graphs.  Recall that a graph $G$ is \emph{$\Z$-connected} if $G$ is connected and $(G,\beta)$ has a nowhere-zero flow for every $\Z$-boundary $\beta$.
Let us note the following standard observation.
\begin{observation}
If $G$ is a spanning subgraph of a graph $G'$ and $G$ is $\Z$-connected, then $G'$ is $\Z$-connected as well.
\end{observation}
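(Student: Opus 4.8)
The plan is to reduce a nowhere-zero flow problem on $G'$ to one on $G$ by absorbing the extra edges into the boundary function. First I would note that $G'$ is connected, since it contains the connected spanning subgraph $G$ and $V(G)=V(G')$; hence a function $\beta\colon V(G')\to\Z$ is a $\Z$-boundary for $G'$ exactly when it is a $\Z$-boundary for $G$, both conditions amounting to $\sum_{v\in V(G')}\beta(v)\equiv 0\pmod 3$.

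Next, fix an arbitrary $\Z$-boundary $\beta$ for $G'$. Let $H$ be the subgraph of $G'$ with vertex set $V(G')$ and edge set $E(G')\setminus E(G)$, and orient the edges of $H$ arbitrarily, obtaining for each vertex $v$ the value $\gamma(v):=\deg^+_H(v)-\deg^-_H(v)$. Since each edge is counted positively at its head and negatively at its tail, $\sum_{v\in V(G')}\gamma(v)=0$. Define $\beta'(v)=\beta(v)-\gamma(v)\pmod 3$; then $\sum_v\beta'(v)\equiv\sum_v\beta(v)-\sum_v\gamma(v)\equiv 0\pmod 3$, so $\beta'$ is a $\Z$-boundary for the connected graph $G$.

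Finally, since $G$ is $\Z$-connected, $(G,\beta')$ admits a nowhere-zero flow $\varphi$. Combining $\varphi$ on $E(G)$ with the chosen orientation of $H$ on $E(G')\setminus E(G)$ yields an orientation of $G'$ in which, for every vertex $v$, $\deg^+_{G'}(v)-\deg^-_{G'}(v)=(\deg^+_\varphi(v)-\deg^-_\varphi(v))+\gamma(v)\equiv\beta'(v)+\gamma(v)\equiv\beta(v)\pmod 3$; that is, a nowhere-zero flow in $(G',\beta)$. As $\beta$ was an arbitrary $\Z$-boundary for the connected graph $G'$, it follows that $G'$ is $\Z$-connected. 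There is essentially no obstacle in this argument; the only point that warrants a moment's care is verifying that $\beta'$ is again a legitimate $\Z$-boundary, which is immediate from $\sum_v\gamma(v)=0$ together with the connectivity observation from the first step.
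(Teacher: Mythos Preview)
Your proof is correct. The paper does not actually prove this observation; it simply states it as a ``standard observation'' without proof, so there is nothing to compare against beyond noting that your argument is the natural one.
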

We say that a graph $G$ is \emph{collapsible} if either $|V(G)|=1$, or $G$ has an edge $uv$ of multiplicity greater than one
and $G/\{u,v\}$ is collapsible. By \emph{suppressing} a vertex of degree two, we mean splitting off its incident edges and deleting the vertex.
\begin{observation}\label{obs-collaps}
Let $G$ be a graph.
\begin{itemize}
\item If $G$ is collapsible, then $G$ is $\Z$-connected.
\item If $v\in V(G)$ is a vertex of degree two such that the graph $G'$ obtained from $G$ by suppressing $v$ is collapsible, then $(G,\beta)$ has a nowhere-zero flow for every $\Z$-boundary $\beta$ such that $\beta(v)=0$.
\item If $v\in V(G)$ is a vertex of degree three and $e$ is an edge incident with $v$ such that the graph obtained from $G-e$ by suppressing $v$ is collapsible, then $(G,\beta)$ has a nowhere-zero flow for every $\Z$-boundary $\beta$ such that $\beta(v)\neq 0$.
\end{itemize}
\end{observation}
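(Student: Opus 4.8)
The plan is to prove the three bullet points in order, with the second reducing to the first and the third reducing to the second, and the first handled by a short induction.

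For the first bullet I would induct on $|V(G)|$. If $|V(G)|=1$, then $G$ is edgeless, the only $\Z$-boundary assigns $0$ to the unique vertex, and the empty orientation is a nowhere-zero flow; in particular $G$ is connected, so it is $\Z$-connected. Otherwise $G$ has an edge $uv$ of multiplicity $m\ge 2$ with $G/\{u,v\}$ collapsible. Contracting an edge never merges distinct components, so $G/\{u,v\}$ being connected (via the induction hypothesis and the definition of $\Z$-connectedness) forces $G$ to be connected. Given a $\Z$-boundary $\beta$ of $G$, the function $\beta/\{u,v\}$ is a $\Z$-boundary of $G/\{u,v\}$ (its total sum equals $\beta(V(G))\equiv 0$), so by induction there is a nowhere-zero flow $\varphi$ in $(G/\{u,v\},\beta/\{u,v\})$. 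Read $\varphi$ as an orientation of all edges of $G$ except the $m$ parallel edges between $u$ and $v$; the flow condition then already holds at every vertex other than $u$ and $v$. Orienting $j$ of the parallel edges towards $u$ changes $\deg^+(u)-\deg^-(u)$ by $m-2j$, and as $j$ ranges over $\{0,\dots,m\}$ these values include three consecutive terms $m, m-2, m-4$ (here $m\ge 2$ is used), which cover all residues modulo $3$; hence we may choose $j$ so the flow condition holds at $u$, and then by Observation~\ref{obs-allbutone} it holds at $v$ as well. This extends $\varphi$ to a nowhere-zero flow of $(G,\beta)$; the argument is essentially that of Lemma~\ref{lemma-simple}.

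For the second bullet, let $f_1=va$ and $f_2=vb$ be the two edges at $v$, and let $G'$ be the collapsible graph obtained by suppressing $v$ (adding an edge $g=ab$ if $a\neq b$, and otherwise just deleting $v$). Given a $\Z$-boundary $\beta$ with $\beta(v)=0$, the restriction $\beta|_{V(G')}$ is a $\Z$-boundary of $G'$ (its sum is $\beta(V(G))-\beta(v)\equiv 0$, and $G'$ is connected, being collapsible), so by the first bullet there is a nowhere-zero flow $\varphi$ of $(G',\beta|_{V(G')})$. When $a\neq b$, orient $g$ as $\varphi$ does, say from $a$ to $b$; deleting $g$ and orienting $f_1$ from $a$ to $v$ and $f_2$ from $v$ to $b$ leaves $\deg^+-\deg^-$ unchanged at $a$ and $b$ and makes $\deg^+(v)-\deg^-(v)=0=\beta(v)$, giving a nowhere-zero flow of $(G,\beta)$. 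The case $a=b$ is the same with $f_1,f_2$ oriented oppositely at $v$.

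For the third bullet I would reduce to the second. Reversing all orientations replaces $\beta$ by $-\beta$, so we may assume $\beta(v)=1$. Let $e=vw$; by hypothesis suppressing $v$ in $G-e$ gives a collapsible graph, so the second bullet applies to the degree-two vertex $v$ of $G-e$. Define a boundary $\gamma$ on $G-e$ by $\gamma(v)=0$, $\gamma(w)=\beta(w)+1$, and $\gamma(y)=\beta(y)$ otherwise; its sum is $\beta(V(G))-\beta(v)+1\equiv 0$ and $G-e$ is connected, so $\gamma$ is a $\Z$-boundary. The second bullet yields a nowhere-zero flow $\varphi$ of $(G-e,\gamma)$, and adding $e$ back oriented from $w$ to $v$ raises $\deg^+(v)-\deg^-(v)$ to $0+1=1=\beta(v)$ and lowers $\deg^+(w)-\deg^-(w)$ to $\gamma(w)-1=\beta(w)$, with all other vertices untouched; this is a nowhere-zero flow of $(G,\beta)$. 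The only real work throughout is the residue bookkeeping and the degenerate cases where endpoints coincide (for instance $a=b$, or $w\in\{a,b\}$, or $v$ joined to a single vertex by several parallel edges) together with the remark that collapsibility propagates connectivity; I expect these routine corner cases, rather than any conceptual point, to be the main nuisance.
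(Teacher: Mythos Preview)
Your proposal is correct and follows essentially the same approach as the paper: the first bullet by the induction/parallel-edge argument (which the paper simply refers to as ``analogously to Lemma~\ref{lemma-simple}''), the second by lifting a flow from the suppressed graph, and the third by adjusting the boundary at $w$ and reducing to the second bullet. Your write-up is more explicit about the degenerate cases and the residue bookkeeping, but the structure and ideas are the same.
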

\begin{proof}
The first point follows analogously to Lemma~\ref{lemma-simple}.  For the second point, since $G'$ is collapsible,
the $\Z$-bordered graph $(G',\beta\restriction V(G'))$ has a nowhere-zero flow. We obtain a nowhere-zero flow in $(G,\beta)$ by directing both edges incident with $v$ according
to the corresponding edge of $G'$.  For the third point, suppose by symmetry that $\beta(v)=1$.  Let $\beta'(v)=0$, $\beta'(u)=\beta(u)+1$
for the other endpoint $u$ of $e$, and let $\beta'(x)=\beta(x)$ for every other vertex $x$ of $G$.  The second point applied to $(G-e,\beta')$
gives a nowhere-zero flow which extends to a nowhere-zero flow in $(G,\beta)$ by directing the edge $e$ towards $u$.
\end{proof}

Using this observation, it is easy to derive the following facts, illustrated in Figure \ref{fig:lemma-small}.

\begin{lemma}\label{lemma-small}
Let $(G,\beta)$ be a $\Z$-bordered graph without a nowhere-zero flow and with no edges of multiplicity greater than one.
\begin{itemize}
\item If $G=K_4$, then $\beta(v)=0$ for every $v\in V(G)$.
\item If $|V(G)|=5$ and $G$ has at most two non-edges, then $G$ consists of a copy $H$ of $K_4$ together with a vertex $v$ of degree two with neighbours $v_1,v_2 \in V(H)$,
$\beta(v)=\beta(v_1)=\beta(v_2)\neq 0$, and $\beta(x)=0$ for each $x\in V(G)\setminus \{v,v_1,v_2\}$.
\item If $|V(G)|=6$ and $G$ has at most four non-edges and minimum degree at least two, then $G$ consists of two copies of $K_4$ whose intersection is $K_2$ and $\beta(x)=0$ for every $x\in V(G)$.
\end{itemize}
\end{lemma}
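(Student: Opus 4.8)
The strategy is the same for all three bullet points: start from a $\Z$-bordered graph $(G,\beta)$ with no multiple edges and no nowhere-zero flow, and use Observation~\ref{obs-collaps} together with the assumed structure to pin down both $G$ and $\beta$. The key principle is that ``collapsible'' graphs (and their mild modifications by suppressing degree-two or degree-three vertices) always admit nowhere-zero flows for the relevant boundaries; so if the graph is richer than the stated extremal configuration, we can exhibit a collapsible spanning structure and contradict the hypothesis that $(G,\beta)$ has no nowhere-zero flow. Throughout I would use the elementary facts that $K_4$ with a doubled edge is collapsible, that $K_4$ minus an edge with a doubled edge added (or simply $C_2$) is collapsible, and more generally that any graph obtained by starting from two parallel edges and repeatedly adding a vertex joined by at least two edges to what we have built is collapsible; these let us recognise collapsibility ``by inspection.''

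\textbf{First bullet ($G=K_4$).} Since $K_4$ is not collapsible and has no multiple edges, we cannot directly apply Observation~\ref{obs-collaps}; instead I would argue directly. First, $\sum_v\beta(v)\equiv 0\pmod 3$. If some $\beta(v)\ne 0$, then pick a second vertex $u$ with $\beta(u)\ne 0$ of the appropriate sign (using that boundaries sum to $0$, not all three of the nonzero ones can be ``the same''): more cleanly, delete an edge $vu$ where $\tau(v)$ is out-friendly and $\tau(u)$ in-friendly, absorbing $\pm 1$ into $\beta$, producing $(K_4-e,\beta')$; $K_4-e$ has a vertex of degree two, and suppressing it leaves a doubled edge, which is collapsible, so by Observation~\ref{obs-collaps} $(K_4-e,\beta')$ has a nowhere-zero flow when the suppressed vertex has boundary $0$. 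One has to choose $e$ so that the suppressed vertex is not $v$ or $u$; since in $K_4$ every pair of vertices has a common neighbour, and there are $\ge 2$ vertices of nonzero boundary, a short case check (on how many vertices have nonzero boundary: it must be $2$ or $3$, since the sum is $0\bmod 3$ and single values are in $\{1,-1\}$) shows such an $e$ exists. Extending the flow across $e$ gives a nowhere-zero flow of $(K_4,\beta)$, a contradiction; hence $\beta\equiv 0$.

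\textbf{Second and third bullets.} For $|V(G)|=5$ with at most two non-edges: list the possible graphs on $5$ vertices with $\le 2$ non-edges. If $G$ has minimum degree $\ge 3$ and is not of the stated shape, I would find a collapsible spanning subgraph (or, after deleting one edge and adjusting $\beta$ by $\pm 1$ as in the first bullet, reduce to a smaller collapsible configuration); in particular if $G$ contains $K_4$ as a subgraph spanning four of the five vertices and the fifth vertex has degree $\ge 3$, one checks collapsibility directly. If $G$ has a vertex $v$ of degree $2$, then the other four vertices span $K_4$ (since removing $v$'s two non-neighbours among those four would create more than two non-edges), so $G$ is exactly $K_4$ plus a pendant-path-of-length-two vertex $v$ with neighbours $v_1,v_2$; suppressing $v$ yields $K_4$ plus a parallel edge $v_1v_2$, which is collapsible, so by Observation~\ref{obs-collaps} a nowhere-zero flow exists whenever $\beta(v)=0$ — forcing $\beta(v)\ne 0$ — and also, extending the suppression argument, forcing $\beta(v)=\beta(v_1)=\beta(v_2)$ and $\beta=0$ elsewhere (here one reuses the fact that $(K_4,\beta|)$ must then itself be flowless on the four-vertex part, invoking the first bullet). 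The third bullet ($|V(G)|=6$, $\le 4$ non-edges, min degree $\ge 2$) is analogous but longer: a degree-two vertex argument plus the five-vertex result gives that the remaining five vertices form the configuration of the second bullet, and then the at-most-four-non-edges and min-degree-$2$ constraints force $G$ to be two $K_4$'s glued along a $K_2$; if instead $G$ has min degree $\ge 3$, one shows it is collapsible or reduces to a collapsible graph by deleting one edge and shifting $\beta$, contradicting flowlessness.

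\textbf{Main obstacle.} The real work is the case analysis in the second and, especially, the third bullet: enumerating graphs on $5$ (resp.\ $6$) vertices with few non-edges and, for each, either exhibiting a collapsible spanning subgraph or the right ``delete-an-edge-and-suppress'' reduction, while carefully tracking which vertex gets suppressed so that the boundary hypothesis of Observation~\ref{obs-collaps} is met. Getting the boundary conclusions (not just the shape of $G$) requires iterating the first-bullet argument on the embedded $K_4$'s and keeping the $\pm 1$ boundary bookkeeping consistent; this is routine but must be done cleanly to avoid off-by-one sign errors.
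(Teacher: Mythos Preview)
Your overall strategy---use Observation~\ref{obs-collaps} repeatedly and reduce to collapsible graphs---is the same as the paper's. But several concrete steps are off, and the cleanest tool is being under-used.

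\textbf{The $K_4$ case.} Your sentence ``choose $e$ so that the suppressed vertex is not $v$ or $u$'' is impossible: in $K_4-e$ the only degree-two vertices are the endpoints $v,u$ of $e$. What you actually want is to suppress $v$ itself after the boundary shift. More to the point, you are re-deriving the third bullet of Observation~\ref{obs-collaps} by hand. The paper uses it directly: every vertex $v$ of $K_4$ has degree three, and for any incident edge $e$, suppressing $v$ in $K_4-e$ gives a triangle with one doubled edge, which is collapsible. Hence if $\beta(v)\neq 0$ a nowhere-zero flow exists; contradiction. One line, no case-check on how many nonzero boundaries there are.

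\textbf{Five vertices, minimum degree $\ge 3$.} You hope for a ``collapsible spanning subgraph'', but a simple graph has no multiple edge, so none of its spanning subgraphs are collapsible. When the two non-edges form a matching, $G$ is the wheel $W_4$; the paper applies the degree-three bullet of Observation~\ref{obs-collaps} to each rim vertex to force $\beta\equiv 0$ on the rim (hence everywhere), and then exhibits an explicit nowhere-zero flow for the all-zero boundary. That explicit flow is needed; collapsibility alone does not do it. This also handles $\le 1$ non-edge, since those graphs contain $W_4$. In the degree-two case, your reduction to the $K_4$ on $v_1,\dots,v_4$ is right in spirit, but the boundary there is not the restriction $\beta|$: you must shift at $v_1,v_2$ by the (unique, since $\beta(v)\ne 0$) orientation of the two edges at $v$. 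The paper instead uses the degree-three bullet on $v_3,v_4$ directly to get $\beta(v_3)=\beta(v_4)=0$, then checks the three possibilities for $(\beta(v_1),\beta(v_2))$.

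\textbf{Six vertices.} Your degree-two/degree-three split is backwards. If $G$ has a vertex $v$ of degree two, then $v$ accounts for three non-edges, so $G-v$ has at most one non-edge and is $\Z$-connected by the five-vertex analysis; orienting the two edges at $v$ to match $\beta(v)$ (always possible since $\{2,0,-2\}$ hits every residue) gives a nowhere-zero flow, a contradiction. So a degree-two vertex is \emph{ruled out}, not the source of the target configuration. The two-$K_4$'s-sharing-$K_2$ graph has minimum degree three, and emerges from the min-degree-three analysis: pick $v$ of degree three, note $G-v$ has exactly two non-edges and must be the non-$\Z$-connected five-vertex graph from bullet two, and then do the honest case analysis on which two of $v_1,\dots,v_4$ are neighbours of $v$ (using the degree-three bullet of Observation~\ref{obs-collaps} on the edges $vv'$ and $v_3v_4$ to pin down the boundary).
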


\begin{proof}
If $G=K_4$, then deleting any edge and suppressing a resulting vertex of degree two gives a collapsible graph, and thus $\beta(v)=0$ for every $v\in V(G)$
by Observation~\ref{obs-collaps}.

Suppose now that $|V(G)|=5$ and $G$ has exactly two non-edges.  There are two possibilities:
\begin{itemize}
\item The two non-edges form a matching, and thus $G$ consists of a $4$-cycle $K=v_1v_2v_3v_4$ and a vertex $v$ adjacent to all of its vertices.
Deleting any edge of $K$ and suppressing a resulting vertex of degree two gives a collapsible graph, and thus $\beta(v_i)=0$ for $i\in\{1,\ldots,4\}$
by Observation~\ref{obs-collaps}.  It follows that $\beta(v)=0$, since $\beta$ is a $\Z$-boundary.  However, then it is easy to construct a nowhere-zero flow in $G$ by orienting all edges incident with $u \in \{v_1,v_3\}$ towards $u$, and all edges incident with $u \in \{v_2,v_4\}$ away from $u$.
Hence, this graph $G$ is $\Z$-connected.

Moreover, note that this graph is contained as a subgraph of every simple graph with $5$ vertices and at most one non-edge,
and thus all such graphs are $\Z$-connected as well. 
\item The two non-edges do not form a matching.  It follows that $G$ consists of a complete graph on vertices $v_1$, \ldots, $v_4$ together with a vertex $v$ and edges $vv_1$ and $vv_2$.
Suppressing $v$ results in a collapsible graph, and thus by Observation~\ref{obs-collaps}, we have $\beta(v)\neq 0$. Suppose by symmetry that $\beta(v)=1$.  Deleting the edge $v_3v_4$ and suppressing $v_3$ or $v_4$
results in a collapsible graph, and thus again by Observation~\ref{obs-collaps}, we have $\beta(v_3)=\beta(v_4)=0$.
Since $\beta$ is a $\Z$-boundary, we have $(\beta(v_1),\beta(v_2))\in \{(-1,0),(0,-1),(1,1)\}$.
The first two options lead to a $\Z$-bordered graph with a nowhere-zero flow, and thus we conclude that $\beta(v_1)=\beta(v_2)=\beta(v)=1$.
\end{itemize}

Suppose now that $|V(G)|=6$ and $G$ has exactly four non-edges and minimum degree at least two.  If $G$ contains a vertex $v$ of degree two,
then $G-v$ has only one non-edge and by the previous point $G-v$ is $\Z$-connected.  Thus, we can orient the edges incident with $v$ to match the boundary
$\beta(v)$ and extend the flow to a nowhere-zero flow in $G$.

Hence, $G$ has minimum degree three (since it has four non-edges, it cannot have minimum degree at least four).
Let $v$ be a vertex of $G$ of degree three; then $G-v$ has two non-edges and analogously to the previous case, we can assume that $G-v$ is not $\Z$-connected.
By the previous point,
$G-v$ consists of a complete graph on vertices $v_1$, \ldots, $v_4$ and a vertex $v'$ adjacent to $v_1$ and $v_2$.
Since $G$ has minimum degree at least three, $v$ is adjacent to $v'$.  It follows that every vertex of $G$ of degree three has a neighbour of degree three. Up to symmetry, there are the following possibilities for the neighbours of $v$ in $\{v_1,\ldots,v_4\}$:
\begin{itemize}
\item If $v$ is adjacent to $v_2$ and $v_3$, then $v_4$ is a vertex of degree three with all neighbours of larger degree, a contradiction.
\item If $v$ is adjacent to $v_3$ and $v_4$, then since deleting the edge $vv'$ and suppressing $v$ or $v'$ gives a collapsible graph,
we have by Observation~\ref{obs-collaps} that $\beta(v)=\beta(v')=0$.  By symmetry, we can assume that $\beta(v_4)\neq -1$.  Let $\beta'(v_4)=\beta(v_4)+1$, $\beta'(v_3)=\beta(v_3)+1$,
$\beta'(v_2)=\beta(v_2)-1$ and $\beta'(v_1)=\beta(v_1)-1$.  Since $\beta'(v_4)\neq 0$, $(G[\{v_1,\ldots,v_4\}],\beta')$ has a nowhere-zero flow,
as we have observed at the beginning of the lemma.  Orienting the edges incident with $v$ towards $v$ and those incident with $v'$ away from $v'$
extends this to a nowhere-zero flow in $(G,\beta)$.
\item Hence, $v$ is adjacent to $v_1$ and $v_2$.  Observation~\ref{obs-collaps} applied to edges $vv'$ and $v_3v_4$ gives $\beta(v)=\beta(v')=\beta(v_3)=\beta(v_4)=0$.
It follows that $\beta(v_1)=-\beta(v_2)$.  If $\beta(v_1)\neq 0$, then $(G,\beta)$ has a nowhere-zero flow, and thus $\beta(v_1)=\beta(v_2)=0$.
\end{itemize}
Finally, if $|V(G)|=6$ and $G$ has at most three non-edges, then we can delete edges from $G$ so that the resulting graph $G'$ has exactly four non-edges, minimum degree at least two,
and it is not the union of two $K_4$'s sharing an edge. By the previous analysis, this implies that $G'$ is $\Z$-connected, and thus $G$ is $\Z$-connected as well and $(G,\beta)$ has a nowhere-zero flow.
\end{proof}
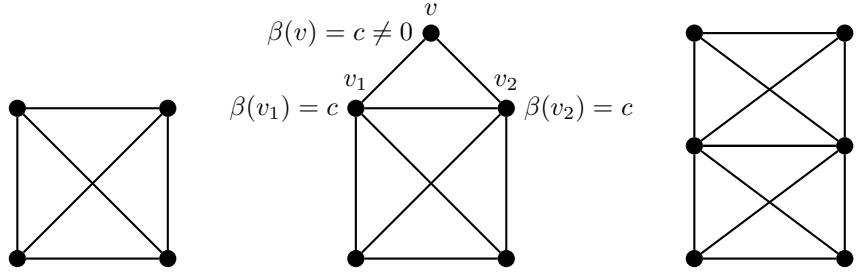
\begin{figure}
\begin{center}
    \begin{tikzpicture}
        \node[blackvertexv2] at (0,0) (v1) {};
        \node[blackvertexv2] at (2,0) (v2) {};
        \node[blackvertexv2] at (2,2) (v3) {};
        \node[blackvertexv2] at (0,2) (v4) {};
        \draw[thick,black] (v1)--(v2)--(v3)--(v4)--(v1);
        \draw[thick,black] (v1)--(v3);
        \draw[thick,black] (v2)--(v4);

        \begin{scope}[xshift = 4.5cm]
        \node[blackvertexv2] at (0,0) (v1) {};
        \node[blackvertexv2] at (2,0) (v2) {};
        \node[blackvertexv2] at (2,2) (v3) [label = above:$v_{2}$] [label = right:$\beta(v_{2}) \equal c$]{};
        \node[blackvertexv2] at (0,2) (v4) [label = above:$v_{1}$] [label = left:$\beta(v_{1}) \equal c$]{};
        \node[blackvertexv2] at (1,3) (v5) [label = above:$v$] [label = left:$\beta(v) \equal c \neq 0$]{};
        \draw[thick,black] (v1)--(v2)--(v3)--(v4)--(v1);
        \draw[thick,black] (v1)--(v3);
        \draw[thick,black] (v2)--(v4);
        \draw[thick,black] (v5)--(v3);
        \draw[thick,black] (v5)--(v4);
        \end{scope}

        \begin{scope}[xshift = 9cm]
        \node[blackvertexv2] at (0,0) (v1) {};
        \node[blackvertexv2] at (2,0) (v2) {};
        \node[blackvertexv2] at (2,1.5) (v3) {};
        \node[blackvertexv2] at (0,1.5) (v4) {};
        \node[blackvertexv2] at (0,3) (v5) {};
        \node[blackvertexv2] at (2,3) (v6) {};
        \draw[thick,black] (v1)--(v2)--(v3)--(v4)--(v1);
        \draw[thick,black] (v1)--(v3);
        \draw[thick,black] (v2)--(v4);
        \draw[thick,black] (v5)--(v6)--(v3)--(v4)--(v5);
        \draw[thick,black] (v5)--(v3);
        \draw[thick,black] (v6)--(v4);
        \end{scope}
    \end{tikzpicture}
    \end{center}
    \caption{The pairs $(G,\beta)$ without nowhere-zero flows in Lemma \ref{lemma-small}. Vertices with no boundary shown are assumed to have boundary zero.} \label{fig:lemma-small}
\end{figure}

\subsection{Containing the censuses}

In this subsection, we prove Theorem \ref{thm-degbetter}.
For non-negative integers $k$ and $r$, let
$$\CC_{k,r}=\{(\deg(z), C(\g,z)): (\g,z,x,\psi)\in\GG_{k,r}\}.$$
Let
$$\CC'_{k,0}=\{(k,\{k-2\}),(k+1,\{k-1\}),(k+1,\{k-2,5\}),(k+1,\{k-2,5,5,5\})\}.$$
Theorem~\ref{thm-degbetter} is then a consequence of the following lemma.
Indeed, suppose that $(\g,z)$ is a non-trivial flow-critical tame canvas with $k=\deg(z)\ge 7$ and
there exists a vertex $x\in V(\g)\setminus\{z\}$ of degree $k-2$. Let $\psi$ be a tip-respecting
preflow that does not extend to a nowhere-zero flow in $(\g,z)$; then $(\g,z,x,\psi)\in\GG_{k,0}$.
Moreover, if $(\deg(z), C(\g,z))\in\CC'_{k,0}$, then $C(\g,z)=\{k-2\}$.

\begin{lemma}\label{lemma-degbetter}
For any integer $k\ge 7$, $\CC_{k,0}\subseteq \CC'_{k,0}$.
\end{lemma}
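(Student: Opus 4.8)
The plan is to prove Lemma~\ref{lemma-degbetter} by induction on the $\prec$-order, via a minimal counterexample, applying the easel generation theorem (Theorem~\ref{thm-genladeg}) with $r=0$ and then ruling out every outcome. So suppose $(\g,z,x,\psi)\in\GG_{k,0}$ is such that $(\deg(z),C(\g,z))\notin\CC'_{k,0}$, chosen with $(\g,z,x)$ minimal in $\prec$. First I would record the constraints forced on such an easel: $\deg(z)\le k+1$, $\deg(x)\ge k-2$ (since $r=0$), and by Corollary~\ref{cor-censmax} applied to a $\psi$-critical contraction, $\deg(x)\le \deg(z)-2\le k-1$. So either $\deg(z)=k$ and $\deg(x)=k-2$, or $\deg(z)=k+1$ and $\deg(x)\in\{k-2,k-1\}$. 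In all cases $k-2\le\deg(x)$, so $x$ contributes $\deg(x)\ge k-2\ge 5$ to the census, meaning $C(\g,z)\ne\emptyset$ and indeed contains an element $\ge k-2\ge 5$. The target $\CC'_{k,0}$ lists exactly the ``small'' censuses; the content of the lemma is that nothing else occurs. Note $(\g,z)$ trivially cannot be (SMALL) with $k\ge 7$: if $|V(\g)|=3$ then $\g-z$ has two vertices, and by Lemma~\ref{lemma-simple} at most one edge between them, so both have degree $\le\deg(z)/2+1$; with the tallness/criticality constraints one checks $\deg(z)\le$ small, contradicting $k\ge7$. (Actually a cleaner route: with $|V(\g)|=3$ and $\deg(x)\ge k-2\ge 5$, all of $x$'s edges except at most one go to $z$, forcing $\deg(z)\ge k-3$, but also the other vertex $v$ has $\deg(v)\ge 4$, and a short parity/criticality argument pins $\deg(z),\deg(x)$ down to values incompatible with $k\ge7$; either way it is excluded or lands in $\CC'_{k,0}$.)

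Next I would walk through the non-(SMALL) outcomes of Theorem~\ref{thm-genladeg}. For (EXP): $(\g,z)$ is a $(\GG_k,x\to x',\GG_{k,0},\emptyset)$-expansion of $(\g',z,x',\psi')\in\GG_{k,0}$ with $(\g',z,x')\prec(\g,z,x)$. By minimality $(\deg(z),C(\g',z))\in\CC'_{k,0}$. Expansion replaces vertices of degree at most $k-1$ (by Theorem~\ref{thm-deg}) with canvases from $\GG_k$; crucially, a vertex $v$ of $\g'$ with $\deg(v)\le 4$ is not expanded (tameness forces $\deg(v)=4$, but every canvas in $\GG_k$ has tip of degree $\ge 6$), so degree-$4$ vertices are untouched, and a vertex of degree $d\in\{5,\dots,k-1\}$ either stays (contributing $d$ to the new census) or is replaced by a canvas whose census is itself controlled. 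I would argue that an expansion of a canvas with census $S$ cannot decrease the ``number of large vertices'' and in particular, since the part containing $x$ must be expandable within $\GG_{k,0}$, the census of $(\g,z)$ again lies in $\CC'_{k,0}$ — here I would lean on the structure: in $\CC'_{k,0}$ the only censuses are $\{k-2\},\{k-1\},\{k-2,5\},\{k-2,5,5,5\}$, and any proper expansion strictly increases $o(\cdot)$ while the expanded parts have tips of degree $\le k-1$, so an expansion of, say, $(k,\{k-2\})$ that changes the census would have to be of the unique vertex of degree $k-2$ by a canvas in $\GG_k$ with tip degree $k-2$; examining $\GG_{k-2}\cap\GG_k$-type canvases and using Corollary~\ref{cor-censmax} (their census has max $\le k-4$) would show the resulting census is a multiset of values $\le k-4$ together with possibly reintroduced copies — and this forces it back into $\CC'_{k,0}$ or yields a contradiction with $x$ having degree $\ge k-2$. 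For (ADD) and (REM): a tip-alteration or tip-reduction leaves $\g-z$ and hence $C(\g,z)$ with the same degree sequence away from $z$ except that $\deg(z)$ changes by $1$; since $\deg(x)$ is unchanged and the degree sequence of $\g-z$ is unchanged, $C(\g',z)$ and $C(\g,z)$ differ only possibly in whether $\deg(z)$ itself... but $\deg(z)\notin C$ by definition, so $C(\g',z)=C(\g,z)$, while $(\deg(z'),C(\g',z))\in\CC'_{k,0}$ by minimality; then checking the four members of $\CC'_{k,0}$ against $\deg(z')=\deg(z)\pm1$ forces $(\deg(z),C(\g,z))\in\CC'_{k,0}$ too, contradiction. (REM lands in $\GG_{k,0}\cup\GG_{k+1,0}$; in the $k+1$ case one uses $\CC_{k+1,0}\subseteq\CC'_{k+1,0}$, which follows by the same induction for the larger parameter since $k+1\ge 8\ge 7$ — so I would actually prove $\CC_{k,0}\subseteq\CC'_{k,0}$ simultaneously for all $k\ge7$ by induction on the $\prec$-order across all $k$.)

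The heart of the argument, and what I expect to be the main obstacle, is outcome (EXPB) — and this is exactly why the excerpt builds all the machinery about small graphs without nowhere-zero flows (Lemma~\ref{lemma-small}, Observation~\ref{obs-collaps}, Observation~\ref{obs-sumdeg}). In (EXPB), $(\g,z)$ is a $(\GG_k,x\to x',\GG_{k,0},\emptyset)$-expansion of a $1$-alteration $(\g_0,z)$ of some $(\g',z,x',\psi')\in\GG_{k,0}$ with $(\g',z,x')\prec(\g,z,x)$; the $1$-alteration deletes an edge (and shifts two boundaries), so $\g'$ need not contain a high-degree vertex, and indeed $\g'$ could be tiny. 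The plan here is: by minimality $(\deg(z),C(\g',z))\in\CC'_{k,0}$, so $C(\g',z)$ is one of four explicit small multisets; combined with Observation~\ref{obs-sumdeg} and $\deg(z)\le k+1$, this bounds $|V(\g'-z)|$ and $|E(\g'-z)|$, so $\g'-z$ is a small graph (at most $\sim 4$ vertices once the $\CC'_{k,0}$ constraint and the $\deg(x)\ge k-2$ constraint interact — the census $\{k-2\}$ means one big vertex and the rest degree $4$; after the $1$-alteration and re-expansion, the expanded parts are themselves in $\GG_k$ with small census). Then one needs to show directly that the candidate $(\g,z)$ is not flow-critical: either $\g-z$ contains a $\Z$-connected spanning subgraph (so every tip preflow extends, contradicting $\psi$-criticality, which holds by Lemma~\ref{lemma-genladeg-psi}-style reasoning — actually by the generation theorem's own guarantee we may assume $\psi$-criticality since otherwise (EXP) holds), or the boundary configuration is forced by Lemma~\ref{lemma-small} into a shape that is not critical. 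Concretely: the degree-$4$ vertices have boundary $0$ and $\tau=\{0\}$; the $1$-alteration changed exactly two boundaries by $\pm1$; tracing which two vertices and using Lemma~\ref{lemma-small} (the $K_4$, the $K_4$-plus-pendant, and the two-$K_4$s-sharing-an-edge configurations) one shows the only surviving possibilities reproduce a census in $\CC'_{k,0}$. I would organize the (EXPB) analysis by the size of $\g'-z$: $|V(\g'-z)|\le 2$ handled by Lemma~\ref{lemma-simple}; $=3$ giving $K_4$-type pictures; $\le 5$ or $\le 6$ vertices after expansion handled by the two harder bullets of Lemma~\ref{lemma-small} together with the non-edge counts forced by $o(\g',z)<o(\g,z)$ and the census bound. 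This casework is the routine-but-lengthy part; the clean statements of Lemma~\ref{lemma-small} are precisely engineered to close each case, so the obstacle is bookkeeping rather than a missing idea. Finally (EXPA) and (EXPX): these involve $2$-alterations and $x'$-alterations creating a single new vertex of degree $4$ (for $2$-alteration) or adding two edges at $x'$ (for $x'$-alteration, which only occurs when $\deg(z)=k+1,\deg(x)=k-2$). For (EXPA), the new degree-$4$ vertex has boundary $0$ and is not in the census, and the rest of the census comes from $(\g',z)$ which by minimality is in $\CC'_{k,0}$; a short check (the expansion can only enlarge, degree-$4$ parts unexpanded) keeps us in $\CC'_{k,0}$. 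For (EXPX), $\deg(x')\ge\deg(x)=k-2$ and $x'$ is a ``$\{y,x'\}$''-expanded part, forcing $|$part of $x'|\ge2$; together with $\deg(z)=k+1$ this is the case that produces the census $\{k-2,5\}$ or $\{k-2,5,5,5\}$ — one verifies the added structure at $x'$ contributes exactly small-degree ($=5$) vertices by Lemma~\ref{lemma-small}'s second/third bullets, landing in $\CC'_{k,0}$, contradicting the choice of counterexample. In every outcome we reach a contradiction, so no minimal counterexample exists and $\CC_{k,0}\subseteq\CC'_{k,0}$, completing the proof (and with it Theorem~\ref{thm-degbetter} via the remark preceding the lemma).
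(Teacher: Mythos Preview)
Your overall strategy matches the paper's: take a $\prec$-minimal counterexample in $\GG_{k,0}$, apply Theorem~\ref{thm-genladeg}, and rule out each outcome. However, two of your case analyses contain genuine errors.

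\textbf{(ADD) and (REM).} You assert that a tip-alteration or tip-reduction ``leaves $\g-z$ and hence $C(\g,z)$ with the same degree sequence away from $z$,'' concluding $C(\g',z)=C(\g,z)$. This is false: the census records degrees in $\g$, not in $\g-z$, so the edges to $z$ count. A tip-alteration at $v$ adds an edge $zv$, raising $\deg_{\g}(v)$ by one; the census therefore changes by replacing the element $\deg_{\g'}(v)$ (if $\ge 5$) by $\deg_{\g'}(v)+1$, or by inserting a $5$ if $\deg_{\g'}(v)=4$. The paper handles (ADD) by noting this forces $\deg_{\g'}(z)=k$ and $C(\g',z)=\{k-2\}$, hence $C(\g,z)=\{k-2,5\}$, which lies in $\CC'_{k,0}$. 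For (REM), the census loses one from an element $a\ge 6$ other than $a_0$; inspection of $\CC'_{k,0}\cup\CC'_{k+1,0}$ shows no such element exists. Your argument as written proves nothing here.

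\textbf{(EXPB).} You try to bound $|V(\g'-z)|$ using $C(\g',z)\in\CC'_{k,0}$ and Observation~\ref{obs-sumdeg}. But the census ignores degree-$4$ vertices, so $\g'$ may have arbitrarily many of them; $\g'-z$ need not be small. The paper instead works with $\g$, not $\g'$: the crucial input you missed is the ``moreover'' clause of Theorem~\ref{thm-genladeg}, which guarantees that in case (EXPB) every vertex of $\g$ other than $x$ has degree at least five (and $\deg(x)\ge k-2\ge 5$). Hence $|C(\g,z)|=|V(\g)|-1$, and Observation~\ref{obs-sumdeg} applies directly to $\g$. The paper first enumerates (via a careful ``contribution'' analysis of each vertex of $\g'$ through the $1$-alteration and expansion) the finitely many candidate pairs $(\deg(z),C(\g,z))\notin\CC'_{k,0}$, then uses Observation~\ref{obs-sumdeg} on $\g$ to eliminate all but three, and finally applies Lemma~\ref{lemma-small} to $G-z$ (again for $\g$, not $\g'$) to dispose of those. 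Your plan to apply Lemma~\ref{lemma-small} to $\g'-z$ is aimed at the wrong graph and would not close the argument.

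Your treatments of (EXP), (EXPA), and (EXPX) are too vague to assess, but the paper's proof of these cases likewise requires tracking how each vertex of $\g'$ contributes to $C(\g,z)$ through the alteration and the expansion (using (i)--(iv) in the paper's argument); the bare observation that ``the new degree-$4$ vertex is not in the census'' is not enough, since the $2$-alteration also raises the degrees at $y_1,y_2$.
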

\begin{proof}
Suppose for a contradiction that there exists an integer $k\ge 7$ and an easel $(\g,z,x,\psi)\in \GG_{k,0}$ such that
$(\deg(z), C(\g,z))\not\in \CC'_{k,0}$, and choose such an easel with $(\g,z,x)$ minimal in the $\prec$ ordering;
such an easel exists, since $\prec$ does not have infinite decreasing chains by Observation~\ref{obs-precord}.
We have $\deg(x) \ge k-2$, and thus $\deg(z)\ge k$ by Theorem~\ref{thm-deg}.  Moreover, $\deg(z)\le k+1$
since $(\g,z,x,\psi)\in \GG_{k,0}$.

We split into cases depending on the outcomes of Theorem~\ref{thm-genladeg} applied to $(\g,z,x,\psi)$. 
\begin{claim*}
The outcome (SMALL) does not occur.
\end{claim*}
\begin{subproof}
Suppose it does, and in this case let $V(\g)=\{z,x,v\}$.  Since $(\g,z)$ is tame, we have $\deg(v) \ge 4$, and since $\deg(x)>\deg(z)-4$,
the vertices $x$ and $v$ must be adjacent.  By Lemma~\ref{lemma-simple}, $x$ and $v$ are joined only by one edge, and thus
$$(\deg(z),\deg(x),\deg(v))\in \{(k,k-2,4),(k+1,k-1,4),(k+1,k-2,5)\}.$$  In any of these cases, we have $(\deg(z),C(\g,z))\in \CC'_{k,0}$,
which is a contradiction.
\end{subproof}

Let $(\g',z,x',\psi')$ be the easel that appears in the rest of the outcomes of Theorem~\ref{thm-genladeg} with $(\g',z,x')\prec (\g,z,x)$.
By the minimality of $(\g,z,x,\psi)$, if $(\g',z,x',\psi')\in \GG_{k,r}$, then $(\deg_{\g'}(z),C(\g',z))\in \CC'_{k,0}$,
and if $(\g',z,x',\psi')\in \GG_{k+1,0}$, then $(\deg_{\g'}(z), C(\g',z))\in \CC'_{k+1,0}$.  If $\deg_{\g'}(x')\ge 5$, then let $a_0$ denote the element of
$C(\g',z)$ corresponding to $\deg_{\g'}(x')$.

\begin{claim*}
The outcome (ADD) does not occur.
\end{claim*}

\begin{subproof}
Suppose it does. Since $(\g,z)$ is a tip-alteration of $(\g',z)$ at a vertex $v\neq x$, $C(\g,z)$ is obtained from $C(\g',z)$
by either choosing an element $a$ different from $a_0$ and replacing it by $a+1$, or (in case that $\deg_{\g'}(v)=4$)
by adding $5$ to $C(\g',z)$.  Moreover, since $\deg_{\g}(z)\le k+1$ and $\deg_{\g'}(x)=\deg_{\g}(x)\ge k-2$,
Theorem~\ref{thm-deg} implies that $\deg_{\g'}(z)=k$, $\deg(x)=k-2$, and $\deg_{\g}(z)=k+1$.
Since $(\deg_{\g'}(z), C(\g',z))\in \CC'_{k,0}$, it follows that $C(\g',z)=\{k-2\}$ and $C(\g,z)=\{k-2,5\}$.
Let us remark that the case $C(\g,z)=\{k-1\}$ is not possible, since $a$ is different from $a_0$.
However, then $(\deg_{\g}(z),C(\g,z))\in \CC'_{k,0}$, which is a contradiction.
\end{subproof}

\begin{claim*}
The outcome (REM) does not occur.
\end{claim*}

\begin{subproof}
Suppose it does. Since $(\g,z)$ is a tip-reduction of $(\g',z)$
at a vertex other than $x$ and all vertices of $\g$ other than $x$ have degree at least $5$,
$C(\g,z)$ is obtained from $C(\g',z)$ by choosing an element $a\ge 6$ different from $a_0$ and replacing it by $a-1$.
However, $(\deg_{\g'}(z),C(\g',z))\in \CC'_{k,0}\cup \CC'_{k+1,0}$, and the inspection of the definition of $\CC'_{k,0}$ shows
that no such element $a$ exists.  This is a contradiction.
\end{subproof}

For the remaining outcomes ((EXP), (EXPA), (EXPX), and (EXPB)), let $(\g_0,z)$ be the canvas (equal to $(\g',z)$, or to
a $2$-alteration, an $x'$-alteration, or a $1$-alteration of $(\g',z)$) such that $(\g,z)$ is a $(\GG_k,x\to x',\GG_{k,0},Y)$-expansion of $(\g_0,z)$ for a set $Y$.
Let $\PP$ be the corresponding tip-respecting $\GG_k$-partition such that $(\g/\PP, z)=(\g_0,z)$.
We say that a vertex $v\in V(\g_0)\setminus\{z\}$ (or $v\in V(\g')\setminus\{z\}$) \emph{contributes} degrees $d_1$, \ldots, $d_m$
to $C(\g,z)$ if the part $P\in\PP$ contracted to $v$ satisfies
$$\{\deg_{\g}(u):u\in P,\deg_{\g}(u)\ge 5\}=\{d_i:i\in\{1,\ldots,m\}, d_i\ge 5\},$$
where both sides of the equality are multisets.
The following observations will be sufficient to bound the census of $(\g,z)$ based on the census of $(\g_0,z)$:
\begin{itemize}
\item[(i)] If a vertex $v\in V(\g_0)\setminus \{z\}$ has degree at most five in $\g_0$, then Observation~\ref{obs-subcrit} and Theorem~\ref{thm-deg}
imply that the part of $\PP$ contracted to $v$ has size one, and thus it contributes $\deg_{\g_0}(v)$ to $C(\g,z)$.
\item[(ii)] If a vertex $v\in V(\g_0)\setminus \{z\}$ has degree six in $\g_0$ and the part $P\in \PP$ contracted to $v$ has size at least two,
then by Observation~\ref{obs-subcrit} and Theorem~\ref{thm-deg} all vertices in $P$ have degree four in $\g$, and thus $v$
does not contribute anything to $C(\g,z)$.
\item[(iii)] Suppose a vertex $v\in V(\g_0)\setminus \{z\}$ has degree seven in $\g_0$ and the part $P\in \PP$ contracted to $v$ has size at least two.
Let $(\g_1,b)=(\g,z) \restriction P$.  By Observation~\ref{obs-subcrit}, Theorem~\ref{thm-deg} and parity, there exists a vertex $x_1\in P$ such that $\deg_{\g}(x_1)=5$.  Consequently, letting
$\psi_1$ be any tip preflow that does not extend to a nowhere-zero flow in $(\g_1,b)$, we have
$(\g_1,b,x_1,\psi_1)\in\GG_{7,0}$. Clearly $o(\g_1,b)<o(\g,z)$ and $(\g_1,b,x_1)\prec (\g,z,x)$.
Therefore, we have $(\deg(b),C(\g_1,b))\in \CC'_{7,0}$, and since $\deg(b)=7$, it follows that $C(\g_1,b)=\{5\}$.
Hence, $v$ contributes $5$ to $C(\g,z)$.
\item[(iv)] Finally, let us consider a contribution of the vertex $x'$ to $(\g,z)$.  Let $P\in\PP$ be the part contracted to the vertex $x'$;
we have $x\in P$.  If $|P|=1$, then $x'$ contributes $\deg_{\g_0} x'=\deg_{\g} x$ to $(\g,z)$.

If $|P|\ge 2$, then let $(\g_1,b)=(\g,z) \restriction P$ and note that $\deg b=\deg_{\g_0} x'$.
Since $(\g,z)$ is a $(\GG_k,x\to x',\GG_{k,0},Y)$-expansion of $(\g_0,z)$, there exists a tip preflow $\psi_1$
such that $(\g_1,b,x,\psi_1)\in\GG_{k,0}$.  Clearly $o(\g_1,b)<o(\g,z)$ and $(\g_1,b,x)\prec (\g,z,x)$.
Therefore, we have $(\deg(b),C(\g_1,b))\in \CC'_{k,0}$.  We conclude that
\begin{itemize}
\item $\deg_{\g_0}(x')=k$ and $x'$ contributes only $k-2=\deg_{\g}(x)=\deg_{\g_0}(x')-2$ to $C(\g,z)$; or,
\item $\deg_{\g_0}(x')=k+1$ and $x'$ contributes only $k-1=\deg_{\g}(x)=\deg_{\g_0}(x')-2$ to $C(\g,z)$; or,
\item $\deg_{\g_0}(x')=k+1$ and $x'$ contributes only $k-2=\deg_{\g}(x)=\deg_{\g_0}(x')-3$ and one or three $5$'s to $C(\g,z)$.
\end{itemize}
\end{itemize}

Let us now discuss each of the conclusions separately.  Note that in all the cases, we have $(\deg_{\g'}(z),C(\g',z))\in \CC'_{k,0}$,
and $C(\g',z)$ consists of $\deg_{\g'}(x')\le k-1$ and possibly several fives.
\begin{claim*}
The case (EXP) does not occur.
\end{claim*}

\begin{subproof}
Suppose it does. Note that $C(\g_0,z)=C(\g',z)\in \CC'_{k,0}$ and $\deg_{\g_0}(x')=\deg_{\g'}(x')\le k-1$. By (i) and (iv), we have $C(\g,z)=C(\g',z)$, and thus $(\deg_{\g}(z), C(\g,z))\in \CC'_{k,0}$.
This is a contradiction.
\end{subproof}

\begin{claim*}
Neither (EXPA) nor (EXPX) occur.
\end{claim*}

\begin{subproof}
In this case, $(\g_0,z)$ is a $2$-alteration of $(\g',z)$ on some $y_1$ and $y_2$ or an $x'$-alteration of $(\g',z)$ on some $y_1$.
In the latter case, let $y_2=x'$, so that $Y=\{y_1,y_2\}$ in both cases.

If $z\in Y$, then we would be in the (EXPA) case and we would have $\deg_{\g'}(z)=\deg_{\g_0}(z)-2=\deg_{\g}(z)-2\le k-1$, which is not possible since $(\deg_{\g'}(z),C(\g',z))\in \CC'_{k,0}$.
It follows that $z\not\in Y$, and $\deg_{\g'}(z)=\deg_{\g_0}(z)=\deg_{\g}(z)$.  
Moreover, $C(\g_0,z)$ is obtained from $C(\g',z)$
by replacing $\deg_{\g'}(y)$ by $\deg_{\g'}(y)+2$ for each vertex $y\in Y$ (or adding $\deg_{\g'}(y)+2=6$
to $C(\g',z)$ when $\deg_{\g'}(y)=4$). 

Consider any vertex $v\in V(\g')\setminus \{z,x'\}$.  If $v\not\in Y$, then $\deg_{\g_0}(v)=\deg_{\g'}(v)\in\{4,5\}$
and by (i), $v$ contributes $\deg_{\g'}(v)$ to $C(\g,z)$.  If $v\in Y$, then let $X_v$ be the part of $\PP$ contracted into $v$
and note that $|X_v|\ge 2$.
\begin{itemize}
\item If $\deg_{\g'}(v)=4$, then $\deg_{\g_0}(v)=6$ and by (ii) $v$ does not contribute anything to $C(\g,z)$.
\item If $\deg_{\g'}(v)=5$, then $\deg_{\g_0}(v)=7$ and by (iii) $v$ contributes $5$ to $C(\g,z)$.
\end{itemize}
In either case, we again conclude that $v$ contributes $\deg_{\g'}(v)$ to $C(\g,z)$.

Since $(\deg_{\g'}(z),C(\g',z))\in\CC'_{k,0}$, $\deg_{\g}(z)=\deg_{\g'}(z)$, and $(\deg_{\g}(z),C(\g,z))\not\in\CC'_{k,0}$,
we have $C(\g,z)\neq C(\g',z)$.  It follows that $x'$ contributes something else than $\deg_{\g'}(x')$ to $C(\g,z)$.
If $x'\not\in Y$, then $\deg_{\g_0}(x')=\deg_{\g'}(x')\le k-1$ and $x'$ would only contribute $\deg_{\g'}(x')$ to $C(\g,z)$ by (iv).
Therefore, $x'\in Y$, and in particular the part of $\PP$ contracted to $x'$ has size at least two and $\deg_{\g_0}(x')=\deg_{\g'}(x')+2$.

Since $x'$ does not contribute only $\deg_{\g'}(x')=\deg_{\g_0}(x')-2$ to $C(\g,z)$, (iv) implies that
$\deg_{\g_0}(x')=k+1$ and $x'$ contributes only $k-2$ and one or three $5$'s to $C(\g,z)$.
Note that $\deg_{\g'}(x')=k-1$.  Recall that $(\deg_{\g'}(z),C(\g',z))\in \CC'_{k,0}$, and thus we have $(\deg_{\g'}(z),C(\g',z))=(k+1,\{k-1\})$.
As we have argued above, this implies that no vertex of $V(\g')\setminus\{z,x'\}$ contributes anything to $C(\g,z)$, and thus
$$(\deg_{\g}(z),C(\g,z))\in\{(k+1,\{k-2,5\}),(k+1,\{k-2,5,5,5\})\}\subset \CC'_{k,0};$$ this is a contradiction.
\end{subproof}

It follows that (EXPB) holds. In particular, $(\g_0,z)$ is obtained from $(\g',z)$ by adding an edge $y_1y_2$ not incident with $z$ (and adjusting the boundary),
and thus $C(\g_0,z)$ is obtained from $C(\g',z)$ by replacing $\deg_{\g'}(y)$ by $\deg_{\g'}(y)+1$ for each vertex $y\in \{y_1,y_2\}$
(or adding $\deg_{\g'}(y)+1=5$ in case that $\deg_{\g'}(y)=4$).  By (ii), in case that the new element is $6$, the vertex $y$ can contribute either
$6$ or nothing to $C(\g,z)$.

Consider now the vertex $x'$, and let $X$ be the part of $\PP$ contracted to $x'$ (and containing $x$).
Since $(\g',z,x',\psi')\in\GG_{k,0}$, we have $\deg_{\g'}(x')\in\{k-2,k-1\}$.
\begin{itemize}
\item If $x'\not\in \{y_1,y_2\}$, then $\deg_{\g_0}(x')=\deg_{\g}(x)\le k-1$, and by (iv),
the vertex $x'$ contributes only $\deg_{\g'}(x)=\deg(x)$ to $C(\g,z)$.
\item If $x'\in \{y_1,y_2\}$, then we have $\deg_{\g'}(x')+1=\deg_{\g_0}(x')=\deg_{\g}(X)\in\{k-1,k\}$.
\begin{itemize}
\item If $|X|=1$, then $\deg_{\g_0}(x')=\deg_{\g}(x)$, and since $\deg_{\g}(x)\le k-1$,
it follows that $\deg_{\g'}(x')=k-2$ and $x'$ contributes only $\deg(x)=k-1=\deg_{\g'}(x')+1$ to $C(\g,z)$.
\item If $|X|\ge 2$, then note that Theorem~\ref{thm-deg} for $(\g,z)\restriction X$ implies that
$k-2\le \deg_{\g}(x)\le \deg_{\g}(X)-2\le k-2$, and thus
$\deg_{\g_0}(x')=\deg_{\g}(X)=k$, $\deg_{\g'}(x')=k-1$, and $\deg_{\g}(x)=k-2$.  By (iv), $x'$ contributes only $\deg(x)=k-2=\deg_{\g'}(x')-1$ to $C(\g,z)$.
\end{itemize}
\end{itemize}
Thus, $C(\g,z)$ is obtained from $C(\g',z)\in \CC'_{k,0}$ by choosing some number $m\le 2$ of distinct elements,
increasing or decreasing them by one (and removing them if the result is $4$), and adding $2-m$ fives.
The inspection of the definition of $\CC'_{k,0}$ gives us the following possibilities for $(\deg(z),C(\g,z))$,
taking into the account that every vertex in $V(\g)\setminus\{z\}$ has degree
at most $\deg(z)-2$ by Theorem~\ref{thm-deg}, that $\deg_{\g}(x)\in \{k-2,k-1\}$, and that $(\deg(z),C(\g,z))\not\in \CC'_{k,0}$:
\begin{align*}
(\deg(z),C(\g,z))\in\{&(k,\{k-2,5,5\}),(k+1,\{k-1,5,5\}), (k+1,\{k-2,6,5\}),\\
&(k+1,\{k-1,6\}), (k+1,\{k-2,5,5,5,5,5\}), (k+1,\{k-2,6,5,5,5\}),\\
&(k+1,\{k-2,6,6,5\}),(k+1,\{k-1,5,5,5,5\}), (k+1,\{k-1,6,5,5\})\}
\end{align*}
Theorem~\ref{thm-genladeg} guarantees that in (EXPB), every vertex other than $x$ has degree at least five, and since $k\ge 7$, so does $x$.
Observation~\ref{obs-sumdeg} excludes all the possibilities for $(\deg(z), C(\g,z))$
except for
$$(k+1,\{k-2,5,5,5,5,5\}), (k+1,\{k-2,6,5,5,5\}),\text{ and }(k+1,\{k-1,5,5,5,5\}).$$
Let $\g=(G,\beta)$, and let $\beta'$ be the $\Z$-boundary for $G-z$ such that $\beta'(v)=\beta(v)-\deg^+_\psi(v)+\deg^-_\psi(v)$ for each $v\in V(G-z)$.
Since $\psi$ does not extend to a nowhere-zero flow in $\g$, $(G-z,\beta')$ does not have a nowhere-zero flow.
\begin{itemize}
\item If $\deg(z)=k+1$ and $C(\g,z)=\{k-2,5,5,5,5,5\}$, then $|V(G-z)|=6$ and $G-z$ has four non-edges.  By Lemma~\ref{lemma-small},
$G-z$ consists of two copies of $K_4$ whose intersection is $K_2$ and $\beta'(v)=0$ for $v\in V(G-z)$.  Note that $G-z$ has two vertices of degree five; let $v$ be one of these degree-five vertices different from the vertex of $G$ of degree $k-2$.  Then $v$ also has degree five in $\g$, $v$ is not adjacent to $z$,
and $\beta(v)=\beta'(v)=0$.  This is a contradiction, since $\deg(v)<4+|\tau(v)|=7$.
\item If $\deg(z)=k+1$ and $C(\g,z)=\{k-2,6,5,5,5\}$ or $C(\g,z)=(k+1,\{k-1,5,5,5,5\})$, then $|V(G-z)|=5$ and $G-z$ has only one non-edge.
However, then $(G-z,\beta')$ has a nowhere-zero flow by Lemma~\ref{lemma-small}.
\end{itemize}
In any of the cases, we obtain a contradiction.
\end{proof}

Although this is mostly a technicality forced on us by our proof method,
Lemma~\ref{lemma-degbetter} also speaks about some canvases with tip of degree $k+1$
and another vertex of degree $k-2$.  Thus, in addition to Theorem~\ref{thm-degbetter},
we obtain the following consequence.
\begin{corollary}\label{cor-db3}
Let $(\g,z)$ be a non-trivial flow-critical tame canvas such that $\deg(z)\ge 8$
and there exists a vertex $x\in V(\g)\setminus\{z\}$ of degree $\deg(z)-3$.
Let $\psi$ be a tip preflow $\psi$ that does not extend to a nowhere-zero flow in $\g$.
If $\psi$ does not orient all edges between $z$ and $V(\g)\setminus\{z,x\}$ in the same direction,
then $C(\g,z)$ is either $\{\deg(z)-3,5\}$ or $\{\deg(z)-3,5,5,5\}$.
\end{corollary}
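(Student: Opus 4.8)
The plan is to derive Corollary~\ref{cor-db3} directly from Lemma~\ref{lemma-degbetter} by choosing the auxiliary parameter $k$ correctly. Given the canvas $(\g,z)$ with $\deg(z)\ge 8$ and the vertex $x$ with $\deg(x)=\deg(z)-3$, I would set $k=\deg(z)-1$, so that $k\ge 7$ and $\deg(z)=k+1$. The first task is to verify that $(\g,z,x,\psi)\in\GG_{k,0}$. By hypothesis $(\g,z)$ is a tame flow-critical canvas, and $\deg(z)=k+1$, so it remains to check that $(x,\psi)$ is a witness of $(k,0)$-tallness. We have $\deg(x)=\deg(z)-3=k-2\ge k-2$, so the degree condition on $x$ holds. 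Since $\deg(x)=k-2$ and $\deg(z)=k+1$, the definition additionally requires that $\psi$ not orient all edges between $z$ and $V(\g)\setminus\{z,x\}$ in the same direction---and this is exactly the extra hypothesis of Corollary~\ref{cor-db3}. Finally, $\psi$ is a $k$-tallness-witnessing preflow: it does not extend to a nowhere-zero flow by assumption, and because $\psi$ does not orient all edges between $z$ and $V(\g)\setminus\{z,x\}$ the same way, a fortiori it does not orient all edges incident with $z$ the same way, which is what is demanded when $\deg(z)=k+1$.

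With $(\g,z,x,\psi)\in\GG_{k,0}$ established, Lemma~\ref{lemma-degbetter} yields $(\deg(z),C(\g,z))=(k+1,C(\g,z))\in\CC'_{k,0}$. Inspecting the definition of $\CC'_{k,0}$, the pairs with first coordinate $k+1$ are $(k+1,\{k-1\})$, $(k+1,\{k-2,5\})$, and $(k+1,\{k-2,5,5,5\})$, so $C(\g,z)$ is one of $\{k-1\}$, $\{k-2,5\}$, $\{k-2,5,5,5\}$. To finish, I would rule out $C(\g,z)=\{k-1\}$: since $k\ge 7$ we have $\deg(x)=k-2\ge 5\ne 4$, so by the definition of the census $k-2\in C(\g,z)$; as $k-2\ne k-1$ this excludes $\{k-1\}$. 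Hence $C(\g,z)\in\{\{k-2,5\},\{k-2,5,5,5\}\}=\{\{\deg(z)-3,5\},\{\deg(z)-3,5,5,5\}\}$, which is the conclusion.

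I do not expect any genuine obstacle here, since all the real work lives in Lemma~\ref{lemma-degbetter}; the proof is a short bookkeeping argument. The two places that warrant care are: matching the hypotheses of Corollary~\ref{cor-db3} against the (slightly intricate) definition of a witness of $(k,r)$-tallness---in particular noticing that its boundary-case clause is precisely the ``not all the same direction'' hypothesis we are handed, and that $k=\deg(z)-1$ is forced because we simultaneously need $\deg(x)\ge k-2$ and $\deg(z)\le k+1$---and the elementary remark that a vertex of degree at least five always contributes to the census, which is what lets us discard the extraneous option $C(\g,z)=\{\deg(z)-2\}$ permitted by $\CC'_{k,0}$.
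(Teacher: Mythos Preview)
Your proof is correct and follows exactly the approach the paper intends: setting $k=\deg(z)-1$, verifying that $(\g,z,x,\psi)\in\GG_{k,0}$ by checking the definition of a witness of $(k,0)$-tallness, applying Lemma~\ref{lemma-degbetter}, and then excluding $C(\g,z)=\{k-1\}$ using that $\deg(x)=k-2\ge 5$ forces $k-2\in C(\g,z)$. The paper presents the corollary as an immediate consequence of Lemma~\ref{lemma-degbetter} without spelling out these details, so your write-up is in fact more explicit than the paper's.
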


We believe the assumption of the existence of the tip preflow $\psi$ can be dropped.
\begin{conjecture}\label{conj-db3}
Let $(\g,z)$ be a non-trivial flow-critical tame canvas such that $\deg(z)\ge 8$.
If $\g$ has a vertex of degree $\deg(z)-3$, then $C(\g,z)$ is either $\{\deg(z)-3,5\}$ or $\{\deg(z)-3,5,5,5\}$.
\end{conjecture}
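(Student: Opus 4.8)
\textbf{Proof proposal for Conjecture~\ref{conj-db3}.}
The plan is to remove the extra hypothesis from Corollary~\ref{cor-db3} by extending Lemma~\ref{lemma-degbetter} from the parameter $r=0$ to $r=1$. Let $(\g,z)$ be as in the conjecture, put $k=\deg(z)\ge 8$, and let $x$ be a vertex of degree $k-3$. Since $(\g,z)$ is non-trivial and flow-critical, the argument in the proof of Observation~\ref{obs-critopsi} furnishes a tip preflow $\psi$ that does not extend to a nowhere-zero flow in $\g$. I claim $(\g,z,x,\psi)\in\GG_{k,1}$: indeed $\psi$ is $k$-tallness-witnessing (there is no side condition because $\deg(z)=k\ne k+1$), $\deg(x)=k-3=k-2-1$, and the third bullet of Definition~\ref{def:krtall} is vacuous, again because $\deg(z)\ne k+1$. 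Hence the conjecture would follow from a description of the part of $\CC_{k,1}$ with $\deg(z)=k$, after discarding the outcome in which $C(\g,z)$ is a singleton $\{k-2\}$: such a census would, by Theorem~\ref{thm-degbetter} applied to a vertex of degree $\deg(z)-2$, be the \emph{only} nonzero degree present, contradicting the existence of $x$ (note $k-3\ne 4$ since $k\ge 8$).

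I would carry this out by mimicking, almost verbatim, the proof of Lemma~\ref{lemma-degbetter}: define a candidate set $\CC'_{k,1}$ (whose $\deg(z)=k$ part should be $\{\{k-3,5\},\{k-3,5,5,5\},\{k-2\}\}$, with a correspondingly enlarged $\deg(z)=k+1$ part), and prove $\CC_{k,1}\subseteq\CC'_{k,1}$ by $\prec$-minimal counterexample, applying Theorem~\ref{thm-genladeg} with $r=1$. As in Lemma~\ref{lemma-degbetter} one must simultaneously control $\CC_{k+1,1}$, because the outcome (REM) of Theorem~\ref{thm-genladeg} may produce an easel in $\GG_{k+1,r}$; this causes no trouble since $\prec$ has no infinite decreasing chains. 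The outcomes (SMALL), (ADD), (REM), (EXP), (EXPA), (EXPX) are handled exactly as in Lemma~\ref{lemma-degbetter}, with the bookkeeping shifted by one: Theorem~\ref{thm-deg} now gives $\deg(z)\ge\deg(x)+2\ge k-1$, and the borderline value $\deg(z)=k-1$ is excluded outright by applying Lemma~\ref{lemma-degbetter} itself with parameter $k-1$ (one checks that such an easel lies in $\GG_{k-1,0}$, whose only census with tip of degree $k-1$ is the impossible singleton $\{k-3\}$). The ``contributes'' analysis of (EXP)/(EXPA)/(EXPX) — items (i)--(iv) in the proof of Lemma~\ref{lemma-degbetter}, which bound the census of $(\g,z)$ from that of the expanded canvas using Theorem~\ref{thm-deg} and the induction hypothesis — transfers with only notational changes.

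The genuine obstacle, exactly as for $r=0$, is pruning the list of candidate censuses produced by outcome (EXPB), where control over the degrees is lost: $C(\g,z)$ is then obtained from some census in $\CC'_{k,1}\cup\CC'_{k+1,1}$ by perturbing at most two elements by $\pm1$ (deleting any that becomes $4$) and inserting up to two further $5$'s. The tools are the edge-count bound of Observation~\ref{obs-sumdeg} ($\sum_{d\in C(\g,z)}d\le\deg(z)+|C(\g,z)|(|C(\g,z)|-1)$, with equality only when $\g-z$ is complete), the parity of $\sum_{d\in C(\g,z)}d-\deg(z)$, and the tameness observation that a degree-five vertex has boundary $\pm1$ and is therefore adjacent to $z$, so cannot have degree five in $\g-z$. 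For $r=0$ these force $|V(\g-z)|\le 6$, so Lemma~\ref{lemma-small} finishes the job; for $r=1$ the surviving candidates have $\g-z$ on as many as seven or eight vertices (censuses such as $\{k-3,5,5,5,5,5\}$, $\{k-3,6,5,5,5\}$, $\{k-3,5,5,5,5,5,5,5\}$), so one must first prove the analogue of Lemma~\ref{lemma-small} for $\Z_3$-bordered graphs on $7$ and $8$ vertices with no nowhere-zero flow and a bounded number of non-edges. The collapsibility machinery of Observation~\ref{obs-collaps} (reduce through a vertex of degree two or three to a smaller instance, whose $\Z$-connectivity is already known) should still drive this classification, but the number of cases grows, and it is this finite-but-lengthy verification that is the crux of the conjecture; everything else is a routine adaptation of Section~\ref{sec:geneasels} and of the proof of Theorem~\ref{thm-degbetter}.
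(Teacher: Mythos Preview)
The statement is a \emph{conjecture}, and the paper does not prove it; on the contrary, the paragraph following Conjecture~\ref{conj-db3} explains exactly the approach you propose and identifies the obstacle that prevented the authors from carrying it out. So the question is whether your proposal overcomes that obstacle. It does not.

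The gap is in your estimate of the $\deg(z)=k+1$ part of $\CC'_{k,1}$ and, consequently, of the size of the finite verification in (EXPB). You assert that the surviving candidates have $\g-z$ on ``as many as seven or eight vertices'', but the paper exhibits an \emph{actual} easel in $\GG_{k,1}$ with $\deg(z)=k+1$ and census $\{k-3,6,5,5,5,5,5,5\}$ (built from $K_{3,5}$ plus an edge, with $z$ attached by double edges to the five-side and by a $(k-9)$-fold edge to $x$). This census must therefore belong to $\CC'_{k,1}$. Now in the (EXPA) case, a $2$-alteration can raise the degree-$6$ vertex to degree~$8$; the expansion at that vertex is no longer governed by your items (i)--(iii), since there exists a flow-critical tame canvas with tip of degree $8$ and census $\{5,5,5,5\}$. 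This already produces a candidate census $\{k-3,\underbrace{5,\ldots,5}_{10}\}$ with eleven non-tip vertices. Feeding that through (EXPB) in turn gives a candidate $\{k-3,\underbrace{5,\ldots,5}_{12}\}$ on thirteen non-tip vertices. The paper points out that even with the degree constraints, the number of simple graphs on $13$ vertices to be checked exceeds $10^9$, so the ``finite-but-lengthy verification'' you anticipate is not the $7$--$8$-vertex extension of Lemma~\ref{lemma-small} but a substantially larger computation (or a new structural idea). Your claim that (EXPA)/(EXPX) ``transfer with only notational changes'' is thus also incorrect: once $\CC'_{k,1}$ contains a degree-$6$ entry other than $a_0$, the contribution analysis must be extended to handle degree-$8$ vertices in $\g_0$, and that is where the cascade begins.
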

In particular, together with Theorem~\ref{thm-degbetter}, this would imply that
if $(\g,z)$ is a non-trivial flow-critical tame canvas and $\deg(z)=8$,
then $C(\g,z)$ is $\{6\}$, $\{5,5\}$, $\{5,5,5,5\}$, or $\emptyset$.
All of these censuses are indeed possible; e.g., $C(\g,z)=\{5,5,5,5\}$
when $\g-z$ is the clique $K_4$ and $z$ is joined by a double edge to all the vertices of this clique.

Let us remark that it is in principle possible to prove Conjecture~\ref{conj-db3} along the lines
of the proof of Lemma~\ref{lemma-degbetter} by characterizing the censuses of all easels in $\GG_{k,1}$.
An issue that prevented us from doing so is as follows.  In addition to the desired easels $(\g,z,x,\psi)$
with $\deg(z)=k$ and $\deg(x)=k-3$, the class $\GG_{k,1}$ also contains the easels with
with $\deg(z)=k+1$ and $\deg(x)=k-3$ for which $(x,\psi)$ is a witness of $(k,1)$-tallness.
There exists such an easel with $\deg(z)=k+1$ and $C(\g,z)=\{k-3, 6, 6\times 5\}$, consisting of $K_{3,5}$,
an edge between distinct vertices $x$ and $y$ of the part of size three, and the vertex $z$ joined to the vertices
in the part of size five by double edges and to $x$ by an edge of multiplicity $k-9$; it is easy to see that
$(\g,z)$ is $\psi$-critical for a suitable choice of the boundary and the tip preflow $\psi$, using the argument from~\cite{li20223}.
Since there also exists a flow-critical tame canvas $(\g_1,z_1)$ with $\deg(z_1)=8$ and $C(\g_1,z_1)=\{5,5,5,5\}$,
in the (EXPA) case (with $(\g,z)$ playing the role of $(\g',z)$),
there arises the possibility of an existence of a flow-critical tame canvas $(\g_2,z_2)$
with $\deg(z_2)=k+1$ and $C(\g_2,z_2)=\{k-3, 10\times 5\}$ (we believe no such canvas actually exists, but
this possibility does not seem easy to exclude).  Finally, in the (EXPB) case, this
would force us to exclude the existence of a flow-critical tame canvas $(\g_3,z_3)$
with $\deg(z_3)=k+1$ and $C(\g_3,z_3)=\{k-3, 12\times 5\}$.  Since we can also assume that $\g_3$
has minimum degree five, this is only a finite problem, which could be dealt by enumerating
simple graphs on $13$ vertices and all their possible boundaries (viewed as already adjusted for
the tip preflow), then testing them for flow-criticality.  However, even if the constraints on
degrees are taken into account, the number of such graphs seems too large (much more than $10^9$)
to do so easily.  A more promising approach is to enumerate only the flow-critical canvases by using Theorem~\ref{thm-gen};
however, actually implementing the algorithm based on this theorem would require non-trivial extra effort
and we leave it for a future project.

\section{Bounding the density of flow-critical graphs: Proving Theorem \ref{thm:introweakerversion}}

We start by giving a reduction that relates the density of flow-critical $\Z$-bordered graphs and tame flow-critical canvases. 

\begin{observation}\label{obs-to-tame}
Let $(G,\beta)$ be a flow-critical $\Z$-bordered graph with $|V(G)|\ge 3$.  For $d\ge 2$, let $n_d$ be the number of vertices of $G$ of degree $d$.
There exists a tame flow-critical canvas $((G',\beta'),z)$ such that $G=G'-z$, $\deg(z)=2n_2+2n_3+n_4+n_5$,
and $\deg_{G'}(v)\le \max(\deg_G(v), 6)$ for every $v\in V(G)$.  Moreover,
$$|E(G)|=3|V(G)| - \frac{1}{2}\Bigl(\deg(z) -\sum_{v\in V(G)} (\deg_{G'}(v)-6)\Bigr).$$
\end{observation}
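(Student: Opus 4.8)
\textbf{Proof plan for Observation~\ref{obs-to-tame}.}

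The plan is to build $((G',\beta'),z)$ by attaching a new ``tip'' vertex $z$ to the low-degree vertices of $G$ in order to push every vertex degree up to at least $6$, while keeping the construction compatible with flow-criticality. Concretely, for each vertex $v$ with $\deg_G(v) \le 5$, I would add $6 - \deg_G(v)$ parallel edges between $v$ and $z$ when this count is positive; note $6-\deg_G(v)$ equals $4,3,2,1$ for $\deg_G(v)=2,3,4,5$ respectively, which is exactly $2n_2+2n_3+n_4+n_5$ edges in total, matching the claimed $\deg(z)$. Vertices of degree $\ge 6$ are left untouched, so $\deg_{G'}(v)=\deg_G(v)$ for them and $\deg_{G'}(v)=6$ for the others; in both cases $\deg_{G'}(v)\le \max(\deg_G(v),6)$, and every vertex of $G'-z$ has degree at least $6$ in $G'$. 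The boundary $\beta'$ on $V(G)$ I would set to agree with $\beta$ on each $v$ with $\deg_G(v)\ge 6$ (these already satisfy $\deg_{G'}(v)\ge 6 > 4+|\tau(v)|$ regardless), and for each low-degree $v$ I would choose $\beta'(v)$ so that $\beta'(v)\equiv\beta(v)\pmod 3$ does \emph{not} force $4+|\tau(v)| > 6$ — but in fact once $\deg_{G'}(v)=6$ we automatically have $4+|\tau(v)|\le 4+2=6=\deg_{G'}(v)$, so tameness at $v$ is free no matter what residue $\beta'(v)$ takes as long as $\beta'(v)\equiv\beta(v)$; so simply keep $\beta'(v)=\beta(v)$ on all of $V(G)$ and set $\beta'(z)$ to be the unique residue making $\beta'$ a $\Z$-boundary of $G'$ (possible since $G'$ is connected, which follows from $G$ being connected — flow-critical implies connected, essentially Lemma~\ref{lemma-conn} with $x=z$ — or $z$ connecting all components; and $\sum_{V(G')}\beta'\equiv 0$ is forced).

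The key step is then to verify that $((G',\beta'),z)$ is flow-critical. I would argue this directly from the definition: one checks that a tip preflow $\psi$ around $z$ in $G'$ together with an orientation of $G'-z$ is a nowhere-zero flow of $(G',\beta',z)$ precisely when the orientation of $G'-z$ is a nowhere-zero flow of $(G, \beta_\psi)$, where $\beta_\psi(v) = \beta(v) - (\text{net flow into } v \text{ along } z\text{-edges under }\psi)$. Conversely, any $\Z$-boundary $\beta''$ of $G$ with $\beta''\equiv\beta\pmod 3$ componentwise is realizable as some $\beta_\psi$ by appropriately orienting the $z$-edges (there are enough $z$-edges at each low-degree vertex, and the degree-$\ge 6$ vertices keep $\beta''(v)=\beta(v)$). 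Thus nowhere-zero flows of $(G',\beta',z)$ extending some tip preflow correspond exactly to nowhere-zero flows of the family $\{(G,\beta'') : \beta''\equiv\beta\}$; so $(G',\beta',z)$ being flow-critical (for every non-trivial tip-respecting partition $\PP$, some tip preflow extends in $\g'/\PP$ but not in $\g'$) translates into a statement about contractions of $G$, which follows from the flow-criticality of $(G,\beta)$ — contracting a part $P\subseteq V(G)$ in $\g'$ corresponds to contracting $P$ in $G$ (the tip $z$ stays fixed), and flow-criticality of $(G,\beta)$ gives a $\beta''$ for which the flow extends after contraction but not before. One must be slightly careful that the tip preflow witnessing non-extension can be chosen consistently; I would handle this by taking, for a given partition, the $\beta''$ guaranteed by flow-criticality of $G$ and unfolding it into a tip preflow $\psi$, checking the residues at degree-$\ge 6$ vertices cause no obstruction.

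Finally, the edge-count identity is a straightforward computation. Writing $2|E(G')| = \deg(z) + \sum_{v\in V(G)}\deg_{G'}(v)$ and $2|E(G')| = 2|E(G)| + \deg(z)$ (every added edge is a $z$-edge), we get $2|E(G)| = \sum_{v\in V(G)}\deg_{G'}(v)$; then
\[
2|E(G)| = \sum_{v\in V(G)}\deg_{G'}(v) = 6|V(G)| + \sum_{v\in V(G)}(\deg_{G'}(v)-6),
\]
so $|E(G)| = 3|V(G)| + \tfrac12\sum_{v}(\deg_{G'}(v)-6)$; but also $2|E(G)| = \deg(z) + \sum_{v}\deg_{G'}(v) - \deg(z)$, and more usefully $\sum_v \deg_G(v) = 2|E(G)|$ together with $\sum_v(\deg_{G'}(v)-\deg_G(v)) = \deg(z)$ gives $\sum_v(\deg_{G'}(v)-6) = 2|E(G)| + \deg(z) - 6|V(G)|$, hence rearranging the displayed formula yields exactly $|E(G)| = 3|V(G)| - \tfrac12\bigl(\deg(z) - \sum_v(\deg_{G'}(v)-6)\bigr)$. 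I expect the \textbf{main obstacle} to be the bookkeeping in the flow-criticality verification — specifically making the correspondence between tip preflows around $z$ and componentwise-congruent $\Z$-boundaries of $G$ fully rigorous, including the edge case where a low-degree vertex of $G$ has degree $2$ or $3$ (so only $4$ or $3$ choices of net $z$-flow are available, but this is exactly enough to hit all residues mod $3$ with the required multiplicity for criticality), and making sure the tameness of $\g'$ genuinely holds at \emph{all} vertices, not just the ones we modified.
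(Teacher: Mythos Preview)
Your construction has an arithmetic error that breaks the proof: you add $6-\deg_G(v)$ edges from $z$ to each vertex of degree $\le 5$, giving $4,3,2,1$ edges for degrees $2,3,4,5$ respectively, and then assert this totals $2n_2+2n_3+n_4+n_5$. It does not --- it totals $4n_2+3n_3+2n_4+n_5$, which is strictly larger whenever $n_2+n_3+n_4>0$. So your construction does not achieve the stated value of $\deg(z)$, and the observation as written is not proved.

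The paper's construction is different and more economical. Rather than padding every low-degree vertex up to degree six, it adds only a \emph{double} edge to each vertex of degree $2$ or $3$ and a \emph{single} edge to each vertex of degree $4$ or $5$ (hence $\deg(z)=2n_2+2n_3+n_4+n_5$). Tameness is then obtained not by brute-force degree inflation but by simultaneously choosing the new boundary $\beta'$ cleverly: a degree-$2$ vertex becomes degree $4$ in $G'$ and is assigned $\beta'(v)=0$ (so $|\tau(v)|=0$ and $4\ge 4+0$); a degree-$3$ vertex becomes degree $5$ with $\beta'(v)\ne 0$ (so $|\tau(v)|=1$ and $5\ge 5$); and so on. Your choice $\beta'=\beta$ forfeits this leverage, forcing you to over-inflate degrees.

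The paper's route to flow-criticality is also more direct than yours. Instead of setting up a correspondence between all tip preflows and a family of boundaries on $G$, it constructs a \emph{single} tip preflow $\psi$ chosen so that, at each $v$, the net $\psi$-flow through $v$ exactly compensates for the difference $\beta'(v)-\beta(v)$. Then $\psi$ extends to a nowhere-zero flow in $(G',\beta')$ if and only if $(G,\beta)$ has one, and the same equivalence holds after any tip-respecting contraction; so $\psi$-criticality of the canvas follows immediately from flow-criticality of $(G,\beta)$. Your correspondence idea is morally similar, but as you yourself flag, it runs into the problem that a single $z$-edge at a degree-$5$ vertex cannot realise net flow $0\pmod 3$, so there is no tip preflow with effective boundary equal to $\beta$ in your construction --- another reason the padding-to-six approach is the wrong one here. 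Your derivation of the edge-count identity at the end is fine.
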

\begin{proof}
The graph $G'$ is obtained from $G$ by joining the vertex $z$ to each vertex of $G$ of degree two or three by a double edge and to each vertex of $G$ of degree four or five by a single edge.
Let us define the $\Z$-boundary $\beta'$ and a tip preflow $\psi$ as follows:
\begin{itemize}
\item If $v\in V(G)$ has degree two, then let $\beta'(v)=0$ and choose $\psi$ on the edges between $v$ and $z$
so that $\deg^+_\psi(v)-\deg^-_\psi(v)\equiv \beta(v)\pmod 3$.
\item If $v\in V(G)$ has degree three, then let $\beta'(v)=1$ and choose $\psi$ on the edges between $v$ and $z$
so that $\deg^+_\psi(v)-\deg^-_\psi(v)\equiv \beta(v)-1\pmod 3$.
\item If $v\in V(G)$ has degree four, then choose $\beta'(v)\neq 0$ different from $\beta(v)$,
and choose $\psi$ on the edge between $v$ and $z$ so that $\deg^+_\psi(v)-\deg^-_\psi(v)\equiv \beta(v)-\beta'(v)\pmod 3$.
\item If $v\in V(G)$ has degree five, then choose $\beta'(v)\neq \beta(v)$ arbitrarily,
and choose $\psi$ on the edge between $v$ and $z$ so that $\deg^+_\psi(v)-\deg^-_\psi(v)\equiv \beta(v)-\beta'(v)\pmod 3$.
\item If $v\in V(G)$ has degree at least six, then let $\beta'(v)=\beta(v)$.
\end{itemize}
Finally, we let $\beta'(z)\equiv \deg^+_\psi(z)-\deg^-_\psi(z)\pmod 3$.
The choice of $G'$ and $\beta'$ implies that the canvas $((G',\beta'),z)$ is tame.
Moreover, since $(G,\beta)$ is flow-critical, it is easy to see that the canvas $((G',\beta'),z)$ is $\psi$-critical.
Since
$$\deg(z) + \sum_{v\in V(G)} \deg_{G'}(v)=2|E(G')|=2|E(G)|+2\deg(z),$$
we have
\begin{align*}
|E(G)|&=-\frac{1}{2}\Bigl(\deg(z)-\sum_{v\in V(G)} \deg_{G'}(v)\Bigr)\\
&=3|V(G)| - \frac{1}{2}\Bigl(\deg(z) -\sum_{v\in V(G)} (\deg_{G'}(v)-6)\Bigr).
\end{align*}
\end{proof}

In combination with Theorem~\ref{thm-degbetter}, we now prove Theorem \ref{thm:introweakerversion}, which we restate for ease of the reader.

\thmintroweakerversion*

\begin{proof}
Let $((G',\beta'),z)$ be the tame flow-critical canvas obtained in Observation~\ref{obs-to-tame} for $G$ with zero boundary.

Let us first consider the case that $\Delta(G) \geq 6$.
Let $v_0$ be a vertex of maximum degree in $G$. Note that by assumption, all other vertices have degree at most six.  Moreover, by Lemma~\ref{lemma-simple},
the graph $G$ is simple, and thus $|V(G)|\ge 7$.  The conclusions of Observation~\ref{obs-to-tame}
imply that $\deg_{G'}(v_0)=\deg(v_0)$ and together with the assumptions of the theorem, that $\deg_{G'} (v)\le 6$ for every $v\in V(G')\setminus\{z,v_0\}$.
By Theorem~\ref{thm-deg}, we have $\deg(z)\ge \deg(v_0)+2$. 
\begin{itemize}
\item If $\deg(z)=\deg(v_0)+2$, then Theorem~\ref{thm-degbetter} implies that
all vertices in $V(G')\setminus\{z,v_0\}$ have degree four and 
\begin{align*}
    \deg(z)-\sum_{v\in V(G)} (\deg_{G'}(v)-6)&=\deg(z)-(\deg(z)-8)+2(|V(G)|-1)\\[-0.2cm]
    & =2|V(G)|+6\ge 20.   
\end{align*}

\item On the other hand, if $\deg(z)\ge \deg(v_0)+3$, then since all vertices of $V(G')\setminus\{z,v_0\}$ have degree at most $6$, we have that
$$\deg(z)-\sum_{v\in V(G)} (\deg_{G'}(v)-6)\ge \deg(z)-(\deg(z)-9)=9.$$
\end{itemize}
Therefore, by the last equality from the statement of Observation~\ref{obs-to-tame},
we have 
$$|E(G)|\le 3|V(G)| - \lceil 9/2\rceil=3|V(G)|-5,$$
as desired.
Next, let us consider the case that $\Delta(G) \leq 5$, and thus all vertices of $G'$ except for $z$ have degree at most six.
Since $G$ is flow-critical, we have $|V(G)|\ge 4$, and Theorem~\ref{thm-deg} implies that $\deg(z)\ge 6$.
\begin{itemize}
\item
If $\deg(z)\ge 9$, then 
$$\deg(z)-\sum_{v\in V(G)} (\deg_{G'}(v)-6)\ge \deg(z)\ge 9.$$
\item If $\deg(z)=8$, then Theorem~\ref{thm-degbetter} implies that $V(G')\setminus\{z\}$ contains at most one vertex of degree six,
and thus
$$\deg(z)-\sum_{v\in V(G)} (\deg_{G'}(v)-6)\ge \deg(z)+(|V(G)|-1)\ge 11.$$
\item If $\deg(z)=7$, then Theorem~\ref{thm-degbetter} implies that $V(G')\setminus\{z\}$ consists of a vertex of degree five and all other
vertices have degree four, and
$$\deg(z)-\sum_{v\in V(G)} (\deg_{G'}(v)-6)\ge \deg(z)+2(|V(G)|-1)+1\ge 14.$$
\item Finally, if $\deg(z)=6$, then Theorem~\ref{thm-deg} implies that $V(G')\setminus\{z\}$ consists of vertices of degree four, and
$$\deg(z)-\sum_{v\in V(G)} (\deg_{G'}(v)-6)\ge \deg(z)+2|V(G)|\ge 14.$$
\end{itemize}
Therefore, the last equality from the statement of Observation~\ref{obs-to-tame} again implies
$$|E(G)|\le 3|V(G)| - \lceil 9/2\rceil=3|V(G)|-5.$$
\end{proof}

\section{Conclusion}
To recap the paper, we proved that every non-trivial flow-critical tame canvas
$(\g,z)$ has every vertex other than $z$ having degree at most $\deg(z) -2$.
More generally, we gave generation theorems for flow-critical canvases and
$(k,r)$-tall easels and used this to make partial progress towards
understanding the density of flow-critical graphs. It would be quite
interesting to know what the correct density of flow-critical graphs is. We
recall the conjecture from the introduction:

\conjdensitycrit*

It would be interesting to resolve this even in the easier situation of
flow-critical graphs. Note that this would imply that graphs with three pairwise
edge-disjoint spanning trees are $\mathbb{Z}_{3}$-connected, which is already an
interesting result in its own right. As the main point of the paper was to
understand tame flow-critical canvases, let us recall two conjectures on
tame flow-critical canvases that we have made and left unanswered:  

\conjfewlarge*

\conjcensusplus*

Let us remark that in principle, we could push the proof method of Theorem~\ref{thm-degbetter}
further, proving these conjectures for graphs with difference $3$, $4$, \ldots, between $\deg z$
and the maximum degree of the rest of the graph; essentially, the only difficulty (beyond the
increasing unwieldiness of the possible degree sets) is the enumeration of the small graphs
in the case (EXPB).  Unfortunately, the lack of control over the increase of the degrees
in (EXPB) prevents us from proving these conjectures in general.
\begin{ack}
    We thank the referees for their helpful comments. 
\end{ack}

\bibliographystyle{acm}
\bibliography{refs.bib}

\end{document}